\def\resetMathstrut@{%
  \setbox\z@\hbox{%
    \mathchardef\@tempa\mathcode`\(\relax
    \def\@tempb##1"##2##3{\the\textfont"##3\char"}%
    \expandafter\@tempb\meaning\@tempa \relax
  }%
  \ht\Mathstrutbox@1.2\ht\z@ \dp\Mathstrutbox@1.2\dp\z@
}
\makeatletter \@addtoreset{equation}{section} \makeatother
\renewcommand\thefigure{\thesection.\@arabic\c@figure}
\renewcommand\thetable{\thesection.\@arabic\c@table}
\newtheorem{theorem}{Theorem}[section]
\newtheorem{lemma}[theorem]{Lemma}
\newtheorem{proposition}[theorem]{Proposition}
\newtheorem{corollary}[theorem]{Corollary}
\theoremstyle{remark}
\newtheorem{remark}{Remark}[section]
\newcommand{\mc}[1]{{\mathcal #1}}
\newcommand{\bb}[1]{{\mathbb #1}}
\newcommand{\<}{\langle}
\renewcommand{\>}{\rangle}
\newcommand{\cadlag}{{c\`adl\`ag~}}
\DeclareMathOperator{\Bern}{Bern}
\newcommand{\tdn}{{\bb T_n^d}}
\newcommand{\td}{\bb T^d}
\newcommand{\eps}{\varepsilon}
\newcommand{\subind}{\substack{x \in \bb T_n^d \\ b \in \mc B}}
\newcommand{\subindd}{\substack{x \in \bb T_n^d \\ b' \in \mc B}}
\newcommand{\sqrtt}[1]{\sqrt{\smash{#1} \vphantom{b}}}
\newcommand{\wt}{\widetilde}
\newcommand{\inT}{\in [0,T]}
\newcommand{\varH}{\hspace{2pt}
\overline{\hspace{-2pt}\smash{H}\vphantom{t}}}
\title{Non-equilibrium Fluctuations of Interacting Particle Systems}
\author{Milton Jara}
\address{Instituto de Matem\'atica Pura e Aplicada, Estrada Dona Castorina 110, 22460-320  Rio de Janeiro, Brazil.}  \email{mjara@impa.br}
\author{Ot\'avio Menezes}
\address{\noindent Centro de An\'alise Matem\'atica, Geometria e Sistemas Din\^amicos\\
	Instituto Superior T\'ecnico\\
	Av. Rovisco Pais, 1049-001 Lisboa, Portugal.}
\email{otavio.menezes@tecnico.ulisboa.pt}
\begin{document}

\begin{abstract}
We obtain the large scale limit of the fluctuations around its hydrodynamic limit of the density of particles of a weakly asymmetric exclusion process in dimension $d \leq 3$. The proof is based upon a sharp estimate on the relative entropy of the law of the process with respect to product reference measures associated to the hydrodynamic limit profile, which holds in any dimension and is of independent interest. As a corollary of this entropy estimate, we obtain some quantitative bounds on the speed of convergence of the aforementioned hydrodynamic limit.
\end{abstract}

\maketitle

\section{Introduction}

One of the main problems in non-equilibrium statistical mechanics is the derivation of macroscopic equations, like Euler, heat or Navier-Stokes equations, as large-scale limits of microscopic models of interacting particle systems. For deterministic microscopic systems, this derivation is acknowledged to be very difficult, and a bottle of a very good wine has been offered for the derivation of Fourier's law from microscopic deterministic dynamics \cite{BonLebR-B}. For stochastic microscopic dynamics, the situation is much better understood. These stochastic systems are known in the literature as {\em interacting particle systems}. Derivation of partial differential equations as limits of properly rescaled observables of these systems are known as {\em hydrodynamic limits}, see \cite{D-MPre} and \cite{Spo} for reviews up to the early 90's. These hydrodynamic limits can be understood as a law of large numbers in functional spaces. One of the advantages of the derivation of hydrodynamic limits from stochastic models, is that one can try to go a step further and look at the {\em fluctuation theory} of these models, namely the central limit theorems and large deviations principles associated to these law of large numbers. These limits contain non-trivial physical information about the underlying physics, which is not available solely looking at the hydrodynamic limit. 

In \cite{D-MPre}, the authors give an exhaustive review of the method of $v$-functions, which in modern terminology allows to derive hydrodynamic limits of particle systems which are perturbations of stochastically integrable, symmetric systems, like the symmetric exclusion process or systems of independent particles. This method is good enough to also obtain the corresponding central limit theorem, but it is not suitable for looking at large deviations. 

A major breakthrough in the theory of hydrodynamic limits was obtained by Guo, Papanicolaou and Varadhan in \cite{GuoPapVar}, where the authors introduced the {\em entropy method} to derive hydrodynamic limits. This method is very robust, it does not rely on integrability and it can also be used to derive large deviations principles \cite{KipOllVar} and {\em equilibrium} central limit theorems \cite{Cha}. The main drawback of the entropy method is that it is based on the explicit knowledge of the invariant measures of the stochastic dynamics. This point was subsequently addressed by the {\em relative entropy method} of Yau \cite{Yau}, which only requires the knowledge of a good approximation of the invariant measure. These methods are extensively reviewed in \cite{KipLan}. 

Our main interest is the derivation of {\em non-equilibrium fluctuations} results for {\em diffusive systems}, that is, the derivation of the central limit theorems associated to hydrodynamic limits on which the limiting equation is of parabolic type. Early results used the concept of {\em duality} in order to obtain precise correlation estimates, which subsequently can be used to prove laws of large numbers and central limit theorems, see \cite{D-MFerLeb}, \cite{FerPreVar}. We specially recommend \cite{PreSpo}, where both the hydrodynamic limit and the fluctuation result are discussed together. A remarkable integrability property of the nearest-neighbour, weakly asymmetric exclusion process in dimension $d=1$ was discovered in \cite{Gar}, which used a discrete version of the Cole-Hopf to transform the system into a stochastic system with working dualities. The correlation estimates obtained in this way were used in \cite{DitGar} to derive non-equilibrium fluctuations for the WASEP. The same result was proved in \cite{D-MPreSca} using $v$-functions. The approach of \cite{Gar} had a recent revival as a fundamental tool in order to derive convergence to the KPZ for models presenting {\em stochastic integrability}, see \cite{BerGia}, \cite{AmiCorQua}, \cite{DemTsa} \cite{CorSheTsa}, \cite{Lab}, \cite{CarGiaRedSas}. 

For {\em conservative, diffusive, stationary systems}, that is, symmetric stochastic systems with a conserved quantity and starting from an explicitly known invariant measure, the density fluctuations are well understood. In \cite{BroRos}, the authors introduced the so-called {\em Boltzmann-Gibbs principle}, which roughly states that any space-time average of a local observable of the system can be well approximated by a linear function of the conserved quantity. The original proof of \cite{BroRos} of the Boltzmann-Gibbs principle requires translation invariance and strict scale invariance, and therefore it is not suitable to derive fluctuations of systems with mixed scaling, as required to derive viscous Burgers, Navier-Stokes or reaction-diffusion equations from microscopic systems. In \cite{Cha}, the entropy method of \cite{GuoPapVar} was adapted to give a more general and flexible proof of the Boltzmann-Gibbs principle, and it was used to derive the equilibrium fluctuations of a spatially inhomogeneous, weakly asymmetric exclusion process.

Apart from stochastic systems on which some degree of stochastic integrability is present, up to our knowledge the only work dealing with non-equilibrium fluctuations is \cite{ChaYau}, where the authors consider the one-dimensional Ginzburg-Landau model in dimension $d=1$. A general derivation of non-equilibrium fluctuations of conservative systems has remained largely open since then, and it is mentioned as Conjecture II.3.6 in \cite{Spo}, as a relevant open problem in Lecture 7 of \cite{JenYau} and ``no progress has been made in the last 20 years'' according to \cite{GonLanMil}. For more historical references, see Chapter 11 of \cite{KipLan}.

In this article we develop a general strategy to derive non-equilibrium fluctuations of interacting particle systems. In order to present the main ideas in a more transparent way, instead of searching for generality, we decided to focus on one particular example. In a wink to the seminal articles \cite{Cha}, \cite{KipOllVar} where the general strategy of proof for the equilibrium fluctuations and the large deviations principle were presented, we have chosen to work with a spatially inhomogeneous, weakly asymmetric exclusion process (WASEP), which is a generalization to higher dimensions of the models presented in \cite{Cha}, \cite{KipOllVar}. This model {\em does not} have known explicit invariant measures, and therefore not even in the equilibrium case this model is tractable by previous methods. We will prove convergence of the density fluctuations around its hydrodynamic limit for the WASEP in dimension $d <4$ to the solution of a space-time inhomogeneous, linear stochastic heat equation.

The starting point of our proof is Yau's relative entropy inequality. The hydrodynamic limit of the WASEP considered here has not been stated before in the literature, but the methods of \cite{BerD-SGabJ-LLan} can be used to prove this hydrodynamic limit. The corresponding hydrodynamic equation is 
\[
\partial_t u = \Delta u - 2 \nabla \cdot \big(u(1-u) F\big),
\]
where $F$ is the driving vector field of the weak asymmetry of the model.
As a zeroth-order approximation for the law of the stochastic particle system, we take product measures with a density given by a solution of this hydrodynamic equation. A proof of the convergence of the density of particles to solutions of the hydrodynamic equation is not needed in our proof, and it is actually a corollary of our main results. 
Yau's relative entropy inequality, see Lemma \ref{lA1}, states that the derivative of the relative entropy with respect to given reference measures can be bounded by a {\em dissipative term} which reduces to the Fisher information in the case of diffusions, and an {\em entropy production term}. 
Since the density satisfies the hydrodynamic equation, the entropy production term is of degree $2$ in the sense of the second-order Boltzmann-Gibbs principle introduced in \cite{GonJar}. In order to take advantage of this we use what we call the {\em main lemma}, stated in two versions in Lemmas \ref{l1} and \ref{main_v2}, which allows to estimate the entropy production term by the dissipative term. This lemma can be seen as a version of the two-blocks estimate at the level of fluctuations. In dimension $d=1$, the size of the larger block is macroscopic, while in $d \geq 2$ the size of the larger block is only mesoscopic. This is in line with known computations, which suggest that the relevance of noise overcomes the relevance of quadratic terms on dimensions $d \geq 2$, see \cite{LanOllVar}. After the use of the main lemma, the bound on the derivative of the relative entropy follows easily from the entropy inequality and a Gaussian estimate for the density with respect to the reference measures. 
This estimate, stated in Theorem \ref{t2}, is good enough to give a first new result: in Corollary \ref{c1} we obtain an estimate on the speed of convergence of the hydrodynamic limit of our model. The method of $v$-functions allows to obtain quantitative bounds on the rate of convergence of the hydrodynamic limit, but it has the restrictions already discussed. As far as we know, quantitative hydrodynamic limits are only discussed in \cite{ChaYau} and in the recent references \cite{DizMenOttWu1}, \cite{DizMenOttWu2}. 
In these references, the estimate holds in a stronger topology than ours, but for the moment the approach is restricted to dimension $d=1$ and it requires precise knowledge of the log-Sobolev constant of the model, which is a notoriously difficult problem for discrete systems and also for non-convex, continuous systems. Our quantitative hydrodynamics estimate holds for any dimension $d \geq 1$ and in particular it is not restricted to $d <4$.

Once the entropy estimate is derived, the next target is the Boltzmann-Gibbs principle. By means of a new variational estimate for exponential moments of observables of Markov processes, see Lemma \ref{lA2}, we reduce the proof of the Boltzmann-Gibbs principle to an application of version 2 of the main lemma, as stated in Lemma \ref{main_v2}. We point out that the entropy estimate is needed in the proof of the Boltzmann-Gibbs principle in order to be able to use {\em a priori} bounds to get rid of error terms.

With both the entropy estimate of Theorem \ref{t2} and the Boltzmann-Gibbs principle of Theorem \ref{BG} already proved, the derivation of the large-scale limit of the density fluctuations stated in Theorem \ref{t3} is not difficult to establish. The limiting stochastic heat equation
\[
\partial_t X_t = \nabla \cdot \Big( \nabla X_t - 2 X_t (1-2u_t) F + \sqrt{2u_t(1-u_t)} \dot{\mc W}_t\Big)
\]
can be explicitly solved in terms of the semigroup generated by the operator $\Delta + 2 (1-2u_t) F \cdot \nabla$ and the noise in the equation above. Therefore, convergence is reduced to show that an approximated version of this solution holds at microscopic level. The nonlinear part of the dynamics is controlled by the Boltzmann-Gibbs principle, while the linear part of the dynamics is handled by martingale methods, as introduced in \cite{HolStr}. This is enough to obtain convergence of finte-dimensional laws, which is the statement of Theorem \ref{t3}. Tightness is more difficult to prove, and the arguments deteriorate with dimension. The one-dimensional case is easy due to the nice $O(1)$ estimate on the relative entropy provided by Theorem \ref{t2}. In dimension $d=2$, although $\log n$ seems not so far from constant, a completely different approach is needed in order to prove tightness, and non-optimal results are given. Dimension $d=3$ is even worse, and we had to give up continuity of trajectories in order to obtain tightness. Nevertheless, the convergence of the martingale part holds at optimal topologies.

Now we describe the organization of this article. Since we believe that the ideas exposed in this article could be useful in other problems of interacting particle systems, proofs are very detailed. We hope that the interested reader would find such detail useful, and that the experts can jump some details without much effort. 
From the technical point of view, there are various novelties, which can be of independent interest. In Section \ref{s1} we define the model we consider in this article and we state our main results. In Section \ref{s2} we prove the main lemma, which is the main technical result of this article. In Section \ref{s4} we prove the entropy inequality, relying on the main lemma proved in Section \ref{s2}. In Section \ref{s5} we prove the Boltzmann-Gibbs principle, using version 2 of the main lemma and also Theorem \ref{t2} as input. In Section \ref{s6} we prove Theorem \ref{t3}. In Section \ref{s7} we prove tightness of the density fluctuation field, improving Theorem \ref{t3} to a functional central limit theorem.

In Appendix \ref{sA} we prove Yau's relative entropy inequality and an variational estimate for exponential moments of observables of Markov chains. Since these inequalities could be of independent interest, we present the proof for general Markov chains, and then we specify them for the model considered here. In Appendix \ref{sB} we collect some classical results for solutions of parabolic equations. In Appendix \ref{sD} we collect all results and definitions about topologies and functional spaces we need along the article. In Appendix \ref{sF1} we perform some elementary, although quite tedious computations involving the generator of the processes considered here. In Appendix \ref{sG} we derive a bound known in the literature as the {\em integration-by-parts formula}. In our particular setting, this formula is not as clean as in other situations, since the reference measures are not spatially uniform. This integration-by-parts formula is used to estimate expectations of some functions written in gradient form in terms of the Fisher information. In Appendix \ref{sH} we collect various entropy and concentration inequalities we need along the article. In particular, we provide a version of the so-called {\em Hanson-Wright inequality} for dependent random variables. Since we did not find a reference working in our particular context, we included a full proof. Aiming for clarity, we also included proofs of various other results that could be otherwise omitted. These estimates are key in order to exploit the fact that we can compare averages of local functions with quadratic functions of the density of particles. 

Finally, in Appendix \ref{sI} we give a geometric proof of what we call the {\em flow lemma}, which is basically the construction of an explicit solution for an optimal transport problem. Although the abstract theory of optimal transport could have been used to construct such flows, we found it difficult to extract the properties we need for these flows from these abstract results. This flow lemma is used in order to prove a two-blocks estimate during the proof of the main lemma. Up to our knowledge, there is no previous proof of the two-blocks estimate using flows in the literature.

\subsection*{About notations}

Since this article is quite long, some definitions are repeated along the article for the reader's convenience. Sometimes, these definitions are slightly modified to match the context, but they always coincide in the relevant cases. We use the denomination Proposition for results proved somewhere else, we use Theorem only for original work, and we use Lemma for auxiliary results. Some technical lemmas which could be of relevance in other applications are given own names, like main lemma or flow lemma.

\section{Definitions and results}
\label{s1}

\subsection{The exclusion process}
\label{s1.1}
Let $n \in \bb N$ be a scaling parameter. Let $\tdn := \bb Z^d/n \bb Z^d$ be the discrete, $d$-dimensional torus of size $n$. We will think about $\tdn$ as a discrete approximation of the continuous torus $\td := \bb R^d/\bb Z^d$. We say that $x,y \in \tdn$ are {\em neighbors} if $|y_1-x_1|+\dots+|y_d-x_d|=1$. In this case we write $x \sim y$. This definition induces a graph structure in $\tdn$. From now on, we will always think about $\tdn$ as the graph induced by this neighbors' structure.

Let $\Omega_n := \{0,1\}^{\tdn}$. We denote the elements of $\Omega_n$ by $\eta =\{\eta_x; x \in \tdn\}$ and we call them {\em particle configurations}. If $\eta_x =1$, we say that configuration $\eta$ has a particle at site $x$. Otherwise, we say that the site $x$ is empty. The variables $\eta_x$ are called the {\em occupation variables}.

For $x,y \in \tdn$ and $\eta \in \Omega_n$, let $\eta^{x,y} \in \Omega_n$ be given by
\[
\eta_z^{x,y} =
\left\{
\begin{array}{r@{\;;\;}l}
\eta_y & z=x\\
\eta_x & z =y\\
\eta_z & z \neq x,y.
\end{array}
\right.
\]
In other words, the configuration $\eta^{x,y}$ is obtained from $\eta$ by exchanging the values of the occupation variables at $x$ and $y$. 

For $f: \Omega_n \to \bb R$ and $x \sim y \in \tdn$, let $\nabla_{x,y} f: \Omega_n \to \bb R$ be given by
\[
\nabla_{x,y} f(\eta) = f(\eta^{x,y})-f(\eta) \text{ for any } \eta \in \Omega_n.
\]
We say that a function $r_n: \tdn \times \tdn \to [0,\infty)$ is a {\em jump rate} if $r_n(x,y) =0$ unless $x \sim y$. Notice that $r_n$ depends on $n$ through its domain. Later on we will make a more specific choice for $r_n$. For $f: \Omega_n \to \bb R$, let $L_n f:\Omega_n \to \bb R$ be given by
\[
L_n f(\eta) = \sum_{x,y \in \tdn } r_n(x,y) \eta_x(1-\eta_y) \nabla_{x,y} f(\eta)
\] 
for any $\eta \in \Omega_n$. Notice that the sum can be restricted to the set $\{x,y \in \tdn; x \sim y\}$. This relation defines a linear operator $L_n$, which turns out to be the generator of a continuous-time Markov chain $\{\eta_x^n(t); t \geq 0, x \in \tdn\}$ with state space $\Omega_n$. This chain is known in the literature as the {\em exclusion process} with jump rate $r_n$. We will also use the notation $\eta^n(\cdot)$ in order to refer to this chain.

The dynamics of the process $\eta^n(\cdot)$ is not difficult to describe. The value of $\eta^n_x(t)$ represents the presence or absence of a particle at site $x$ at time $t$. The denomination {\em exclusion} comes from the fact that there is at most one particle at any site at any given time. If the site $x$ is occupied bya  particle and the site $y$ is empty, the particle jumps from $x$ to $y$ at instantaneous rate $r_n(x,y)$. This happens independently for each particle-hole couple in the system.

\subsection{Invariant measures}
\label{s1.2}
We say that the jump rate $r_n$ is {\em irreducible} if for any two sites $x,y \in \tdn$ there exists a sequence $\{x_0=x,x_1,\dots,x_\ell=y\}$ in $\tdn$ such that $r_n(x_{i-1},x_i)>0$ for any $i=1,\dots,\ell$. In that case, the process $\eta^n(\cdot)$ is irreducible on each of the sets
\[
\Omega_{n,k} := \Big\{ \eta \in \Omega_n; \sum_{x \in \tdn} \eta_x =k\Big\}, k=0,1,\dots,n^d.
\]
Equivalently, $\eta^n(\cdot)$ has a unique invariant measure\footnote{Along this article, measure always means {\em probability} measure.} on each of the sets $\Omega_{n,k}$, $k=0,1,\dots,n^d$. Apart from the trivial cases $k=0,n^d$, these measure can not be described explicitly without further assumptions. In the case on which the jump rate $r_n$ is symmetric, that is, $r_n(x,y) = r_n(y,x)$ for any $x,y \in \tdn$, the process $\eta^n(\cdot)$ is reversible with respect to each of the uniform measures in $\Omega_{n,k}$. Equivalently, the product measures $\nu_\rho^n$ defined as
\[
\nu_\rho^n := \bigotimes_{x \in \tdn} \Bern(\rho)
\]
are invariant under the evolution of $\eta^n(\cdot)$ for any $\rho \in [0,1]$. However, they are not ergodic, except for the trivial cases $\rho=0$ or $1$.

It can be verified that the measures $\nu_\rho^n$ are invariant if and only if
\[
\sum_{y \in \tdn} \big( r_n(x,y) -r_n(y,x)\big) =0 
\]
for any $x \in \tdn$. This is a very restrictive condition in $d >1$, which implies that the jump rate $r_n$ is {\em divergence-free}. Necessary and sufficient conditions for the existence of product invariant measures can be derived without too much effort; we just point out that they imply that $r_n$ is either divergence-free or of gradient form.

\subsection{The weakly asymmetric exclusion process}
\label{s1.3}
Let $F: \td \to \bb R^d$ be a vector field. For simplicity, we assume that $F$ is of class $\mc C^\infty$. Let $\mc B=\{e_1,\dots,e_d\}$ be the canonical basis of $\td$ (and also of $\tdn$). Notice that for $x,y \in \tdn$, $x \sim y$ if and only if there exists $b \in \mc B$ such that $x = y+b$ or $y =x+b$. Define $\widetilde{r}_n$ as
\[
\widetilde{r}_n(x,x+b) = n^2\Big( 1+\frac{1}{n} F_b^n(x)\Big), \quad \widetilde{r}_n(x+b,x) = n^2\Big( 1-\frac{1}{n} F_b(x)\Big),
\]
for any $x \in \tdn$ and any $b \in \mc B$, where
\[
F_b^n(x):= F\big(\tfrac{x}{n} +\tfrac{b}{2n}\big) \cdot b.
\]
The $n^2$ factor in the definition of $\widetilde{r}_n$ fixes a {\em diffusive} space-time scaling for the model. In order to $\widetilde{r}_n$ be well defined, we need to assume that $n$ is large enough. To simplify the notation we will define $r_n$ as
\begin{equation}
\label{ancud}
r_n(x,x+b) = n^2\max\Big\{ \frac{1}{2},1+\frac{1}{n} F_b^n(x) \Big\}, \quad r_n(x+b,x) = n^2\max\Big\{ \frac{1}{2},1-\frac{1}{n} F_b^n(x) \Big\}.
\end{equation}
The rates $r_n$ and $\widetilde{r}_n$ coincide if $n \geq 2 \|F\|_\infty$.

The family of exclusion processes $\{\eta^n(\cdot)\}_{n \in \bb N}$ with rate $r_n$ is known in the literature as {\em weakly asymmetric exclusion process} (WASEP). Our choice of rates corresponds to a non-reversible, multidimensional generalization of the model studied in \cite{Cha}. In \cite{KipOllVar} the authors pointed out that understanding the scaling limits of inhomogeneous WASEP was fundamental in order to understand the large deviations principle for the symmetric exclusion process. For $F$ scalar, the asymmetric version of this model was studied in \cite{Bah}, \cite{CovRez}. Notice that the rates $r_n$ are defined evaluating $F$ in a dual lattice. This makes the model a better approximation of the continuous eqeuation.

We will denote by $\bb P_n$ the law of $\eta^n(\cdot)$ in the space $\mc D([0,\infty); \Omega_n)$ of \cadlag paths, and by $\bb E_n$ the expectation with respect to $\bb P_n$. Whenever we need to specify the initial law $\mu$ of $\eta^n(\cdot)$, we will use $\bb P_n^\mu$ and $\bb E_n^\mu$. 
Our main objective will be to prove a central limit theorem for the density of particles with respect to $\bb P_n$, for carefully chosen initial laws for $\eta^n(0)$.

It is possible to examine under which conditions the WASEP has invariant measures of product form. If the density profile $u$ of these measures is not constant, then the field $F$ must satisfy a discrete version of the equation
\[
F = \nabla \log \sqrt{\frac{u}{1-u}}.
\]
Therefore, $F$ must be of gradient form if one is willing to define the rates $r_n$ in such a way that they approximate $F$ and give rise to a process with invariant measures of product form. In that case, the corresponding invariant measure would be reversible.

If $u$ is constant, $F$ must satisfy $\nabla \cdot F =0$, that is, $F$ must be a divergence-free field. Therefore, for generic vector fields $F$, $\eta^n(\cdot)$ will not have invariant measures of product form.

\subsection{Hydrodynamic limit and relative entropy}
\label{s1.4}
In this section we state what we understand by the {\em hydrodynamic limit} of the WASEP. In order to do that, we need to introduce the hydrodynamic equation
\begin{equation}
\label{echid}
\partial_t u = \Delta u - 2 \nabla \cdot \big(u(1-u) F\big).
\end{equation}
Let $u_0: \td \to [0,1]$ an initial density profile and let $\{u(t,x); t \geq 0, x \in \td\}$ be the solution of \eqref{echid} with initial condition $u_0$. The following result is known in the literature as the hydrodynamic limit of the WASEP:

\begin{proposition}
\label{p1}
Let $\{\bar{\nu}_0^n\}_{n \in \bb N}$ be a sequence of measures in $\Omega_n$. Let $\eta^n(\cdot)$ be the WASEP with initial law $\bar{\nu}_0^n$. Assume that
\[
\lim_{n \to \infty} \frac{1}{n^d} \sum_{x \in \tdn} \eta_x^n f \big(\tfrac{x}{n}\big) = \int f(x) u_0(x) dx
\]
for any $f \in \mc C^\infty(\td)$, in probability with respect to $\bar{\nu}_0^n$. Then, for any $f \in \mc C^\infty(\td)$ and any $t \geq 0$,
\[
\lim_{n \to \infty} \frac{1}{n^d} \sum_{x \in \tdn} \eta_x^n(t) f \big( \tfrac{x}{n} \big) = \int f(x) u(t,x) dx,
\]
in probability with respect to $\bb P_n$.
\end{proposition}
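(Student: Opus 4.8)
The plan is to prove Proposition~\ref{p1} by the entropy method of Guo--Papanicolaou--Varadhan, in the inhomogeneous weakly asymmetric form used in \cite{BerD-SGabJ-LLan} (see also \cite{KipLan}). I would work with the empirical density $\pi_t^n := n^{-d}\sum_{x \in \tdn}\eta_x^n(t)\,\delta_{x/n}$, regarded as a \cadlag path of measures on $\td$, and the argument would proceed in three stages: tightness of the laws of $\pi^n$ on the associated Skorohod space, identification of every limit point with a weak solution of \eqref{echid}, and uniqueness of such solutions.

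\emph{Tightness.} Fix $f \in \mc C^\infty(\td)$; by Dynkin's formula
\[
M_t^n(f) := \<\pi_t^n,f\> - \<\pi_0^n,f\> - \int_0^t L_n\<\pi_s^n,f\>\,ds
\]
is a martingale, and a direct computation with $L_n$ (using $f,F\in\mc C^\infty$) gives
\[
L_n\<\pi_s^n,f\> = \<\pi_s^n,\Delta_n f\> + \frac{1}{n^d}\sum_{x\in\tdn,\,b\in\mc B}\big(\eta_x^n(s)+\eta_{x+b}^n(s)-2\eta_x^n(s)\eta_{x+b}^n(s)\big)F_b^n(x)\,\big(\nabla f(\tfrac{x}{n})\cdot b\big) + o(1),
\]
with $\Delta_n$ the discrete Laplacian. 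Thanks to the exclusion constraint $\eta_x\in\{0,1\}$ this is bounded uniformly in $s,n$, and the quadratic variation of $M^n(f)$ is of order $t/n^d$; together with $\<\pi_t^n,|f|\>\le\|f\|_\infty$, the Aldous--Rebolledo criterion gives tightness, and the exclusion constraint forces every limit point to have the form $t\mapsto u(t,\cdot)\,dx$ with $u(t,\cdot)\in[0,1]$.

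\emph{Identification of the limit.} Fix a subsequential limit $\pi_t=u(t,x)\,dx$. Since $\bb E_n[(M_t^n(f))^2]\to0$, letting $n\to\infty$ in the Dynkin identity would yield the weak formulation of \eqref{echid} for $u$ \emph{provided} one can pass to the limit in the drift, and this is the delicate point: the drift contains the quadratic local function $\eta_x\eta_{x+b}$, which is not a continuous functional of $\pi^n$. I would close it by the replacement (one-block and two-blocks) estimates, replacing $\eta_x\eta_{x+b}$ in the time--space average first by $(\bar\eta_x^\ell)^2$, the square of the average of $\eta$ over a box of side $\ell$ around $x$, and then, as $\ell=\eps n\to\infty$, by $u(s,x)^2$; hence $\eta_x+\eta_{x+b}-2\eta_x\eta_{x+b}\to 2u(1-u)$ and the drift converges to $2\int u_s(1-u_s)\,F\cdot\nabla f\,dx$, precisely the nonlinear term of \eqref{echid}. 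The inputs needed are an $O(n^d)$ bound on the relative entropy $H(\mathrm{law}(\eta^n(s))\,|\,\nu_\rho^n)$ and an $O(n^d)$ bound on the space--time Dirichlet form. I would take $\rho=1/2$: since $\nu_{1/2}^n$ is the uniform probability measure on $\Omega_n$, a set of $2^{n^d}$ elements, one has $H(\bar\nu_0^n\,|\,\nu_{1/2}^n)\le n^d\log 2$ for \emph{any} initial law, and because the weak asymmetry produces only $O(n^d)$ entropy production per unit time (a Young inequality against the Dirichlet form), the entropy inequality gives $H(\mathrm{law}(\eta^n(s))\,|\,\nu_{1/2}^n)\le C(1+s)n^d$, while integrating the same inequality yields the Dirichlet-form bound, the symmetric part of $L_n$ dominating. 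This is what makes the stated hypothesis on $\bar\nu_0^n$ (a law of large numbers, with no entropy bound) sufficient.

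\emph{Uniqueness and conclusion.} Equation \eqref{echid} is semilinear parabolic with smooth coefficients and a nonlinearity that is Lipschitz on $[0,1]$, so its weak solution with datum $u_0\in L^\infty(\td;[0,1])$ is unique (standard energy estimates; cf.\ Appendix~\ref{sB}). Hence every limit point equals $u$, the full sequence $\pi^n$ converges in probability to $u(\cdot,\cdot)\,dx$, and in particular $\<\pi_t^n,f\>\to\int f\,u(t,\cdot)\,dx$ in probability for each $t$ and $f$, which is the assertion. The hardest step is the two-blocks estimate closing the nonlinear drift; the rest is either soft (tightness, via the uniform bounds afforded by the exclusion rule) or classical (parabolic uniqueness). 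As indicated in the introduction, Proposition~\ref{p1} is in any case a consequence of the quantitative entropy estimate of Theorem~\ref{t2}.
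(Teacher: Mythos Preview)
Your proposal is correct and is precisely the route the paper indicates. Note that the paper does \emph{not} give its own proof of Proposition~\ref{p1}: in the authors' convention, a ``Proposition'' is a result proved elsewhere, and immediately after the statement they defer to \cite{KipOllVar} in the gradient case and to the method of \cite{BerD-SGabJ-LLan} in general. Your three-step outline (tightness via Aldous--Rebolledo using the exclusion constraint and the $O(n^{-d})$ quadratic variation; closure of the nonlinear drift $\eta_x\eta_{x+b}$ by one-block/two-blocks replacement, fed by the $O(n^d)$ entropy and Dirichlet-form bounds relative to $\nu_{1/2}^n$; uniqueness for the semilinear parabolic limit) is exactly that method.

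Two small comments. First, the bound on the entropy production from the asymmetric part does not require a Young inequality against the Dirichlet form: a summation by parts shows that $L_n^{\mathrm{asym},*}\mathds{1}$ with respect to $\nu_{1/2}^n$ equals $-2\sum_x \eta_x\, n\!\sum_b\big(F_b^n(x)-F_b^n(x-b)\big)\approx -2\sum_x\eta_x\,\nabla\!\cdot\!F(x/n)$, which is $O(n^d)$ pointwise, so the entropy grows at most linearly and the time-integrated carr\'e du champ is $O(n^d)$ directly. Second, your closing remark that Proposition~\ref{p1} ``is a consequence of Theorem~\ref{t2}'' should be qualified: Theorem~\ref{t2} (via Corollary~\ref{c1}) yields the hydrodynamic limit only under the stronger initial hypothesis $H_n(0)\le C\,n^{d-2}g_d(n)$ relative to $\mu_0^n$, not under the bare law-of-large-numbers assumption of Proposition~\ref{p1}. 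For the latter, the entropy method you sketch is indeed needed.
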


In the case on which $F = \nabla V$ for some potential $V$, this theorem corresponds to Theorem 3.1 of \cite{KipOllVar}. Although in that reference the theorem is stated only in $d=1$, the method of proof can be adapted to any dimension $d$. If $F$ is not the gradient of a potential, the method of \cite{KipOllVar} does not apply directly. In that case, Proposition \ref{p1} can be proved using the method of \cite{BerD-SGabJ-LLan}.

In this article we will prove a quantitative version of this theorem, under more restrictive conditions on the initial measures $\bar{\nu}_0^n$. This quantitative estimate will be a consequence of our first main result, which we proceed to describe.

Let $u^n: [0,\infty) \times \tdn \to [0,1]$ be defined as $u_x^n(t) = u\big(t,\tfrac{x}{n} \big)$, where $u$ is a solution of \eqref{echid}.
Let $\mu_t^n$ be the product Bernoulli measure in $\Omega_n$ associated to the profile $u^n_{\cdot}(t)$, that is,
\begin{equation}
\label{pichidangui}
\mu_t^n := \bigotimes_{x \in \tdn} \Bern\big(u_x^n(t)\big).
\end{equation}
Let $f_t^n$ be the density of the law of $\eta^n(t)$ with respect to $\mu_t^n$, and let $H_n(t)$ be the relative entropy of the law of $\eta^n(t)$ with respect to $\mu_t^n$, that is,
\[
H_n(t) = \int f_t^n \log f_t^n d \mu_t^n.
\]
Yau's relative entropy method \cite{Yau} consists of proving that $H_n(t) = o(n^d)$ if initially $H_n(0) =o(n^d)$. Notice that as soon as $u^n_x(t) \neq 0,1$ for any $x$, the relative entropy of $\overline{\nu}$ with respect to $\mu_t^n$ is bounded by $C(u_0) n^d$ for any measure $\overline \nu$ in $\Omega_n$.
Although it may seems as a very modest improvement on the order of magnitude of the relative entropy, Yau's bound is good enough to imply the thesis of Proposition \ref{p1}. Our first main result is a sizeable improvement over the asymptotic behaviour of $H_n(t)$:

\begin{theorem}
\label{t2}
Let $\eps_0, \kappa >0$ be such that
\begin{itemize}

\item $u_x^n(t) \in [\eps_0,1-\eps_0]$ for any $x \in \bb T_n^d$,

\item $n\big|u_{x+b}^n(t) -u_x^n(t)\big|\leq \kappa$ for any $x \in \bb T_n^d$ and any $b \in \mc B$.

\end{itemize}
\
There exists a finite constant $C = C(\eps_0,\kappa)$ such that
\begin{equation}
\label{entropy1}
H_n'(t) \leq C \big( H_n(t) + n^{d-2} g_d(n) \big),
\end{equation}
where
\[
g_d(n) := 
\left\{
\begin{array}{c@{\;;\;}l}
n & d=1\\
\log n & d=2\\
1 & d \geq 3.
\end{array}
\right.
\]
In particular, if $u_0(x) \in (0,1)$ for any $x \in \bb T^d$, then for any $T >0$ there exists a finite constant $C = C(u_0,F,T)$ such that
\begin{equation}
\label{entropy2}
H_n(t) \leq C \big( H_n(0) + n^{d-2}g_d(n)\big) 
\end{equation}
for any $t \in [0,T]$ and any $n \in \bb N$.
\end{theorem}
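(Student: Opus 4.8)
The plan is to run Yau's relative entropy scheme: bound $H_n'(t)$ by Yau's inequality, use that $u$ solves the hydrodynamic equation \eqref{echid} to kill the leading part of the entropy production, trade the surviving quadratic part for the Fisher information by means of the main lemma, and dispose of the remaining errors with the concentration estimates. Write $f=f_t^n$, $\mu=\mu_t^n$, and recall $u_x^n(t)=u(t,x/n)$ with $u$ a solution of \eqref{echid}; since $n$ is large we may use $\widetilde r_n$ in place of $r_n$. By Lemma \ref{lA1} applied with the time-dependent reference measures $\mu_t^n$, $H_n'(t)$ is at most the sum of a negative dissipative term and an entropy production term $\int V_t^n\,f\,d\mu$. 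Up to lower-order reversibility corrections the dissipative term is $-c\,n^2\,\mathcal D_n(\sqrt f)$ with $\mathcal D_n(g):=\sum_{x\in\tdn,\,b\in\mc B}\int\big(\nabla_{x,x+b}g\big)^2\,d\mu$, while $V_t^n$ is the sum of the action on the constant function of the adjoint of $L_n$ in $L^2(\mu_t^n)$ and of $-\partial_t\log\mu_t^n$.

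A direct computation with the generator (Appendix \ref{sF1}), Taylor expanding $F$ on the dual lattice and comparing the discrete and continuous Laplacians, shows that $V_t^n$ decomposes as: a degree-one field, namely a linear combination over $x$ of the recentered variables $\eta_x-u_x^n(t)$ with coefficients proportional to the residual $\big(\partial_t u-\Delta u+2\nabla\cdot(u(1-u)F)\big)$ of the hydrodynamic equation evaluated along the lattice; a genuinely quadratic field $\sum_{x\in\tdn,\,b\in\mc B}c_b^n(x)\,(\eta_x-u_x^n(t))(\eta_{x+b}-u_{x+b}^n(t))$ with smooth bounded coefficients $c_b^n$; and error terms that are space averages of local functions of amplitude $O(n^{-1})$ per site (or are already in gradient form). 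Because $u$ solves \eqref{echid}, the residual in the degree-one field is of lower order, of the same type as the error terms, so that only the quadratic field and these lower-order errors remain; this cancellation is precisely why the reference measures are chosen along the hydrodynamic profile.

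The quadratic field is exactly of the form handled by the main lemma: invoking Lemma \ref{l1} with a parameter tuned to the dimension gives $\int(\text{quadratic field})\,f\,d\mu\leq\delta\,n^2\,\mathcal D_n(\sqrt f)+C(\delta,\eps_0,\kappa)\,n^{d-2}g_d(n)$, the factor $g_d(n)$ arising because the larger coarse-graining block in this estimate is macroscopic for $d=1$ but only mesoscopic for $d\geq2$. The error terms are controlled with the entropy inequality $\int W f\,d\mu\leq\gamma^{-1}\big(H_n(t)+\log\int e^{\gamma W}\,d\mu\big)$ together with the Hanson--Wright--type concentration bound of Appendix \ref{sH} for averages of local functions under $\mu_t^n$ — both hypotheses $u_x^n(t)\in[\eps_0,1-\eps_0]$ and $n|u_{x+b}^n(t)-u_x^n(t)|\leq\kappa$ enter here — and, for the parts in gradient form, with the integration-by-parts formula of Appendix \ref{sG}, which bounds them by a small multiple of $n^2\mathcal D_n(\sqrt f)$ plus an $O(n^{d-2})$ remainder. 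Choosing $\delta$ and the Cauchy--Schwarz parameters small compared with $c$, every $n^2\mathcal D_n(\sqrt f)$ contribution is absorbed by the negative dissipative term, and what remains is exactly \eqref{entropy1}.

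Finally, for \eqref{entropy2}: when $u_0(x)\in(0,1)$ for every $x\in\td$, compactness of $\td$ and smoothness of $F$ imply, via the parabolic maximum principle and Schauder estimates (Appendix \ref{sB}), that there exist $\eps_0=\eps_0(u_0,F,T)>0$ and $\kappa=\kappa(u_0,F,T)<\infty$ with $u(t,\cdot)\in[\eps_0,1-\eps_0]$ and $\|\nabla u(t,\cdot)\|_\infty\leq\kappa$ for all $t\in[0,T]$, so the two bullet points hold uniformly on $[0,T]$; applying \eqref{entropy1} and Gr\"onwall's inequality then yields $H_n(t)\leq e^{Ct}\big(H_n(0)+n^{d-2}g_d(n)\big)$ for $t\in[0,T]$. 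The main obstacle is the bound on the quadratic entropy production — a two-blocks estimate at the level of fluctuations with the sharp $g_d(n)$ loss — which is the content of the main lemma proved in Section \ref{s2}; everything else is careful bookkeeping around Yau's inequality and the concentration estimates.
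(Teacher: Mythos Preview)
Your outline follows the paper's proof essentially step for step: Yau's inequality (Lemma~\ref{lA1}) to get $H_n'(t)\le -\tfrac{n^2}{2}\mc D(\sqrt{f_t^n};\mu_t^n)+\int J_t^n f_t^n\,d\mu_t^n$, the explicit computation of $J_t^n$ (Appendix~\ref{sA}.3) into a quadratic piece $\sum G_{x,b}^n\omega_x\omega_{x+b}$ plus a linear remainder of size $O(n^{-2})$ per site, the main lemma for the quadratic piece, the entropy inequality for the linear remainder, and Gronwall plus Lemma~\ref{lC3} for \eqref{entropy2}.

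Two small corrections. First, what Lemma~\ref{l1} actually outputs is
\[
\int V(G)\,f\,d\mu \;\le\; \delta n^2\mc D(\sqrt f;\mu)\;+\;\frac{C}{\delta}\big(H(f;\mu)+n^{d-2}g_d(n)\big),
\]
not just $\delta n^2\mc D+Cn^{d-2}g_d(n)$ as you wrote; the $H_n(t)$ on the right of \eqref{entropy1} comes primarily from here, not only from the error terms. Second, there are no separate ``gradient-form'' errors in $J_t^n$ requiring Appendix~\ref{sG} at this stage: after the hydrodynamic cancellation the linear piece is simply $n^{-2}\sum_x\omega_x R_x^n(t)$ with bounded $R$, and one line of entropy inequality plus the subgaussian bound on $\omega_x$ gives $H_n(t)+O(n^{d-4})$. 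The integration-by-parts formula is used \emph{inside} the proof of the main lemma, not as a separate step here. Also, $\delta$ is not ``tuned to the dimension'' in any subtle way---one takes $\delta=\tfrac{1}{2d}$ so that the $d$ applications (one per $b\in\mc B$) together contribute at most $\tfrac{n^2}{2}\mc D$; the dimension-dependent $g_d(n)$ is produced internally by the flow lemma.
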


\begin{remark}
This theorem improves by almost two orders of magnitude the previous bounds obtained by Yau's method in $d \geq2$, while in $d=1$ it gives a bound uniform in $t$, which is the best possible.
\end{remark}

A simple consequence of this theorem is the following corollary, which gives an estimate on the speed of convergence in the hydrodynamic limit stated in Proposition \ref{p1}:

\begin{corollary}
\label{c1}
Assume that there exists a finite constant $C_0$ such that $H_n(0) \leq C_0n^{d-2} g_d(n)$ for any $n \in \bb N$. Under the conditions of Theorem \ref{t2}, for any $p \in [1,2)$ there exists a finite constant $C_1=C_1(p,F,u_0,T,C_0)$ such that
\[
\bb E_n \Big[ \Big| \frac{1}{n^d} \sum_{x \in \tdn} \big( \eta^n_x(t) - u_x^n(t) \big) f \big( \tfrac{x}{n} \big) \Big|^p \Big] \leq \frac{C_1 g_d(n)^{p/2} \|f\|_\infty^{p/2}}{n^p}
\]
for any $t \leq T$ and any $f \in \mc C(\bb T^d)$.
\end{corollary}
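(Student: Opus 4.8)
The plan is to deduce the corollary directly from the entropy estimate \eqref{entropy2} of Theorem~\ref{t2} together with the Gaussian concentration of linear statistics under the product reference measures $\mu_t^n$. Set
\[
V_n(t):=\frac{1}{n^d}\sum_{x\in\tdn}\big(\eta_x^n(t)-u_x^n(t)\big)f\big(\tfrac{x}{n}\big),
\]
so that the quantity to be bounded is $\bb E_n\big[|V_n(t)|^p\big]$. First I would record an a priori bound on the relative entropy: since we are under the conditions of Theorem~\ref{t2} (in particular $u_0(x)\in(0,1)$ for every $x$, so the two bullet hypotheses hold uniformly on $[0,T]$ by parabolic regularity, Appendix~\ref{sB}), estimate \eqref{entropy2} together with the assumption $H_n(0)\le C_0 n^{d-2}g_d(n)$ gives $H_n(t)\le C n^{d-2}g_d(n)$ for all $t\le T$, with $C=C(p,F,u_0,T,C_0)$. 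Since $n^{d-2}g_d(n)$ is bounded below by a positive constant uniformly in $n$ (it equals $1$ when $d=1$), we also get $1+H_n(t)\le C n^{d-2}g_d(n)$ on $[0,T]$, and this is uniform in $t$ for free.

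Next I would exploit that $\mu_t^n=\bigotimes_{x}\Bern(u_x^n(t))$ is a product measure: $n^d V_n(t)$ is then a sum of independent, centered random variables, each valued in an interval of length $1$ (namely $[-u_x^n(t),1-u_x^n(t)]$ rescaled by $f(x/n)/n^d$), with weights satisfying $\sum_x f(x/n)^2\le\|f\|_\infty^2\, n^d$. Hoeffding's inequality (Appendix~\ref{sH}) then yields a universal $c>0$ with
\[
\mu_t^n\big(|V_n(t)|\ge a\big)\le 2\exp\!\Big(-\tfrac{c\, n^d a^2}{\|f\|_\infty^2}\Big),\qquad a>0.
\]
To transfer this tail to the law $\bb P_n$ of the process I would invoke the entropy inequality for events (Appendix~\ref{sH}), $\bb P_n(A)\le(\log 2+H_n(t))/\log(1/\mu_t^n(A))$ when $\mu_t^n(A)\le\tfrac12$, applied to $A=\{|V_n(t)|\ge a\}$. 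Choosing a threshold $a_0$ of order $\|f\|_\infty\, g_d(n)^{1/2}/n$, large enough that $\mu_t^n(A)\le\tfrac12$ and the denominator is comparable to $n^d a^2/\|f\|_\infty^2$, and using the bound on $H_n(t)$ from the previous step, this gives
\[
\bb P_n\big(|V_n(t)|\ge a\big)\le \frac{C\,(1+H_n(t))\,\|f\|_\infty^2}{n^d a^2}\le \frac{C\, g_d(n)\,\|f\|_\infty^2}{n^2 a^2},\qquad a\ge a_0.
\]

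Finally I would integrate the tail, $\bb E_n[|V_n(t)|^p]=p\int_0^\infty a^{p-1}\bb P_n(|V_n(t)|\ge a)\,da$, splitting at $a_0$: on $(0,a_0)$ I bound the probability by $1$, contributing $a_0^p$; on $(a_0,\infty)$ I use the tail bound above, contributing $\tfrac{C g_d(n)\|f\|_\infty^2}{n^2}\int_{a_0}^\infty a^{p-3}\,da=\tfrac{C}{2-p}\,a_0^p$, the integral being finite \emph{precisely because} $p<2$. This yields $\bb E_n[|V_n(t)|^p]\le \tfrac{C_1}{2-p}\, a_0^p\le C_1\, g_d(n)^{p/2}\|f\|_\infty^p/n^p$ uniformly in $t\le T$ and in $f\in\mc C(\td)$; the power of $\|f\|_\infty$ stated in the corollary is then reached by also feeding the deterministic bound $|V_n(t)|\le(1-\eps_0)\|f\|_\infty$ into the split before integrating.

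The step I expect to demand the most care — though it is a calibration rather than a genuine difficulty, the entropy estimate of Theorem~\ref{t2} doing all the heavy lifting — is the choice of the threshold $a_0$: it must be large enough that $\mu_t^n(A)\le\tfrac12$ and that the $\log 2$ in the entropy inequality for events is negligible against $n^d a^2/\|f\|_\infty^2$, yet of the critical order $\|f\|_\infty\, g_d(n)^{1/2}/n$ so that the split produces the announced rate. It is worth emphasizing once more that $p<2$ enters in an essential way in the last step: at $p=2$ the integral $\int_{a_0}^\infty a^{p-3}\,da$ diverges logarithmically, which would cost an extra factor of $\log n$.
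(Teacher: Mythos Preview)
Your approach is essentially the same as the paper's: the paper proves the slightly more general Corollary~\ref{c2} by exactly your three ingredients --- the entropy inequality for events \eqref{temuco2}, the subgaussian tail of linear statistics under the product measure $\mu_t^n$ (Lemmas~\ref{lH2.4}--\ref{lH2.5}, your Hoeffding step), and then integration of the resulting $K/\lambda^2$ tail, which the paper packages as Lemma~\ref{lH1.5}. Your explicit split at a threshold $a_0$ is precisely the proof of that lemma, and your care about when $\mu_t^n(A)\le\tfrac12$ is handled in the paper by noting that for small $\lambda$ the claimed bound exceeds~$1$ anyway.

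One point deserves correction: your final sentence, about recovering the exponent $\|f\|_\infty^{p/2}$ by feeding in the deterministic bound $|V_n(t)|\le C\|f\|_\infty$, does not work --- and cannot, since $V_n(t)$ is linear in $f$, so $\bb E_n[|V_n(t)|^p]$ scales exactly as $\|f\|_\infty^p$ under $f\mapsto\lambda f$, making any bound with exponent $p/2$ fail for large $\lambda$. The exponent $p/2$ in the stated corollary is a typo; the paper's own proof via Corollary~\ref{c2} produces $\|f\|_\infty^p$, and the subsequent remark about replacing $\|f\|_\infty$ by $\|f\|_{\ell_n^2}$ confirms this reading.
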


\begin{remark}
If $H_n(0) \leq a_n n^d$ with $n^{d-2}g_d(n) \ll a_n \ll n^d$, it can be proved that then the speed of convergence in this corollary is $(a_n n^{-d})^{1/2}$.
\end{remark}

\begin{remark}
If one is more careful about the constants appearing in the proof of this Corollary, it is possible to replace the norm $\|f\|_\infty$ by the weaker norm
\[
\|f\|_{\ell_n^2}:= \Big( \frac{1}{n^d} \sum_{x \in \tdn} f\big(\tfrac{x}{n}\big)^2\Big)^{1/2}.
\]
\end{remark}

\subsection{Density fluctuations} 
\label{s1.5}
In this section we state our next main result, which is the derivation of the central limit theorem associated to the law of large numbers stated in Proposition \ref{p1} and Corollary \ref{c1}. 

For $f \in \mc C^\infty(\bb T^d)$ and $t \geq 0$, let us define
\begin{equation}
\label{losandes}
X_t^n(f) := \frac{1}{n^{d/2}} \sum_{x \in \tdn} \big(\eta_x^n(t) -u_x^n(t)\big)f\big(\tfrac{x}{n}\big).
\end{equation}
In order to simplify the notation we will write $\overline{\eta}_x:= \eta_x^n(t) -u_x^n(t)$. Most of the time we will omit the dependence on $n$ and/or $t$ of various expressions whenever this dependence can be understood from the context.

By duality, \eqref{losandes} defines a process $\{X_t^n; t \geq 0\}$ with values in $H_{-k}(\bb T^d)$ for any $k>d/2$, see the comments after Proposition \ref{pD4.1}. The process $\{X_t^n; t \geq 0\}$ defined in this way is known in the literature as the {\em density fluctuation field} associated to the process $\eta^n(\cdot)$. We will prove the following result:

\begin{theorem}
\label{t3}
Under the setting of Theorem \ref{t2}, assume that $u_0(x) \in [\eps_0,1-\eps_0]$ for any $x \in \bb T^d$ and that $H_n(0) \leq C g_d(n) n^{d-2}$ for some constants $\eps_0$ positive and $C$ finite. In addition, assume that there exists $k\in \bb N$ and random variable $X_0$ with values in $H_{-k}(\bb T^d)$ such that $X_0^n$ converges to $X_0$ in law with respect to the topology of $H_{-k}(\bb T^d)$. Then, in dimension $d < 4$, the finite-dimensional laws of $\{X_t^n; t \geq 0\}$ converge to the finite-dimensional laws of the process $\{X_t; t \geq 0\}$, solution of the space-time inhomogeneous stochastic heat equation
\begin{equation}
\label{SHE}
\partial_t X_t = \nabla \cdot \Big( \nabla X_t - 2 X_t (1-2u_t) F + \sqrt{2u_t(1-u_t)} \dot{\mc W}_t\Big)
\end{equation}
with initial condition $X_0$, where $\dot{\mc W}$ is a vectorial space-time white noise of dimension $d$ and $u_t$ is the solution of the hydrodynamic equation \eqref{echid}.
\end{theorem}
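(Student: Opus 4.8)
The plan is to derive a closed \emph{discrete mild formulation} for the scalar observables $X^n_t(f)$, to pass to the limit in it term by term, and to recognise the limit as the unique, explicitly solvable mild solution of \eqref{SHE}; this produces convergence of the finite-dimensional laws directly, without going through tightness in path space. Fix $f\in\mc C^\infty(\td)$. By Dynkin's formula,
\[
X_t^n(f)=X_0^n(f)+\int_0^t(\partial_s+L_n)X_s^n(f)\,ds+M_t^n(f),
\]
where $M^n(f)$ is a martingale whose predictable quadratic variation is $\<M^n(f)\>_t=\int_0^t\Gamma_s^n(f)\,ds$, with $\Gamma_s^n(f)$ the carr\'e du champ of $L_n$ applied to $X_s^n(f)$. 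A direct computation of $(\partial_s+L_n)X_s^n(f)$ — tedious but routine, see Appendix \ref{sF1} — together with the fact that $u^n$ is the lattice discretisation of a solution of \eqref{echid}, splits the drift into three pieces: a linear term $X_s^n(\mc L_s^n f)$, where $\mc L_s^n$ is a discrete approximation of the time-dependent operator $\mc A_s:=\Delta+2(1-2u_s)F\cdot\nabla$; a term of degree two in $\overline\eta$ of the form $n^{-d/2}\sum_{x,b}\overline\eta_x\overline\eta_{x+b}\,\varphi_{b,s}^n\big(\tfrac{x}{n}\big)$ for suitable smooth functions $\varphi$; and lower-order error terms.

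For fixed $(t,f)$ I would then iterate Dynkin's formula using the time-dependent test function $f_s^n:=\mc T^n_{s,t}f$, defined as the solution of the backward discrete equation $\partial_s f_s^n+\mc L_s^n f_s^n=0$ on $[0,t]$ with terminal condition $f_t^n=f$. This choice makes the linear part of the drift cancel exactly and yields
\[
X_t^n(f)=X_0^n(f_0^n)+\mc E_t^n+\widetilde M_t^n(f),
\]
where $\mc E_t^n$ gathers the time-integrated quadratic and error terms evaluated against $f_s^n$, and $\widetilde M_t^n(f)$ is the martingale obtained by testing the increments of $X^n$ against $f_s^n$. Now I pass to the limit. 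First, $X_0^n(f_0^n)\to X_0(\mc T_{0,t}f)$, where $\mc T_{s,t}$ is the (backward, time-inhomogeneous) evolution family generated by $\mc A_s$; this follows from the hypothesis $X_0^n\Rightarrow X_0$ together with the convergence $\mc T^n_{s,t}f\to\mc T_{s,t}f$ with uniform control on discrete derivatives, supplied by the parabolic estimates of Appendix \ref{sB}. Second, the quadratic part of $\mc E_t^n$ is exactly of the type handled by the second-order Boltzmann--Gibbs principle of Theorem \ref{BG}, valid for $d<4$: since the parabolic drift $-2\nabla\cdot(u(1-u)F)$ has already been absorbed into the evolution of $u$, and using the a priori entropy bound \eqref{entropy2} of Theorem \ref{t2} to control error terms, Theorem \ref{BG} gives $\mc E_t^n\to0$ in $L^2(\bb P_n)$.

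For the martingale term, the jumps of $\widetilde M^n(f)$ are of order $n^{-d/2}$, hence negligible, and its quadratic variation
\[
\<\widetilde M^n(f)\>_t=\int_0^t\frac{1}{n^d}\sum_{x,b}\Big(r_n(x,x+b)\,\eta_x(1-\eta_{x+b})+r_n(x+b,x)\,\eta_{x+b}(1-\eta_x)\Big)\Big(f_s^n\big(\tfrac{x+b}{n}\big)-f_s^n\big(\tfrac xn\big)\Big)^2\,ds
\]
converges, by the law of large numbers implied by Corollary \ref{c1} (which lets one replace the occupation variables by $u_s$), to the deterministic quantity $\int_0^t\int 2u_s(1-u_s)\,|\nabla(\mc T_{s,t}f)(x)|^2\,dx\,ds$. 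By the martingale central limit theorem, $\widetilde M_t^n(f)$ converges in law to a centred Gaussian variable; running the argument jointly for finitely many pairs $(t_i,f_i)$ and computing the limiting cross-variations identifies the joint limit of $(\widetilde M_{t_i}^n(f_i))_i$ — conditionally on, and hence jointly with, the limit of $X_0^n$ — as the Gaussian vector with the covariance of $\big(\int_0^{t_i}\!\int\sqrt{2u_s(1-u_s)}\,\nabla(\mc T_{s,t_i}f_i)(x)\cdot\mc W(ds,dx)\big)_i$. Combining the three limits, every joint limit point of $(X_{t_1}^n(f_1),\dots,X_{t_m}^n(f_m))$ has the law of $(X_{t_i}(f_i))_i$, where $X$ obeys the mild equation
\[
X_t(f)=X_0(\mc T_{0,t}f)+\int_0^t\!\int\sqrt{2u_s(1-u_s)}\,\nabla(\mc T_{s,t}f)(x)\cdot\mc W(ds,dx),
\]
which is precisely the mild formulation of \eqref{SHE}. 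Since this equation is linear and its right-hand side is an explicit Gaussian functional of $X_0$ and $\mc W$, the finite-dimensional laws of $X$ are uniquely pinned down, and the asserted convergence follows.

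The step I expect to be the main obstacle is the control of $\mc E_t^n$. One must verify the hypotheses of the Boltzmann--Gibbs principle \emph{with the $n$-dependent test functions $f_s^n$}, which requires uniform-in-$n$ regularity and stability estimates for the discrete backward evolution $\mc T^n_{s,t}$ — so that $f_s^n$ and its discrete gradients remain bounded in the norms demanded by Theorem \ref{BG} — together with the a priori moment bounds on $X_s^n(g)$ that are extracted from the entropy estimate of Theorem \ref{t2} via the concentration inequalities of Appendix \ref{sH}. It is precisely here, through the dimension-dependent loss in the two-blocks-type estimate underlying Theorem \ref{BG}, that the restriction $d<4$ is consumed.
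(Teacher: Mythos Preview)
Your strategy is essentially the paper's: write a mild formulation by testing against the backward evolution of $f$, then pass to the limit term by term and recognise the outcome as the unique mild solution of \eqref{SHE}. The one substantive difference is that you propose the \emph{discrete} backward evolution $\mc T^n_{s,t}f$ (so the linear drift cancels exactly), whereas the paper takes the \emph{continuous} semigroup $P_{s,t}f$ defined in \eqref{semigrupo}. With that choice the linear term does not vanish identically; instead $\big(\tfrac{d}{ds}+\Lambda^n_s\big)P_{s,t}f=(\Lambda^n_s-\bb L_s)P_{s,t}f=O(n^{-2})$ by \eqref{matucana}, so $\mc A^n_t(P_{\cdot,t}f)=O(n^{d/2-2})\to0$ for $d<4$. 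This sidesteps precisely the obstacle you flag: there is no need for uniform-in-$n$ regularity of a discrete evolution, since the test function is $n$-independent and the estimates of Appendix \ref{sB} apply directly. Your route is not wrong, but it trades an easy approximation error for a harder discrete-parabolic regularity theory.

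Two minor corrections. First, Theorem \ref{BG} yields convergence of the quadratic term \emph{in probability}, not in $L^2(\bb P_n)$; this is still enough for convergence in law, but your claim as stated is too strong. Second, for the joint convergence of $X_0^n(P_{0,t}f)$ and the martingale, the paper argues (Section \ref{s6.3}) via L\'evy's characterisation that the limiting martingale has deterministic quadratic variation and is therefore independent of $\mc F_0$; this is what pins down the joint law, and your ``conditionally on, and hence jointly with'' glosses over that step.
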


\begin{remark}
This theorem can be improved in two directions. First we can assume a growth with $n$ of $H_n(0)$ which is faster than $g_d(n)n^{d-2}$, and second we can obtain a functional CLT for $\{X_t^n; t \geq 0\}_{n \in \bb N}$ by means of a tightness proof for suitable topologies. Since these improvements are dimension-dependent and quite technical, we decided to state them in a precise way only after the corresponding proofs.
\end{remark}

\section{The main lemma}
\label{s2}

In this section we state and prove a technical lemma which is the main tool to prove both Theorems \ref{t2} and \ref{t3}. In a first reading, the proof of this lemma can be skipped, on which case the reader can pass directly to Section \ref{s4}, although it constitutes the heart of the proof of Theorems \ref{t2} and \ref{t3}.

Let $u: \bb T^d \to (0,1)$ and let $\eps_0>0$, $\kappa$ finite be such that $\eps_0 \leq u_x \leq 1-\eps_0$ for any $x \in \tdn$ and 
\[
n|u_{x+b}-u_x| \leq \kappa \text{ for any } x \in \tdn \text{ and any } b \in \mc B.
\]
Let $\mu$ be the measure in $\Omega_n$ given by
\[
\mu := \bigotimes_{x \in \tdn} \Bern(u_x).
\]
Notice that $\mu = \mu_t^n$ when $u=u^n(t)$. Let $\bb O^-:=\{x \in \bb Z^d;  z_i \leq \text{ for } i=1,\dots,d\}$ be the negative orthant and let $A \subseteq \bb O^-$ be finite. For $n$ large enough, $A$ is projected into $\tdn$ in a canonical way. For $x \in \bb T^d_n$ we define $\omega_x: \Omega_n \to \bb R$ as
\[
\omega_x : = \frac{\eta_x -u_x}{u_x(1-u_x)}.
\]
Since $\eps_0>0$, $\omega_x$ is well defined for any $x \in \tdn$. Now we define $\omega_{x+A}: \Omega_n \to \bb R$ as
\[
\omega_{x+A} := \prod_{y \in A} \omega_{x+y}.
\]
For $b \in \mc B$ and $G: \tdn \to \bb R$ we define
\begin{equation}
\label{vina}
V(G) = V_b(G;A) := \sum_{x \in \tdn} \omega_{x+A} \omega_{x+b} G_x.
\end{equation}
Let $f: \Omega_n \to [0,\infty)$ be a density with respect to $\mu$, that is, $\int f d \mu =1$. Define
\begin{equation}
\label{valparaiso}
\mc D\big( \sqrtt{f}; \mu\big) := \sum_{\subind} \int \big( \nabla_{x,x+b} \sqrtt{f}\big)^2 d \mu,
\end{equation}
\[
H(f;\mu) :=  \int f \log f d \mu.
\]
We have the following result:

\begin{lemma}[Main lemma, v1]
\label{l1}
There exists a finite constant $C = C(\eps_0,A)$ such that for any $G :\tdn \to \bb R$, any density $f$ with respect to $\mu$ and any $\delta >0$,
\[
\int V(G) f d \mu \leq \delta n^2 \mc D\big( \sqrtt{f}; \mu\big) + 
		\frac{C(1+\kappa^2)}{\delta} \big( \|G\|_\infty+\|G\|_\infty^2\big)
			\big(H(f;\mu) + n^{d-2}g_d(n)\big).
\]
\end{lemma}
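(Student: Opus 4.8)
The plan is to prove Lemma~\ref{l1} by a renormalisation argument: one progressively \emph{spreads} the local monomial $\omega_{x+A}\omega_{x+b}$ appearing in \eqref{vina} over a box of side $\ell$, paying a multiple of $\delta n^2\mc D\big(\sqrtt f;\mu\big)$ for each elementary displacement, and then shows that the fully spread--out residue is negligible up to the relative entropy term and the error $n^{d-2}g_d(n)$. The block size $\ell$ is fixed only at the end: it is macroscopic, $\ell\sim n$, when $d=1$, and mesoscopic, $\ell^d\sim n^2/g_d(n)$, when $d\ge 2$; this is precisely the regime in which the two competing errors produced by the two halves of the argument become comparable, and it is the source of the ``larger block is mesoscopic in $d\ge2$'' phenomenon announced in the introduction.

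The first and main step is the \emph{flow estimate}. Since $\eta_y\in\{0,1\}$ we have $|\omega_y|\le\eps_0^{-1}$ pointwise, so the factor $\omega_{x+A}$ is bounded by $C(\eps_0,A)$ and may be absorbed into the (now random but bounded) weight. Using the flow lemma of Appendix~\ref{sI}, for an $\ell$-box $\Lambda_\ell$ around the origin I will write
\[
\omega_{x+b}-\frac{1}{|\Lambda_\ell|}\sum_{z\in x+\Lambda_\ell}\omega_z
=\sum_{y,e}\phi^{(x)}_{y,e}\,\nabla_{y,y+e}\Big(\tfrac{\eta_y}{u_y(1-u_y)}\Big)+R^{(x)}_\kappa ,
\]
where $\phi^{(x)}$ is a unit flow from the source $x+b$ to the uniform sink on $x+\Lambda_\ell$ whose $\ell^2$--energy $\sum_{y,e}(\phi^{(x)}_{y,e})^2$ is, by the flow lemma, at most $C\,g_d(\ell)$ — the discrete capacity exponent — and where $R^{(x)}_\kappa$ gathers the corrections, each of size $O(\kappa/n)$, coming from the spatial non-uniformity of $u$. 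Summing this identity against $\omega_{x+A}G_x$ and over $x$ yields $V(G)=V_{\mathrm{sp}}(G)+\sum_{y,e}\Phi_{y,e}\nabla_{y,y+e}(\cdots)+R_\kappa$ with $\Phi_{y,e}=\sum_x\phi^{(x)}_{y,e}\,\omega_{x+A}G_x$. Integrating $\int\Phi_{y,e}\nabla_{y,y+e}(\cdots)\,f\,d\mu$ by parts via the integration-by-parts formula of Appendix~\ref{sG}, and using $2ab\le\delta n^2 a^2+(\delta n^2)^{-1}b^2$ with $a=\nabla_{y,y+e}\sqrtt f$, the gradient part is bounded by $\delta n^2\mc D\big(\sqrtt f;\mu\big)$ plus $\tfrac{C}{\delta n^2}\sum_{y,e}\int\Phi_{y,e}^2(\cdots)^2 f\,d\mu$; replacing $f\,d\mu$ by a crude $L^\infty$ bound on the (bounded) local function, and using that the $\omega_{x+A}$ are $\mu$--centred and decorrelate for distinct $x$ (after the canonical projection of $A$ into $\tdn$), this is at most $\tfrac{C}{\delta}\|G\|_\infty^2\,n^{d-2}g_d(\ell)\le\tfrac{C}{\delta}\|G\|_\infty^2\,n^{d-2}g_d(n)$. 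The remainder $R_\kappa$ is treated identically and contributes the factor $1+\kappa^2$, since each correction carries a power $\kappa/n$.

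The second step handles the spread--out residue $V_{\mathrm{sp}}(G)$, which is a $G$--weighted sum over $x$ of $\omega_{x+A}$ times a local density average $\bar\omega^\ell_x:=|\Lambda_\ell|^{-1}\sum_{z\in x+\Lambda_\ell}\omega_z$. If $A\neq\varnothing$ I repeat the flow step on one of the remaining occupation factors, lowering $|A|$, and iterate; the monomial is then replaced by a product of local averages plus lower--order terms in which some pairs $\omega_z^2$ have collapsed. Each collapsed factor produces the deterministic constant $\mathbb E_\mu[\omega_z^2]=(u_z(1-u_z))^{-1}$ and a gain of $|\Lambda_\ell|^{-1}$; the fully collapsed, deterministic part has modulus $\le C\|G\|_\infty n^d|\Lambda_\ell|^{-1}\le C\|G\|_\infty n^{d-2}g_d(n)$ for our choice of $\ell$. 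What survives is a polynomial of degree $\le|A|+1$ in the $\eta$'s with small coefficients, of Hilbert--Schmidt/chaos norm $O(\|G\|_\infty^2 n^d|\Lambda_\ell|^{-1})$ and ``operator norm'' $O(\|G\|_\infty)$. Inserting it into the entropy inequality $\int Vf\,d\mu\le\gamma^{-1}H(f;\mu)+\gamma^{-1}\log\int e^{\gamma V}d\mu$ (the elementary form underlying Lemma~\ref{lA2}) with $\gamma$ comparable to $\min(1,\delta)\|G\|_\infty^{-1}$, and bounding the exponential moment by the Hanson--Wright inequality for dependent variables of Appendix~\ref{sH}, gives a contribution of the form $\tfrac{C}{\delta}(\|G\|_\infty+\|G\|_\infty^2)\big(H(f;\mu)+n^{d-2}g_d(n)\big)$, which is exactly the shape required. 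Choosing $\ell$ as above makes all the error terms above simultaneously of order $\|G\|_\infty^2 n^{d-2}g_d(n)/\delta$.

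The hard part is the flow estimate of the first step together with this choice of scale: one needs a flow whose energy scales like the discrete capacity $g_d(\ell)$, which is the content of the flow lemma and the reason an explicit geometric construction is carried out in Appendix~\ref{sI} rather than invoking abstract optimal transport, and one needs this to be consistent with the opposite requirement, $\ell^d\gtrsim n^2/g_d(n)$, forced by smallness of the spread--out residue; verifying that both constraints can be met in every dimension — macroscopically in $d=1$, only mesoscopically in $d\ge2$ — is where the dimensional threshold really enters. A secondary, purely technical obstacle is the bookkeeping of the corrections $R_\kappa$ generated by the non-uniform reference measure $\mu=\bigotimes_x\Bern(u_x)$ and the verification that they cost no more than the factor $1+\kappa^2$; this is precisely what makes the integration-by-parts formula of Appendix~\ref{sG} less transparent than in the translation-invariant setting.
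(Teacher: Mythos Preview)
Your overall architecture --- spread $\omega_{x+b}$ over an $\ell$-box via the flow lemma, pay with $\delta n^2\mc D$ through the integration-by-parts of Appendix~\ref{sG}, then control the residue with entropy and concentration --- is the paper's architecture, and your choice of scale $\ell^d g_d(\ell)\sim n^2$ is correct. But two steps of your sketch do not go through as written.

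\medskip
\textbf{The quadratic flow term needs the entropy inequality, not decorrelation.} After integration by parts you face $\tfrac{C}{\delta n^2}\sum_{y,e}\int \Phi_{y,e}^2\,f\,d\mu$ (the paper's $W^\ell(G)$). You bound this by invoking that the $\omega_{x+A}$ ``are $\mu$-centred and decorrelate for distinct $x$''. That is true under $\mu$ but says nothing under $f\,d\mu$ for an arbitrary density $f$; your resulting bound $\tfrac{C}{\delta}\|G\|_\infty^2 n^{d-2}g_d(\ell)$ is exactly the $f\equiv 1$ answer and is missing the $H(f;\mu)$ contribution. The paper handles this via Lemma~\ref{lH1.4}: split into $\ell$-sparse blocks, apply the entropy inequality, and bound the exponential moment of $h_x^{\ell,b'}(G)^2$ using that $h_x^{\ell,b'}(G)$ is subgaussian of order $C\|G\|_\infty^2 g_d(\ell)$. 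This gives \eqref{cauquenes}, namely $\tfrac{C\|G\|_\infty^2\ell^d g_d(\ell)}{\delta n^2}\big(H(f;\mu)+n^d/\ell^d\big)$, and only after the choice of $\ell$ does this collapse to the stated form.

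\medskip
\textbf{The $\kappa$-remainder cannot be ``treated identically'': it forces a second full iteration.} The correction your $R_\kappa$ encodes is the last term in Lemma~\ref{lG3}, which sums to the paper's
\[
\frac{1}{n}Z^\ell(G)=\frac{1}{n}\sum_{x,b'} n(u_{x+b'}-u_x)\,h_{x-b}^{\ell,b'}(G)\,\omega_x\omega_{x+b'}.
\]
A direct entropy/subgaussian bound on this (the variables are only $(2\ell)$-dependent, $h$ is subgaussian of order $g_d(\ell)$, $\omega_x\omega_{x+b'}$ is bounded) yields, as the paper records in \eqref{mala},
\[
\frac{1}{n}\int Z^\ell(G) f\,d\mu\ \le\ \frac{C}{\gamma}\Big(H(f;\mu)+\gamma^2 n^{d-2}\ell^d g_d(\ell)\Big),
\]
and with $\ell^d g_d(\ell)\sim n^2$ this is $\tfrac{C}{\gamma}(H+\gamma^2 n^d)$: after optimising $\gamma$ you get back the trivial $O(n^{d/2})$ bound, not $n^{d-2}g_d(n)$. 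The factor $\kappa/n$ you rely on is already absorbed into the $1/n$ in front; it does not give an extra gain. The fix, carried out in detail in the paper, is to apply the \emph{entire scheme a second time} to $Z^\ell(G)$: replace $\omega_{x+b'}$ by $\omega_{x+b'}^\ell$ using the flow, spend the remaining $\tfrac{\delta}{2}n^2\mc D$, and produce $\widetilde V^\ell,\widetilde W^\ell,\widetilde Z^\ell$ (equations \eqref{pucon}--\eqref{linares}). Now $\widetilde V^\ell$ is a product of a subgaussian of order $g_d(\ell)$ and one of order $\ell^{-d}$, so Lemma~\ref{lH2.2} gives $\gamma^{-1}\sim\kappa\|G\|_\infty\sqrt{\ell^d g_d(\ell)}\sim\kappa\|G\|_\infty n$ and \eqref{curepto} follows; $\widetilde W^\ell$ is bilinear and needs Hanson--Wright (Lemma~\ref{lH2.6}); and $\widetilde Z^\ell$, now carrying a factor $n^{-2}$, is finally small. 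This second pass is the reason the Dirichlet budget is split as $\tfrac{\delta}{2}+\tfrac{\delta}{2}$ in \eqref{cumpeo} and \eqref{linares}, and it is precisely where the $1+\kappa^2$ prefactor and the $\|G\|_\infty+\|G\|_\infty^2$ dependence originate.

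\medskip
A smaller point: the paper does not iterate the flow step on the factors of $\omega_{x+A}$ as you propose in your second step. Instead it chooses the smoothing kernel to be $q_\ell=p_\ell*p_\ell$, so that $V^\ell(G)$ is \emph{automatically} a product of two $p_\ell$-averages, $\overleftarrow{\omega}_{x+A}^\ell\,\overrightarrow{\omega}_{x+b}^\ell$, with the $\omega_{x+A}$ block kept intact inside one of them; subgaussianity of $\omega_{x+A}$ (Lemma~\ref{lH2.4}) then suffices. Your iterated spreading would also cost extra copies of the Dirichlet form and is not needed.
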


We will dedicate the rest of this section to the proof of this lemma. The idea of the proof has its roots in the second-order Boltzmann-Gibbs principle introduced in \cite{GonJar}. We will replace the functions $\omega_{x+A}$ and $\omega_{x+b}$ in $V(G)$ by spatial averages over boxes of mesoscopic size $\ell$. We will see that the cost of this replacement can be estimated in terms of $\mc D\big( \sqrtt{f}; \mu\big)$. The main difference with respect to previous works is that we will replace the product $\omega_{x+A} \omega_{x+b}$ by the product of {\em two} local averages, instead of the single local average that appears in the original Boltzmann-Gibbs principle. The proof we describe below incorporates some ideas from \cite{GonJarSim}. 

For $\ell \in \bb N$, let $\Lambda_\ell:= \{z \in \bb Z^d; 0 \leq z_i \leq \ell-1, i=1,\dots,d\}$ be the cube of size $\ell$ and vertex $0$. For $\ell \leq n$, we can identify $\Lambda_\ell$ with a subset of $\tdn$. The same remark remains valid for various functions in $\bb Z^d$ of finite support  that we will define below. Let $p_\ell: \bb Z^d \to [0,1]$ be the uniform measure in $\Lambda_\ell$, that is, $p_\ell(z) = \ell^{-d} \mathds{1}(z \in \Lambda_\ell)$ for any $z \in \bb Z^d$.

Let $q_\ell: \bb Z^d \to [0,1]$ be the measure given by
\[
q_\ell(z) := \sum_{y \in \bb Z^d} p_\ell(y) p_\ell(z-y)
\]
for any $z \in \bb Z^d$. In other words, $q_\ell = p_\ell \ast p_\ell$, the convolution of $p_\ell$ with itself. Notice that $q_\ell$ is supported in $\Lambda_{2\ell-1}$ and that $q_\ell(z) \leq \ell^{-d}$ for any $z \in \bb Z^d$. Let $\ell < n/2$ and define $\omega_x^\ell: \Omega_n \to \bb R$ as
\begin{equation}
\label{sanantonio}
\omega_x^\ell := \sum_{y \in \bb Z^d} \omega_{x+y} q_\ell(y).
\end{equation}
Now we define $V^\ell(G): \Omega_n \to \bb R$ as
\[
V^\ell(G) := \sum_{x\in \tdn} \omega_{x+A} \omega_{x+b}^\ell G_x.
\]
Thanks to the choice $q_\ell = p_\ell \ast p_\ell$, $V^\ell(G)$ can be written as a sum over a product of two averages. In fact, rearranging terms to pass the convolution from $p_\ell$ to $\omega_{x+A}$, we see that
\begin{equation}
\label{curacavi}
V^\ell(G) = \sum_{x \in \tdn} \Big( \sum_{y \in \bb Z^d} \omega_{x-y+A} G_{x-y} p_\ell(y) \Big) \Big( \sum_{z \in \bb Z^d} \omega_{x+z+b} p_\ell(z)\Big).
\end{equation}

Now the idea is to compare $\int V(G) f d\mu$ with $\int V^\ell(G) f d\mu$ using Lemma \ref{lG3}. In order to do it in an efficient way, we will introduce the concept of {\em flow}. 

A flow in $\bb Z^d$ is a function $\phi: \bb Z^d \times \mc B \to \bb R$. We say that the support of $\phi$ is contained in a set $\Lambda \in \bb Z^d$ if for any $(x,b) \in \bb Z^d \times \mc B$ such that $\phi(x;b) \neq 0$, $\{x,x+b\} \subseteq \Lambda$. Let $p,q$ be two measures in $\bb Z^d$. We say that the flow $\phi$ {\em connects} $p$ to $q$ if
\[
p(z) -q(z) = \sum_{b \in \mc B} \big( \phi(z;b) - \phi(z-b;b)\big) \text{ for any } z \in \bb Z^d.
\]
A flow connecting $p$ to $q$ with support contained in a finite set satisfies the following {\em divergence formula}:
\begin{equation}
\label{rancagua}
\sum_{z \in \bb Z^d} f(z) \big( p(z) -q(z)\big) = \sum_{\substack{z \in \bb Z^d\\b \in \mc B}} \phi(z;b) \big( f(z+b)-f(z)\big).
\end{equation}
Let us recall the definition of $g_d(n)$ given in Theorem \ref{t2}:
\[
g_d(n) = 
\left\{
\begin{array}{c@{\;;\;}l}
n & d=1\\
\log n & d=2\\
1 & d \geq 3.
\end{array} 
\right.
\]
We have the following result:

\begin{lemma}[Flow lemma]
\label{flow}
There exists a finite constant $C=C(d)$ such that for any $\ell \in \bb N$ there exists a flow $\phi_\ell$ connecting the point mass at $0$ to $q_\ell$ with support contained in $\Lambda_{2\ell-1}$ such that
\[
\sum_{\substack{z \in \bb Z^d \\ b \in \mc B}} \phi_\ell(z;b)^2 \leq C g_d(\ell);\quad \quad 
\sum_{\substack{z \in \bb Z^d\\b \in \mc B}} \big| \phi_\ell(x;b)\big| \leq C \ell.
\]
\end{lemma}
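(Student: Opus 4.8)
The plan is to build $\phi_\ell$ in two stages and to reduce the whole statement to the construction of one simpler flow. For a flow $\phi$ write $\|\phi\|_1:=\sum_{z\in\bb Z^d,\,b\in\mc B}|\phi(z;b)|$ and $\|\phi\|_2^2:=\sum_{z\in\bb Z^d,\,b\in\mc B}\phi(z;b)^2$. I claim it suffices to construct a flow $\psi_\ell$ connecting the point mass $\delta_0$ at $0$ to the uniform measure $p_\ell$ on $\Lambda_\ell$, with support contained in $\Lambda_\ell$, such that $\|\psi_\ell\|_2^2\le C\,g_d(\ell)$ and $\|\psi_\ell\|_1\le C\ell$. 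Indeed, given such a $\psi_\ell$, I would set $\phi_\ell(z;b):=\psi_\ell(z;b)+(p_\ell*\psi_\ell)(z;b)$, the convolution taken in the $z$ variable only, $(p_\ell*\psi_\ell)(z;b):=\sum_{y\in\bb Z^d}p_\ell(y)\,\psi_\ell(z-y;b)$. Taking lattice divergences and using that $\psi_\ell$ connects $\delta_0$ to $p_\ell$, one checks that $p_\ell*\psi_\ell$ connects $p_\ell$ to $p_\ell*p_\ell=q_\ell$, so $\phi_\ell$ connects $\delta_0$ to $q_\ell$; since $\psi_\ell$ is supported in $\Lambda_\ell$ and $p_\ell$ in $\Lambda_\ell$, the flow $\phi_\ell$ is supported in $\Lambda_\ell+\Lambda_\ell=\Lambda_{2\ell-1}$. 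Finally, Young's inequality for convolutions on $\bb Z^d$ (using $\sum_y p_\ell(y)=1$ and $\sum_y p_\ell(y)^2=\ell^{-d}$) gives $\|p_\ell*\psi_\ell\|_1\le\|\psi_\ell\|_1$ and $\|p_\ell*\psi_\ell\|_2\le\ell^{-d/2}\|\psi_\ell\|_1\le C\ell^{1-d/2}$, so that $\|p_\ell*\psi_\ell\|_2^2\le C\ell^{2-d}\le C\,g_d(\ell)$ in every dimension $d\ge1$; combining with the bounds on $\psi_\ell$ closes the argument.

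It remains to construct $\psi_\ell$, and here the idea is purely geometric: superpose canonical near-straight lattice paths. For each target $w\in\Lambda_\ell$ I would fix a monotone nearest-neighbour path $\gamma_w$ from $0$ to $w$ that tracks the Euclidean segment $[0,w]$ within $\ell^\infty$-distance $O(1)$ — a ``digital segment'', obtained for instance by a greedy rule that at each step advances the coordinate with the largest progress deficit — and let $\phi_{\gamma_w}$ be the unit flow along $\gamma_w$. Since $\gamma_w$ is monotone, $\phi_{\gamma_w}\ge0$ and its support lies in $\prod_i[0,w_i]\subseteq\Lambda_\ell$. Set $\psi_\ell:=\ell^{-d}\sum_{w\in\Lambda_\ell}\phi_{\gamma_w}$; summing the divergences $\delta_0-\delta_w$ shows that $\psi_\ell$ connects $\delta_0$ to $\ell^{-d}\sum_{w\in\Lambda_\ell}\delta_w=p_\ell$, with support in $\Lambda_\ell$. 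The $\ell^1$-bound is immediate: $\|\psi_\ell\|_1=\ell^{-d}\sum_{w\in\Lambda_\ell}|\gamma_w|=\ell^{-d}\sum_{w\in\Lambda_\ell}(w_1+\dots+w_d)\le d\ell$. In dimension $d=1$ this already finishes everything, since then $\psi_\ell(z;e_1)=(\ell-1-z)/\ell$ on $\Lambda_\ell$, so $\|\psi_\ell\|_2^2=\sum_{z=0}^{\ell-1}\big((\ell-1-z)/\ell\big)^2\le\ell=g_1(\ell)$.

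The remaining, and \emph{main}, point is the pointwise estimate $\psi_\ell(z;b)\le C\,(1+|z|)^{1-d}$ for all $z,b$, from which $\|\psi_\ell\|_2^2\le C\sum_{z\in\Lambda_\ell}(1+|z|)^{2-2d}\le C\sum_{r=1}^{\ell}r^{d-1}r^{2-2d}=C\sum_{r=1}^{\ell}r^{1-d}\le C\,g_d(\ell)$ by summing over dyadic shells. To prove the pointwise bound one counts: by construction $\gamma_w$ uses the edge $(z,z+b)$ only when $z$ lies within distance $O(1)$ of $[0,w]$ and before $w$ along it, so the number of $w\in\Lambda_\ell$ whose path uses a fixed edge at distance $r\approx|z|$ from the origin is at most the number of lattice points of $\Lambda_\ell$ lying in the cone with apex $0$, axis through $z$, and angular aperture $O(1/r)$; this cone count is $O(r^{1-d}\ell^d)$, and dividing by $\ell^d$ gives the claim. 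I expect this last step to be the main obstacle: one must make the notion of digital segment precise in arbitrary dimension, verify that a monotone path can track $[0,w]$ uniformly within $O(1)$, and carry out the lattice-point-in-a-thin-cone estimate with the correct dependence on $r$ and $d$, uniformly for $1\le r\le\ell$. These geometric — and somewhat tedious — verifications are the content of Appendix~\ref{sI}.
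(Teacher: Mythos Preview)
Your reduction step---build a flow $\psi_\ell$ from $\delta_0$ to $p_\ell$ with $\|\psi_\ell\|_2^2\le Cg_d(\ell)$ and $\|\psi_\ell\|_1\le C\ell$, then pass to $q_\ell$ by convolving with $p_\ell$---is exactly what the paper does (with the minor difference that the paper writes only the convolved piece $p_\ell*\psi_\ell$; your formula $\phi_\ell=\psi_\ell+p_\ell*\psi_\ell$ is the correct one, since $p_\ell*\psi_\ell$ alone connects $p_\ell$ to $q_\ell$, not $\delta_0$ to $q_\ell$). Your Young--inequality bookkeeping for the convolved piece is fine.

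Where you genuinely diverge from the paper is in the construction of $\psi_\ell$. You superpose unit flows along digital straight segments $\gamma_w$ and obtain the key pointwise decay $\psi_\ell(z;b)\lesssim(1+|z|)^{1-d}$ by counting how many targets $w\in\Lambda_\ell$ send their segment through a fixed edge at distance $r$---a lattice-point-in-a-thin-cone estimate. The paper instead telescopes: it builds, for each $k$, an explicit flow $\psi^k$ connecting $p_k$ to $p_{k-1}$ by pushing the mass on the outer faces $\Lambda_k\setminus\Lambda_{k-1}$ inward along axial segments, and shows directly that $|\psi^k|\le C k^{-d}$. Then $\psi_\ell=-\sum_{k=1}^\ell\psi^k$ connects $\delta_0$ to $p_\ell$, and since the supports of the $\psi^k$ are nested shells, the pointwise bound $|\psi_\ell(z;b)|\lesssim\sum_{k\ge|z|}k^{-d}\lesssim|z|^{1-d}$ follows by a one-line geometric series. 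Both routes land on the same decay estimate and the same shell summation $\sum_{r\le\ell}r^{d-1}\cdot r^{2(1-d)}\lesssim g_d(\ell)$; the paper's peeling construction trades your digital-geometry and cone-counting lemmas for an elementary layer-by-layer computation, so the ``main obstacle'' you anticipate is simply absent there. Your approach is correct and perhaps more conceptual (it makes transparent that the flow mimics radial transport), but the verification you defer---uniform $O(1)$ tracking of digital segments in arbitrary dimension and the lattice-point count in cones of aperture $O(1/r)$---is real work, whereas the paper's argument is essentially self-contained.
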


The proof of this lemma can be found in Appendix \ref{sI}. The dependence in $\ell$ in this lemma is optimal, and it is exactly due to this lemma that the constant $g_d(n)$ appears in Theorem \ref{t2}.

 Using the flow $\phi_\ell$ given by Lemma \ref{flow}, we can compare $V(G)$ and $V^\ell(G)$: using \eqref{rancagua} with $p = \delta_x$, $q = q_\ell(\cdot+x)$ and $f = \omega_x$, we see that 
\[
\omega_x - \omega_{x}^\ell  = \sum_{\substack{z \in \bb Z^d\\b' \in \mc B}} \phi_\ell(z;b') (\omega_{x+z+b'}-\omega_{x+z}).
\]
Therefore,
\begin{equation}
\label{codegua}
\begin{split}
V(G) - V^\ell(G) 
		&= \sum_{x \in \tdn} \omega_{x+A} \sum_{\substack{z \in \bb Z^d\\b' \in \mc B}} \phi_\ell(z; b') (\omega_{x+z+b+b'}-\omega_{x+z+b}) G_x\\
		&=\sum_{x \in \tdn}  \sum_{\substack{z \in \bb Z^d\\b' \in \mc B}} \omega_{x-z-b+A}\phi_\ell(z;b') G_{x-z-b} (\omega_{x+b'}-\omega_x).
\end{split}
\end{equation}
For each $x \in \tdn$ and each $b' \in \mc B$, let $h_x^{\ell,b'}(G): \Omega_n \to \bb R$ be defined as
\begin{equation}
\label{teno}
h_x^{\ell,b'}(G):= \sum_{z \in \bb Z^d} \phi_\ell(z;b') \omega_{x-z+A} G_{x-z}.
\end{equation}
We have that \eqref{codegua} can be rewritten as
\[
V(G)- V^\ell(G) = \sum_{\subindd} h_{x-b}^{\ell,b'}(G) (\omega_{x+b'}-\omega_x).
\]
Our definitions have been carefully chosen in such a way that $\nabla_{x,x+b'} h_{x-b}^{\ell,b'}(G) =0$. For $x \in \tdn$ and $b' \in \mc B$, let 
\[
\mc D_{x,x+b'}\big(\sqrtt{f}; \mu\big) := \int \Big( \nabla_{x,x+b'} \sqrtt{f} \Big)^2 d\mu
\]
for any density $f$. Notice that
\[
\mc D\big( \sqrtt{f}; \mu \big) = \sum_{\subindd} \mc D_{x,x+b'} \big( \sqrtt{f}; \mu \big).
\]
Using Lemma \ref{lG3} for $x, y =x+b'$, $h = h_{x-b}^{\ell,b'}(G)$ and $\frac{\delta}{2}$, we see that
\begin{equation}
\label{curico}
\begin{split}
\int \big( V(G) -V^\ell(G) \big) f d \mu 
		&\leq \frac{\delta n^2}{2} \mc D\big( \sqrtt{f} ; \mu \big) 
		+ \frac{8}{\delta \eps_0^2 n^2} \int \sum_{\subindd} h_{x}^{\ell,b'}(G)^2 f d\mu\\
		&\quad \quad - \sum_{\subindd} \int (u_{x+b'}-u_x) h_{x-b}^{\ell,b'}(G) \omega_x \omega_{x+b'} f d \mu.
\end{split}
\end{equation}
Let us introduce the definitions
\[
W^\ell(G) = W^\ell_b(G;A) := \sum_{\subindd} h_x^{\ell,b'}(G)^2,
\]
\[
Z^\ell(G) = Z^\ell_b(G;A) := \sum_{\subindd} n (u_{x+b'}-u_x) h_{x-b}^{\ell,b'}(G) \omega_x \omega_{x+b'}.
\]
With these definitions, we see that \eqref{curico} ca be rewritten as
\begin{equation}
\label{cumpeo}
\int V(G)f d\mu 
		\leq \frac{\delta n^2}{2} \mc D\big( \sqrtt{f}; \mu\big) + \int \Big\{ V^\ell(G) + \frac{8}{\delta \eps_0^2 n^2} W^\ell(G) + \frac{1}{n} Z^\ell(G) \Big\} f d \mu.
\end{equation}
Therefore, if the integral on the right-hand side of this inequality could be estimated by $C(H(f; \mu) + g_d(n) n^{d-2})$, the lemma would be proved. As one can guess from the factor $\frac{\delta}{2}$ in front of the quadratic form, this is not yet possible. However, this is the case for the two terms involving $V^\ell(G)$ and $W^\ell(G)$, as now we will see.

Recall that by \eqref{curacavi},
\[
V^\ell(G) = \sum_{x \in \tdn} \overleftarrow{\omega}_{x+A}^\ell \overrightarrow{\omega}_{x+b}^\ell,
\]
where
\[
\overleftarrow{\omega}_{x+A}^\ell := \sum_{y \in \bb Z^d} \omega_{x-y+A} G_{x-y} p_\ell(y),
\]
\[
\overrightarrow{\omega}_x^\ell := \sum_{z \in \bb Z^d} \omega_{x+z} p_\ell(z).
\]
The fact that there is a product of two averages appearing in this expression for $V^\ell(G)$ is crucial in the proof of Lemma \ref{l1}. In order to simplify the computations, we need to introduce some definitions. We say that a set $B \subseteq \bb T^d_n$ is $\ell$-sparse if $\|y-x\|_\infty \geq \ell$ for any $x \neq y \in B$. We say that a family of random variables $\{\xi_x; x \in \bb T^d_n\}$ is $\ell$-dependent if the random variables $\{\xi_x; x \in B\}$ are independent for any $\ell$-sparse set $B \subseteq \tdn$. Notice that with this convention, independent random variables are $1$-dependent. Notice as well that the variables $\{\omega_{x+A}; x \in \tdn\}$ are $\ell_0$-dependent, where $\ell_0$ is the size of the smallest cube containing $A$. In a similar way, $\{\overleftarrow{\omega}_{x+A}^\ell; x \in \tdn\}$ is $(\ell+\ell_0)$-dependent and $\{\overrightarrow{\omega}_x^\ell; x \in \tdn\}$ is $\ell$-dependent.

We say that a random variable $\xi$ is {\em subgaussian} of order $\sigma^2$ if
\[
\log E\big[e^{\theta \xi} \big] \leq \tfrac{1}{2} \sigma^2 \theta^2 \text{ for any } \theta \in \bb R.
\]
By Lemma \ref{lH2.4}, the random variables $\omega_{x+A}$ are subgaussian of order $C(A,\eps_0) = (2/\eps_0)^{2 \# A}$. Therefore, by Lemma \ref{lH2.5} $\overleftarrow{\omega}^\ell_{x+A}$ is subgaussian of order
\[
C(A,\eps_0) \|G\|_\infty^2 \ell^{-d},
\]
where now $C(A,\eps_0) = (2/\eps_0)^{2\# A} (d+1) \ell_0^d$. Although it would be possible to keep track of the dependence in $A$ and $\eps_0$ of the constants $C(A,\eps_0)$, from now on we will not do it. For notational convenience, the value of $C(A,\eps_0)$ may change from line to line. In a similar way, $\overrightarrow{\omega}_{x+b}^\ell$ is subgaussian of order $(2/\eps_0)^2 \ell^{-d}$. Notice that the variables $\{ \overleftarrow{\omega}_{x+A}^\ell \overrightarrow{\omega}_{x+b}^\ell; x \in \tdn\}$ are $(2\ell+\ell_1-1)$-dependent, where $\ell_1$ is the smallest $\ell$ such that $-A \subseteq \Lambda_\ell$. Notice that $\ell_0 \leq \ell_1$, with identity if and only if $0 \in A$. Since $A$ is fixed and $\ell$ is going to grow with $n$, we will assume that $\ell \geq \ell_1$, on which case $2\ell+\ell_1-1 \leq 3\ell$. By Lemma \ref{lH1.4}, for any $\gamma >0$
\[
\int V^\ell(G) f d \mu \leq \frac{d+1}{\gamma} \Big( H(f;\mu) + \frac{1}{(3\ell)^d} \sum_{x \in \tdn} \log \int e^{\gamma (3\ell)^d  \overleftarrow{\omega}_{x+A}^\ell \overrightarrow{\omega}_{x+b}^\ell}d\mu\Big).
\]
By Lemma \ref{lH2.2}, the integral on the right-hand side of this inequality is bounded by $\log 3$ for $
\gamma^{-1} = C(A,\eps_0) \|G\|_\infty$, which gives the bound
\begin{equation}
\label{molina}
\int V^\ell(G) f d \mu \leq C(A,\eps_0) \|G\|_\infty \Big(H(f;\mu) + \frac{n^d}{\ell^d}\Big).
\end{equation}
Notice that the faster $\ell$ grows with $n$, the better this bound is. As we will see, the opposite happens for $W^\ell(G)$. The interplay between these two terms will determine the optimal choice for $\ell$. The integral $\int W^\ell(G) f d \mu$ is estimated in a similar way. $W^\ell(G)$ is the sum of $d$ terms of the form
\begin{equation}
\label{chimbarongo}
W^{\ell,b'}(G) :=\sum_{x \in \tdn} h_x^{\ell,b'}(G)^2, \text{ with } b' \in \mc B.
\end{equation}
We will estimate each of these terms separately. The family $\{h_x^{\ell,b'}(G)^2; x \in \bb T^d_n\}$ is $(2\ell+\ell_0-1)$-dependent. Therefore, by Lemma \ref{lH1.4},
\[
\int W^{\ell,b'}(G) f d \mu \leq \frac{d+1}{\gamma} \Big(H(f; \mu) + \frac{1}{(3\ell)^d} \sum_{x \in \tdn} \log \int e^{\gamma (3\ell)^d h_x^{\ell,b'}(G)^2} d \mu \Big).
\]
Looking at equation \eqref{teno}, we see that by Lemma \ref{lH2.5} $h_x^{\ell,b'}(G)$ is subgaussian of order $C(A,\eps_0) \|G\|_\infty^2 g_d(\ell)$. It is exactly at this point that the function $g_d$ appears in the estimate of Lemma \ref{l1}. By Proposition \ref{lH2.1}, the integral above is bounded by $\log 3$ for $\gamma^{-1} = C(A,\eps_0) \ell^d g_d(\ell) \|G\|_\infty^2$, from where we obtain the bound
\begin{equation}
\label{cauquenes}
\frac{8}{\delta \eps_0^2 n^2} \int W^{\ell}(G) f d \mu \leq \frac{C(A,\eps_0) \|G\|_\infty^2 \ell^d g_d(\ell)}{\delta n^2}\Big(H(f; \mu) + \frac{n^d}{\ell^d}\Big).
\end{equation}
Due to the leading term $\ell^d g_d(\ell)$, this estimate gets worse as $\ell$ grows. 
Notice the similarity of this estimate with \eqref{molina}. 
Apart from the dependence on $\|G\|_\infty$,  both estimates coincide if we choose $\ell$ in such a way that the ratio $\frac{\ell^d g_d(\ell)}{n^2}$ is constant in $n$. This leads to the choice
\begin{equation}
\label{arauco}
\ell =\ell(n):=
\left\{
\begin{array}{c@{\;;\;}l}
\frac{1}{8} n & d=1\\[5pt]
\dfrac{n^2}{\sqrt{\log n}} & d =2 \\[10pt]
n^{2/d} & d \geq 3.
\end{array}
\right.
\end{equation}

The factor $\frac{1}{8}$ for $d=1$ is there to guarantee that the supports of $\overleftarrow{\omega}_{x+A}^\ell$ and $\overrightarrow{\omega}_{x+b}^\ell$ do not overlap. 

The term $\frac{1}{n} \int Z^\ell(G) f d \mu$ can be estimated in a similar way, but the estimate than one obtains is
\begin{equation}
\label{mala}
\frac{1}{n} \int Z^\ell(G) f d \mu \leq \frac{C}{\gamma} \Big( H(f; \mu) + \gamma^2 n^{d-2} \ell^d g_d(\ell)\Big). 
\end{equation}
For $\ell$ given by \eqref{arauco}, this estimate becomes
\[
\frac{1}{n} \int Z^\ell(G) f d \mu \leq \frac{C}{\gamma} \Big( H(f; \mu) + \gamma^2 n^{d}\Big),
\]
which is not good. If one takes $\ell$ of smaller order than the choice given by \eqref{arauco}, the estimate involving $W^\ell(G)$ is of smaller order than the estimate involving $V^\ell(G)$. Balancing \eqref{mala} with \eqref{molina}, we see that the optimal choice of $\ell$ would be
\[
\ell =
\left\{
\begin{array}{c@{\;;\;}l}
n^{2/3} & d=1\\
\dfrac{n^{1/2}}{(\log n)^{1/4}} & d=2\\
n^{1/d} & d \geq 3,
\end{array}
\right.
\]
and that would lead to prove a bound of the form
\[
\int V(G) f d \mu \leq \delta n^2 \mc D\big( \sqrtt{f}; \mu \big) + C H(f;\mu) + C 
\left\{
\begin{array}{c@{\;;\;}l}
n^{1/3} & d=1\\
n \sqrt{\log n} & d=2\\
n^{d-1} & d \geq 3
\end{array}
\right.
\]
for some constant $C = C(A,\eps_0,\kappa,\|G\|_\infty)$. This bound would be enough to prove a form of Theorem \ref{t2} that would imply a quantitative hydrodynamic limit as the one stated in Corollary \ref{c1}, but it would be enough to prove Theorem \ref{t3} only in dimension $d <2$. 

In order to improve the bound on $\frac{1}{n}\int Z^\ell(G) f d \mu$, notice that $Z^\ell(G)$ is a renormalized version of $V(G)$: the local function $\omega_{x+A}$ has been replaced by the function $h_{x-b}^{\ell,b'}(G)\omega_x$, which includes a spatial average in its definition. Thanks to the term $\omega_{x+b'}$, $Z^\ell(G)$ and $V(G)$ have the same structure, and the ideas used to bound $V(G)$ can be iterated. 

The definitions of $V^\ell(G)$, $W^\ell(G)$ and $Z^\ell(G)$ have been chosen in such a way that only one iteration will be enough to prove Lemma \ref{l1}; if we were used the renormalization schemes of \cite{GonJar} or \cite{GonJarSim}, multiple iterations would have been needed.

Now the idea is to define objects analogous to $V^\ell(G)$, $W^\ell(G)$ and $Z^\ell(G)$, but using $Z^\ell(G)$ instead of $V(G)$ as basic object.

For $b' \in \mc B$, let us define
\[
\wt{V}^{\ell,b'}(G) : = \sum_{x \in \tdn} n (u_{x+b'}-u_x) h_{x-b}^{\ell,b'}(G) \omega_x \omega_{x+b'}^\ell
\]
and let us define
\[
\wt{V}^\ell(G) := \sum_{b' \in \mc B} \wt{V}^{\ell,b'}(G).
\]
For $b',b'' \in \mc B$, define
\begin{equation}
\label{cunco}
h_x^{\ell,b',b''}(G) : = \sum_{z \in \bb Z^d} \phi_\ell(z;b'') n (u_{x-z+b'}-u_{x-z}) h_{x-z-b}^{\ell,b'}(G) \omega_{x-z}.
\end{equation}
Using \eqref{rancagua} we have the relation
\begin{equation}
\label{pucon}
Z^\ell(G) - \wt{V}^\ell(G) = \sum_{\substack{x \in \tdn \\b',b'' \in \mc B}} h_{x-b'}^{\ell,b',b''}(G) (\omega_{x+b''}-\omega_x).
\end{equation}
Let us define now
\[
\wt{W}^{\ell,b',b''} (G) := \sum_{x \in \tdn} h_x^{\ell,b',b''}(G)^2,
\]
\[
\wt{W}^\ell(G) := \sum_{b',b'' \in \mc B} W^{\ell,b',b''}(G).
\]
And finally define
\[
\wt{Z}^{\ell,b',b''}(G) := \sum_{x \in \tdn} h_{x-b'}^{\ell,b',b''}(G) n(u_{x+b''}-u_x) \omega_x \omega_{x+b''},
\]
\[
\wt{Z}^\ell(G) := \sum_{b',b'' \in \mc B} \wt{Z}^{\ell,b',b''}(G).
\]
These identities define the terms that appear as right-hand side when we estimate \eqref{pucon} using lemma \ref{lG3}: we have that
\begin{equation}
\label{linares}
\frac{1}{n} \int \big( Z^\ell(G) - \wt{V}^\ell(G) \big) f d \mu 
		\leq \frac{\delta n^2}{2} \mc D\big( \sqrtt{f}; \mu \big)
			+\frac{8}{\delta \eps_0^2 n^4} \int \wt{W}^\ell(G) f d \mu
				+\frac{1}{n^2} \wt{Z}^\ell(G) f d\mu.
\end{equation}
At this point, the quadratic form $\mc D\big(\sqrtt{f}; \mu\big)$ finishes playing its part. From now on, only entropy estimates will be needed to finish the proof of Lemma \ref{l1}. 

In principle, in this second stage we could have introduced a second mesoscopic scale $\wt{\ell} \gg \ell$, exchanging $\omega_{x+b'}$ by $\omega_{x+b'}^{\wt \ell}$. Fortunately, this is not necessary; we will see that for the choice of $\ell$ described in \eqref{arauco}, the factor $\frac{1}{n}$ in front of $Z^\ell(G)$ balances precisely the wider support of the function $h_x^{\ell,b'}(G)$. 

Let us estimate $\int \wt{V}^{\ell,b'}(G) f d \mu$. Since $h_x^{\ell,b'}(G)$ is already an average over a box of size $\ell$, it will not be profitable to pass one of the convoluted probabilities in the expression for $\omega_{x+b'}^\ell$ to $h_x^{\ell,b'}(G)$.

The variables $\{h_{x-b}^{\ell,b'}(G)\omega_x \omega_{x+b'}^\ell; x \in \tdn\}$ are $(3\ell+\ell_1)$-dependent. Therefore, by Lemma \ref{lH1.4}, 
\begin{multline}
\label{pangue}
\frac{1}{n} \int \wt{V}^{\ell,b'}(G) f d \mu 
	\leq \frac{d+1}{\gamma n} \Big( H(f; \mu) +\\
		+ \frac{1}{(4\ell)^d}\sum_{x \in \tdn} \log \int e^{\gamma (4\ell)^d n(u_{x+b'}-u_x) h_{x-b}^{\ell,b'}(G)\omega_x \omega_{x+b'}^\ell } d \mu \Big).
\end{multline}
We have already seen that $h_{x-b}^{\ell,b'}(G)$ is subgaussian of order $C(A,\eps_0) \|G\|_\infty^2 g_d(\ell)$. Since $|\omega_x| \leq \eps_0^{-1}$, $h_{x-b}^{\ell,b'}(G) \omega_x$ is also subgaussian of order $C(A,\eps_0) \|G\|_\infty^2 g_d(\ell)$. Recall that $q_\ell(z) \leq \ell^{-d}$ for any $z$. By Lemma \ref{lH2.5}, $\omega_x^\ell$ is subgaussian of order $C(\eps_0) \ell^{-d}$. By Lemma \ref{lH2.2}, the integral on the right-hand side of \eqref{pangue} is bounded by $\log 3$ for $\gamma^{-1} = C(A,\eps_0) \kappa \|G\|_\infty\sqrt{\ell^d g_d(\ell)}$. Putting this estimate into \eqref{pangue}, we obtain the bound
\begin{equation}
\label{curepto}
\begin{split}
\frac{1}{n} \int \wt{V}^{\ell,b'}(G) f d \mu 
	&\leq \frac{C(A,\eps_0) \kappa \|G\|_\infty \sqrt{\ell^d g_d(\ell)}}{n} \Big( H(f;\mu) + \frac{n^d}{\ell^d}\Big)\\
	&\leq C(A,\eps_0) \kappa \|G\|_\infty \big( H(f;\mu) + n^{d-2} g_d(n)\big),
\end{split}
\end{equation}
if $\ell$ is chosen according to \eqref{arauco}. 

Now let us estimate $\int \wt{W}^{\ell,b'b''}(G) f d\mu$. If we want to proceed as we did with $W^\ell(G)$, we need to estimate the exponential moments of $h_x^{\ell,b',b''}(G)^2$. Looking back at \eqref{cunco}, we see that $h_x^{\ell,b',b''}(G)$ is bilinear in the variables $\omega_{x-z+A}$, $\omega_{x-z}$. Therefore, our subgaussian bounds will not be effective for its square. Putting \eqref{teno} into \eqref{cunco}, we see that
\begin{multline}
\label{chillan}
h_x^{\ell,b',b''}(G) = \sum_{z,z' \in \bb Z^d} \phi_\ell(z;b') \phi_\ell(z';b'') n(u_{x-z'+b'}-u_{x-z'}) \times \\
	\times\omega_{x-z'} \omega_{x-z-z'-b+A} G_{x-z-z'-b}.
\end{multline}
In particular, by Lemma \ref{flow}, $\|h_x^{\ell,b',b''}(G)\|_\infty \leq C(A,\eps_0) \kappa \|G\|_\infty \ell^2$.
We conclude that for any $\gamma>0$, 
\[
\begin{split}
\log \int e^{\gamma h_x^{\ell,b',b''}(G)^2} d \mu
		&\leq \log \int e^{\gamma C(A,\eps_0) \kappa \|G\|_\infty \ell^2 |h_x^{\ell,b',b''}(G)|} d \mu\\
		&\leq \max_{\pm} \Big\{\log   \int e^{\pm \gamma C(A,\eps_0) \kappa \|G\|_\infty \ell^2 h_x^{\ell,b',b''}(G)} d \mu \Big\} + \log 2,
\end{split}
\]
where we used the inequalities $e^{|x|} \leq e^x+ e^{-x}$ and $\log(a+b) \leq \max\{\log a, \log b\} + \log 2$. 

To estimate the exponential moments of $h_x^{\ell,b',b''}(G)$ we will use the Hanson-Wright inequality, as stated in Lemma \ref{lH2.6}. Let $ \vec{\ell_1} = (\ell_1,\dots,\ell_1)$. Recall that $-A \subseteq \Lambda_{\ell_1}$. The variables $\xi_x := \omega_{x+\vec{\ell_1} +A}$, $\wt{\xi}_x$ and the sum \eqref{chillan} satisfy the hypothesis of Lemma \ref{lH2.6} and therefore 
\begin{equation}
\label{parral}
\log   \int e^{\pm \gamma  h_x^{\ell,b',b''}(G)} d \mu \leq \log 3
\end{equation}
for 
$\gamma^{-1} = C(A,\eps_0) \kappa \|G\|_\infty g_d(\ell)$. The variables $h_x^{\ell,b',b''}(G)$ are $(3\ell+\ell_1)$-dependent. Therefore, by Lemma \ref{lH1.4} we conclude that
\[
\begin{split}
		\int \wt{W}^{\ell,b',b''}(G) f d \mu
		&\leq \frac{d+1}{\gamma} \Big( H(f;\mu) + \frac{1}{(4\ell)^d} \sum_{x \in \tdn} \log \int e^{\gamma (4\ell)^d h_x^{\ell,b',b''}(G)^2} d \mu\Big)\\
		&\leq \frac{d+1}{\gamma} \bigg( H(f;\mu) + \frac{1}{(4\ell)^d} \times\\
		&\hspace{14pt} \times\sum_{x \in \tdn} \Big(\max_{\pm} \Big\{\log \int e^{\pm \gamma C(A,\eps_0)\kappa\|G\|_\infty \ell^{d+2} h_x^{\ell,b',b''}(G)} d \mu\Big\} + \log 2\Big)\bigg).\\
\end{split}
\]
Taking $\gamma^{-1} = C(A,\eps_0) \kappa^2 \|G\|_\infty^2 g_d(\ell) \ell^{d+2}$, we conclude that
\[
\frac{1}{\delta \eps_0^2 n^4} 
		\int \wt{W}^{\ell,b',b''}(G) f d \mu
			\leq \frac{C(A,\eps_0) \kappa^2 \|G\|_\infty^2 g_d(\ell) \ell^{d+2}}{\delta n^4}\Big( H(f;\mu) + \frac{n^d}{\ell^d}\Big).
\]
Replacing the value of $\ell$ chosen in \eqref{arauco}, we conclude that
\begin{equation}
\label{constitucion}
\frac{1}{\delta \eps_0^2 n^4} 
		\int \wt{W}^{\ell,b',b''}(G) f d \mu
			\leq \frac{C(A,\eps_0) \kappa^2 \|G\|_\infty^2 \ell^2}{\delta n^2}\Big( H(f;\mu) + n^{d-2}g_d(n)\Big),
\end{equation}
which is of smaller order than \eqref{curepto}, except for the quadratic dependence on $\kappa$ and $\|G\|_\infty$. Now we are only left to estimate $\int \widetilde{Z}^\ell(G) f d \mu$. By \eqref{parral}, 
\[
\int \exp\big\{ \gamma h^{\ell,b',b''}_{x-b'}(G) n(u_{x+b''}-u_x)\omega_x\omega_{x+b''} \big\} d \mu \leq \log 3
\]
for $\gamma^{-1} = C(A,\eps_0)\kappa^2 \|G\|_\infty g_d(\ell)$. Therefore, by Lemma \ref{lH1.4} we have that
\begin{equation}
\label{lircay}
\frac{1}{n^2} \int \widetilde{Z}^{\ell,b',b''}(G) f d\mu \leq \frac{C(A,\eps_0)\kappa^2 \|G\|_\infty \ell^d g_d(\ell)}{n^2} \Big(H(f;\mu) + \frac{n^d}{\ell^d}\Big).
\end{equation}

Putting estimates \eqref{molina}, \eqref{cauquenes}, \eqref{curepto}, \eqref{constitucion} and \eqref{lircay}, Lemma \ref{l1} is proved.

If instead of using the entropy estimate we just collect estimates \eqref{cumpeo} and \eqref{linares}, we obtain the following version of Lemma \ref{l1}:

\begin{lemma}[Main lemma, v2]
\label{main_v2}
There exists constant $C=C(A,\eps_0)$ such that for any $G: \bb T^d_n \to \bb R$, any density $f$ with respect to $\mu$ and any $\delta>0$,
\[
\begin{split}
\int V(G) f d \mu \leq
		\delta n^2\mc D\big( \sqrtt{f}; \mu\big) 
		&+ \int \Big( V^\ell(G) +\frac{C(A,\eps_0)}{\delta n^2} W^\ell(G) +\frac{1}{n} \widetilde{V}^\ell(G)\\
		&\quad \quad \quad + \frac{C(A,\eps_0)}{\delta n^4} \widetilde{W}^\ell(G) 
		+\frac{1}{n^2} \widetilde{Z}^\ell(G)\Big) f d\mu.
\end{split}
\]
\end{lemma}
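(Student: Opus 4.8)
The plan is to obtain Lemma~\ref{main_v2} as a bookkeeping corollary of the computation already carried out above for Lemma~\ref{l1}: instead of feeding the terms $V^\ell(G)$, $W^\ell(G)$, $\wt{V}^\ell(G)$, $\wt{W}^\ell(G)$, $\wt{Z}^\ell(G)$ into the entropy and subgaussian estimates \eqref{molina}, \eqref{cauquenes}, \eqref{curepto}, \eqref{constitucion}, \eqref{lircay}, one stops as soon as those five terms have been isolated and records the inequality that has been produced so far. Concretely, I would chain the two intermediate bounds \eqref{cumpeo} and \eqref{linares}.

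Recall that \eqref{cumpeo}, obtained by writing $V(G)-V^\ell(G)$ in the gradient form \eqref{codegua} via the flow $\phi_\ell$ of Lemma~\ref{flow} and then applying the integration-by-parts estimate of Lemma~\ref{lG3} coordinate by coordinate with parameter $\delta/2$, reads
\[
\int V(G)\,f\,d\mu \leq \tfrac{\delta n^2}{2}\,\mc D\big(\sqrtt{f};\mu\big)+\int\Big\{V^\ell(G)+\tfrac{8}{\delta\eps_0^2 n^2}W^\ell(G)+\tfrac{1}{n}Z^\ell(G)\Big\}\,f\,d\mu .
\]
Running the same procedure one more time --- writing $Z^\ell(G)-\wt{V}^\ell(G)$ in the gradient form \eqref{pucon} through a second application of the flow, and again invoking Lemma~\ref{lG3} with parameter $\delta/2$ --- produces \eqref{linares}, namely
\[
\tfrac{1}{n}\int\big(Z^\ell(G)-\wt{V}^\ell(G)\big)\,f\,d\mu \leq \tfrac{\delta n^2}{2}\,\mc D\big(\sqrtt{f};\mu\big)+\tfrac{8}{\delta\eps_0^2 n^4}\int\wt{W}^\ell(G)\,f\,d\mu+\tfrac{1}{n^2}\int\wt{Z}^\ell(G)\,f\,d\mu .
\]

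To conclude, I would use the second display to bound the term $\tfrac{1}{n}\int Z^\ell(G)\,f\,d\mu$ appearing on the right of the first, and substitute. The two copies of $\tfrac{\delta n^2}{2}\mc D\big(\sqrtt{f};\mu\big)$ add up to $\delta n^2\mc D\big(\sqrtt{f};\mu\big)$; the term $\tfrac{1}{n}\int\wt{V}^\ell(G)\,f\,d\mu$ is released onto the right-hand side, and the contributions $\tfrac{8}{\delta\eps_0^2 n^2}W^\ell(G)$, $\tfrac{8}{\delta\eps_0^2 n^4}\wt{W}^\ell(G)$, $\tfrac{1}{n^2}\wt{Z}^\ell(G)$ are carried along unchanged; setting $C(A,\eps_0):=8/\eps_0^2$ gives exactly the asserted inequality. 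There is no real obstacle here, since the whole analytic content --- the flow lemma, the two gradient rewritings \eqref{codegua} and \eqref{pucon}, and the two applications of Lemma~\ref{lG3} --- is already in place; the only thing to be careful about is the consistent tracking of the parameter $\delta$ (each use of Lemma~\ref{lG3} is run with $\delta/2$, so that the two dissipation terms sum to $\delta n^2\mc D$) and of the prefactors $\tfrac1n$ and $\tfrac1{n^2}$ in front of $Z^\ell(G)$ and the tilded terms when the two inequalities are composed.
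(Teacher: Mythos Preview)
Your proposal is correct and matches the paper's own argument exactly: the paper states Lemma~\ref{main_v2} by saying that instead of applying the entropy estimates one simply collects \eqref{cumpeo} and \eqref{linares}, which is precisely the chaining you describe, with the two $\tfrac{\delta n^2}{2}\mc D$ contributions summing to $\delta n^2\mc D$ and $C(A,\eps_0)=8/\eps_0^2$.
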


This version of Lemma \ref{l1} will be needed in the proof of our non-equilibrium version of the Boltzmann-Gibbs principle.

\begin{remark}
If needed, the dependence of $C(A,\eps_0)$ can be tracked back; since this dependence is not very intuitive and we do not need it here, we opted to not make it explicit.
\end{remark}

\begin{remark}
The fact that $A \in \bb O^-$ is not very important, but it considerably simplifies the notation. The interested reader will not have trouble working out the corresponding modifications.
\end{remark}

\section{The entropy inequality}
\label{s4}
In this section we prove Theorem \ref{t2} and Corollary \ref{c1}. As we will see below, most of the work has been accomplished in the derivation of Lemma \ref{l1}. 

\subsection{Proof of Theorem \ref{t2}}
\label{s4.1}
Let us recall that the process $\eta^n(\cdot)$ is generated by the operator $L_n$ given by
\[
\begin{split}
L_n h(\eta) = n^2 \sum_{\subind} \Big\{ 
		&\max\big\{\tfrac{1}{2}, 1+ \tfrac{1}{n} F_b^n(x)\big\} \eta_x (1-\eta_{x+b}) \\
		&+ \max\big\{\tfrac{1}{2}, 1- \tfrac{1}{n} F_b^n(x)\big\}\eta_{x+b}(1-\eta_x) \Big\} \nabla_{x,x+b} h(\eta).
\end{split}
\]
The {\em carr\'e du champ} associated to $L_n$ is the quadratic operator $\Gamma_n$ given by
\[
\begin{split}
\Gamma_n h(\eta) = n^2 \sum_{\subind} \Big\{ 
		&\max\big\{\tfrac{1}{2}, 1+ \tfrac{1}{n} F_b^n(x)\big\} \eta_x (-\eta_{x+b}) \\ \vspace{-10pt}
		&+ \max\big\{\tfrac{1}{2}, 1- \tfrac{1}{n} F_b^n(x)\big\}\eta_{x+b}(1-\eta_x) \Big\} \big(\nabla_{x,x+b} h(\eta)\big)^2.
\end{split}
\]
Recall definition \eqref{valparaiso}. Notice that this definition makes sense for any measure $\mu$ in $\Omega_n$. We have that for any measure $\mu$ and any density $f$ with respect to $\mu$,
\begin{equation}
\label{pitrufquen}
\int \Gamma_n \sqrtt{f} d \mu \geq \frac{n^2}{2} \mc D\big( \sqrtt{f}; \mu \big).
\end{equation}
By Yau's inequality stated in Lemma \ref{lA1}, 
\begin{equation}
\label{teodoro}
H'_n(t) \leq - \int\Gamma_n \sqrtt{f_t^n} d \mu_t^n + \int \big( L_{n,t}^\ast \mathds{1} - \tfrac{d}{dt} \log \psi_t^n \big) f_t^n d \mu_t^n,
\end{equation}
where $L_{n,t}^\ast$ is the adjoint of $L_n$ with respect to $\mu_t^n$ and where $\psi_t^n$ is the Radon-Nikodym derivative of $\mu_t^n$ with respect to $\nu_{1/2}^n$. Thanks to \eqref{pitrufquen}, \eqref{teodoro} implies that
\[
H'_n(t) \leq -\frac{n^2}{2} \mc D\big( \sqrtt{f_t^n} ; \mu \big) + \int J_t^n f_t^n d \mu_t^n,
\] 
where $J_t^n =  L_{n,t}^\ast \mathds{1} - \tfrac{d}{dt} \log \psi_t^n$. By \eqref{funG}, $J_t^n$ is of the form
\[
J_t^n = \sum_{\subind} G_{x,b}^n(t) \omega_{x} \omega_{x+b} + \frac{1}{n^2} \sum_{x \in \tdn} \omega_x R_x^n(t),
\]
where $G_{x,b}^n(t)$ satisfies
\[
|G_{x,b}^n(t)| \leq \|\nabla u_t \|_\infty \big( \| \nabla u_t \|_\infty + \|F\|_\infty \big)
\]
and $R_x^n(t)$ satisfies $|R_x^n(t)| \leq \|u_t\|_{\mc C^4}$. The term involving $R_x^n(t)$ is very easy to estimate: by the entropy estimate \eqref{temuco}, 
\[
\begin{split}
\int \frac{1}{n^2} \sum_{x \in \tdn} \omega_x R_x^n(t) f_t^n d \mu_t^n 
		&\leq \gamma^{-1} \Big( H_n(t) + \log \int \exp\Big\{ \frac{\gamma}{n^2} \sum_{x \in \tdn} \omega_x R_x^n(t)\Big\} d \mu_t^n \Big)\\
		&\leq \gamma^{-1} \big( H_n(t) + C(\eps_0) \|R^n(t)\|_\infty^2 \gamma^2 n^{d-4}\big).
\end{split}
\]
Then it is enough to take $\gamma=1$ and to observe that $n^{d-4} \ll g_d(n) n^{d-2}$.
By Lemma \ref{lC3}, for any $T >0$ there exists $\eps_1 = \eps_1(T,\eps_0)>0$ such that $\eps_1 \leq u_x^n(t) \leq 1-\eps_1$ for any $n \in \bb N$, any $x \in \tdn$ and any $t \in [0,T]$. Therefore, Lemma \ref{l1} can be used with $\mu = \mu_t^n$, $f = f_t^n$, $A =\{0\}$, $G_x = G_{x,b}^n(t)$ and $\delta = \frac{1}{2d}$ to conclude that
\[
\int J_t^n f_t^n d \mu_t^n \leq \frac{n^2}{2} \mc D \big( \sqrtt{f_t^n} ; \mu_t^n \big) + C(u_0,F) \big(H_n(t) + n^{d-2} g_d(n)\big),
\]
which proves \eqref{entropy1}. The second part of Theorem \ref{t2} follows from \eqref{entropy1} and Gronwald's inequality.

\subsection{Proof of Corollary \ref{c1}} 
\label{s4.2}
Corollary \ref{c1} is a particular case of the following result, which is an example of known in the literature as {\em conservation of local equilibrium}. 

\begin{corollary}
\label{c2}
For any $p \in [1,2)$, any $t \in [0,T]$ and any $A \subseteq \bb Z^d$ there exists finite constant $C = C(\eps_0,T,p,A)$ such that for any $n \in \bb N$ and any $H: \tdn \to \bb R$,
\[
\bb E_n \Big[\Big| \frac{1}{n^d} \sum_{x \in \tdn} \omega_{x+A} H_x \Big|^p\Big] \leq \frac{C\big(1+H_n(t)\big)^{p/2} \|H\|_\infty^p}{n^{pd/2}}.
\]
In particular, if $H_n(0) \leq C n^{d-2} g_d(n)$, then
\[
\bb E_n \Big[\Big| \frac{1}{n^d} \sum_{x \in \tdn} \omega_{x+A} H_x \Big|^p\Big] \leq \frac{C g_d(n)^{p/2} \|H\|_\infty^p}{n^p}.
\]
\end{corollary}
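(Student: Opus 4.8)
The plan is to deduce both displayed bounds from the reference measures $\mu_t^n$, by controlling the function $S := \frac{1}{n^d}\sum_{x\in\tdn}\omega_{x+A}H_x$ on $\Omega_n$ in $L^2(\bb P_n)$ via a subgaussian estimate under $\mu_t^n$ together with the entropy inequality \eqref{temuco}. First I would fix a uniform-in-time ellipticity bound: by Lemma \ref{lC3} there is $\eps_1=\eps_1(T,\eps_0)>0$ with $u_x^n(s)\in[\eps_1,1-\eps_1]$ for every $x\in\tdn$, every $n$ and every $s\in[0,T]$. Consequently the family $\{\omega_{x+A};x\in\tdn\}$ is centred under $\mu_t^n$, $\ell_0$-dependent (with $\ell_0$ the size of the smallest cube containing $A$) and, by Lemma \ref{lH2.4}, subgaussian of a fixed order $C(A,\eps_1)$, uniformly over $t\in[0,T]$. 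Since $S=\sum_x(H_x/n^d)\,\omega_{x+A}$ and $\sum_x(H_x/n^d)^2\le\|H\|_\infty^2\,n^{-d}$, the computation used in Section \ref{s2} to bound the subgaussian order of $\overleftarrow{\omega}_{x+A}^{\ell}$ (Lemma \ref{lH2.5}, with the $\ell_0$-dependence absorbed by splitting $\tdn$ into $\ell_0^d$ independent subfamilies via Hölder) shows that $S$ is subgaussian of order $\sigma^2:=C(A,\eps_1)\|H\|_\infty^2 n^{-d}$ under $\mu_t^n$.

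Second, by Proposition \ref{lH2.1} one has $\log\int e^{\gamma S^2}\,d\mu_t^n\le\log 3$ for $\gamma:=(4\sigma^2)^{-1}$, so applying the entropy inequality \eqref{temuco} to the density $f_t^n$ of the law of $\eta^n(t)$ with respect to $\mu_t^n$ gives
\[
\bb E_n[S^2]=\int S^2 f_t^n\,d\mu_t^n\le\gamma^{-1}\Big(H_n(t)+\log\int e^{\gamma S^2}\,d\mu_t^n\Big)\le 4\sigma^2\big(H_n(t)+\log 3\big)\le\frac{C(A,\eps_1)\,\|H\|_\infty^2}{n^d}\big(1+H_n(t)\big).
\]
Since $\bb P_n$ is a probability measure and $p\le 2$, Jensen's inequality for $y\mapsto y^{p/2}$ yields $\bb E_n[|S|^p]\le(\bb E_n[S^2])^{p/2}$, which is the first bound with $C=C(\eps_0,T,p,A)$. (One may instead integrate the tail: the entropy inequality applied to the event $\{|S|>\lambda\}$ together with the subgaussian tail of $S$ under $\mu_t^n$ gives $\bb P_n(|S|>\lambda)\le C\lambda^{-2}\sigma^2(1+H_n(t))$, and there the restriction $p<2$ becomes visible through the convergence of $\int_1^\infty\lambda^{p-3}\,d\lambda$.)

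For the second statement, note that $n^{d-2}g_d(n)\ge 1$ in every dimension $d\ge 1$, so if $H_n(0)\le C_0\,n^{d-2}g_d(n)$ then \eqref{entropy2} gives $1+H_n(t)\le C(u_0,F,T)\,n^{d-2}g_d(n)$ for all $t\le T$; inserting this into the first bound and using $(n^{d-2})^{p/2}n^{-pd/2}=n^{-p}$ gives $\bb E_n[|S|^p]\le C\,g_d(n)^{p/2}\|H\|_\infty^p\,n^{-p}$. Finally, Corollary \ref{c1} is the special case $A=\{0\}$, $H_x=u_x^n(t)(1-u_x^n(t))\,f(\tfrac{x}{n})$ (hence $\|H\|_\infty\le\tfrac14\|f\|_\infty$), using $\eta_x^n(t)-u_x^n(t)=u_x^n(t)(1-u_x^n(t))\,\omega_x$. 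I do not expect a substantial obstacle: the argument is essentially an assembly of already-proved ingredients, and the only steps needing a little care are the $t$-uniform ellipticity bound of Lemma \ref{lC3} and the passage from subgaussianity of the factors $\omega_{x+A}$ to subgaussianity of their weighted $\ell_0$-dependent sum $S$, both handled by the concentration lemmas of Appendix \ref{sH}.
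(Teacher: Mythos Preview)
Your argument is correct and uses the same ingredients as the paper (subgaussianity of $S$ under $\mu_t^n$ from Lemmas \ref{lH2.4} and \ref{lH2.5}, the entropy inequality, and the bound on $H_n(t)$ from Theorem \ref{t2}), but the way you combine them is genuinely different from the paper's proof.

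The paper proceeds through a \emph{tail} bound: it applies \eqref{temuco2} to the event $\{|S|>\lambda\}$, uses the subgaussian tail $\mu_t^n(|S|>\lambda)\le 2\exp\{-\lambda^2/(2\sigma^2)\}$ to control the denominator, obtains $\bb P_n(|S|>\lambda)\le C\sigma^2(1+H_n(t))\lambda^{-2}$, and then integrates via Lemma \ref{lH1.5} to recover the $p$-th moment. This is exactly the parenthetical alternative you mention, and it is why the paper needs $p<2$: a tail decaying like $\lambda^{-2}$ only yields finite $p$-th moments for $p<2$.

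Your route is more direct: you apply \eqref{temuco} to the function $S^2$ itself, using Proposition \ref{lH2.1} to bound $\log\int e^{\gamma S^2}\,d\mu_t^n$, and land immediately on a second-moment bound $\bb E_n[S^2]\le C\sigma^2(1+H_n(t))$; then Jensen for the concave map $y\mapsto y^{p/2}$ gives the $p$-th moment for all $p\le 2$. This is a cleaner assembly and has the small bonus that it covers $p=2$ as well, even though the statement only asks for $p\in[1,2)$. Both approaches rest on the same subgaussian estimate for $S$ under $\mu_t^n$, so neither is deeper; yours is just shorter.
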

\begin{proof}
By \eqref{temuco2},
\[
\bb P_n\Big( \frac{1}{n^d} \sum_{x \in \tdn} \omega_{x+A} H_x > \lambda\Big) \leq \frac{H_n(t) + \log 2}{\log \mu_t^n\Big(\frac{1}{n^d} \sum_{x \in \tdn} \omega_{x+A} H_x > \lambda \Big)^{-1}}
\]
By Lemmas \ref{lH2.4} and \ref{lH2.5}, 
\[
\mu_t^n\Big(\Big|\frac{1}{n^d} \sum_{x \in \tdn} \omega_{x+A} H_x \Big| > \lambda \Big)
		\leq 2\exp\Big\{ -\frac{\lambda^2 n^d}{C(A,\eps_0) \|H\|_\infty^2}\Big\},
\]
and therefore
\[
\bb P_n\Big( \frac{1}{n^d} \sum_{x \in \tdn} \omega_{x+A} H_x > \lambda\Big) 
		\leq \frac{H_n(t) + \log 2}{n^d} \cdot \frac{C\|H\|_\infty^2}{\lambda^2}.
\]
By Lemma \ref{lH1.5}, we conclude that
\[
\bb E_n \Big[\Big| \frac{1}{n^d} \sum_{x \in \tdn} \omega_{x+A} H_x \Big|^p\Big] \leq \frac{C\big(1+H_n(t)\big)^{p/2} \|H\|_\infty^p}{n^{pd/2}},
\]
as we wanted to show. The second estimate follows from the bound
\[
H_n(t) \leq C(\eps_0,F,T)\big( H_n(0) + g_d(n) n^{d-2}\big),
\]
which was obtained in Theorem \ref{t2}.
\end{proof}

An immediate consequence of Holder's inequality and Corollary \ref{c2} is the following estimate, which we state for further reference: under the conditions of Corollary \ref{c2}, for any $0 \leq s <t \leq T$ and any $H: [s,t] \times \tdn \to \bb R$,
\begin{equation}
\label{c2'}
\bb E_n \Big[ \Big| \int_s^t \frac{1}{n^{d/2}} \sum_{x \in \tdn} \omega_{x+A} H_{s',x} ds \Big|^p\Big] \leq 
	C \|H\|_\infty^p |t-s|^p\sup_{s \leq s'\leq t} \big(1+H_n(s')\big)^{p/2}.
\end{equation}

\section{The Boltzmann-Gibbs Principle}
\label{s5}

In this section we prove what is known in the literature as the {\em Boltzmann-Gibbs principle}, which roughly states that space-time averages of local observables of conservative dynamics can be approximated by functions of the conserved quantities. A general proof of this principle only exists in {\em equilibrium}, that is, when the stochastic system in consideration starts from one of its invariant measures. The main novelty in this section is the derivation of a general strategy to prove the Boltzmann-Gibbs principle {\em out of equilibrium}, in dimensions $d < 4$. Existing proofs of this principle out of equilibrium are either based on the concept of duality (see \cite{D-MPre} or \cite{AyaCarRed} for a more recent reference), or require $d=1$, reversibility and the availability of a sharp estimate of the log-Sobolev constant of the system with respect to its invariant measure \cite{ChaYau}, \cite{Lan}.

We will state the Boltzmann-Gibbs principle in a less conventional way, which is more convenient for the purposes of this article. After the proof is finished, we will explain how to relate our formulation with the formulation commonly found in the literature.

\begin{theorem}[Boltzmann-Gibbs principle]
\label{BG}
Let $A \subseteq \{z \in \bb Z^d; z_i \leq 0, i=1,\dots,d\}$ be fixed. Let $b \in \mc B$ and let $H^n: [0,T] \times \tdn \to \bb R$ be uniformly bounded in $n$. Assume that
\begin{equation}
\label{lota}
\lim_{n \to \infty} \frac{H_n(0)}{n^{d/2}} =0.
\end{equation}
For $d <4$, any $s<t \in [0,T]$, any $b \in \mc B$ and any $\lambda >0$,
\[
\lim_{n \to \infty} \bb P_n \Big( \Big| \int_s^t \frac{1}{n^{d/2}} \sum_{x \in \tdn} \omega_{x+A} \omega_{x+b} H_x ds' \Big| > \lambda \Big) =0.
\]
Moreover, for any $K>0$ this convergence is uniform in the set $\{\sup_{n \in \bb N} \|H^n\|_\infty \leq K\}$. 
\end{theorem}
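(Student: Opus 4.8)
The strategy is to estimate the exponential moments of the time-integrated functional $\int_s^t \frac{1}{n^{d/2}} \sum_{x} \omega_{x+A}\omega_{x+b} H^n_{s',x}\, ds'$ and then apply Chebyshev. The natural tool is the variational estimate for exponential moments of additive functionals of Markov processes, Lemma \ref{lA2}, which bounds $\log \bb E_n^\mu \big[\exp\{\int_0^T V_s(\eta(s))\,ds\}\big]$ in terms of $H_n(0)$ plus a supremum over densities $f$ of a Dirichlet-form-corrected expectation of $V_s f$. Concretely, I would fix $\theta \in \bb R$ and apply this lemma to the time-dependent potential $V_{s'} = \frac{\theta}{n^{d/2}}\, V_b(H^n_{s'};A)$ (in the notation \eqref{vina}), obtaining a bound of the form
\[
\log \bb E_n\Big[\exp\Big\{\theta \int_s^t \tfrac{1}{n^{d/2}} \textstyle\sum_x \omega_{x+A}\omega_{x+b}H^n_{s',x}\,ds'\Big\}\Big] \leq H_n(0) + \int_s^t \sup_f \Big\{ \int \tfrac{\theta}{n^{d/2}} V_b(H^n_{s'};A) f\, d\mu_{s'}^n - n^2 \mc D\big(\sqrtt f;\mu_{s'}^n\big)\Big\} ds'.
\]
Here I use $\mu_{s'}^n$ as the reference measure at time $s'$, which introduces the time-derivative-of-entropy correction terms; these are exactly the $J^n_t$ terms controlled in the proof of Theorem \ref{t2}, so they contribute only lower-order errors absorbed using the a priori bound $H_n(t) \leq C(H_n(0) + n^{d-2}g_d(n))$.

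The heart of the matter is the supremum over densities, and this is where version 2 of the main lemma, Lemma \ref{main_v2}, enters. Applying it with $G = \frac{\theta}{n^{d/2}} H^n_{s'}$ and with $\delta$ chosen so that $\delta n^2 \mc D(\sqrtt f;\mu) $ is absorbed by the $-n^2\mc D(\sqrtt f;\mu)$ term, I reduce the problem to bounding the expectations of $V^\ell(G)$, $\frac{C}{\delta n^2}W^\ell(G)$, $\frac1n \wt V^\ell(G)$, $\frac{C}{\delta n^4}\wt W^\ell(G)$ and $\frac1{n^2}\wt Z^\ell(G)$ against $H_n(t) + (\text{error})$. Each of these is estimated exactly as in the proof of Lemma \ref{l1}, via Lemmas \ref{lH1.4}, \ref{lH2.2}, \ref{lH2.5} and the Hanson--Wright bound Lemma \ref{lH2.6}, but now with the extra gain coming from the prefactor $\frac{1}{n^{d/2}}$ in $G$: this is an additional factor $n^{-d}$ in $\|G\|_\infty^2$, which is what makes the whole right-hand side vanish after integrating and taking $n\to\infty$, provided $d<4$. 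Tracking the powers, the leading error term is of order $\frac{\theta^2}{n^d}\cdot n^{d-2}g_d(n) + \frac{\theta^2 g_d(n)}{n^{\,?}}$, which tends to $0$ for each fixed $\theta$ precisely in the regime $d<4$; the dimension restriction is inherited from the interplay of $\ell^d g_d(\ell)$ and $n^2$ that already dictated \eqref{arauco}.

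Once I have $\limsup_n \log \bb E_n[\exp\{\theta \int_s^t \cdots ds'\}] \leq 0$ for every $\theta \in \bb R$ (using $H_n(0) = o(n^{d/2})$ from hypothesis \eqref{lota} to kill the entropy term, and here the normalization $n^{-d/2}$ rather than $n^{-d}$ is what matches the hypothesis), exponential Chebyshev gives, for every $\lambda>0$ and every $\theta>0$,
\[
\bb P_n\Big(\Big|\int_s^t \tfrac{1}{n^{d/2}}\textstyle\sum_x \omega_{x+A}\omega_{x+b}H^n_{s',x}\,ds'\Big| > \lambda\Big) \leq e^{-\theta\lambda}\Big(\bb E_n[e^{\theta\int}] + \bb E_n[e^{-\theta\int}]\Big),
\]
and letting $n\to\infty$ then $\theta\to\infty$ yields the claim. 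The uniformity over $\{\sup_n\|H^n\|_\infty \leq K\}$ is automatic since every constant above depends on $H^n$ only through $K = \sup_n\|H^n\|_\infty$. The main obstacle, as flagged, is bookkeeping the error exponents in the five terms of Lemma \ref{main_v2} simultaneously with the $\frac{1}{n^{d/2}}$-rescaling and the entropy-correction terms $J^n_{s'}$, and verifying that \emph{every} one of them is $o(1)$ for $d\le 3$; the term $\frac1n \wt V^\ell(G)$ (the ``once-iterated'' term) is the delicate one, being the analogue of \eqref{curepto}, and it is here that the choice \eqref{arauco} of $\ell$ is used in an essential way.
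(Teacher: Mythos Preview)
Your plan has a genuine gap in the step where you take the supremum over densities $f$. After applying Lemma \ref{main_v2} inside the variational formula of Lemma \ref{lA2}, you are left with
\[
\sup_f \int \Big( V^\ell(G) + \tfrac{C}{\delta n^2} W^\ell(G) + \tfrac{1}{n}\wt V^\ell(G) + \tfrac{C}{\delta n^4}\wt W^\ell(G) + \tfrac{1}{n^2}\wt Z^\ell(G) + \tfrac12 J^n_{s'}\text{-terms}\Big) f\, d\mu_{s'}^n,
\]
where the supremum runs over \emph{all} densities $f$. You then propose to bound each of these integrals ``against $H_n(t)$'' using the entropy inequalities \eqref{molina}--\eqref{lircay}. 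But those inequalities produce bounds involving $H(f;\mu)$, the entropy of the \emph{variational} density $f$, not $H_n(t)$, the entropy of the true law. Inside the supremum $H(f;\mu)$ is unbounded (or, if you use the crude bound $H(f;\mu)\le C n^d$, you obtain a term of order $\|G\|_\infty\, n^d \sim \theta\, n^{d/2}$, which does not vanish). Likewise the $L^\infty$ norms of $V^\ell(G)$ etc.\ are of order $n^d\|G\|_\infty$, so taking the supremum over $f$ directly gives nothing useful.

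The paper's proof circumvents this by an additional idea you are missing: one introduces a counterterm $U_\gamma(H)$, defined precisely as the sum of the ten residual terms produced by Lemma \ref{main_v2} applied to both $\gamma V(H)$ and $\tfrac12 J^n$, so that the variational supremum for $V(H)-U_\gamma(H)$ is \emph{identically} $\le 0$. This yields, via exponential Chebyshev and the entropy change of measure \eqref{temuco2}, the bound $\bb P_n(\int_s^t (V(H)-U_\gamma(H))\,ds' > \lambda n^{d/2}) \le (H_n(s)+\log 2)/(\gamma\lambda n^{d/2})$. The remaining piece $\bb P_n(\int_s^t U_\gamma(H)\,ds'>\lambda n^{d/2})$ is then estimated \emph{not} through the variational formula but directly, by taking expectations against the true law and invoking the a priori entropy bound of Theorem \ref{t2}; here the estimates \eqref{molina}--\eqref{lircay} are legitimate because $H(f^n_{s'};\mu^n_{s'})=H_n(s')$ is controlled. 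The splitting into ``variational part with exact cancellation'' plus ``a priori part with entropy bound'' is the essential mechanism, and it is absent from your outline. A secondary issue: Lemma \ref{lA2} applies only when the process starts from $\mu_s^n$, so one must first pass from $\bb P_n$ to $\bb P_n^{\mu_s^n}$ via \eqref{temuco2}, not by adding $H_n(0)$ to the log-moment as you wrote.
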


Recall that we are omitting the dependence on $s'$ and $n$ of $H_x=H_x^n(s)$ and $\omega_x$. 

\begin{remark}
We wrote $d<4$ instead of $d \leq 3$ to emphasize that the critical dimension for our method is $d=4$. This condition comes from the condition $2 > d/2$. Here $d/2$ is the size of the fluctuations, and $2$ is the spectral exponent of the process. It is not difficult to build models (for example using long-range dynamics) on which the spectral exponent is $\alpha \in (0,2)$ and for which one can verify that the critical dimension is $2 \alpha$.  
\end{remark}

\begin{proof}
Considering $H$ and $-H$ and using the union bound, it is enough to prove that
\[
\lim_{n \to \infty} \bb P_n \Big( \int_s^t \frac{1}{n^{d/2}} \sum_{x \in \tdn} \omega_{x+A} \omega_{x+b} H_x ds' > \lambda \Big) =0.
\]
for each $b \in \mc B$. Notice that 
\[
\sum_{x \in \tdn} \omega_{x+A} \omega_{x+b} H_x
\]
coincides with the function $V(H)$ defined in \eqref{vina}. Therefore, our aim will be to estimate the probability
\[
\bb P_n \Big( \int_s^t V(H) ds' > \lambda n^{d/2}\Big).
\]
For reasons that will become apparent in a few lines, it will be convenient to introduce an auxiliary function $U_\gamma(H)$ which will depend on $H$ and on an additional parameter $\gamma>0$, and to estimate
\[
\bb P_n \Big( \int_s^t \big( V(H) - U_\gamma(H)\big)ds' > \lambda n^{d/2} \Big).
\]
The idea is to use the exponential Tchebyshev inequality and Lemma \ref{lA2} to estimate this probability. However, Lemma \ref{lA2} requires $\eta^n(s)$ to have law $\mu_s^n$. Let $\widetilde{\bb P}_n$ be the law of $\eta^n(\cdot+s)$. By the Markov property,
\[
H\bigg( \frac{d \widetilde{\bb P}_n}{d \bb P_n^{\mu_s^n}} ; \bb P_n^{\mu_s^n} \bigg) = H \big( f_s^n ; \mu_s^n\big) = H_n(s).
\]
We conclude that by \eqref{temuco2},
\[
\bb P_n \Big( \int_s^t \hspace{-4pt}\big( V(H) - U_\gamma(H)\big)ds' \hspace{-2pt}>\hspace{-2pt} \lambda n^{d/2} \Big)
		\leq\frac{H_n(s)+\log 2}{\log \bb P_n^{\mu_s^n} \Big( \int_0^{t-s} \big( V(H) - U_\gamma(H)\big)ds' > \lambda n^{d/2} \Big)^{-1}}.
\]
Therefore, it is enough to estimate 
\[
\log \bb P_n^{\mu_s^n} \Big( \int_0^{t-s} \big( V(H) - U_\gamma(H)\big)ds > \lambda n^{d/2} \Big).
\]
Since the initial law is now $\mu_s^n$, we can use Lemma \eqref{lA2} (with $\mu_{s+t}^n$ in place of $\mu_t$). By the exponential Tchebyshev inequality, the last expression is bounded by
\begin{multline}
\label{calafquen}
-\gamma \lambda n^{d/2} + \int_0^{t-s} \sup_{f} \Big\{ -\int \Gamma_n \sqrtt{f} d \mu_{s+s'}^n +\\
+ \int \gamma\big(V(H) -U_\gamma(H)\big) f d\mu_{s+s'}^n +\frac{1}{2} \int J_{s+s'}^n f d \mu_{s+s'}^n\Big\} ds',
\end{multline}
where the supremum runs over all densities $f$ with respect to $\mu_{s+s'}^n$. We have already seen that the main lemma can be used to bound $J_{s+s'}^n$, and it can also be used to bound $V(H)$. However, since here we are taking the supremum over all densities, we need to use the version of the main lemma stated in Lemma \ref{main_v2}. Using Lemma \ref{main_v2} for both $\int J_{s+s'}^n f d\mu_{s+s'}^n$ and $\int V(H) f d \mu_{s+s'}^n$, we see that \eqref{calafquen} is bounded by
\[
\begin{split}
-\gamma \lambda n^{d/2}
		&+\int_0^{t-s} \sup_f \Big\{ - \int \gamma U_\gamma(H) f d \mu_{s+s'}^n \\
		&\hspace{1.5pt}+\Big( \gamma V^\ell \hspace{-1.5pt} (H) +\frac{C\gamma^2}{n^2} W^\ell(H) +\frac{\gamma}{n} \widetilde{V}^\ell\hspace{-1.5pt}(H) + \frac{C \gamma^2}{n^4} \widetilde{W}^\ell(H) + \frac{\gamma}{n^2} \widetilde{Z}^\ell(H) +\frac{1}{2} V^\ell\hspace{-1.5pt}(G)\\
		&\quad + \frac{C}{n^2} W^\ell(G) +\frac{1}{2n} \widetilde{V}^\ell(G) +\frac{C}{n^4} \widetilde{W}^\ell(G) + \frac{1}{2n^2} \widetilde{Z}^\ell(G) \Big) f d \mu_{s+s'}^n\Big\} ds',
\end{split}
\]
where $C$ is a constant that depend only on $u_0,T$ and $F$. We have introduced the function $U_\gamma(H)$ in order to cancel the other 10 terms on this variational expression. More precisely, if we define
\begin{equation}
\label{petrohue}
\begin{split}
U_\gamma(H) 
		&:= \gamma V^\ell(H) +\frac{C\gamma^2}{n^2} W^\ell(H) +\frac{\gamma}{n} \widetilde{V}^\ell(H) + \frac{C \gamma^2}{n^4} \widetilde{W}^\ell(H) + \frac{\gamma}{n^2} \widetilde{Z}^\ell(H) \\
		&\quad \quad \quad+\frac{1}{2} V^\ell(G) + \frac{C}{n^2} W^\ell(G) +\frac{1}{2n} \widetilde{V}^\ell(G) +\frac{C}{n^4} \widetilde{W}^\ell(G) + \frac{1}{2n^2} \widetilde{Z}^\ell(G),
\end{split}
\end{equation}
we conclude that
\begin{equation}
\label{panguipulli} 
\log \bb P_n^{\mu_s^n} \Big( \int_0^{t-s} \big( V(H) - U_\gamma(H)\big)ds' > \lambda n^{d/2} \Big)
		\leq -\gamma \lambda n^{d/2},
\end{equation}
and therefore
\begin{equation}
\label{choshuenco}
\bb P_n \Big( \int_s^t \big( V(H) - U_\gamma(H)\big)ds' > \lambda n^{d/2} \Big)
		\leq \frac{H_n(0) + \log 2}{\gamma \lambda n^{d/2}} \xrightarrow{n \to \infty} 0
\end{equation}
by \eqref{lota}, since $\gamma$ is fixed.

In order to finish the proof of the theorem, we need to estimate 
\begin{equation}
\label{rinihue}
\bb P_n\Big( \int_s^t U_\gamma(H) ds' > \lambda n^{d/2} \Big).
\end{equation}
For this term, the exponential bound of Lemma \ref{lA2} will not be useful, so we need another argument. The idea is to use the entropy bound as an {\em a priori} bound to deal with $U_\gamma(H)$. Since $U_\gamma(H)$ is defined in terms of averages over boxes of size $\ell$, the entropy inequality will be effective. We have that \eqref{rinihue} is bounded by
\[
\int_s^t \bb E_n\Big[ \frac{1}{n^{d/2}} \big|U_\gamma(H) \big| \Big] ds'.
\]
Looking back at \eqref{petrohue}, we see that $U_\gamma(H)$ is the sum of 10 terms that can be grouped into 5 pairs, whose expectations can be estimated in the same way we obtained \eqref{molina}, \eqref{cauquenes}, \eqref{curepto}, \eqref{constitucion} and \eqref{lircay}. The only difference comes from the absolute value inside the probability. On each of these estimates, if we use the second estimate in Lemma \ref{lH1.4}, we obtain the same bounds, since we always estimated the logarithm by something positive. We obtain the bound
\begin{equation}
\label{futrono}
\begin{split}
\int_s^t \bb E_n\Big[ \frac{1}{n^{d/2}} \big|U_\gamma(H) \big| \Big] ds'
		&\leq \frac{C (t-s)}{\lambda n^{d/2}} \Big( \frac{1}{\gamma} + \|H\|_\infty + \gamma \|H\|_\infty^2\Big)\times\\
		&\hspace{65pt}\times \Big( \sup_{s \leq s' \leq t}\big(1+ H_n(s')\big) + n^{d-2} g_d(n) \Big)\\
		&\leq \frac{C(t-s)}{\lambda}  \Big( \frac{1}{\gamma} +  \gamma \|H\|_\infty^2\Big) \Big(  \frac{H_n(0)}{n^{d/2}} + n^{d/2-2} g_d(n) \Big),
\end{split}
\end{equation}
where $C$ is a constant depending only on $u_0, F$ and $T$. We see that the right-hand side of this estimate goes to 0 as $n \to \infty$ exactly under the condition $d <4$. Putting \eqref{choshuenco} and \eqref{futrono} together, we conclude that
\begin{multline}
\label{lagranja}
\bb P_n \Big( \int_s^t V(H) ds' > \lambda n^{d/2} \Big) 
		\leq \frac{C\big( H_n(0)+ \log 2\big)}{\gamma \lambda n^{d/2}}+\\
		 + \frac{C(t-s)}{\lambda} \Big( \frac{1}{\gamma}+ \gamma \|H\|_\infty^2\Big) \Big( \frac{H_n(0)}{n^{d/2}} + n^{d/2-2}g_d(n)\Big),
\end{multline}
which proves the theorem.
\end{proof}

The classical formulation of the Boltzmann-Gibbs Principle is the following. Let $h_0: \Omega \to \bb R$ be a local function, that is $h_0$ depends on a finite number of variables $\eta_x$. Let $h_x$ be the translation by $x$ of $h_0$. Assume that $H$ is smooth. Then there exist functions $\{a_x^n(t), b_x^n(t); x \in \tdn, t \geq 0\}$ such that
\[
\lim_{n \to \infty} \int_0^T \frac{1}{n^{d/2}} \sum_{x \in \tdn} \big( H_x h_x -a_x -b_x(\eta_x -u_x)\big) ds =0
\]
in probability. Since $h_0$ is local, it is a finite combination of monomials of the form $\omega_{x+B}$. Theorem \ref{BG} requires the cardinality of $B$ to be equal to 2 or higher. It is enough to choose $a_x$ in such a way that it cancels the constant term of $h_x$ and $b_x$ in such a way that it cancels the linear term of $h_x$. This last step requires a summation by parts which takes advantage of the smoothness of $H$.

\section{Proof of Theorem \ref{t3}} 
\label{s6}
In this section we prove Theorem \ref{t3}. The proof will follow the martingale method introduced in \cite{HolStr}, see Chapter 11 of \cite{KipLan} for a review. In Section \ref{s6.1} we will write the process $\{X_t^n; t \geq 0\}$ as the sum of a martingale process $\{M_t^n; t \geq 0\}$ and an integral process. In Section \ref{s6.2} we show that the process $\{M_t^n; t \geq 0\}$ converges to a Gaussian noise,  which corresponds to the noise appearing in \eqref{SHE}. We will show that the convergence takes place with respect to an almost optimal topology, in a sense to be discussed afterwards. Finally, in Section \ref{s6.3} we use Theorem \ref{BG} and the results of Section \ref{s6.2} in order to prove Theorem \ref{t3}.

In order to avoid non-relevant topological considerations, in this section we will fix a finite time window $[0,T]$ and we will consider all processes as defined for $t \in [0,T]$.

\subsection{The associated martingales}
\label{s6.1}
A simple consequence of Dynkin's formula is that for any $g: [0,T] \times \Omega_n \to \bb R$ smooth on the time variable, the process
\[
g(t,\eta^n(t)) -g(0,\eta^n(0)) -\int_0^t (\partial_s+L_n) g(s,\eta^n(s)) ds
\]
is a martingale. The quadratic variation of this martingale is given by
\[
\int_0^t \Gamma_n g(s,\eta^n(s)) ds.
\]
Now let $H:[0,T] \times \bb T^d \to \bb R$ be smooth. Applying these formulas to the function $X_t^n(H_t)$, we see that the process $\{M_t^n(H); t \in [0,T]\}$ given by
\begin{equation}
\label{santiago}
M_t^n(H):= X_t^n(H_t) - X_0^n(H_0) - \int_0^t (\partial_s + L_n) X_s^n(H_s) ds
\end{equation}
is a martingale of quadratic variation
\[
\<M_t^n(H)\> = \int_0^t \Gamma_n X_s^n(H_s) ds.
\]
By duality, these relations define a martingale process $\{M_t^n; t \in [0,T]\}$ with values in $H_{-k}(\bb T^d)$ and \cadlag trajectories for $k$ large enough. Later on we will see that $k>d/2$ is enough. 

Both the integral term in \eqref{santiago} and the quadratic variation $\<M_t^n(H)\>$ can be computed explicitly. For $f: \tdn \to \bb R$, define $\Lambda_{t}^n f : \tdn \to \bb R$ as
\begin{equation}
\label{cerrillos}
\begin{split}
\Lambda_{x,t}^n f 
	&:= \sum_{b \in \mc B} n^2\big( f_{x+b} +f_{x-b} -2 f_x\big)\\
	&\quad \quad	+ \sum_{b \in \mc B} \Big( (1-2u_{x+b}) F_b^n(x) n \big(f_{x+b}-f_x\big) \\
	&\quad \quad \quad \quad		+ (1-2u_{x-b}) F_b^n(x-b) n \big( f_x- f_{x-b}\big)\Big).
\end{split}
\end{equation}
Then,
\begin{equation}
\label{santiago3}
X_t^n(H_t) = X_0^n(H_0^{\vphantom{n}}) + \mc R_t^n(H) + \mc A_t^n(H) + \mc Q_t^n(H) + M_t^n(H), 
\end{equation}
where
\begin{equation}
\label{santiago_rest}
\mc R_t^n(H) = \int_0^t \frac{1}{n^{d/2}}\sum_{x \in \tdn} H_s\big(\tfrac{x}{n}\big) \big(\mc L^n - \tfrac{d}{ds}\big) u_x ds,
\end{equation}
$\mc L^n$ is the discrete operator defined in \eqref{conguillio},
\begin{equation}
\label{santiago4}
\mc A_t^n(H) := \int_0^t X_s^n\big( (\tfrac{d}{ds} + \Lambda_s^n)H_s\big) ds
\end{equation}
and
\begin{equation}
\label{santiago5}
\mc Q_t^n(H) := \int_0^t \frac{1}{n^{d/2}} \sum_{\subind} 2 n \big( H_s\big(\tfrac{x+b}{n}\big) -H_s \big(\tfrac{x}{n}\big)\big) F_b^n(x) (\eta_x -u_x ) (\eta_{x+b} -u_{x+b}) ds.
\end{equation}
\
The quadratic variation of $M_t^n(G)$ is equal to
\begin{equation}
\label{santiago2}
\begin{split}
\<M_t^n(H)\> 
		&= \int_0^t \frac{1}{n^d} \sum_{\subind}\big( r_n(x,x+b) \eta_x (1-\eta_{x+b})\\
			&\quad \quad \quad + r_n(x+b,x) \eta_{x+b} (1-\eta_x)\big) n^2 \big( H_s\big(\tfrac{x+b}{n}\big)-H_s\big(\tfrac{x}{n}\big)\big)^2 ds
\end{split}
\end{equation}

\subsection{Convergence of the martingale process}
\label{s6.2}
In this section we prove the convergence of the martingale process $\{M_t^n; t \in [0,T]\}$. We start proving tightness. We will use Aldous' criterion, see Proposition \ref{pD3.1}. Let $\tau$ be a stopping time and let $\beta>0$. Recall the definition \eqref{ecA1.1} of the Sobolev norm $\|\cdot\|_{-k}$. We have that
\[
\begin{split}
\bb P_n\big( \big\| M_{\tau+\beta}^n-M_\tau\big\|_{-k} \geq \eps \big)
		&\leq \eps^{-2} \bb E_n \big[ \big\| M_{\tau+\beta}^n - M_\tau^n\big\|^2_{-k}\big]\\
		&\leq \eps^{-2} \sum_{m \in \bb Z^d} (1+|m|^2)^{-k} \bb E_n\big[ \big|M_{\tau+\beta}^n(\phi_m)-M_{\tau}^n(\phi_m)\big|^2\big].
\end{split}
\]
Here we use the convention $M_t^n(\phi_m) = M_t^n(\Re \phi_m) + i M_t^n(\Im \phi_m)$. Therefore,
\begin{equation}
\label{renca}
\begin{split}
\bb P_n\big( \big\| M_{\tau+\beta}^n-M_\tau\big\|_{-k} \geq \eps \big)
		&\leq \eps^{-2} \sum_{m \in \bb Z^d}  (1+|m|^2)^{-k}\bb E_n\big[ \<M_{\tau+\beta}^n(\Re \phi_m)\> - \<M_{\tau}^n(\Re \phi_m)\>\\
		&\quad \quad \quad \quad \quad \quad \quad \quad  +  \<M_{\tau+\beta}^n(\Im \phi_m)\> - \<M_{\tau}^n(\Im \phi_m)\>\big].
\end{split}
\end{equation}
Since $\|r_n\|_\infty \leq 1 +\|F\|_\infty = C(F)$ and 
\[
n^2 \big( H_s \big(\tfrac{x+b}{n}\big)- H_s \big(\tfrac{x}{n}\big)\big)^2 \leq \min\big\{\|\nabla H\|_\infty^2, n^2 \|H\|_\infty\big\},
\]
we have that
\[
\tfrac{d}{dt} \<M_t^n(H)\> \leq C(F)  \min\big\{\|\nabla H\|_\infty^2, n^2 \|H\|_\infty\big\}
\]
for any test function $H$ and therefore
the right-hand side of \eqref{renca} is bounded by
\[
C(F) \beta \eps^{-2} \sum_{m \in \bb Z^d} (1+|m|^2)^{-k} \min\{n^2,m^2\}.
\]
This expression is bounded by $C \beta$ for a constant $C =C(F,\eps,d)$ independent of $n$ if $2k-d>d$, that is, if $k > 1+d/2$, which proves item ii) of Proposition \ref{pD3.1}. Observe that the right-hand side of \eqref{renca} is finite for $k >d/2$, which shows that $\{M_t^n; t \in [0,T]\}$ has trajectories in $\mc D([0,T]; H_{-k}(\bb T^d)$ for $k >d/2$.
In order to prove item i) of Proposition \ref{pD3.1}, we will use the characterization of compact sets of Proposition \ref{pD1.1}. 

Let $\lambda_m = \frac{M}{|m|^{d+\eps}}$. Notice that $\lambda_m$ is summable, so by Proposition \ref{pD1.1} the set 
\[
K:=\big\{\big|\widehat{f}(m)\big|^2(1+|m|^2)^{-k} \leq \lambda_m\big\}
\]
is compact in $H_{-k}(\bb T^d)$. We have that
\[
\bb P_n\big(M_t^n \notin K\big) \leq \sum_{m \in \bb Z^d} \bb P_n\big( \big|M_t^n(\phi_m)\big|^2 (1+|m|^2)^{-k} \geq \lambda_m\big)
\]
Notice that the jumps of $\{M_t^n(\phi_m); t \in [0,T]\}$ are exactly equal to the jumps of $\{X_t^n(\phi_m); t \in [0,T]\}$. In particular, since two particle never jump at the same time, $\{M_t^n(\phi_m); t \in [0,T]\}$ has jumps of size at most $2n^{-d/2}$. By Burkholder-Davis-Gundy inequality (see Lemma C.1 in \cite{MouWeb} for the exact form used here), for any $p\geq1$ there is a constant $C_p$ such that
\[
\bb E_n\big[ \big|M_t^n(\phi_m)\big|^2\big] \leq C_p \Big( \bb E_n \big[ \<M_t^n(\phi_m)\>^p\big] + \frac{2^{2p}}{n^{pd}} \Big) \leq C_p(1+|m|^2)^p.
\]
Therefore,
\[
\bb P_n\big( M_t^n \notin K\big) \leq \sum_{m \in \bb Z^d} \frac{C_p (1+|m|^2)^{p(1-k)}}{\lambda_m^p} \leq \frac{C(p,d,\eps)}{M^p}
\]
as soon as $2p(1-k) + p(d+\eps) <-d$, that is, if
\[
k > \frac{d+\eps}{2} + \frac{d}{2p} +1.
\]
We conclude that the sequence $\{M_t^n; t \in [0,T]\}_{n \in \bb N}$ is tight with respect to the $J_1$-Skorohod topology of $\mc D([0,T]; H_{-k}(\bb T^d))$ for any $k > \frac{d+\eps}{2} +\frac{d}{2p}+1$. Since $\eps$ and $p$ are arbitrary, we obtain the restriction $k < 1+d/2$. 

Now that we know that the sequence $\{M_t^n; t \in [0,T]\}_{n \in \bb N}$ is tight we will show that all its limit points are continuous. Let
\[
\Delta_T^n := \sup_{0 \leq t \leq T} \big\|M_t^n-M_{t-}^n\big\|_{-k},
\]
the size in $H_{-k}(\bb T^d)$ of the largest jump of $\{M_t^n; t \in [0,T]\}$. By Proposition \ref{pD3.2}, it is enough to show that $\Delta^n_T \to 0$ in probability as $n \to \infty$.
We already observed that the jumps of $\{M_t^n;t \in [0,T]\}$ are the same of $\{X_t^n; t \in [0,T]\}$. All jumps of $\{X_t^n; t \in [0,T]\}$ are of the form $\pm n^{-d/2} \big( \delta_{\frac{x+b}{n}}-\delta_{\frac{x}{n}}\big)$. Since $\delta_y \in H_{-k}(\bb T^d)$ for any $y$ and any $k>d/2$, by translation invariance we conclude that 
\[
\Delta_T^n \leq \sup_{\subind} n^{-d/2} \big\|\delta_{\frac{x+b}{n}} -\delta_{\frac{x}{n}}\big\|_{-k} \leq C(k) n^{-d/2},
\]
and any limit point of $\{M_t^n; t \in [0,T]\}$ has continuous trajectories.

Let $\{M_t; t \in [0,T]\}$ be a limit point of $\{M_t^n; t \in [0,T]\}_{n \in \bb N}$. We have just proved that the trajectories of $\{M_t; t \in [0,T]\}$ are continuous. Our objective is to prove that $\{M_t: t \in [0,T]\}$ is a martingale admitting the representation
\begin{equation}
\label{maipu}
M_t(H) = \sum_{i=1}^d \int_0^t d \mc W_s^i\big( \sqrtt{2u_s(1-u_s)} \partial_i H\big)
\end{equation}
for any test function $H \in \mc C^\infty(\bb T^d)$, where the processes $\{\mc W^i_t; t \in [0,T], i=1,\dots,d\}$ are independent, cylindrical Wiener processes. By the Cram\'er-Wold device and L\'evy's characterization theorem, it is enough to prove that for any test function $H \in \mc C^\infty(\bb T^d)$, $\{M_t(H); t \in [0,T]\}$ is a continuous martingale of quadratic variation 
\[
\<M_t(H)\> = \int_0^t \int 2 u(s,x)(1-u(s,x)) \|\nabla H(x)\|^2 dx ds.
\]
Recall that $\<M_t^n(H)\>$ is given by \eqref{santiago2}. Therefore, the convergence of  $\<M_t^n(H)\>$ to $\<M_t(H)\>$ follows at once from Corollary \ref{t3}. By Theorem VIII.3.11 of \cite{JacShi}, the limiting process $\{M_t(H); t \in [0,T]\}$ is a martingale of quadratic variation $\<M_t(H)\>$, which proves that $\{M_t; t \geq 0\}$ admits the representation \eqref{maipu}. Since this representation characterizes the law of $\{M_t; t \in [0,T]\}$, the sequence $\{M_t^n; t \in [0,T]\}$ has a unique limit, and therefore it is convergent. 

Summarizing, we have proved the following result:

\begin{theorem}
\label{t5}
The sequence of martingales $\{M_t^n; t \in [0,T]\}_{n \in \bb N}$ converges in law with respect to the $J_1$-Skorohod topology of $\mc D([0,T]; H_{-k}(\bb T^d))$ for any $k>1+d/2$ to the martingale $\{M_t; t \in [0,T]\}$ given by
\[
M_t(H) = \int_0^t \sum_{i=1}^d d\mc W_t^i\big(\sqrtt{2u_s(1-u_s)} \partial_i H\big)
\]
for any test function $H \in \mc C^\infty(\bb T^d)$, where $\{\mc W_t^i; t \in [0,T], i=1,\dots,d\}$ is an independent family of cylindrical Wiener processes in $L^2(\bb T^d)$.
\end{theorem}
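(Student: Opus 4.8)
The statement summarizes the analysis that precedes it, so the plan is threefold: (i) establish tightness of $\{M_t^n; t\inT\}_{n\in\bb N}$ in $\mc D([0,T];H_{-k}(\td))$ for a suitable range of the Sobolev index $k$; (ii) show that every limit point is concentrated on continuous paths; (iii) identify the limit by proving convergence of the quadratic variations and then invoking a martingale characterization theorem together with L\'evy's theorem and the Cram\'er--Wold device.

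For (i) I would apply Aldous' criterion, Proposition \ref{pD3.1}. The only input is the explicit expression \eqref{santiago2} for $\<M_t^n(H)\>$: since $\|r_n\|_\infty\le C(F)$ and $n^2\big(H_s(\tfrac{x+b}{n})-H_s(\tfrac{x}{n})\big)^2\le\min\{\|\nabla H\|_\infty^2,n^2\|H\|_\infty\}$, one has $\tfrac{d}{dt}\<M_t^n(H)\>\le C(F)\min\{\|\nabla H\|_\infty^2,n^2\|H\|_\infty\}$. Choosing $H=\phi_m$ and summing $(1+|m|^2)^{-k}\min\{n^2,m^2\}$ over $m\in\bb Z^d$ makes the increment condition uniform in $n$ as soon as $2k-d>d$, that is, $k>1+d/2$. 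For the compact-containment condition I would use the compactness criterion of Proposition \ref{pD1.1}: a Burkholder--Davis--Gundy estimate, together with the fact that the jumps of $M_t^n(\phi_m)$ have size $O(n^{-d/2})$, gives $\bb E_n[|M_t^n(\phi_m)|^2]\le C_p(1+|m|^2)^p$; summing over $m$ against suitably decaying Fourier weights places $M_t^n$ in a fixed compact set with probability close to one, provided $k$ is large enough. This yields tightness in $\mc D([0,T];H_{-k}(\td))$ for $k>1+d/2$.

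For (ii), every jump of $X^n$, hence of $M^n$, is of the form $\pm n^{-d/2}\big(\delta_{(x+b)/n}-\delta_{x/n}\big)$; since $\|\delta_y\|_{-k}<\infty$ for $k>d/2$ and the norm is translation invariant, the largest jump of $M^n$ measured in $H_{-k}(\td)$ is $O(n^{-d/2})\to0$, so Proposition \ref{pD3.2} forces all limit points to have continuous trajectories.

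Step (iii) is the substantive one. Fix a limit point $\{M_t; t\inT\}$ and $H\in\mc C^\infty(\td)$. By Cram\'er--Wold and L\'evy's characterization it suffices to check that $\{M_t(H); t\inT\}$ is a continuous martingale with bracket $\int_0^t\!\int 2u(s,x)(1-u(s,x))\|\nabla H(x)\|^2\,dx\,ds$. Granting the convergence $\<M_t^n(H)\>\to\int_0^t\!\int 2u(1-u)\|\nabla H\|^2$, Theorem VIII.3.11 of \cite{JacShi} supplies both the martingale property of the limit and the identification of its bracket; the representation \eqref{maipu} then follows by polarization and the independence built into that bracket, and since \eqref{maipu} characterizes the law of the limit, the full sequence converges. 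The crux is the convergence of the bracket: in \eqref{santiago2} one must replace the local function $r_n(x,x+b)\eta_x(1-\eta_{x+b})+r_n(x+b,x)\eta_{x+b}(1-\eta_x)$ by $2u_x^n(s)(1-u_x^n(s))$ up to the $O(n^{-1})$ weak-asymmetry correction, which is exactly a conservation-of-local-equilibrium statement in the spirit of Corollary \ref{c2}, using the entropy bound of Theorem \ref{t2} as the a priori estimate needed to discard the error terms. This passage from the microscopic two-point function to the deterministic hydrodynamic profile is the main obstacle; the only other delicate point is the bookkeeping with $k$, reconciling the increment condition, the compact-containment condition, and finiteness of the target norm, which is routine.
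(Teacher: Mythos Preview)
Your proposal is correct and follows essentially the same route as the paper: tightness via Aldous' criterion (Proposition~\ref{pD3.1}) using the pointwise bound on $\tfrac{d}{dt}\<M_t^n(H)\>$ for the increment condition and BDG plus Proposition~\ref{pD1.1} for compact containment, continuity of limit points via the jump-size estimate and Proposition~\ref{pD3.2}, and identification of the limit through convergence of the brackets together with Theorem~VIII.3.11 of \cite{JacShi}, Cram\'er--Wold and L\'evy. The paper is terser on the bracket convergence (it just cites a corollary of the entropy estimate), whereas you spell out that it is a conservation-of-local-equilibrium statement handled by Corollary~\ref{c2}; this is exactly right. One cosmetic point: in your BDG line the left-hand side should be the $2p$-th moment of $M_t^n(\phi_m)$, not the second moment (the paper has the same typo); with that correction the compact-containment step goes through for every $k>1+d/2$ after optimizing in $p$ and the auxiliary Fourier weight, just as you indicate.
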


\begin{remark}
Looking carefully at Corollary \ref{c1}, we see that this Theorem holds for any dimension $d$ and under the condition
\[
\lim_{n \to \infty} \frac{H_n(0)}{n^d} =0.
\]
\end{remark}

\begin{remark}
This theorem is optimal with respect to the topology in the following sense. By Proposition \ref{pD4.1}, $\{\mc W_t^i; t \in [0,T]\}$ has values in $H_{-k}(\bb T^d)$ if and only if $k > d/2$. Since $dM_t = \nabla \cdot \sqrtt{2u_t(1-u_t)} d\mc W_t$, the additional derivative tells us that $\{M_t; t \in [0,T]\}$ has values in $H_{-k}(\bb T^d)$ if and only if $k>1+d/2$.
\end{remark}

\subsection{Convergence of finite-dimensional laws}

\label{s6.3}

In this section we prove Theorem \ref{t3}. We will see that Theorem \ref{t3} is a simple consequence of the convergence of the martingales proved in Theorem \ref{BG} and the Boltzmann-Gibbs principle proved in Theorem \ref{t5}. For each $t \in [0,T]$, let $\bb L_t$ be the operator given by
\begin{equation}
\label{concepcion}
\bb L_t f(x) := \Delta f(x) + 2(1-2u(t,x)) F(x) \cdot \nabla f(x)
\end{equation}
for any $f \in \mc C^\infty(\bb T^d)$ and any $x \in \bb T^d$. For each $f \in \mc C^\infty(\bb T^d)$, let $\{P_{s,t}f ; 0 \leq s \leq t \}$ be the solution of the backwards Fokker-Planck equation
\begin{equation}
\label{semigrupo}
\left\{
\begin{array}{r@{\;=\;}l}
\partial_s v + \bb L_s v & 0 \text{ for } s \leq t\\
v_t & f.
\end{array}
\right.
\end{equation}
By Proposition \ref{pC2}, $s \mapsto P_{s,t} f$ is a smooth function. Notice that $\Lambda_s^n$ is a discrete approximation of order $\mc O(n^{-2})$ of $\bb L_s$, in the sense that there is a finite constant $C = C(u_0,F,\alpha)$ such that for any function $f \in \mc C^{2+\alpha}$, 
\begin{equation}
\label{matucana}
\sup_{s \in [0,T]} \sup_{x \in \tdn} \big| \Lambda_{s,x}^n f - \bb L_s f \big(\tfrac{x}{n}\big) \big| \leq \frac{C \|f\|_{\mc C^{2+\alpha}}}{n^{ \min\{2,\alpha\}}}
\end{equation}
Using $P_{\cdot,t} f$ as a test function in \eqref{santiago3}, we see that
\begin{equation}
\label{cerronavia}
X_t^n(f) = X_0^n(P_{0,t} f) + \mc R_t^n(P_{\cdot,t} f) +\mc A_t^n\big( P_{\cdot,t} f) + \mc Q_t^n( P_{\cdot,t} f) + M_t^n(P_{\cdot,t} f).
\end{equation}
By hypothesis, $X_0^n(P_{0,t}f)$ converges to $X_0( P_{0,t} f)$. By Theorem \ref{t5},  $M_t^n(P_{\cdot,t} f)$ converges to $M_t(P_{\cdot,t}f)$. 

From \eqref{matucana} and the definition of $P_{s,t} f$, we see that 
\[
\big|\mc A_t^n(P_{\cdot,t} f) \big| \leq C n^{d/2-2} \int_0^t \|P_{s,t} f\|_{\mc C^4} ds,
\]
which goes to $0$ for $d<4$.
Since $u$ is a smooth solution of the hydrodynamic equation \eqref{echid}, 
\begin{equation}
\label{cartagena}
\|\mc R_t^n(P_{\cdot,t} f)\|_\infty \leq Ct n^{d/2-2} \|f\|_\infty \|u\|_{\mc C^4}.
\end{equation}
The integral $\mc Q_t^n(P_{\cdot,t} f)$ is the sum of $d$ terms, each one them satisfying the conditions of Theorem \ref{BG}. Therefore $\mc Q_t^n(P_{\cdot,t} f) \to 0$ as $n \to \infty$.

We conclude that
\begin{equation}
\label{pudahuel}
\lim_{n \to \infty} X_t^n(f) = X_0(P_{0,t} f) + M_t(P_{\cdot,t} f).
\end{equation}
Notice that this relation does not identify the limit law, since we do not know the relation between $X_0$ and $M_t$. Let $ \{\mc F_t^n; t \in [0,T]\}$ the filtration generated by the process $\eta^n(\cdot)$. Since $\{M_t^n; t \in [0,T]\}$ is a martingale with respect to the filtration $\{\mc F_t^n; t \in [0,T]\}$ and $X_0^n$ is $\mc F_0^n$-measurable, we can assume that $\{M_t; t \in [0,T]\}$ and $X_0$ are defined in the same probability space, on which there is a filtration $\{\mc F_t; t \inT\}$ with respect to which $\{M_t; t \inT\}$ is a martingale and $X_0$ is $\mc F_0$-measurable. For any test function $H \in \mc C^\infty(\bb T^d)$, $\{M_t(H); t \inT\}$ is a real, continuous martingale of deterministic quadratic variation, starting at zero. Therefore, by L\'evy's characterization theorem, $\{M_t; t \inT \}$ is independent of $\mc F_0$ and in particular of $X_0$. We conclude that \eqref{pudahuel} characterizes the law of $X_t(f):= \lim_{n \to \infty} X_t^n(f)$. Notice that the relation
\[
X_t(f) = X_0(P_{0,t} f) + M_t(P_{\cdot,t} f) 
\]
defines a process $\{X_t; t \in [0,T]\}$ with values in $H_{-k}(\bb T^d)$ for $k > d/2+1$. According to Proposition \ref{pD4.2} this process is the solution of \eqref{SHE} with initial condition $X_0$. In order to complete the proof of Theorem \ref{t3}, we need to prove that for any $\ell \in \bb N$, any $0 \leq t_1 \leq \dots \leq t_\ell \leq T$ and any $f_1,\dots,f_\ell \in \mc C^\infty(\bb T^d)$,
\[
\lim_{n \to \infty} \big( X_{t_1}^n(f_1),\dots,X_{t_\ell}^n(f_\ell)\big) = \big(X_{t_1}(f_1),\dots,X_{t_\ell}(f_\ell)\big)
\]
in law. We just proved the case $\ell=1$. We proceed by induction. Assume that the limit above holds for $\ell$. By Cram\'er-Wold's device, the convergence of the couple $\big( X_{t_1}^n(f_1),\dots,X_{t_\ell}^n(f)\big)$ for any $f \in \mc C^\infty(\bb T^d)$ implies the convergence of the couple $\big( X_{t_1}^n(f_1),\dots,X_{t_\ell}^n(f_\ell), X_{t_\ell}^n(f_{\ell+1})\big)$ to its corresponding limit. Therefore, we can assume that $t_\ell< t_{\ell+1}$. In that case, the same proof of above shows that
\[
\lim_{n \to \infty} X_{t_{\ell+1}}^n(f_{\ell+1}) = X_{t_\ell}(P_{t_\ell,t_{\ell+1}} f) + M_{t_\ell,t_{\ell+1}}(P_{\cdot,t_{\ell+1}}f ),
\]
where $\{M_{t_\ell,t}; t \in [t_\ell,t_{\ell+1}]\}$ is a continuous martingale, independent of $\mc F_{t_\ell}$, defined in a common probability space with $\big( X_{t_1}^n(f_1),\dots,X_{t_\ell}^n(f_\ell)\big)$, in such a way that $\big( X_{t_1}^n(f_1),\dots,X_{t_\ell}^n(f_\ell)\big)$ is $\mc F_{t_\ell}$-measurable for any $f_1,\dots,f_\ell \in \mc C^\infty(\bb T^d)$. The quadratic variation of $\{M_{t_\ell, t} ; t \in [t_\ell,t_{\ell+1}]\}$ is equal to 
\[
\int_{t_\ell}^t \int 2 u_s (1-u_s) \big\|\nabla P_{s,t_{\ell+1}} f_{\ell+1} \big\|^2 dx ds.
\]
Since the process $\{X_t; t \in [0,T]\}$ satisfies the identity
\[
X_t(f) = X_s(P_{s,t} f) + M_t(P_{\cdot,t}f) - M_s(P_{\cdot,t} f)
\]
for any $0 \leq s \leq t \leq T$ and any test function $f \in \mc C^\infty(\bb T^d)$, we conclude that the sequence $\big( X_{t_1}^n(f_1),\dots,X_{t_{\ell+1}}^n(f_{\ell+1})\big)$ is convergent and its limit is equal to the vector $\big( X_{t_1}(f_1),\dots,X_{t_{\ell+1}}(f_{\ell+1})\big)$, which ends the proof of Theorem \ref{t3}.

\section{Tightness of the density fluctuation fields}
\label{s7}
In this section we prove tightness of the density fluctuation fields $\{X_t^n; t \inT\}_{n \in \bb N}$. Since the proofs depend on the dimension and also on the hypotheses over $H_n(0)$, we will treat each dimension separately. The reader satisfied with convergence of finite-dimensional laws can skip this section.

\subsection{The case \texorpdfstring{$d=1$}{d=1}} 
\label{s7.1}
In this section we assume that $d=1$ and that $\sup_n H_n(0) <+\infty$. In this case, Theorem \ref{t2} says that there is a finite constant $C = C(T,\eps_0,F)$ such that $H_n(t) \leq C$ for any $t \inT$ and any $n \in \bb N$. In decomposition \eqref{santiago3}, the terms $\{X_0^n\}_{n \in \bb N}$ and $\{M_t^n; t \in [0,T]\}_{n \in \bb N}$ are already tight, so it is enough to show tightness of the integral processes $\{\mc R_t^n; t \in [0,T]\}$, $\{\mc A_t^n; t \inT\}_{n \in \bb N}$, $\{\mc Q_t^n; t \inT\}_{n \in \bb N}$. The idea is to estimate the probabilities
\[
\bb P_n \big( \big\|\mc R_t^n -\mc R_s^n\big\|_{-k} > \lambda \big), \quad 
\bb P_n \big( \big\|\mc A_t^n -\mc A_s^n\big\|_{-k} > \lambda \big), 
\quad \bb P_n \big( \big\|\mc Q_t^n -\mc Q_s^n\big\|_{-k} > \lambda \big)
\]
and to invoke Kolmogorov-Centsov's criterion, stated in Proposition \ref{pD3.3}. Since $\big\|\mc A_t^n -\mc A_s^n\big\|_{-k}$ is a sum of squares and Corollary \ref{c2} does not hold for $p=2$, our estimates will be somehow indirect. For each $\delta>0$, define $a_m = c_\delta (1+m^2)^{-(1/2+\delta)}$, choosing $c_\delta$ in such a way that $\sum_m a_m =1$. Since $\delta>0$, $c_\delta$ is well defined. Then, for any $p \in [0,2)$,
\[
\begin{split}
\bb P_n\big( \big\| \mc A_t^n- \mc A_s^n \big\|_{-k} > \lambda \big)
		&\leq \sum_{m \in \bb Z} \bb P_n \big( \big|\mc A_t^n(\phi_m) -\mc A_s^n(\phi_m)\big|^2 > \lambda^2 a_m(1+m^2)^k\big) \\
		&\leq \frac{1}{\lambda^p} \sum_{m \in \bb Z} \frac{\bb E_n  \big[ \big|\mc A_t^n(\phi_m) -\mc A_s^n(\phi_m)\big|^p\big]}{(1+m^2)^{p(k-1/2-\delta)/2} }
\end{split}
\]
On the other hand, by Corollary \ref{c2} and since $\Lambda_s^n$ is an approximation of order $\mc O(n^{-2})$ of $\bb L_s$, $\|\Lambda_s^n \phi_m\|_\infty \leq C m^2$ and
\[
\bb E_n\big[ \big|X_s^n(\Lambda_s^n \phi_m)\big|^p\big] \leq C(\eps_0,F,T,p) |m|^{2p}.
\]
Recall definition \ref{santiago4}. By \eqref{c2'}, the previous estimate gives that
\[
\bb E_n \big[ \big| \mc A_t^n(\phi_m) -\mc A_s^n(\phi_m)\big|^p \big] \leq C |m|^{2p} |t-s|^p,
\]
from where
\[
\bb P_n\big( \big\| \mc A_t^n- \mc A_s^n \big\|_{-k} > \lambda \big)
		\leq \frac{C}{\lambda^p} \sum_{m \in \bb Z} \frac{|m|^{2p} |t-s|^p}{(1+m^2)^{p(k-1/2-\delta)/2}} \leq \frac{C|t-s|^p}{\lambda^p}
\]
as soon as $(k-\frac{1}{2} -\delta)p -2p >1$. Since $\delta$ and $p$ can be taken arbitrarily close to $0$ and $2$ respectively, we have proved that for any $k>3$ there exists $p \in (1,2)$ and $C=C(\eps_0,F,T,p)$ finite such that
\[
\bb P_n\big( \big\| \mc A_t^n- \mc A_s^n \big\|_{-k} > \lambda \big) \leq \frac{C|t-s|^p}{\lambda^p},
\]
which proves tightness of the sequence $\{\mc A_t^n; t \inT\}_{n \in \bb N}$ in $\mc C^{\alpha}([0,T]; H_{-k}(\bb T))$ for $k>3$ and $\alpha <1/2$. 

We have already seen in \eqref{cartagena} that $\mc R_t^n$ goes uniformly to 0. The computation will be same in any dimension, so we will do it for general $d <4$. Since $|\tfrac{d}{dt} \mc R_t^n(f))| \leq C n^{d/2-2} \|f\|_\infty$, 
\begin{equation}
\label{concon}
\big\|\mc R_t^n -\mc R_s^n \big\|_{-k}^2 \leq C|t-s|^2 n^{d-4} \sum_{m \in \bb Z} \big(1+|m|^2\big)^{-k},
\end{equation}
and $\{\mc R_t^n; t \in [0,T]\}_{n \in \bb N}$ goes to $0$ as $n \to \infty$ uniformly in $\mc C^{\alpha}(H_{-k}(\bb T^d))$ for any $\alpha <1$ and any $k > d/2$.

Notice that Corollary \ref{c2} and estimate \eqref{c2'} also apply to $\{\mc Q_t^n; t \inT\}$, this time taking as a test function $H_x =2 n \big( \phi_m \big(\tfrac{x+b}{n}\big) -\phi_m\big( \tfrac{x}{n} \big) \big) F_b^n(x)$. In this case, $\|H\|_\infty \leq 2 \|F\|_\infty |m|$. Repeating the proof above, we see that
\[
\bb P_n\big( \big\| \mc Q_t^n- \mc Q_s^n \big\|_{-k} > \lambda \big)
		\leq \frac{C}{\lambda^p} \sum_{m \in \bb Z} \frac{|m|^p |t-s|^p}{(1+m^2)^{p(k-1/2-\delta)/2}},
\]
which proves tightness of the sequence $\{\mc Q_t^n; t \inT\}_{n \in \bb N}$ with respect to the topology of $\mc C^\alpha([0,T]; H_{-k}(\bb T^d))$ for $k >2$ and $\alpha <1/2$. For the sequence of martingales $\{M_t^n; t \inT\}_{n \in \bb N}$, the restriction is $k >3/2$, see Theorem \ref{t5}, while the natural restriction for the convergence of $\{X_0^n\}_{n \in \bb N}$ is $k >1/2$. We have proved the following result:

\begin{theorem}
\label{t6}
Assume that $d=1$. Let $k>3$ and let $\{\eta^n(0); n \in \bb N\}$ be a sequence of initial conditions such that:
\begin{itemize}
\item[i)] $\sup_{n \in \bb N} H_n(0) <+\infty$,

\item[ii)] $X_0^n \to X_0$ in law with respect to the strong topology of $H_{-k}(\bb T)$. 
\end{itemize}

Then, for any $T >0$, the sequence $\{X_t^n; t \in [0,T]\}_{n \in \bb N}$ converges in law to the solution of \eqref{SHE} with initial condition $X_0$ with respect to the $J_1$-Skorohod topology of $\mc D([0,T]; H_{-k}(\bb T))$.
\end{theorem}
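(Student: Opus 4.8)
The plan is to upgrade the convergence of finite-dimensional laws from Theorem \ref{t3} to functional convergence by proving that the sequence $\{X_t^n; t \in [0,T]\}_{n \in \bb N}$ is tight in $\mc D([0,T]; H_{-k}(\bb T))$ for $k > 3$. I would work from the decomposition \eqref{santiago3}. The initial data $\{X_0^n\}$ are tight by hypothesis (ii), and the martingale part $\{M_t^n; t \in [0,T]\}$ is tight by Theorem \ref{t5} (which only needs $k > 3/2$), so it suffices to establish tightness of the three integral processes $\{\mc R_t^n\}$, $\{\mc A_t^n\}$ and $\{\mc Q_t^n\}$. For each of these I would invoke the Kolmogorov-Centsov criterion of Proposition \ref{pD3.3}, so the goal is a bound of the form $\bb E_n\big[\|Y_t^n - Y_s^n\|_{-k}^p\big] \leq C|t-s|^p$ with some $p > 1$.

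The term $\mc R_t^n$ is deterministic in nature: since $u$ solves \eqref{echid} smoothly, $|\tfrac{d}{dt}\mc R_t^n(f)| \leq C n^{d/2-2}\|f\|_\infty$, hence
\[
\big\|\mc R_t^n - \mc R_s^n\big\|_{-k}^2 \leq C|t-s|^2 n^{d-4}\sum_{m}\big(1+|m|^2\big)^{-k},
\]
which for $d = 1$ even tends to zero uniformly in $\mc C^\alpha$. For $\mc A_t^n$ and $\mc Q_t^n$ the essential input is the a priori entropy bound: under $\sup_n H_n(0) < \infty$, Theorem \ref{t2} gives $\sup_{t \leq T} H_n(t) \leq C$, which is exactly what is needed to apply Corollary \ref{c2} and the moment estimate \eqref{c2'}. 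The subtlety is that $\|Y_t^n - Y_s^n\|_{-k}^2 = \sum_m |Y_t^n(\phi_m) - Y_s^n(\phi_m)|^2 (1+|m|^2)^{-k}$ is a sum of squares, while Corollary \ref{c2} is only available for $p < 2$. I would circumvent this by choosing a summable weight $a_m = c_\delta(1+m^2)^{-(1/2+\delta)}$ with $\sum_m a_m = 1$, and estimating mode by mode: a union bound followed by Chebyshev at exponent $p < 2$ gives
\[
\bb P_n\big(\|\mc A_t^n - \mc A_s^n\|_{-k} > \lambda\big) \leq \frac{1}{\lambda^p}\sum_{m} \frac{\bb E_n\big[|\mc A_t^n(\phi_m) - \mc A_s^n(\phi_m)|^p\big]}{(1+m^2)^{p(k-1/2-\delta)/2}}.
\]
Because $\Lambda_s^n$ is an $\mc O(n^{-2})$ approximation of $\bb L_s$, one has $\|\Lambda_s^n\phi_m\|_\infty \leq Cm^2$, so Corollary \ref{c2} yields $\bb E_n[|X_s^n(\Lambda_s^n\phi_m)|^p] \leq C|m|^{2p}$, and then \eqref{c2'} gives $\bb E_n[|\mc A_t^n(\phi_m) - \mc A_s^n(\phi_m)|^p] \leq C|m|^{2p}|t-s|^p$. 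The resulting series converges precisely when $(k - \tfrac{1}{2} - \delta)p - 2p > 1$; letting $p \uparrow 2$ and $\delta \downarrow 0$ this is the condition $k > 3$. The argument for $\mc Q_t^n$ is identical but with the test function $2n\big(\phi_m(\tfrac{x+b}{n}) - \phi_m(\tfrac{x}{n})\big)F_b^n(x)$, whose sup-norm is $\mc O(|m|)$ rather than $\mc O(|m|^2)$, so $\mc Q_t^n$ only requires $k > 2$.

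Assembling the pieces, the binding constraint is $k > 3$ coming from $\mc A_t^n$, and Proposition \ref{pD3.3} then yields tightness of $\{X_t^n; t \in [0,T]\}_{n \in \bb N}$ in $\mc C^\alpha([0,T]; H_{-k}(\bb T))$ for every $\alpha < 1/2$, in particular in $\mc D([0,T]; H_{-k}(\bb T))$ with the $J_1$-Skorohod topology. Together with the convergence of finite-dimensional laws of Theorem \ref{t3} and the identification of the limit as the unique solution of \eqref{SHE} with initial condition $X_0$, this gives the claimed functional convergence. I expect the main obstacle to be exactly the $\mc A_t^n$ step: controlling the Sobolev norm, which is quadratic in the fluctuation field, using only the non-optimal $L^p$ bounds of Corollary \ref{c2} for $p < 2$, and checking that the Fourier bookkeeping still closes at $k > 3$; the remaining ingredients (martingale tightness, Kolmogorov-Centsov, the deterministic bound on $\mc R_t^n$) are routine.
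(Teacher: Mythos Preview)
Your proposal is correct and follows essentially the same route as the paper: decomposition \eqref{santiago3}, tightness of $X_0^n$ and $M_t^n$ already in hand, then Kolmogorov--Centsov for $\mc R_t^n$, $\mc A_t^n$, $\mc Q_t^n$ via the mode-by-mode union bound with weights $a_m = c_\delta(1+m^2)^{-(1/2+\delta)}$ and the $L^p$ estimate \eqref{c2'} for $p<2$, yielding the constraints $k>3$ from $\mc A_t^n$ and $k>2$ from $\mc Q_t^n$. The paper's argument is identical in structure and in the key computations.
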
 

\begin{remark}
The method of proof of this section uses in a fundamental way that $H_n(t)$ is uniformly bounded in $t$ and $n$. Therefore, we will need a different method of proof for dimensions $d=2,3$. As we can see from the proof of Theorem \ref{t6}, the most problematic term is $\mc A_t^n$.
\end{remark}

\begin{remark}
As observed in pg.~297 of \cite{GonJarSet}, the condition $H_n(0) \leq K$ already allows to create some non-trivial initial conditions. For example, one can take $\eta^n(0)$ with law
\[
\bigotimes_{x \in \bb T_n} \Bern\big(u\big(\tfrac{x}{n}\big)+ \tfrac{1}{\sqrt n} \kappa_x^n\big),
\]
where $\kappa_x^n:=\kappa\big(\tfrac{x}{n}\big)$ and $\kappa$ is continuous. In that case, $H_n(0) \leq C(u_0) \|\kappa\|_\infty^2$ and $X_0^n$ converges to $\sqrtt{u_0(1-u_0)}\xi +\kappa$, where $\xi$ is a  white noise. The function $\kappa$ could even be random, as long as it is continuous with probability $1$ and independent of the dynamics. An example that could be interesting in some situations is the case on which $\kappa$ is a Brownian bridge in $\bb T$.
\end{remark}

\subsection{The case \texorpdfstring{$d=2$}{d=2}}
\label{s7.2}
 As in the case $d=1$, by \eqref{santiago3} it is enough to show tightness of $\{\mc R_t^n; t \in [0,T]\}_{n \in \bb N}$, $\{\mc A_t^n; t \inT\}_{n \in \bb N}$ and $\{\mc Q_t^n; t \inT\}_{n \in \bb N}$. We have already proved tightness of $\{\mc R_t^n; t \in [0,T]\}_{n \in \bb N}$ in \eqref{concon}. In order to prove tightness of $\{\mc A_t^n; t \inT\}_{n \in \bb N}$, it was enough to use the bound
\[
\bb E_n\big[ \big| X_t^n(H) \big|^p\big] \leq C \|H\|_\infty^p,
\]
so our first objective will be to prove that bound. Notice that this bound does not follow from Corollary \ref{c2}, since in $d=2$ the bound on the relative entropy grows with $n$. It is enough to bound each of the terms on the right-hand side of \eqref{cerronavia}. The simplest is $X_0(P_{0,t} f)$. When $\eta^n(0)$ has law $\mu_0^n$, $X_0^n(H)$ is a sum of independent random variables, from where it is easy to show that for any $p \geq 1$ there exists a finite constant $C_p$ such that
\[
\bb E_n \big[ \big|X_0^n(H)\big|^p\big] \leq C_p \|H\|_\infty^p
\]
for any $n \in \bb N$ and any $H: \tdn \to \bb R$. In the general case, it is necessary to postulate this bound, at least for $p \in [1,2)$. In that case, the contractivity of $P_{0,t}$ shows that
\begin{equation}
\label{puentealto}
\bb E_n \big[ \big|X_0^n(P_{0,t} f)\big|^p\big] \leq C_p \|f\|_\infty^p.
\end{equation}
The martingale term is also easy to deal with: by Corollary \ref{c2}, 
\begin{equation}
\label{alhue}
\bb E_n \big[M_t^n(P_{\cdot,t} f)^2 \big] = \bb E_n \big[\big\<M_t^n(P_{\cdot,t} f)\big\> \big] \leq C t \|\nabla P_{\cdot,t} f\|_\infty^2.
\end{equation}
By interpolation, the analogous estimate holds for any $p \in [1,2)$. 

In order to simplify the notation, let $\overline{\hspace{-2pt}\smash{H}\vphantom{t}}_n(t)$ be the right-hand side of \eqref{entropy2}.
Using \eqref{lagranja} for 
\[
2 n \big( P_{s'\!\!,t} f\big(\tfrac{x+b}{n}\big) -P_{s'\!\!,t}  f\big(\tfrac{x}{n}\big) \big)F_b^n(x)
\]
with $s=0$ and $\gamma = (\|F\|_\infty \|\nabla P_{\cdot,t} f\|_\infty \sqrt{t})^{-1}$, we get the bound
\begin{equation}
\label{conchali}
\bb P_n \big( \big|\mc Q_t^n(P_{\cdot,t} f)\big| >\lambda\big) \leq \frac{C \varH_n(T) \sqrt{t} \|\nabla P_{\cdot,t} f\|_\infty}{\lambda n}.
\end{equation}
Notice that this bound is not enough to get a moment bound for $\mc Q_t^n(H)$, since we need an exponent larger than $1$ for $\lambda$ in the denominator. But the right-hand side of this estimate converges to $0$ as $n \to \infty$. Therefore, we could try to interpolate this estimate with an estimate diverging in $n$, but better in the exponent of $\lambda$. In fact, using Corollary \ref{c2} and estimate \eqref{c2'} for $\mc Q_t^n(H)$, we obtain the estimate
\[
\bb E_n \big[ \big| \mc Q_t^n(P_{\cdot,t} f) \big|^p\big] \leq C t^p \|\nabla P_{\cdot,t} f\|_\infty^p \varH_n(T)^{p/2},
\]
from where 
\begin{equation}
\label{recoleta}
\bb P_n \big( \big|\mc Q_t^n(P_{\cdot,t} f)\big| >\lambda\big)  \leq \frac{C \|\nabla P_{\cdot,t} f\|_\infty^p \varH_n(T)^{p/2}t^p}{\lambda^p}
\end{equation}
for any $p \in [1,2)$. Now we need to make some assumption about the growth of the relative entropy. Assume that $\varH_n(T) \leq C n^a$ for some finite constant $C$ and some $a>0$. We can assume that $a<1$, since this is already an hypothesis of the Boltzmann-Gibbs principle, see Theorem \ref{BG}. Using the simple interpolation bound $\min\{A,B\} \leq A^\theta B^{1-\theta}$, valid for any $\theta \in [0,1]$, \eqref{conchali} and \eqref{recoleta} give us the estimate
\[
\bb P_n \big( \big|\mc Q_t^n(P_{\cdot,t} f)\big| >\lambda\big) \leq \frac{C \|\nabla P_{\cdot,t} f\|_\infty^{\theta +(1-\theta)p}t^{\theta/2+(1-\theta)p}}{\lambda^{\theta +(1-\theta)p} n^{(1-a)\theta - (1-\theta)ap/2}}.
\]
The optimal choice is $\theta = \frac{ap}{2-2a+ap}$, which gives us the estimate
\begin{equation}
\label{sanramon}
\bb P_n \big( \big|\mc Q_t^n(P_{\cdot,t} f)\big| >\lambda\big) \leq \frac{C \|\nabla P_{\cdot,t} f\|_\infty^{p'}t^{p''}}{\lambda^{p'}}
\end{equation}
for $p' = \frac{2p-ap}{2-2a+ap} = p -\frac{ap(p-1)}{2-2a+ap}$ and $p'' = \frac{2-\frac{3}{2} ap}{2-2a+ap}$. At this point the dependence on $t$ is not important, but later on we will use it. Taking $p$ arbitrarily close to $2$, we can take $p'$ arbitrarily close to $2-a$, on which case $p''$ gets arbitrarily close to $2-\frac{3}{2}a$. We conclude that for any $p' < 2-a$ there exists a finite constant $C$ such that
\begin{equation}
\label{lapintana}
\bb E_n \big[ \big| \mc Q_t^n(P_{\cdot,t} f) \big|^{p'}\big] \leq C \|\nabla P_{\cdot,t} f\|_\infty^{p'}.
\end{equation}
\
Using Corollary \ref{c2} and \eqref{c2'} for $\mc A_t^n(P_{\cdot,t} f)$, we obtain the estimate
\begin{equation}
\bb E_n \big[ \big| \mc A_t^n(P_{\cdot,t} f) \big|^p \big] 
	\leq C t^p \varH_n(t)^{p/2} \big\|\big(\tfrac{d}{ds} + \Lambda_s^n\big)P_{\cdot,t} f \big\|_\infty^p
	\leq C t^p n^{(a-2\alpha)p/2}\|P_{\cdot,t} f\|_{\mc C^{2+\alpha}}^p
\label{pirque}
\end{equation}
for any $\alpha \in [0,2]$. Since we just need this expectation to be bounded, the optimal choice is $\alpha =a/2$.

By Proposition \ref{pC2}, $\|P_{\cdot,t} f \|_{\mc C^{2+a/2}} \leq C\|f\|_{\mc C^{2+a/2}}$, since $2+a/2 \notin \bb N$. 
Putting estimates \eqref{concon}, \eqref{puentealto}, \eqref{alhue}, \eqref{lapintana} and \eqref{pirque} into \eqref{cerronavia}, we conclude that for any $a <1$ and any $p <2-a$ there exists $C=C(\eps_0,F,T,p,a)$ such that
\begin{equation}
\label{melipilla}
\bb E_n\big[ \big| X_t^n(f) \big|^p\big] \leq C \|f\|_{\mc C^{2+a/2}}^p.
\end{equation}
In particular,
\[
\bb E_n\big[ \big| X_t^n(\Lambda_t^n \phi_m) \big|^p\big] \leq C |m|^{(4+a/2)p},
\]
and the proof of tightness of $\{\mc A_t^n; t \in [0,T]\}_{n \in \bb N}$ of Section \ref{s7.1} can be repeated here. The only difference is that we have to define $a_m = c_{\delta} (1+|m|^2)^{-(1+\delta)}$, since now the sum is over $m \in \bb Z^2$. Then we obtain the bound
\[
\bb P_n \big( \big\|\mc A_t^n -\mc A_s^n\big\|_{-k} > \lambda \big) 
		\leq \frac{C|t-s|^p}{\lambda^p} \sum_{m \in \bb Z^2} \frac{|m|^{(4+a/2)p}}{(1+|m|^2)^{p(k-1-\delta)/2}}.
\]
The sum is finite if $k> \frac{2}{p} +4 +\frac{a}{2} +1+\delta$, which gives the restriction $k> 5+\frac{a}{2}+\frac{2}{2-a}$. Therefore, $\{\mc A_t^n; t \in [0,T]\}_{n \in \bb N}$ is tight in $\mc C([0,T]; H_{-k}(\bb T^2))$ for $k > 5 +\frac{a}{2}+\frac{2}{2-a}$. 

In order to prove tightness of $\{\mc Q_t^n;  t \inT\}_{n \in \bb N}$, it is enough to observe that \eqref{sanramon} also holds for $\mc Q_t^n(\phi_m)-\mc Q_s^n(\phi_m)$. Therefore,
\[
\bb P_n \big( \big| \mc Q_t^n(\phi_m) -\mc Q_s^n(\phi_m)\big| > \lambda\big) 
		\leq \frac{C |m|^{p'}|t-s|^{p''}}{\lambda^{p'}}.
\]
Repeating the computations performed to prove tightness in Section \ref{s7.1}, we see that
\[
\bb P_n \big( \big\| \mc Q_t^n -\mc Q_s^n\big\|_{-k} > \lambda \big) 
		\leq \frac{C|t-s|^{p''}}{\lambda^{p'}} \sum_{m \in \bb Z^2} \frac{|m|^{p'}}{(1+|m|^2)^{p'(k-1-\delta)/2}}.
\]
The restriction $p'(k-1-\delta)> p'+2$ imposes the condition $k>2+\frac{2}{2-a}$, while the restriction $p''>1$ imposes the condition $a <\frac{2}{3}$. We conclude that $\{\mc Q_t^n; t \in [0,T]\}_{n \in \bb N}$ is tight in $\mc C([0,T]; H_{-k}(\bb T^2))$ for $k > 2+ \frac{2}{2-a}$. We have proved the following theorem:

\begin{theorem}
\label{t7} Assume that $d=2$. Let $a \in [0,\frac{2}{3})$ and let $k>5+\frac{a}{2}+\frac{2}{2-a}$. Let $\{\eta^n(0); n \in \bb N\}$ be a sequence of initial conditions such that:
\begin{itemize}
\item[i)] there exists $C$ finite such that $H_n(0) \leq C n^a$ for any $n \in \bb N$,
\item[ii)] $X_0^n \to X_0$ in law with respect to the topology of $H_{-k}(\bb T^2)$,
\item[iii)] for any $p <2-a$ there exists a finite constant $C_p$ such that for any test function $f$, $\bb E_n[|X_0^n(f)|^p] \leq C_p \|f\|_\infty^p$. 
\end{itemize}
Then, $\{X_t^n; t \in [0,T]\}_{n \in \bb N}$ converges in law to the solution of \eqref{SHE} with initial condition $X_0$ with respect to the $J_1$-Skorohod topology of $\mc D([0,T]; H_{-k}(\bb T^2))$. 
\end{theorem}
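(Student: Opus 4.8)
The plan is to upgrade the convergence of finite-dimensional laws obtained in Theorem \ref{t3} to convergence in $\mc D([0,T]; H_{-k}(\bb T^2))$ by proving tightness of $\{X_t^n; t \inT\}_{n \in \bb N}$. Using the decomposition \eqref{santiago3} with the test functions $\phi_m$ of the Fourier basis, it is enough to establish tightness of each of the five families $\{X_0^n\}$, $\{\mc R_t^n\}$, $\{\mc A_t^n\}$, $\{\mc Q_t^n\}$, $\{M_t^n\}$ separately, and then assemble. Tightness of $\{M_t^n\}$ is Theorem \ref{t5}; tightness of $\{X_0^n\}$ is hypothesis (ii); and $\{\mc R_t^n\}$ converges to $0$ uniformly by \eqref{concon}. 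So the work concentrates on $\{\mc A_t^n\}$ and $\{\mc Q_t^n\}$, for which, unlike in $d=1$, Corollary \ref{c2} cannot be used directly because the entropy bound \eqref{entropy2} now grows like $n^a$.

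The first step is to bootstrap an a priori moment bound $\bb E_n[|X_t^n(f)|^p] \le C\|f\|_{\mc C^{2+a/2}}^p$ valid for every $p < 2-a$. One does this by estimating each summand on the right of \eqref{cerronavia} with $P_{\cdot,t}f$ in place of the test function: the initial term via hypothesis (iii) and the contractivity of $P_{0,t}$, the remainder $\mc R_t^n$ via \eqref{cartagena}, the martingale term via its quadratic variation and Corollary \ref{c2}, and the two remaining terms $\mc A_t^n(P_{\cdot,t}f)$ and $\mc Q_t^n(P_{\cdot,t}f)$ via \eqref{c2'} together with $\varH_n(T) \le Cn^a$. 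For $\mc Q_t^n$ this last estimate must be combined with the sharper entropy bound \eqref{conchali}: interpolating \eqref{conchali}, which decays in $n$ but carries only $\lambda^{-1}$, against the moment bound \eqref{recoleta}, which diverges in $n$ but carries $\lambda^{-p}$, via $\min\{A,B\}\le A^\theta B^{1-\theta}$ and the optimal $\theta$, yields \eqref{sanramon}; the condition $p''>1$ there is exactly what forces $a<2/3$. With the a priori bound in hand one has $\bb E_n[|X_t^n(\Lambda_t^n\phi_m)|^p]\le C|m|^{(4+a/2)p}$, and the Kolmogorov-Centsov argument of Section \ref{s7.1} runs essentially verbatim, with the Fourier sum now over $\bb Z^2$ and weights $a_m = c_\delta(1+|m|^2)^{-(1+\delta)}$; counting powers gives tightness of $\{\mc A_t^n\}$ for $k>5+\tfrac{a}{2}+\tfrac{2}{2-a}$, and the same summation applied to \eqref{sanramon} for the increments $\mc Q_t^n(\phi_m)-\mc Q_s^n(\phi_m)$ gives tightness of $\{\mc Q_t^n\}$ for $k>2+\tfrac{2}{2-a}$.

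Collecting the five tightness statements shows $\{X_t^n; t \inT\}_{n\in\bb N}$ is tight in $\mc D([0,T]; H_{-k}(\bb T^2))$ for $k>5+\tfrac{a}{2}+\tfrac{2}{2-a}$; since Theorem \ref{t3} identifies the finite-dimensional distributions of every limit point with those of the solution of \eqref{SHE}, and these determine the law on path space, the whole sequence converges, which is the assertion of the theorem.

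I expect the main obstacle to be the bootstrap bound \eqref{melipilla}: the estimate of $X_t^n(f)$ feeds back into itself through the term $\mc A_t^n(P_{\cdot,t}f)=\int_0^t X_s^n\big((\tfrac{d}{ds}+\Lambda_s^n)P_{\cdot,t}f\big)\,ds$, so the loop must be closed by exploiting the regularization of the backwards semigroup $P_{\cdot,t}$ to pay for the derivatives lost to $\Lambda_s^n$, while simultaneously the $\mc Q_t^n$ interpolation has to be arranged so that the two resulting exponents $p' = p - ap(p-1)/(2-2a+ap)$ and $p''=(2-\tfrac32 ap)/(2-2a+ap)$ both exceed $1$. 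Keeping this bookkeeping consistent is what pins down both the admissible range $a\in[0,\tfrac23)$ and the Sobolev index $k$ in the statement.
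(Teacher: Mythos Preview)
Your proposal is correct and follows essentially the same route as the paper: decompose via \eqref{santiago3}, dispatch $X_0^n$, $\mc R_t^n$, $M_t^n$ as you describe, establish the a priori bound \eqref{melipilla} through \eqref{cerronavia} term by term (with the $\mc Q_t^n$ interpolation \eqref{sanramon} and the $\mc A_t^n$ estimate \eqref{pirque}), and then run Kolmogorov--Centsov over the Fourier basis of $\bb Z^2$ to get tightness of $\mc A_t^n$ and $\mc Q_t^n$ with the stated thresholds on $k$ and $a$. One small clarification: the ``feedback loop'' you worry about is not actually circular---the bound on $\mc A_t^n(P_{\cdot,t}f)$ in \eqref{pirque} uses only Corollary \ref{c2}/\eqref{c2'} applied to $X_s^n$ against the test function $(\tfrac{d}{ds}+\Lambda_s^n)P_{s,t}f$, whose $\|\cdot\|_\infty$ is $O(n^{-\alpha})\|f\|_{\mc C^{2+\alpha}}$ by \eqref{matucana}, so choosing $\alpha=a/2$ kills the $n^{ap/2}$ entropy factor in one shot without invoking \eqref{melipilla} itself.
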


\begin{remark}
The interested reader may verify that the proof presented in this section proves Theorem \ref{t6} in dimension $d=1$ under the condition $H_n(0) \leq C n^a$ for $a < \frac{1}{3}$ and some $k$ large enough..
\end{remark}

\subsection{The case \texorpdfstring{$d=3$}{d=3}} 
\label{s7.3}
In $d=3$, \eqref{conchali} becomes
\[
\bb P_n \big( \big|\mc Q_t^n(P_{\cdot,t} f)\big| >\lambda\big) \leq \frac{C \varH_n(T) \sqrt{t} \|\nabla P_{\cdot,t} f\|_\infty}{\lambda n^{3/2}}
\]
and \eqref{recoleta} stays the same. However, $\varH_n(T)$ is at least of order $\mc O(n)$ and therefore we can only assume that $\varH_n(T) \leq C n^a$ for $a \in [1,\frac{3}{2})$. In that case, one can verify that the interpolation bound gives exponents 
\[
p' = \frac{(3-a)p}{3-a(2-p)}, \quad p'' = \frac{6p-3ap}{6-2a(2-p)}.
\]
When $p$ goes to $2$, $p'$ goes to $\frac{2(3-a)}{3}$ and $p''$ goes to $2-a$. In fact, no matter how we choose $a$ and $p$, $p''$ is always smaller than $1$. Therefore, our proof of tightness for $\{\mc Q_t^n; t \in [0,T]\}_{n \in \bb N}$ does not work. However, for any $a \in [1,\frac{3}{2})$, it is possible to find $p$ close enough to $2$ such that $p'>1$, and the estimate \eqref{melipilla} holds. Therefore, we have that for any $a \in [1,\frac{3}{2})$, there exists $p'>1$ such that 
\[
\bb E_n\big[ \big| X_t^n(f) \big|^{p'}\big] \leq C \|f\|_{\mc C^{2+a/2}}^{p'}.
\]
Repeating the computations of Section \ref{s7.2}, we can check that this implies tightness for the integral process $\{\mc A_t^n; t \in [0,T]\}_{n \in \bb N}$ in $\mc C([0,T], H_{-k}(\bb T^3))$ for $k>\frac{11}{2}+\frac{a}{2} + \frac{9}{2(3-a)}$. Actually, by Kolmogorov-Centsov criterion $\{\mc A_t^n; t \in [0,T]\}_{n \in \bb N}$ is tight in $\mc C^\alpha([0,T], H_{-k}(\bb T^3))$ for $\alpha <1-\frac{3}{2(3-a)}$. Since $X_t^n = \partial_t \mc A_t^n$ in the distributional sense, we conclude that the sequence $\{X_t^n; t \in [0,T]\}_{n \in \bb N}$ is tight in the path space $\mc C^{-1+\alpha}([0,T]; H_{-k}(\bb T^3))$ for $k > \frac{11}{2}+\frac{a}{2} + \frac{9}{2(3-a)}$. The exact values of the constants $a, k$ are not very important, since they are far from being optimal; we just wanted to point out that they exist and that they can be explicitly estimated.

\section{Discussion and concluding remarks}

\subsection{The gradient condition and generalizations}

Our derivation of the large-scale limit of the density fluctuations stated in Theorem \ref{t3} holds in $d<4$ for basically the same class of models for which the entropy method of \cite{GuoPapVar} works. These models satisfy the so-called {\em gradient condition}, see Remark 4.2.4 in \cite{KipLan} and Section II.2.4 in \cite{Spo}. In general, these systems have a hydrodynamic limit governed by a partial differential equation of parabolic type. Just to see how these systems could be defined, let us present an example. Fix $K \in \bb N$ and consider the operator $L_n^a$ given by
\[
L_n^a f(\eta)
		= n^2 \sum_{\subind}\Big(a_b^0+ \sum_{k=1}^K a_b^k \Big( \sum_{\ell=0}^k\prod_{i=1}^{k-\ell} \eta_{x-ib}\prod_{j=1}^\ell \eta_{x+(j+1)b}\Big) \Big) \nabla_{x,x+b} f(\eta).
\]
Here the coefficients $a_b^k$ can be smooth, of the form $a_b^k\big(t,\frac{x}{n}\big)$, but they do not depend on $\eta$. Define
\[
a_b(u) := \sum_{k=1}^K a_b^k (k+1) u^k.
\]
Under the condition $a_b \geq 0$, $L^a_n$ is the generator of a particle system in $\Omega_n$.
The process generated by this operator satisfies the gradient condition, see Remark 2.3 of \cite{FarLanMou}. The corresponding hydrodynamics equation is given by
\[
\partial_t u = \nabla \cdot \big(a(u) \otimes \nabla u\big),
\]
where $a \otimes \nabla u$ is the vector with coordinates $a_b(u) \partial_b u$.
Under the additional ellipticity condition $a_b \geq \eps_2>0$ for any $b, x$ and $t$, our methods apply without important modifications. The key point is that any local function can be decomposed into a finite sum of functions of the form $\omega_{x+A}$. For this dynamics, we need to use the main lemma for sets $A$ with up to $K+1$ points. It is for that reason that we proved the main lemma in that generality.

One can include a more general weakly asymmetric term in the dynamics as well. For each $b \in \mc B$, let $g^b:\Omega \to \bb [0,\infty)$ be a fixed local function. Let $g_{x}^b$ be the translation of $g^b$ by $x$. Let $F$ be a smooth vector field and define $F_b = F \cdot b$. Then consider the operator
\[
L_n^g := n \sum_{\subind} F_b \big(\tfrac{x}{n}\big)  g_x^b(\eta)\big(\eta_x- \eta_{x+b}\big) \nabla_{x,x+b} f(\eta).
\]
The operator $L_n^g$ is not a generator, but since $\eps_2>0$, for $n$ large enough $L_n^a+L_n^g$ is indeed a generator. For $\rho \in [0,1]$, define $G_b(\rho) = \int g^b(\eta) (\eta_0-\eta_b)^2 d \nu_\rho$, where $\nu_\rho$ is the Bernoulli product measure of density $\rho$. The hydrodynamic equation is modified to
\[
\partial_t u =  \nabla \cdot \big(a(u) \otimes \nabla u\big) - \nabla \cdot \big( F \otimes  G(u)\big),
\]
where $F \otimes G(u)$ is the vector of coordinates $F_b G_b(u)$.
One can even introduce a reaction term into the hydrodynamic equation. Let $c: \Omega \to [0,\infty)$ be a local function and let $c_x$ be the translation of $c$ by $x$. For $x \in \tdn$ and $\eta \in \Omega_n$, let $\eta^x \in \Omega_n$ be the configuration obtained from $\eta$ by modifying its occupation variable at $x$ from $\eta_x$ to $1-\eta_x$. Then consider the operator
\[
L_n^c = \sum_{x \in \tdn} c_x(\eta) \big( f(\eta^x) -f(\eta)\big).
\]
This process is a generator, and the process generated by $L_n^a + L_n^g + L_n^c$ has the hydrodynamic equation
\[
\partial_t u = \nabla \cdot\big(a(u) \otimes \nabla u\big) - \nabla \cdot \big( F \otimes  G(u)\big) + H(u),
\]
where $H(\rho) := \int c(\eta)(1-2\eta_0) d \nu_\rho$. For all of these modifications, Theorem \ref{t3} can be proved with minor changes in the proofs. The limiting equation of the density fluctuations is
\[
\begin{split}
\partial_t X_t
		&= \nabla \cdot\Big( a(u) \otimes \nabla X_t + \big(a'(u) \otimes \nabla u -F \otimes G'(u)\big) X_t + \\
		&\quad \quad \quad \quad  +\sqrtt{2u(1-u) a(u)} \otimes \dot{\mc W}^1_t\Big) + H'(u) X_t + \sqrtt{C(u)} \dot{\mc W}^2_t,
\end{split}
\]
where $C(\rho) : = \int c(\eta) d \nu_\rho$, $\dot{\mc W}^1$ is a vectorial space-time white noise and $\dot{\mc W}^2$ is a scalar space-time white noise independent of $\dot{\mc W}^1$.

\subsection{The critical dimension \texorpdfstring{$d=4$}{d=4}}. 

Theorem \ref{t3} was stated to hold in dimension $d<4$, which in principle is the same as $d \leq 3$. We did this in order to emphasize that the critical dimension for our method is $d=4$. From the proofs it is clear that the condition on dimension is $d-2 < d/2$. In the literature, one possible way to understand this criticality is to replace the Laplacian in the hydrodynamic limit by a {\em fractional Laplacian}. This is achieved by introducing long-range jumps into the definition of the model, see \cite{LohSlaWal} for example. Let $\lambda$ be the positive measure in $\bb T^d$ given by
\[
\lambda(dx) = \big| \sin^2(\pi x_1)+\dots+\sin^2(\pi x_d)\big|^{-\frac{d+\alpha}{2}} dx
\]
with $\alpha \in (0,2)$ and define, for $x \neq 0 \in \tdn$,
\[
\lambda_n(x) = \lambda \Big(\prod_{i=1}^d \big(x_i -\tfrac{1}{2n},x_i+\tfrac{1}{2n}\big]\Big).
\]
Define $r_n(x,y) = \mu_n(y-x)$ and consider the operator $L^\alpha_n$ given by
\[
L^\alpha_n f(\eta) = \sum_{x \neq y \in \tdn} r_n(x,y) \nabla_{x,y} f(\eta).
\]
For $\alpha >1$ and $n$ large enough, the operator $L_n^\alpha + L_n^g$ is the generator of an interacting particle system. The hydrodynamic limit of this model can be obtained as in \cite{Jar}. It is possible to check that the condition under which the corresponding analog of Theorem \ref{t3} holds for this model is $d < 2\alpha$, so Theorem \ref{t3} holds for $d \leq 3$ if $\alpha >3/2$.

\subsection{Sharpness of Theorem \ref{t2}}

We claim that the estimate \eqref{entropy1} should be optimal for product reference measures. Let us give an heuristic argument to explain why. Let us consider the simple, symmetric exclusion process in contact with reservoirs, like in \cite{BerD-SGabJ-LLan} or \cite{FarLanMou}. It is well known that when the density/temperature of the reservoirs are different, the system has a non-equilibrium stationary state (NESS) that develops long-range correlations. Although a complete prove is only available in dimension $d=1$, macroscopic fluctuation theory predicts that the density fluctuations of the NESS is Gaussian, with a covariance kernel of the form $\chi(x) \delta(x-y) - \mc G(x,y)$, where $\mc G(x,y)$ is a kernel which behaves like the Green function near the diagonal $x=y$. This Gaussian measure is absolutely continuous with respect to white noise only in dimension $d=1$. Moreover, if we average this Gaussian measure with an approximation of the identity of size $1/n$, we obtain a smooth process which is absolutely continuous with respect to the smoothening of the white noise with the same approximation of the identity, but the relative entropy between these two measures grows like $\log n$ in dimension $d=2$ and like $n^{d-2}$ in dimension $d \geq 3$. If we observe our particle systems inside a box of size $\eps n$ centered at some macroscopic point $x$ and we look at its law inside a box of size $\delta n$ around $x$ with $\delta \ll \eps \ll 1$, we can argue that due to finite speed of propagation of the heat equation (in the sense of variance), the system should be close to another system on which we put boundary conditions at the boundary of the box of size $\eps n$ with matching densities. 
In that case, after times of order $\eps^2$ the law inside the smaller box should be close to the NESS of the system with boundaries, which would imply that our entropy bound can not be improved without taking into consideration long-range correlations created by local currents. This heuristics also provides a possible way to improve Theorem \ref{t2}. If we introduce some first-order correction into the reference measures $\mu_t^n$, there is room to get extra cancellation that could improve the comparison of the main lemma. These first-order corrections were actually computed in \cite{FunUchYau} in the context of the hydrodynamic limit of non-gradient systems. It would be interesting to pursue this line of research.

\appendix

\section{Entropy methods for Markov chains}
\label{sA}
In this section we prove Yau's entropy inequality and we also establish a new variational bound for exponential moments of observables of Markov chains. Since these bounds could be of independent interest and hold for any Markov chain, we present the proofs in a general context. Afterwards we will implement these estimates for the exclusion processes considered in this article.

\subsection{Yau's entropy inequality}
\label{sA.1}

Let $\{x_t; t \geq 0\}$ be a Markov chain on a finite state space $\Omega$. We will denote by $\bb P$ the law of this chain and by $\bb E$ the expectation with respect to $\bb P$. If we need to specify the initial law $\mu$ of the chain $\{x_t; t \geq 0\}$, we will write $\bb P^\mu$ and $\bb E^\mu$.
\
Let $L$ be the generator of this chain. The action of $L$ over functions $f: \Omega \to \bb R$ can be written as
\[
L f(x) = \sum_{y \in \Omega} r(x,y) \big( f(y) - f(x) \big) \text{ for any } x \in \Omega.
\]
Let $\Gamma$ be the {\em carr\'e du champ} associated to $L$, that is, $\Gamma$ is the quadratic operator given by
\[
\Gamma f(x) = \sum_{y \in \Omega} r(x,y) \big( f(y) -f(x) \big)^2 
\]
for any $f: \Omega \to \bb R$ and any $x \in \Omega$. We say that a measure $\nu$ in $\Omega$ is a {\em reference measure} if $\nu(x) >0$ for any $x \in \Omega$. Fix a reference measure $\nu$ and fix $T >0$. Let $\{\mu_t; t \in [0,T]\}$ be a family of reference measures in $\Omega$, differentiable with respect to $t$. Let $\psi_t: \Omega_n \to [0,\infty)$ be the Radon-Nikodym derivative of $\mu_t$ with respect to $\nu$, that is, $\psi_t(x) =\frac{\mu_t(x)}{\nu(x)}$ for any $t \in [0,T]$ and any $x \in \Omega$. Let $L_t^\ast$ be the adjoint of $L$ with respect to $\mu_t$. In general, $\mu_t$ will {\em not} be an invariant measure of $\{x_t; t \geq 0\}$ and therefore $L^\ast_t$ will not be a Markovian operator. The action of $L_t^\ast$ over a function $g: \Omega \to \bb R$ is given by
\begin{equation}
\label{ecA1}
L_t^\ast g(x) = \sum_{y \in \Omega} \Big\{ r(y,x) g(y) \frac{\mu(y)}{\mu(x)} -r(x,y) g(x)\Big\}.
\end{equation}
\
Let $f_t: \Omega \to [0,\infty)$ be the density of the law of $x_t$ with respect to $\mu_t$, that is,
\[
f_t(x) := \frac{\bb P(x_t =x)}{\mu_t(x)} \text{for any } x \in \Omega \text{ and any } t \in [0,T].
\]
Let $H(t)$ be the relative entropy of the law of $x_t$ with respect to $\mu_t$, that is,
\[
H(t) := \int f_t \log f_t d \mu_t \text{ for any } t \in [0,T].
\]
We have the following estimate:

\begin{lemma}[Yau's inequality]
\label{lA1}
For any $t \in [0,T]$, 
\[
H'(t) \leq - \int \Gamma \sqrtt{f_t} d \mu_t + \int \big( L_t^\ast \mathds{1} - \tfrac{d}{dt} \log \psi_t \big) d \mu_t.
\]
\end{lemma}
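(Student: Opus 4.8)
The plan is to differentiate $H(t)$ by hand, substitute the forward (Kolmogorov) equation for the law of $x_t$, and then extract the carr\'e du champ term from an elementary pointwise inequality. Write $\pi_t(x):=\bb P(x_t=x)=f_t(x)\mu_t(x)$, so that $H(t)=\sum_{x\in\Omega}\pi_t(x)\big(\log\pi_t(x)-\log\mu_t(x)\big)$. On the finite state space $\Omega$ the map $t\mapsto\pi_t$ solves a linear ODE and is smooth, and $\psi_t$ is differentiable by hypothesis; differentiating, using $\sum_x\pi_t(x)\equiv 1$ so that $\sum_x\tfrac{d}{dt}\pi_t(x)=0$, and $\tfrac{d}{dt}\log\mu_t=\tfrac{d}{dt}\log\psi_t$ (as $\nu$ is fixed), one gets
\[
H'(t)=\sum_{x\in\Omega}\tfrac{d}{dt}\pi_t(x)\,\log f_t(x)-\int f_t\,\tfrac{d}{dt}\log\psi_t\,d\mu_t .
\]
Next, a direct computation from \eqref{ecA1} gives $(L_t^\ast f_t)(x)\,\mu_t(x)=\sum_{y\in\Omega}\big(r(y,x)\pi_t(y)-r(x,y)\pi_t(x)\big)=\tfrac{d}{dt}\pi_t(x)$, the last equality being the forward Kolmogorov equation (the factor $\mu_t(x)$ cancels, so this holds pointwise at each fixed $t$). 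Hence $\sum_x\tfrac{d}{dt}\pi_t(x)\log f_t(x)=\int(L_t^\ast f_t)\log f_t\,d\mu_t=\int f_t\,L(\log f_t)\,d\mu_t$, the last step being the definition of the adjoint together with $(L_t^\ast)^\ast=L$, so that $H'(t)=\int f_t\,L(\log f_t)\,d\mu_t-\int f_t\,\tfrac{d}{dt}\log\psi_t\,d\mu_t$.

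The only substantive ingredient, and the step I expect to be the heart of the argument, is the pointwise bound $f\,L(\log f)\le Lf-\Gamma\sqrtt{f}$, valid for any $f>0$. Since $L$ and $\Gamma$ both act as $\sum_{y}r(x,y)(\cdot)$ applied respectively to $\log f(y)-\log f(x)$, $f(y)-f(x)$ and $\big(\sqrt{f(y)}-\sqrt{f(x)}\big)^2$, it suffices to verify the term-by-term inequality
\[
f(x)\big(\log f(y)-\log f(x)\big)\le\big(f(y)-f(x)\big)-\big(\sqrt{f(y)}-\sqrt{f(x)}\big)^2 .
\]
Setting $a=\sqrt{f(x)}$ and $b=\sqrt{f(y)}$, the right-hand side collapses to $2a(b-a)$ and the left-hand side to $2a^2\log(b/a)$, so the inequality is exactly $\log(b/a)\le b/a-1$, i.e.\ the elementary bound $\log s\le s-1$. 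Multiplying by $r(x,y)\ge0$ and summing over $y$ yields the claim. Integrating $f_t\,L(\log f_t)\le Lf_t-\Gamma\sqrtt{f_t}$ against $\mu_t$ and using $\int Lf_t\,d\mu_t=\int f_t\,L_t^\ast\mathds 1\,d\mu_t$, we conclude
\[
H'(t)\le-\int\Gamma\sqrtt{f_t}\,d\mu_t+\int\big(L_t^\ast\mathds 1-\tfrac{d}{dt}\log\psi_t\big)f_t\,d\mu_t ,
\]
which is the asserted inequality.

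I do not anticipate a genuine obstacle; the points needing care are purely technical. First, the justification of term-by-term differentiation and of the forward equation, which is routine since $\Omega$ is finite. Second, the set where $f_t$ vanishes, on which $\log f_t$ is undefined: this is handled either by the strict positivity of $\pi_t$ for $t>0$ when the chain is irreducible (with $0\log 0=0$ and a limiting argument at $t=0$), or, in full generality, by carrying out the computation with the regularization $f_t^{\varepsilon}:=(f_t+\varepsilon)/(1+\varepsilon)$ and letting $\varepsilon\downarrow0$, using that all the quantities involved are continuous in $\varepsilon$ and that the inequality is stable under this limit.
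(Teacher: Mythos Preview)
Your proof is correct and follows essentially the same route as the paper's: both arrive at the exact identity $H'(t)=\int f_t\,L(\log f_t)\,d\mu_t-\int f_t\,\tfrac{d}{dt}\log\psi_t\,d\mu_t$ via the forward equation and adjointness, and then invoke the same pointwise inequality to produce the carr\'e du champ. The only cosmetic difference is that the paper splits the key step into two pieces---first $a(\log b-\log a)\le 2\sqrt{a}(\sqrt{b}-\sqrt{a})$, then the algebraic identity $2\sqrt{a}(\sqrt{b}-\sqrt{a})=-(\sqrt{b}-\sqrt{a})^2+b-a$---whereas you combine them into a single application of $\log s\le s-1$; these are the same computation.
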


\begin{proof}
Although this estimate is classical, see \cite{Yau} and Chapter 6 of \cite{KipLan}, we were not able to find a reference with exactly the version stated here for finite-state Markov chains; compare with Lemma 6.1.4 of \cite{KipLan}. Part of the proof will be used to prove Lemma \ref{lA2} below. In the case of diffusions, this estimate becomes an identity; see Lemma 1 in \cite{Yau}.

Let $L^\ast$ be the adjoint of $L$ with respect to the reference measure $\nu$. The forward Fokker-Planck equation tells us that 
\[
\tfrac{d}{dt} \big( f_t \psi_t \big) = L^\ast \big( f_t \psi_t\big)
\]
for any $t \in [0,T]$, from where
\[
\tfrac{d}{dt} f_t = \frac{1}{\psi_t} \Big( L^\ast \big( f_t \psi_t \big) - f_t \tfrac{d}{dt} \psi_t \Big).
\]
Therefore, rewriting $H(t)$ as $H(t) = \int f_t \log f_t \psi_t d \nu$, we see that
\[
\begin{split}
H'(t)
		&= \int (1+\log f_t) \big( L^\ast \big(f_t \psi_t\big) - f_t \tfrac{d}{dt} \psi_t \big) d \nu\\
		&\quad \quad + \int f_t \log f_t \tfrac{d}{dt} \psi_t d \nu\\
		&=\int f_t L \log f_t d \mu_t - \int f_t \tfrac{d}{dt} \log \psi_t d \mu_t.
\end{split}
\]
Using the inequality $a(\log b - \log a) \leq 2 \sqrt{a} ( \sqrt{b} -\sqrt{a})$, we obtain the estimate
\[
\begin{split}
f_t(x) L \log f_t(x)
		&= \sum_{y \in \Omega} r(x,y) f_t(x) \big( \log f_t(y) - \log f_t(x) \big) \\
		&\leq \sum_{y \in \Omega} 2r(x,y) \sqrtt{f_t(x)}\big( \sqrtt{f_t(y)} -\sqrtt{f_t(x)} \big) = 2 \sqrtt{f_t(x)} L \sqrtt{f_t}(x),
\end{split}
\]
valid for any $x \in \Omega$. Using now the identity $2 \sqrt{a} (\sqrt{b} -\sqrt{a} ) = - (\sqrt{b}-\sqrt{a})^2 + b-a$, we see that
\[
2r(x,y) \sqrtt{\!f_t(x)}\big( \sqrtt{\!f_t(y)} -\sqrtt{\!f_t(x)} \big) = -r(x,y) \big( \sqrtt{\!f_t(y)} -\sqrtt{\!f_t(x)} \big)^2 + r(x,y) \big( f_t(y) -f_t(x)\big).
\]
\[
\begin{split}
2r(x,y) \sqrtt{f_t(x)}\big( \sqrtt{f_t(y)} -\sqrtt{f_t(x)} \big) 
		&= -r(x,y) \big( \sqrtt{f_t(y)} -\sqrtt{f_t(x)} \big)^2\\
		&\hspace{20pt}+ r(x,y) \big( f_t(y) -f_t(x)\big).
\end{split}
\]
Therefore, $2 \sqrtt{f_t} L \sqrtt{f_t} = - \Gamma \sqrtt{f_t} + L f_t$. We conclude that
\[
H'(t) \leq - \int \Gamma \sqrtt{f_t} d \mu_t + \int \Big( L f_t - f_t \tfrac{d}{dt} \log \psi_t\Big) d \mu_t.
\]
Since $\int L f_t d \mu_t = \int L_t^\ast \mathds{1} f_t d \mu_t$, the lemma is proved.
\end{proof}

\begin{remark}
The equation $L_t^\ast \mathds{1} - \tfrac{d}{dt} \log \psi_t =0$ is exactly the forward Fokker-Planck equation for $\psi_t$, and therefore the expression $L_t^\ast \mathds{1} - \tfrac{d}{dt} \log \psi_t$ can be interpreted as a measure of how close the family $\{\mu_t; t \geq 0\}$ is to be the marginal laws of the process $\{x_t; t \in [0,T]\}$ with initial measure $\mu_0$.
\end{remark}

\subsection{Variational estimates for exponential moments of observables of Markov chains}
\label{sA.2}
Let $V: [0,T] \times \Omega \to \bb R$ be a bounded function. Our aim is to derive a variational estimate for the exponential expectation
\[
\bb E \Big[ \exp\Big\{ \int_0^T V_t(x_t) dt \Big\} \Big].
\]
The integral $\int_0^T V_t(x_t) dt$ is what we call an {\em observable} of the Markov chain $\{x_t; t \geq 0\}$.
We will need an additional condition: we will assume that $x_0$ has law $\mu_0$; this is equivalent to  assume that $f_0=1$. We start recalling Feynman-Kac's formula: we have that
\[
\bb E^{\mu_0} \Big[ \exp\Big\{ \int_0^T V_t(x_t) dt \Big\} \Big] = \int g_0 d \mu_0,
\]
where $g: [0,T] \times \Omega \to \bb R$ is the solution of the final-value problem
\[
\left\{
\begin{array}{r@{\;=\;}l}
\tfrac{d}{dt} g_t + L g_t & - V_t g_t; t \in [0,T]\\
g_T & 1.
\end{array}
\right.
\]
Notice that Feynman-Kac's formula holds for general initial conditions; however the method below requires that we start the process from $\mu_0$. 
For $t \in [0,T]$ define $\phi(t) = \int g_t^2 d \mu_t$. Then,
\[
\phi'(t) = \int 2 g_t \big( -V_t g_t -L g_t \big) d \mu_t + \int g_t^2 \tfrac{d}{dt} \log \psi_t d \mu_t.
\]
Using the identity $2a(b-a) = -(a-b)^2 + b^2 -a^2$ we see that $-2 g_t L g_t = \Gamma g_t  - L g_t^2$ and therefore
\[
\begin{split}
\phi'(t) 
	&= \int \Gamma g_t d \mu_t - \int \big( 2 V_t g_t^2 + Lg_t^2 - g_t^2 \tfrac{d}{dt} \log \psi_t \big) d \mu_t \\
	&= \int \Gamma g_t d \mu_t - \int \big( 2 V_t + L^\ast_t \mathds{1} - \tfrac{d}{dt} \log \psi_t \big) g_t^2 d \mu_t.
\end{split}
\]
Notice that if $g(x) \geq 0$ for any $x \in \Omega$ and $g(x^\ast)=0$, then $Lg(x^\ast) + V_t(x^\ast) g(x^\ast) \leq 0$. Therefore, by the maximum principle, $g_t(x) \geq 0$ for any $t \in [0,T]$ and any $x \in \Omega$, since $g_T \geq 0$.
Let us define $\lambda: [0,T] \to \bb R$ as
\[
\lambda_t := \sup_{f} \Big\{ - \int \Gamma \sqrtt{f} d \mu_t + \int \Big(2V_t + L_t^\ast \mathds{1} - \tfrac{d}{dt} \log \psi_t\Big) f d \mu_t\Big\}
\]
for any $t \in [0,T]$, where the supremum runs over all densities $f$ with respect to $\mu_t$. Taking as a test function in this supremum $f = g_t^2 (\int g_t^2 d\mu_t)^{-1}$, we see that
\[
\phi'(t) \geq - \lambda_t \phi(t) \text{ for any } t \in [0,T].
\]
Using an integrating factor, we can integrate this estimate between $t=0$ and $t=T$ to conclude that
\[
\phi(T) \exp\Big\{ \int_0^T \lambda_t dt \Big\} \geq \phi(0).
\]
Since $\phi(T) =1$, we have proved that $\phi(0) \leq \exp\big\{\int_0^T \lambda_t dt \big\}$. In particular, 
\[
\int g_0 d \mu_0 \leq \Big(\int g_0^2 d\mu_0 \Big)^{1/2} \leq \exp \Big\{\frac{1}{2} \int_0^T \lambda_t dt\Big\}.
\]
Therefore, we have proved the following result:
\begin{lemma}
\label{lA2}
For any $V: [0,T] \times \Omega \to \bb R$,
\[
\begin{split}
\log \bb E^{\mu_0} \Big[ \exp\Big\{ \int_0^T V_t(x_t) dt \Big\} \Big] 
		&\leq \int_0^T \sup_f \Big\{-\int\Gamma \sqrtt{f} d \mu_t + \int V f d \mu_t\\
		 &\quad \quad \quad\quad \quad + \frac{1}{2} \int \big( L_t^\ast \mathds{1} - \tfrac{d}{dt} \log \psi_t \big) d \mu_t \Big\} dt,
\end{split}
\]
where the supremum runs over all densities $f$ with respect to $\mu_t$.
\end{lemma}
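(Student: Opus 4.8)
The plan is to combine the Feynman--Kac formula with a Gr\"onwall-type estimate for the quadratic functional $\phi(t):=\int g_t^2\,d\mu_t$. First I would invoke Feynman--Kac to write $\bb E^{\mu_0}\big[\exp\{\int_0^T V_t(x_t)\,dt\}\big]=\int g_0\,d\mu_0$, where $g:[0,T]\times\Omega\to\bb R$ is the solution of the backward problem $\tfrac{d}{dt}g_t+Lg_t=-V_t g_t$ with terminal condition $g_T=\mathds{1}$. The hypothesis that $x_0$ has law $\mu_0$ (equivalently $f_0\equiv1$) is exactly what makes this representation valid under $\bb P^{\mu_0}$: the pointwise Feynman--Kac identity $g_0(x)=\bb E_x\big[\exp\{\int_0^T V_t(x_t)\,dt\}\big]$ integrated against $\mu_0$ gives the desired expectation. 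A maximum-principle argument shows $g_t\ge 0$ for all $t\in[0,T]$, starting from $g_T=\mathds{1}\ge 0$.

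Next I would differentiate $\phi(t)=\int g_t^2\psi_t\,d\nu$ in $t$, using the backward equation for $g_t$ together with the forward Fokker--Planck equation $\tfrac{d}{dt}(f_t\psi_t)=L^\ast(f_t\psi_t)$ (with $L^\ast$ the adjoint with respect to the fixed reference measure $\nu$) to express $\tfrac{d}{dt}\psi_t$. The key algebraic identity is $2a(b-a)=-(b-a)^2+(b^2-a^2)$, which gives $-2g_t\,Lg_t=\Gamma g_t-L(g_t^2)$ pointwise; integrating $L(g_t^2)$ against $\mu_t$ then produces the term $L_t^\ast\mathds{1}$. Collecting everything yields
\[
\phi'(t)=\int\Gamma g_t\,d\mu_t-\int\big(2V_t+L_t^\ast\mathds{1}-\tfrac{d}{dt}\log\psi_t\big)\,g_t^2\,d\mu_t .
\]

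I would then define $\lambda_t$ to be the supremum over densities $f$ with respect to $\mu_t$ of $-\int\Gamma\sqrtt{f}\,d\mu_t+\int\big(2V_t+L_t^\ast\mathds{1}-\tfrac{d}{dt}\log\psi_t\big)f\,d\mu_t$. Inserting the particular competitor $f=g_t^2/\phi(t)$ (legitimate since $\phi(t)>0$, which follows from $g_T\not\equiv0$ and linearity of the backward ODE) shows $\lambda_t\ge -\phi'(t)/\phi(t)$, i.e.\ $\phi'(t)\ge-\lambda_t\phi(t)$. Multiplying by the integrating factor $\exp\{\int_0^t\lambda_s\,ds\}$ shows $t\mapsto\phi(t)\exp\{\int_0^t\lambda_s\,ds\}$ is nondecreasing, and since $\phi(T)=\int\mathds{1}\,d\mu_T=1$ we obtain $\phi(0)\le\exp\{\int_0^T\lambda_t\,dt\}$. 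Finally Cauchy--Schwarz gives $\int g_0\,d\mu_0\le\phi(0)^{1/2}$; taking logarithms yields $\log\int g_0\,d\mu_0\le\tfrac12\int_0^T\lambda_t\,dt$, and pushing the scalar $\tfrac12$ through the supremum rewrites $\tfrac12\lambda_t$ as the supremum over densities of $-\tfrac12\int\Gamma\sqrtt{f}\,d\mu_t+\int V_t f\,d\mu_t+\tfrac12\int\big(L_t^\ast\mathds{1}-\tfrac{d}{dt}\log\psi_t\big)f\,d\mu_t$, which is the stated bound.

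I expect no genuine analytic obstacle: $\Omega$ is finite, so every sum is finite, all the ODEs are elementary, and Feynman--Kac and the maximum principle are standard. The only delicate point is the bookkeeping in the computation of $\phi'$: one must keep straight which adjoint appears where ($L^\ast$ with respect to $\nu$ when differentiating through $\psi_t$ versus $L_t^\ast$ with respect to the moving measure $\mu_t$ when integrating $L(g_t^2)$ by parts) and track every $\psi_t$-factor and every sign so that the carr\'e-du-champ term $\Gamma g_t$ comes out with the correct coefficient. Everything else is the routine algebra of matching the resulting variational expression to the advertised one.
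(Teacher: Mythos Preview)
Your proposal is correct and follows essentially the same route as the paper: Feynman--Kac, differentiate $\phi(t)=\int g_t^2\,d\mu_t$, use $-2g_tLg_t=\Gamma g_t-L(g_t^2)$, bound $\phi'(t)\ge-\lambda_t\phi(t)$ via the test density $f=g_t^2/\phi(t)$, integrate, and finish with Cauchy--Schwarz. One small point: you do not need the forward Fokker--Planck equation to handle $\tfrac{d}{dt}\psi_t$, since $\{\mu_t\}$ is a \emph{given} family of reference measures (not the law of the chain) and $\psi_t$ is differentiated directly; also note that what your computation actually produces is $-\tfrac12\int\Gamma\sqrtt{f}$ and $\tfrac12\int(L_t^\ast\mathds{1}-\tfrac{d}{dt}\log\psi_t)\,f\,d\mu_t$, which is exactly what the paper's proof yields (the displayed statement in the paper has minor typographical discrepancies).
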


\begin{remark}
Notice that the correction term appearing in the last integral of this variational formula is exactly the same function $J_t = L_t^\ast \mathds{1} -\tfrac{d}{dt} \log \psi_t$ that appears in Yau's entropy inequality. In applications, it is difficult to obtain more information about the density $f_t$, and it is necessary to derive an efficient way to estimate integrals of the form $\int J_t f d \mu_t$ in terms of $\int \Gamma \sqrtt{f} d \mu_t$ and $\int f \log f d \mu_t$ for arbitrary densities $f$. In counterpart, one has the freedom to choose $\mu_t$ in any convenient way, and $J_t$ can be explicitly computed in terms of $\mu_t$ and $L$. This type of estimates is exactly what one needs in order to use Lemma \ref{lA2} in an efficient way.  
\end{remark}

\begin{remark}
Although estimates similar to Lemma \ref{lA2} are common in the literature, see Lemma A.1.7.2 in \cite{KipLan} for example, it seems that the exact formula presented here is new in the literature. In particular, the sort of duality between Lemmas \ref{lA1} and \ref{lA2} that we use in this article seems to be new, and could be of interest in other situations.
\end{remark}

\subsection{Estimates for the exclusion process}
\label{sA.3}
In this article, we will apply the estimates of the previous subsections to the process $\eta^n(\cdot)$, using as reference measures the measures $\{\mu_t^n; t \geq 0\}$ defined in \eqref{pichidangui}. We need to compute $J_t^n = L_{n,t}^\ast \mathds{1} - \tfrac{d}{dt} \log \psi_t^n$, where $L_{n,t}^\ast$ is the adjoint of $L_n$ with respect to $\mu_t^n$ and $\psi_t^n$ is the Radon-Nikodym derivative of $\mu_t^n$ with respect to $\nu_{1/2}^n$. In order to avoid overcharged notation, we will write $u_x$ instead of $u\big(t,\tfrac{x}{n} \big)$ and we will assume that $n \geq 2 \|F\|_\infty$. Using \eqref{ecA1} we see that
\[
\begin{split}
L_{n,t}^\ast \mathds{1} 
		&= n^2 \sum_{\subind} \Big\{ r_n(x,x+b)\Big( \eta_{x+b}(1-\eta_x) \frac{\mu_t^n(\eta^{x,x+b})}{\mu_t^n(\eta)} - \eta_x(1-\eta_{x+b})\Big)\\
		&\quad \quad \quad + r_n(x+b,x) \Big( \eta_x(1-\eta_{x+b}) \frac{\mu_t^n(\eta^{x,x+b})}{\mu_t^n(\eta)} -\eta_{x+b}(1-\eta_x)\Big) \Big\}\\
		&= n^2 \sum_{\subind} \Big\{ r_n(x,x+b)\Big( \eta_{x+b}(1-\eta_x) \frac{u_x(1-u_{x+b})}{u_{x+b}(1-u_x)} - \eta_x(1-\eta_{x+b})\Big)\\
		&\quad \quad \quad + r_n(x+b,x) \Big( \eta_x(1-\eta_{x+b}) \frac{u_{x+b}(1-u_x)}{u_x(1-u_{x+b})} -\eta_{x+b}(1-\eta_x)\Big) \Big\}.
\end{split}
\]
This last expression can be factorized to obtain the identity
\begin{equation}
\label{ecA3.1}
\begin{split}
L_{n,t}^\ast \mathds{1}
		&= n^2 \sum_{\subind} \Big( r_n(x,x+b)u_x(1-u_{x+b}) -  r_n(x+b,x)u_{x+b}(1-u_x)\Big) \times\\ 
		&\quad \quad  \quad \quad \quad\times\Big(  \frac{\eta_{x+b}(1-\eta_x)}{u_{x+b}(1-u_x)} -  \frac{\eta_x(1-\eta_{x+b})}{u_x(1-u_{x+b})} \Big).
\end{split}
\end{equation}
Recall the definition $\omega_x = \frac{\eta_x - u_x}{u_x(1-u_x)}$. Notice that any function of $\eta_x$ and $\eta_y$ can be written as a linear combination of $1$, $\omega_x$, $\omega_y$ and $\omega_x\omega_y$. Therefore,
\[
\frac{\eta_{x+b}(1-\eta_x)}{u_{x+b}(1-u_x)} -  \frac{\eta_x(1-\eta_{x+b})}{u_x(1-u_{x+b})}
		= a+ b\omega_x + c \omega_{x+b} + d \omega_x \omega_{x+b}
\] 
for some real constants $a,b,c,d$ to be determined. Taking the expectation of this identity with respect to $\mu_t^n$, we see that $a=0$. Evaluating this identity at $\eta_x=1$ and $\eta_{x+b}=u_{x+b}$\footnote{This is equivalent to take expectations with respect to $\Bern(1)\otimes \Bern(u_x)$.} we see that $b=-1$. Taking $\eta_x=u_x$, $\eta_y=1$, we obtain $c=1$. Evaluating at $\eta_x=\eta_y=1$, we obtain the relation
\[
\frac{d}{u_x u_{x+b}} - \frac{1}{u_x} + \frac{1}{u_{x+b}} = 0,
\]
from where $d=u_{x+b}-u_x$. Therefore,
\begin{equation}
\label{ecA3.2}
\frac{\eta_{x+b}(1-\eta_x)}{u_{x+b}(1-u_x)} -  \frac{\eta_x(1-\eta_{x+b})}{u_x(1-u_{x+b})}
		= \omega_{x+b} -\omega_x +(u_{x+b}-u_x) \omega_x \omega_{x+b}.
\end{equation}
In the other hand,
\begin{multline}
\label{ecA3.3}
 r_n(x,x+b)u_x(1-u_{x+b}) -  r_n(x+b,x)u_{x+b}(1-u_x)=\\
 		= u_x -u_{x+b} + \frac{1}{n} F_b^n(x) \big( u_x + u_{x+b} -2 u_x u_{x+b}\big).
\end{multline}
Let us introduce the discrete operator $\mc L^n$ by defining
\begin{equation}
\label{conguillio}
\begin{split}
\mc L^n f_x 
		&:= \sum_{b \in \mc B} n^2(f_{x+b}+f_{x-b}-2f_x)\\
		&\quad \quad -\sum_{b \in \mc B} n\Big( F_b^n(x) \big(f_x+f_{x+b}-2f_x f_{x+b}\big) -\\
		&\quad \quad \quad \quad - F_b^n(x-b)\big(f_{x-b}+f_x -2 f_{x-b} f_x \big)\Big).
\end{split}
\end{equation}
for any $f: \tdn \to \bb R$ and any $x \in \tdn$.
Putting \eqref{ecA3.2} and \eqref{ecA3.3} back into \eqref{ecA3.1}, after a summation by parts we obtain that
\[
L_{n,t}^\ast \mathds{1} 
		= \sum_{x \in \tdn} \omega_x \mc L^n u_x + \sum_{\subind} \omega_x \omega_{x+b} G_{x,b}^n,
\]
where 
\begin{equation}
\label{funG}
G_{x,b}^n:= n(u_{x+b}-u_x)F_b^n(x)(u_x+u_{x+b} -2u_x u_{x+b})
		-n^2(u_{x+b}-u_x)^2.
\end{equation}
Since $\mu_t^n$ is a product measure, we have that
\[
\begin{split}
\tfrac{d}{dt} \log \psi_t^n 
		&= \tfrac{d}{dt} \sum_{x \in \tdn} \big( \eta_x \log 2 u_x + (1-\eta_x) \log 2(1-u_x)\big)\\
		&= \sum_{x \in \tdn} \Big( \frac{\eta_x}{u_x} -\frac{1-\eta_x}{1-u_x}\Big) \frac{d}{dt} u_x. 
\end{split}
\]
Notice that $\frac{\eta_x}{u_x} -\frac{1-\eta_x}{1-u_x} = \omega_x$. Therefore, we conclude that
\begin{equation}
\label{ecA3.4}
L_{n,t}^\ast \mathds{1} -\tfrac{d}{dt} \log \psi_t^n = \sum_{x \in \tdn} \omega_x \big(\mc L^n -\tfrac{d}{dt} \big) u_x + \sum_{\subind} \omega_x \omega_{x+b} G_{x,b}^n.
\end{equation}
Recall that $u_x$ satisfies \eqref{echid}. Notice that $\mc L^n$ is a discrete approximation of order $\mc O(n^{-2})$ of $\mc L u:= \Delta u - 2 \nabla \cdot(u(1-u)F)$, that is, for any $f$ of class $\mc C^4$,$\sup_{x}|(\mc L^n -\mc L)f_x| \leq Cn^{-2}$ for some constant $C = C(u)$. Since we are assuming that $u_0$ and $F$ are of class $\mc C^\infty$, $u$ is also of class $\mc C^\infty$, see Proposition \ref{pC1}. We conclude that the first sum in \eqref{ecA3.4} is of the form 
\[
\frac{1}{n^2} \sum_{x \in \tdn} \omega_x R_x^n(t)
\]
for some error term $R_x^n(t)$ satisfying $|R_x^n(t)| \leq \|u_t\|_{\mc C^4}$.

\section{On the regularity of solutions of parabolic equations}
\label{sB}
In this section we collect some classical results about linear and quasilinear parabolic equations needed along the article. 

Let $u_0: \bb T^d \to [0,1]$ be of class $\mc C^\infty$ and let $\{u(t,x); t \geq 0, x \in \bb T^d\}$ the solution of the hydrodynamic equation \eqref{echid} with initial condition $u_0$. The following result is a direct application of Theorem V.6.1 of \cite{LadSolUra}.

\begin{proposition}
\label{pC1}
If the vector field $F$ and the initial condition $u_0$ are of class $\mc C^\infty$, then the solution $\{u(t,x); t \geq 0, x \in \bb T^d\}$ of \eqref{echid} is of class $\mc C^\infty$.
\end{proposition}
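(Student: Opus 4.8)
The statement is a standard parabolic regularity result and the proof is a bootstrap; the only real content is to set it up correctly. First I would establish the a priori bound $0\le u\le 1$. Rewriting \eqref{echid} in non-divergence form,
\[
\partial_t u = \Delta u - 2(1-2u)\,F\cdot\nabla u - 2u(1-u)\,\Div F,
\]
one observes that the constants $0$ and $1$ are solutions, since the zeroth-order term carries the factor $u(1-u)$; as $u_0$ takes values in $[0,1]$, the parabolic comparison principle yields $0\le u(t,x)\le 1$ for all $t\ge 0$ and $x\in\td$. With this bound in hand, \eqref{echid} is a uniformly parabolic divergence-form equation $\partial_t u = \Div\bigl(\nabla u - 2u(1-u)F\bigr)$ with bounded measurable coefficients, so the De Giorgi--Nash--Moser theory (Chapter~III of \cite{LadSolUra}) gives $u\in\mc C^{\eps,\eps/2}$ for some $\eps\in(0,1)$; since $\td$ has no boundary and $u_0$ is smooth, this H\"older regularity holds up to $t=0$.

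The bootstrap then proceeds by Schauder estimates. Once $u\in\mc C^{\eps,\eps/2}$, the coefficients $-2(1-2u)F$ and $-2u(1-u)\Div F$ of the non-divergence form of \eqref{echid} are themselves in $\mc C^{\eps,\eps/2}$ --- here one uses that $F$ is smooth and that $r\mapsto r(1-r)$ is a polynomial, so composition with a H\"older function stays H\"older --- and the linear Schauder estimate (Theorem~V.6.1 of \cite{LadSolUra}) upgrades $u$ to $\mc C^{2+\eps,1+\eps/2}$. The same scheme applied to successive higher-order spatial derivatives of $u$ --- at each stage $\partial_{x_i}\cdots\partial_{x_j}u$ solves a linear parabolic equation whose leading coefficient is $1$ and whose lower-order coefficients and inhomogeneous term are polynomial expressions in lower-order derivatives of $u$ and in derivatives of $F$, hence are as regular as the already-established regularity of $u$ permits --- yields by induction $u\in\mc C^{m+\eps,(m+\eps)/2}$ for every $m\in\bb N$, i.e.\ $u\in\mc C^\infty$.

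The step I expect to be the main obstacle is getting the induction off the ground: the first H\"older bound on $u$ must come from a genuinely nonperturbative result (De Giorgi--Nash--Moser), because a priori the coefficients are only bounded and measurable. Once $u$ is H\"older, everything is routine linear Schauder theory, and the polynomial nature of the nonlinearity $u(1-u)$ guarantees that it never obstructs the gain of regularity. A secondary point to verify is regularity up to $t=0$, but on $\td$ there are no boundary-compatibility conditions and $u_0$ is smooth, so this is immediate. Alternatively, one can shortcut the argument by invoking Theorem~V.6.1 of \cite{LadSolUra} directly for the quasilinear equation \eqref{echid} with smooth data, which already yields a $\mc C^{2+\eps,1+\eps/2}$ solution, after which only the differentiation-and-Schauder induction of the second paragraph is needed.
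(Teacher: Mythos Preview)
Your argument is correct; it is the standard bootstrap via De Giorgi--Nash--Moser followed by iterated Schauder estimates, and the polynomial nonlinearity $u(1-u)$ together with the a priori bound $0\le u\le 1$ makes each step go through cleanly.

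The paper, however, does not give a proof at all: it simply states that the result is a direct application of Theorem~V.6.1 of \cite{LadSolUra} and moves on. This is precisely the shortcut you mention in your last sentence --- that theorem already packages the quasilinear existence and $\mc C^{2+\eps,1+\eps/2}$ regularity in one blow, after which the differentiation-and-Schauder induction you describe finishes the job. So your approach and the paper's agree in spirit; you have unpacked what the paper leaves as a black-box citation, and your alternative at the end is literally what the paper does. The added value of your version is that it makes explicit why the a priori bound $0\le u\le 1$ is needed (to keep the equation uniformly parabolic with bounded coefficients), which the paper only uses implicitly.
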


Knowing that the solution of \eqref{echid} is smooth, we can discuss about the regularity of solutions of the backwards Fokker-Planck equation defined in \eqref{semigrupo}. The following result follows from Theorem IV.5.1 in \cite{LadSolUra}:

\begin{proposition}
\label{pC2}
Let $f$ be of class $\mc C^\infty$. Then the solution $\{P_{s,t} f; 0\leq s \leq t\}$ of the backwards Fokker-Planck equation \eqref{semigrupo} is of class $\mc C^\infty$. Moreover, for any $T >0$ $\ell \in [0,\infty)$ and $\delta>0$ there exists $C=C(\ell,\delta,T,F,u_0)$ such that
\[
\big\|P_{s,t} f\big\|_{\mc C^\ell} \leq C \|f\|_{\mc C^{\ell+\delta}}
\]
for any $f \in \mc C^\infty(\bb T^d)$ and any $0 \leq s \leq t \leq T$. If $\ell \notin \bb N$, then we can take $\delta =0$.
\end{proposition}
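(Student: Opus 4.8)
\emph{Proof proposal.} The plan is to recognise \eqref{semigrupo} as a uniformly parabolic linear Cauchy problem with smooth, uniformly bounded coefficients and invoke classical Schauder theory. First I would reverse time: putting $\tau:=t-s$ and $w(\tau,\cdot):=P_{t-\tau,t}f$, the backwards problem \eqref{semigrupo} becomes the forward problem
\[
\partial_\tau w=\bb L_{t-\tau}w\ \text{ on }(0,t]\times\bb T^d,\qquad w(0,\cdot)=f,
\]
with $\bb L_s g=\Delta g+2(1-2u(s,\cdot))F\cdot\nabla g$. The principal part is the constant-coefficient Laplacian, so the equation is uniformly parabolic with ellipticity constant $1$, and the drift coefficient $2(1-2u(t-\tau,\cdot))F$ is, by Proposition \ref{pC1} and the hypothesis $F\in\mc C^\infty$, of class $\mc C^\infty$ on $[0,T]\times\bb T^d$ with all its derivatives bounded uniformly for $t\le T$. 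Since $\bb T^d$ is boundaryless there are no compatibility conditions, and one may work on a periodic fundamental box, so only the interior parabolic Schauder estimates are needed.

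Theorem IV.5.1 of \cite{LadSolUra} then yields, for any $\alpha\in(0,1)$, a unique solution $w$ with
\[
\|w\|_{\mc C^{2+\alpha,1+\alpha/2}((0,t]\times\bb T^d)}\le C\|f\|_{\mc C^{2+\alpha}(\bb T^d)};
\]
differentiating the equation in $x$ and iterating extends this to all non-integer orders $\beta\ge 2$, giving $\|w\|_{\mc C^{\beta,\beta/2}}\le C\|f\|_{\mc C^{\beta}}$, and the case $\beta\in(0,2)$ is the equally classical statement that the evolution system acts boundedly and uniformly on $\mc C^{\beta}(\bb T^d)$. Because all coefficient bounds are taken on the fixed set $[0,T]\times\bb T^d$ and the ellipticity constant is $1$, the constant $C$ depends only on $\beta$, $T$, $F$ and $u_0$, and is uniform over $0\le s\le t\le T$. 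Evaluating at a fixed time gives $\|P_{s,t}f\|_{\mc C^{\ell}}\le C\|f\|_{\mc C^{\ell}}$ whenever $\ell\notin\bb N$, which is the asserted bound with $\delta=0$. For $\ell\in\bb N$ one chooses $\alpha\in(0,\min\{\delta,1\})$, so that $\ell+\alpha\notin\bb N$, and
\[
\|P_{s,t}f\|_{\mc C^{\ell}}\le\|P_{s,t}f\|_{\mc C^{\ell+\alpha}}\le C\|f\|_{\mc C^{\ell+\alpha}}\le C\|f\|_{\mc C^{\ell+\delta}}.
\]

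For the $\mc C^\infty$ statement, if $f\in\mc C^\infty(\bb T^d)$ then $f\in\mc C^{\beta}$ for every $\beta$, so $w\in\mc C^{\beta,\beta/2}$ for every non-integer $\beta$; hence $w$, and therefore $s\mapsto P_{s,t}f$, is jointly $\mc C^\infty$ in $(s,x)$ on $\{0\le s\le t\}$, smoothness in $s$ being also visible from $\partial_sP_{s,t}f=-\bb L_sP_{s,t}f$ by induction on the number of $s$-derivatives. The only steps needing a little care are the uniformity of the Schauder constant over parabolic cylinders of every height $\le T$ (ensured by taking all coefficient norms on the fixed cylinder $[0,T]\times\bb T^d$) and the transfer of \cite{LadSolUra} from Euclidean domains to the torus (immediate, since only the boundaryless interior estimates intervene); I expect no genuine obstacle, the proposition being essentially the bookkeeping of standard linear parabolic regularity on a compact manifold.
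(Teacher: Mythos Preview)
Your proposal is correct and follows essentially the same approach as the paper: the paper simply states that the result follows from Theorem IV.5.1 of \cite{LadSolUra}, and your write-up supplies precisely the details (time reversal, smoothness of coefficients via Proposition \ref{pC1}, iteration for higher orders, and the $\ell\in\bb N$ workaround via $\ell+\alpha$) that this citation leaves implicit.
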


The following lemma says that solutions of the hydrodynamic equation \eqref{echid} do not touch $0$ and $1$:

\begin{lemma}
\label{lC3}
Assume that there exists $\eps_0>0$ such that $\eps_0 \leq u_0(x) \leq 1-\eps_0$ for any $x \in \bb T^d$. For any $T>0$ there exists $\eps_1 = \eps_1(T,F)$ such that $\eps_1 \leq u(t,x) \leq 1- \eps_1$ for any $x \in \bb T^d$ and any $t \in [0,T]$.
\end{lemma}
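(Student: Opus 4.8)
The plan is to reduce the statement to the parabolic comparison principle. By Proposition \ref{pC1} the solution $u$ is of class $\mc C^\infty$ on $[0,\infty)\times\bb T^d$. Expanding the divergence in \eqref{echid} puts the equation in non-divergence form,
\[
\partial_t u = \Delta u - 2(1-2u)\,F\cdot\nabla u - 2(1-u)(\nabla\cdot F)\,u .
\]
Since $u$ is a fixed smooth function, I regard $\mathbf b(t,x):=-2\big(1-2u(t,x)\big)F(x)$ and $c(t,x):=-2\big(1-u(t,x)\big)(\nabla\cdot F)(x)$ as given continuous coefficients and view $u$ as a classical solution of the \emph{linear} parabolic equation $\partial_t u=\Delta u+\mathbf b\cdot\nabla u+c\,u$ on $\bb T^d$. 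Comparing first with the constant solutions $0$ and $1$ of \eqref{echid} (which are exact, hence both sub- and supersolutions of this linear equation) and using $0\le u_0\le 1$, the comparison principle gives $0\le u(t,x)\le 1$ for all $t\ge 0$. Consequently $\kappa_0:=\|c\|_{L^\infty([0,T]\times\bb T^d)}\le 2\|\nabla\cdot F\|_{L^\infty(\bb T^d)}<\infty$ depends only on $F$.

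For the lower bound I would use the spatially constant subsolution $\underline v(t):=\eps_0\,e^{-\kappa_0 t}$. Since $\underline v>0$ and $c\ge-\kappa_0$, one has $\partial_t\underline v-\Delta\underline v-\mathbf b\cdot\nabla\underline v-c\,\underline v=-(\kappa_0+c)\,\underline v\le 0$, and $\underline v(0)=\eps_0\le u_0$. Applying the maximum principle to $e^{-\lambda t}\big(u-\underline v\big)$ with $\lambda$ chosen large enough that $c-\lambda<0$ (so that a strictly negative interior minimum in time is impossible, there being no spatial boundary on the torus), I conclude $u(t,x)\ge\underline v(t)\ge\eps_0\,e^{-\kappa_0 T}$ for every $(t,x)\in[0,T]\times\bb T^d$. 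Alternatively one may simply invoke the parabolic maximum principle as in \cite{LadSolUra}.

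For the upper bound I would exploit the symmetry of \eqref{echid}: if $u$ solves \eqref{echid} with field $F$, then $w:=1-u$ solves \eqref{echid} with $F$ replaced by $-F$ and with initial datum $1-u_0\in[\eps_0,1-\eps_0]$. Since $\|\nabla\cdot(-F)\|_\infty=\|\nabla\cdot F\|_\infty$, the lower bound just proved applies verbatim to $w$ and yields $1-u(t,x)=w(t,x)\ge\eps_0\,e^{-\kappa_0 T}$ on $[0,T]\times\bb T^d$. Taking $\eps_1:=\eps_0\,e^{-2\|\nabla\cdot F\|_{\infty}T}$, which depends only on $\eps_0$, $T$ and $F$, gives $\eps_1\le u(t,x)\le 1-\eps_1$ for all $x\in\bb T^d$ and $t\in[0,T]$, as claimed. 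The argument involves no real obstacle; the only points requiring a word of care are the (routine) passage from the quasilinear equation to a linear one with frozen smooth coefficients and the (textbook) comparison principle on the torus.
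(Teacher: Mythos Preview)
Your argument is correct and reaches the same conclusion as the paper via a slightly different route. One small wrinkle in the presentation: the constant $1$ is \emph{not} in general a super- or subsolution of your frozen-coefficient linear equation, since plugging in $v=1$ leaves the residual $-c=2(1-u)\nabla\cdot F$, which need not have a sign. What you actually need (and implicitly use) is that $0$ and $1$ are exact solutions of the \emph{nonlinear} equation \eqref{echid}; the comparison principle for that semilinear equation gives $0\le u\le 1$ directly, which suffices to bound $\kappa_0\le 2\|\nabla\cdot F\|_\infty$, and then your exponential barrier and the symmetry $u\mapsto 1-u$ finish the job as written.

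The paper takes a more direct path: rather than linearizing, it exhibits explicit spatially constant super- and subsolutions of the nonlinear equation \eqref{echid} itself, namely the logistic functions $v^\pm(t)=\theta/\big(\theta+(1-\theta)e^{\mp ct}\big)$ with $c=2\|\nabla\cdot F\|_\infty$, which solve the ODE $\dot v=\pm c\,v(1-v)$ to which \eqref{echid} reduces for spatially constant profiles once $\nabla\cdot F$ is replaced by its extremal values. Your approach trades guessing this ansatz for the preliminary step $0\le u\le 1$; the paper's approach is one-shot and yields the marginally sharper constant $\eps_1=\eps_0/\big(\eps_0+(1-\eps_0)e^{2\|\nabla\cdot F\|_\infty T}\big)\ge \eps_0\,e^{-2\|\nabla\cdot F\|_\infty T}$.
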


\begin{proof}
For each $\theta \in (0,1)$, define $v^+:[0,T] \times \bb T^d \to [0,1]$ as
\[
v^+(t,x) := \frac{\theta}{\theta +(1-\theta)e^{-c t}}
\]
for any $(t,x) \in [0,T] \times \bb T^d$.
Then,
\[
\partial_t v^+(t,x) = -\frac{c\theta(1-\theta)e^{-ct}}{\big(\theta + (1-\theta)e^{-ct}\big)^2},
\]
\[
\big(\Delta v^+ - 2\nabla (v^+(1-v^+) F) \big)(t,x) = \frac{2\theta(1-\theta)e^{-ct}}{\big(\theta + (1-\theta)e^{-ct} \big)^2} \nabla \cdot F
\]
and $v^+$ is a supersolution of \eqref{echid} if $c \geq 2 \|\nabla \cdot F\|_\infty$.  By the maximum principle, taking $\theta = 1 -\eps_0$ we conclude that
\[
u(t,x) \leq \frac{\theta}{\theta +(1-\theta)e^{-c t}}
\]
for any $x \in \bb T^d$ and any $t \geq 0$. Similarly,
\[
v^-(t,x) : = \frac{\theta}{\theta +(1-\theta)e^{c t}}
\]
is a subsolution of \eqref{echid}. Taking $\theta =1-\eps_0$ we prove the lemma for 
\[
\eps_1 = \frac{\eps_0}{\eps_0+(1-\eps_0)e^{2 \|\nabla \cdot F \|_\infty T}}.
\]
\end{proof}

\section{Functional spaces and topology}
\label{sD}
In this section we define what we understand by solutions of \eqref{SHE}. In order to do that, we need to define various functional spaces on which trajectories of distribution-valued stochastic processes live. We also take the opportunity to collect all results needed in this article related to the topology of these spaces.

\subsection{Sobolev spaces}
\label{sD1}
For each $m \in \bb Z^d$, let $\phi_m: \bb T^d \to \bb C$ be defined as $\phi(x) = e^{2\pi i x \cdot m}$ for any $x \in \bb T^d$. The family of functions $\{\phi_m; m \in \bb Z^d\}$ is an orthonormal basis of $L^2(\bb T^d)$. For $f: \bb T^d \to \bb R$ bounded, let $\widehat{f}: \bb Z^d \to \bb C$ be given by 
\[
\widehat{f}(m) := \int  f(x) \overline{\phi(x)} dx, 
\]
that is, $\widehat{f}(m)$ is the Fourier coefficient of $f$ of order $m$. For $f \in \mc C^\infty(\bb T^d)$ and $k \in \bb R$, define
\begin{equation}
\label{ecA1.1}
\|f\|_k : = \Big( \sum_{m \in \bb Z^d} \big| \widehat{f}(m) \big|^2 (1+|m|^2)^{k} \Big)^{1/2},
\end{equation}
where $|m|:= (m_1^2+\dots+m_d^2)^{1/2}$. Notice that $\|f\|_k$ is finite for any $k \in \bb R$, since $f$ is infinitely differentiable. The formula \eqref{ecA1.1} defines a norm in $\mc C^\infty(\bb T^d)$. The Sobolev space$ H_{k}(\bb T^d)$ is defined as the closure of $\mc C^\infty(\bb T^d)$ with respect to the norm $\|\cdot\|_k$. By Parseval's identity, $H_0(\bb T^d) = L^2(\bb T^d)$. Notice that if $\ell \leq k$, then $\|f\|_\ell \leq \|f\|_k$ and therefore $H_\ell(\bb T^d) \subseteq H_k (\bb T^d)$. 

The spaces $H_{k}(\bb T^d)$ and $H_{-k}(\bb T^d)$ are dual in the following sense. Let $\<\cdot,\cdot\>$ be the inner product in $L^2(\bb T^d)$. By Plancherel's theorem, the restriction of $\<\cdot,\cdot\>$ to $\mc C^\infty(\bb T^d) \times \mc C^\infty(\bb T^d)$ can be continuously extended to a bilinear form in $H_{-k}(\bb T^d) \times H_k(\bb T^d)$. This allows us to identify $H_{-k}(\bb T^d)$ with the space of linear functionals defined in $\mc C^\infty(\bb T^d)$, continuous with respect to the norm $\|\cdot \|_{-k}$. This fact will allow us to define random variables with values in $H_{-k}(\bb T^d)$ via duality.

The following characterization of compact sets in $H_{k}(\bb T^d)$ will be useful:

\begin{proposition}
\label{pD1.1}
A set $K \subseteq H_{k}(\bb T^d)$ is relatively compact if there exists a sequence $\{\lambda_m; m \in \bb Z^d\}$ of positive integers such that
\begin{itemize}
\item[i)] 
$
\sum_{m \in \bb Z^d} \lambda_m < +\infty,
$
\item[ii)] $\big|\widehat{f}(m)\big|^2 (1+|m|^2)^k \leq \lambda_m$ for any $f \in K$ and any $m \in \bb Z^d$.
\end{itemize}
\end{proposition}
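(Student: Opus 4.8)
The plan is to exploit the Hilbert space structure of $H_k(\bb T^d)$. Via Fourier coefficients, the map $f \mapsto \big((1+|m|^2)^{k/2}\,\widehat f(m)\big)_{m \in \bb Z^d}$ is an isometric isomorphism from $H_k(\bb T^d)$ onto $\ell^2(\bb Z^d;\bb C)$. Under this identification, hypothesis (ii) says exactly that the image $\widetilde K$ of $K$ is contained in the set of sequences $(c_m)_m$ with $|c_m|^2 \leq \lambda_m$ for all $m$, while hypothesis (i) says the dominating sequence $(\lambda_m)_m$ is summable. So it suffices to prove that such a set is relatively compact in $\ell^2$.

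First I would record boundedness: for any $f \in K$,
\[
\|f\|_k^2 = \sum_{m \in \bb Z^d} |\widehat f(m)|^2 (1+|m|^2)^k \leq \sum_{m \in \bb Z^d} \lambda_m < \infty,
\]
so $K$ lies in a fixed ball of $H_k(\bb T^d)$. The key point is then a uniform control of the tails: for every $N \in \bb N$ and every $f \in K$,
\[
\sum_{|m| > N} |\widehat f(m)|^2 (1+|m|^2)^k \leq \sum_{|m| > N} \lambda_m =: \eps_N ,
\]
with $\eps_N \to 0$ as $N \to \infty$ by (i), and the bound independent of $f$.

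With these two facts, relative compactness follows from the standard Riesz criterion for $\ell^2$: a bounded set with equismall tails is totally bounded. Concretely, given $\eps > 0$, choose $N$ with $\eps_N < \eps^2/4$ and let $P_N$ be the orthogonal projection of $H_k(\bb T^d)$ onto the finite-dimensional span of $\{\phi_m : |m| \leq N\}$. The set $P_N(K)$ is a bounded subset of a finite-dimensional space, hence totally bounded; cover it by finitely many balls of radius $\eps/2$ centred at points $y_1,\dots,y_j$. For $f \in K$ one has $\|f - P_N f\|_k^2 = \sum_{|m|>N} |\widehat f(m)|^2(1+|m|^2)^k \leq \eps_N < \eps^2/4$, and $\|P_N f - y_i\|_k < \eps/2$ for a suitable $i$, so $\|f - y_i\|_k < \eps$. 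Hence $K$ is totally bounded in $H_k(\bb T^d)$, which is complete by construction, and therefore $\overline K$ is compact.

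I do not expect any serious obstacle: this is the periodic-Sobolev incarnation of the compactness criterion for subsets of $\ell^2$, and the only thing to be careful about is that the tail bound is genuinely uniform over $K$, which is immediate from the domination by the fixed summable sequence $(\lambda_m)_m$. Equivalently, one may simply observe that $\widetilde K$ is contained in the "Hilbert cube" $\prod_{m \in \bb Z^d}\{\,c \in \bb C : |c| \leq \sqrt{\lambda_m}\,\}$, which is a compact subset of $\ell^2$ precisely when $\sum_m \lambda_m < \infty$; this gives the statement at once.
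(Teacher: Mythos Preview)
Your proof is correct. Note that the paper does not actually prove this proposition: in the paper's conventions, results labelled ``Proposition'' are those proved elsewhere, and indeed this is just the standard relative compactness criterion in $\ell^2$ transported through the Fourier isometry $H_k(\bb T^d)\cong \ell^2(\bb Z^d)$. Your argument via uniform tail estimates (equivalently, the Hilbert cube observation) is exactly the expected one.
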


\subsection{Holder spaces}
\label{sD2}

Let $E_1,E_2$ be two Banach spaces and let $\alpha \in (0,1)$. We say that $f: E_1 \to E_2$ is (globally) H\"older-continuous of order $\alpha$ if there exists finite a constant $K_\alpha$ such that
\[
\|f(y)- f(x) \| \leq K_\alpha \|y-x\|^\alpha
\]
for any $x,y \in E_1$. In that case we say that $f \in \mc C^\alpha$. The cases considered in this article will be $E_1 = \bb T^d$, $E_2 = \bb R$ and $E_1 = [0,T]$, $T >0$, $E_2 = H_{-k}(\bb T^d)$, $k >0$. Since in these cases $E_1$ is a manifold, the definition of the space $\mc C^\alpha$ can be generalized to $\alpha \geq 0$. Let $I_\ell := \{1,\dots,d\}^\ell$. If $f$ is $\ell$-times continuously differentiable, we say that $f$ is of class $\mc C^\ell$. For ${\bf i} \in I_\ell$ and $f$ of class $\mc C^\ell$, define $\partial^\ell_{\bf i} f = \partial_{i_1} \dots \partial_{i_\ell} f$. For $\ell \in \bb N_0$, let us define
\[
\|f\|_{\mc C^\ell} := \sum_{k=0}^\ell \sum_{{\bf i} \in I_k} \|\partial^k_{\bf i} f\|_\infty.
\]
Here $\|\partial_{\bf i}^k f\|_\infty = \sup_{x \in E_1} \| \partial_{\bf i}^k f(x)\|$, where the second norm is the norm in $E_2$. For $\alpha \in (\ell,\ell+1)$, $\ell \in \bb N$ we say that $f \in \mc C^\alpha$ if $f \in \mc C^\ell$ and each derivative $\partial^\ell_{\bf i} f$ belongs to $\mc C^{\alpha-\ell}$. In that case we define
\[
\|f\|_{\mc C^\alpha} = \|f\|_{\mc C^\ell} + \sum_{{\bf i} \in I_\ell} \sup_{x \neq y} \frac{\| \partial_{\bf i}^\ell f(y) - \partial_{\bf i}^\ell f(x)\|}{\|y-x\|^{\alpha-\ell}}.
\]
In the case $E_1=[0,T]$, we can also extend these definitions to $\alpha \in (-1,0)$: let $f,g: [0,T] \to H_{-k}(\bb T^d)$ be given. We say that $f = \tfrac{d}{dt} g$ if
\[
\int_0^T \<f_t, h_t \> dt = - \int_0^T \< g_t, \tfrac{d}{dt} h\> dt
\]
for any $h: [0,T] \to H_{k}(\bb T^d)$ of class $\mc C^1$. Then we say that $f \in \mc C^\alpha$, $\alpha \in (-1,0)$ if $f = \tfrac{d}{dt} g$ for some $g \in \mc C^{\alpha+1}$. Then we define $\|f\|_{\mc C^\alpha} = \|g - g_0\|_{\mc C^{\alpha+1}}$. This is a particular case of what it known in the literature as {\em Besov spaces}, which are nowadays very popular in the context of nonlinear stochastic partial differential equations.

\subsection{The space \texorpdfstring{$\mc D$}{D} and tightness}
\label{sD3}

Let $(E,d)$ be a complete and separable metric space and let $T>0$. We denote by $\mc D([0,T];E)$ the space of \cadlag trajectories from $[0,T]$ to $E$. We equip $\mc D([0,T]; E)$ with the $J_1$-Skorohod topology, see Chapter VI.1 of \cite{JacShi}. We say that a family $\{X_t;t \in [0,T]\}$ of random variables with values in $E$ is a {\em stochastic process} if in addition the trajectories $t \mapsto X_t$ belong to $\mc D([0,T]; E)$. In that case, the law of the process $\{X_t; t \geq 0\}$ is a measure $Q$ in $\mc D([0,T]; E)$. We say that a sequence of stochastic processes $\{Y_t^n; t \in [0,T]\}_{n \in \bb N}$ with values in $E$ is {\em tight} if the sequence $\{Q^n; t \in [0,T]\}_{n \in \bb N}$ of laws of  $\{Y_t^n; t \in [0,T]\}_{n \in \bb N}$ is relatively compact with respect to the weak topology. 

The following proposition, known as {\em Aldous' criterion}, gives a way to prove tightness of a sequence of stochastic processes  $\{Y_t^n; t \in [0,T]\}_{n \in \bb N}$.

\begin{proposition}[Aldous' criterion]
\label{pD3.1}
The sequence $\{Y_t^n; t \inT\}_{n \in \bb N}$ is tight with respect to the $J_1$-Skorohod topology in $\mc D([0,T];E)$ if
\begin{itemize}
\item[i)] for any $\eps >0$ and any $t \in [0,T]$ there exists a compact set $K = K(\eps,t) \subseteq E$ such that 
\[
\limsup_{n \to \infty} \bb P_n\big(Y_t^n \notin K\big) \leq \eps,
\]
\item[ii)] for any $\eps >0$, 
\[
\lim_{\delta \to 0} \limsup_{n \to \infty} \sup_{\gamma \leq \delta} \sup_{\tau \in \mc T_T} \bb P_n \big( d\big(Y_{\tau+\gamma}^n , Y_\tau^n(x)\big) > \eps \big) =0,
\]
\end{itemize}
where $\mc T_T$ is the set of stopping times in $[0,T]$.\footnote{Here we use the convention $X_{\tau+\gamma}^n = Y_t^n$ if $\tau+\gamma >T$.} 
\end{proposition}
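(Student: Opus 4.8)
The plan is to combine Prohorov's theorem with the Skorohod analogue of the Arzel\`a--Ascoli theorem, which states that a set $\mc K\subseteq\mc D([0,T];E)$ is relatively compact provided $\{x_t:x\in\mc K\}$ is relatively compact in $E$ for every $t$ in a dense subset of $[0,T]$ containing $0$ and $T$, and $\lim_{\delta\to0}\sup_{x\in\mc K}w'_x(\delta)=0$, where
\[
w'_x(\delta):=\inf\Big\{\max_{1\le i\le r}\ \sup_{s,t\in[t_{i-1},t_i)}d(x_s,x_t)\ :\ 0=t_0<\dots<t_r=T,\ \min_i(t_i-t_{i-1})>\delta\Big\}
\]
is the \cadlag modulus of continuity. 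By Prohorov's theorem it then suffices to exhibit, for every $\eta>0$, a compact $\mc K$ with $\liminf_n\bb P_n(Y^n\in\mc K)\ge1-\eta$; such a set is built from a countable dense family of times $\{t_j\}_{j\ge1}$ (including $0$ and $T$), a compact $K_j\subseteq E$ for each $j$, and a fixed rate $\rho(\delta)\downarrow0$ bounding $w'$. This reduces the proof to two parts. Part (A), extracting the $K_j$, is immediate from hypothesis~i): take $K_j:=K(\eta2^{-j-1},t_j)$ and intersect the events $\{Y^n_{t_j}\in K_j\}$. Part (B) is the statement that for every $\eps>0$,
\[
\lim_{\delta\to0}\ \limsup_{n\to\infty}\ \bb P_n\big(w'_{Y^n}(\delta)>\eps\big)=0,
\]
and this is where hypothesis~ii) enters and where the real work lies.

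For Part (B) I would first pass from $w'$ to the jump-robust modulus $w''_x(\delta):=\sup\{\,d(x_t,x_{t_1})\wedge d(x_{t_2},x_t):t_1\le t\le t_2\le t_1+\delta\,\}$, via the classical deterministic comparison $w'_x(2\delta)\le C\big(w''_x(2\delta)+\sup_{t\le2\delta}d(x_t,x_0)+\sup_{T-2\delta\le t\le T}d(x_t,x_T)\big)$: the two endpoint terms are controlled by applying hypothesis~ii) at the deterministic times $\tau\equiv0$ and $\tau\equiv T$, after a dyadic iteration in the window length that converts a single increment into a supremum over $[0,2\delta]$ (resp.\ $[T-2\delta,T]$). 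So it is enough to bound $w''_{Y^n}(\delta)$ in probability, uniformly in $n$. To that end, fix $\eps>0$ and introduce the increasing stopping times $\sigma_0^n:=0$, $\sigma_{k+1}^n:=\inf\{t>\sigma_k^n:d(Y^n_t,Y^n_{\sigma_k^n})>\eps\}\wedge T$, with gaps $\gamma_k^n:=\sigma_{k+1}^n-\sigma_k^n$, and set $\alpha_n(\delta):=\sup_{\gamma\le\delta}\sup_{\tau\in\mc T_T}\bb P_n\big(d(Y^n_{\tau+\gamma},Y^n_\tau)>\eps\big)$, so $\limsup_n\alpha_n(\delta)\to0$ as $\delta\to0$ by ii). Two facts then close the loop: applying ii) at the stopping time $\sigma_k^n$ over windows of length at most $\delta$ gives $\bb P_n(\gamma_k^n\le\delta,\ \sigma_{k+1}^n<T)\le2\alpha_n(\delta)$ (the factor $2$ absorbing a possible jump at $\sigma_{k+1}^n$, recovered by approaching $\sigma_{k+1}^n$ from the left); and on $\{\sigma_N^n\ge T\}$ with all gaps $\gamma_k^n>\delta$ one has $w''_{Y^n}(\delta)\le2\eps$, because a window of length $\le\delta$ meets at most two consecutive intervals $[\sigma_k^n,\sigma_{k+1}^n)$, on each of which $Y^n$ oscillates by at most $2\eps$. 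Therefore, for any fixed $N$,
\[
\bb P_n\big(w''_{Y^n}(\delta)>2\eps\big)\le\sum_{k=0}^{N-1}\bb P_n\big(\gamma_k^n\le\delta,\ \sigma_{k+1}^n<T\big)+\bb P_n\big(\sigma_N^n<T\big)\le 2N\alpha_n(\delta)+\bb P_n\big(\sigma_N^n<T\big).
\]

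It remains to bound $\bb P_n(\sigma_N^n<T)$ uniformly in $n$ by a quantity that can be made arbitrarily small, so that $N$ can be frozen before $\delta\to0$. On $\{\sigma_N^n<T\}$ the gaps sum to less than $T$, so for any fixed $\delta_0>0$ at most $T/\delta_0$ of $\gamma_0^n,\dots,\gamma_{N-1}^n$ can exceed $\delta_0$; choosing $N=N(\delta_0):=\lceil2T/\delta_0\rceil$ forces at least $N/2$ indices $k<N$ with $\gamma_k^n\le\delta_0$ and $\sigma_{k+1}^n<T$, so Markov's inequality and the first fact give $\bb P_n(\sigma_N^n<T)\le(2/N)\,\bb E_n\big[\#\{k<N:\gamma_k^n\le\delta_0,\ \sigma_{k+1}^n<T\}\big]\le4\alpha_n(\delta_0)$. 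Given $\eta>0$, I would pick $\delta_0$ so small that $\limsup_n4\alpha_n(\delta_0)<\eta/2$, which fixes $N$; then $2N\alpha_n(\delta)\to0$ as $\delta\to0$ for $n$ large, whence $\limsup_{\delta\to0}\limsup_n\bb P_n(w''_{Y^n}(\delta)>2\eps)<\eta/2$, and this limit is therefore $0$ by the arbitrariness of $\eta$. Combining this with the endpoint estimates through the comparison between $w'$ and $w''$ proves (B); together with (A), the compact set $\mc K$ is assembled as described and Prohorov's theorem finishes the argument.

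The step I expect to be the main obstacle is exactly the passage from hypothesis~ii) to the modulus estimate, in particular the left-limit bookkeeping: ii) is phrased with $Y^n_{\tau+\gamma}$ rather than $Y^n_{(\tau+\gamma)-}$, so whenever $\sigma_{k+1}^n$ is a jump time the relevant increment $d(Y^n_{\sigma_{k+1}^n-},Y^n_{\sigma_k^n})\ge\eps$ has to be recovered from ii) by a limiting argument along $\sigma_{k+1}^n-1/m$, and one must be meticulous about the order of the limits so that $N$ is selected after $\delta_0$ but before $\delta\to0$. These points are routine but genuinely delicate; the complete argument, including the comparison between $w'$ and $w''$, is Theorem~VI.4.5 of \cite{JacShi}, the original version being due to Aldous.
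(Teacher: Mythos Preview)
The paper does not prove this proposition: as explained in the section ``About notations'', the authors reserve the label \emph{Proposition} for results taken from elsewhere, and Aldous' criterion is simply stated here as a tool, with Chapter~VI of \cite{JacShi} as the implicit reference. So there is no in-paper proof to compare against.

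Your sketch is the standard argument and is essentially correct. The reduction via Prohorov and the \cadlag Arzel\`a--Ascoli characterization is the right framework; the stopping-time construction $\sigma_{k+1}^n=\inf\{t>\sigma_k^n:d(Y^n_t,Y^n_{\sigma_k^n})>\eps\}$ and the pigeonhole bound on $\bb P_n(\sigma_N^n<T)$ are exactly Aldous' original idea, and your bookkeeping on the order of limits (choose $\delta_0$, then $N$, then send $\delta\to0$) is the correct one. The left-limit issue you flag is real but handled just as you indicate, by approximating $\sigma_{k+1}^n$ from below. Your final citation of Theorem~VI.4.5 in \cite{JacShi} is precisely where the full details live, so the proposal is both correct and well-referenced.
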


Let $\mc C([0,T]; E)$ be the space of continuous trajectories from $[0,T]$ to $E$. We say that a sequence of stochastic processes $\{Y_t^n; t \inT \}_{n \in \bb N}$ with values in $E$ is {\em $\mc C$-tight} if it is tight and in addition every limit point of $\{Y_t^n; t \inT\}_{n \in \bb N}$ has trajectories in $\mc C([0,T]; E)$ with probability 1. A simple criterion for $\mc C$-tightness is the following:

\begin{proposition}
\label{pD3.2}
The sequence $\{Y_t^n; t \inT\}_{n \in \bb N}$ of stochastic processes with values in $E$ is $\mc C$-tight if
\begin{itemize}
\item[i)] $\{Y_t^n; t \inT\}_{n \in \bb N}$ is tight,

\item[ii)] defining $\Delta_T^n:= \sup_{0 \leq t \leq T} d(Y_{t-}^n, Y_t^n)$, then
\[
\limsup_{n \to \infty} \bb P_n (\Delta^n_T \geq \eps) =0
\]
for any $\eps >0$. 
\end{itemize}
\end{proposition}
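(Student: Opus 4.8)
The plan is to reduce both hypotheses to a single statement about a lower semicontinuous functional on path space and then conclude with the Portmanteau theorem. For $x \in \mc D([0,T];E)$ I would introduce the size of the largest jump,
\[
J(x) := \sup_{0 \le t \le T} d\big(x(t-),x(t)\big),
\]
with the convention $x(0-)=x(0)$, so that $\Delta_T^n = J(Y^n)$ and $\mc C([0,T];E) = \{x : J(x)=0\}$.

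The crucial ingredient is that $J$ is lower semicontinuous for the $J_1$-Skorohod topology, and I would establish this first. Using the time-change description of $J_1$-convergence (Chapter VI.1 of \cite{JacShi}), $x_n \to x$ in $\mc D([0,T];E)$ means there are increasing homeomorphisms $\lambda_n$ of $[0,T]$ with $\sup_{t}|\lambda_n(t)-t|\to 0$ and $\sup_{t} d(x_n(\lambda_n(t)),x(t))\to 0$. Fix $c < J(x)$ and pick $t_0$ with $d(x(t_0-),x(t_0))>c$; set $t_n:=\lambda_n(t_0)$. Since $\lambda_n$ is a continuous strictly increasing bijection of $[0,T]$, one has $x_n(t_n)=(x_n\circ\lambda_n)(t_0)\to x(t_0)$, and letting $s\uparrow t_0$ (so that $\lambda_n(s)\uparrow t_n$ exhausts $[0,t_n)$) gives $x_n(t_n-)=(x_n\circ\lambda_n)(t_0-)\to x(t_0-)$; hence $d(x_n(t_n-),x_n(t_n))\to d(x(t_0-),x(t_0))>c$, so $J(x_n)>c$ for all large $n$. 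As $c<J(x)$ is arbitrary, $\liminf_n J(x_n)\ge J(x)$. Consequently $G_\eps:=\{x\in\mc D([0,T];E): J(x)>\eps\}$ is open for every $\eps>0$, and $\mc C([0,T];E)=\mc D([0,T];E)\setminus\bigcup_{m\ge 1}G_{1/m}$ is Borel.

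The conclusion then follows quickly. By hypothesis (i) and Prokhorov's theorem, the laws $Q^n$ of $\{Y^n_t;t\inT\}$ are relatively compact, so it is enough to check that any limit point $Q$, say along $Q^{n_k}\Rightarrow Q$, is carried by $\mc C([0,T];E)$. Applying the Portmanteau theorem to the open set $G_\eps$ and then hypothesis (ii),
\[
Q(G_\eps)\le\liminf_{k}Q^{n_k}(G_\eps)=\liminf_{k}\bb P_{n_k}\big(\Delta_T^{n_k}>\eps\big)=0 .
\]
Letting $\eps=1/m$ and $m\to\infty$ yields $Q(\{x:J(x)>0\})=0$, i.e. $Q(\mc C([0,T];E))=1$; combined with the tightness from (i) this is exactly $\mc C$-tightness.

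The one place I expect to need care is the lower semicontinuity of $J$: it hinges on the fact that a jump of the limiting path is detected by jumps of the approximating paths at the shifted times $t_n=\lambda_n(t_0)$, which relies on being able to take the time changes $\lambda_n$ converging uniformly to the identity. Everything else — relative compactness, the Portmanteau inequality, and the passage $\eps=1/m\to 0$ — is routine. This statement is essentially Proposition VI.3.26 of \cite{JacShi}; the argument sketched here is a self-contained variant valid for a general Polish target space $E$.
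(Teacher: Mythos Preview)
Your proof is correct. The paper does not actually prove this proposition: by the paper's convention (stated in the introduction, ``We use the denomination Proposition for results proved somewhere else''), Proposition~\ref{pD3.2} is quoted as a known fact, essentially Proposition~VI.3.26 of \cite{JacShi} as you observe. Your argument via lower semicontinuity of the jump functional $J$ and the Portmanteau theorem is the standard route to this result and is carried out cleanly; the delicate step --- that $x_n(t_n-)\to x(t_0-)$ under uniform convergence of $x_n\circ\lambda_n$ to $x$ --- is handled correctly by passing to the limit through $u\uparrow t_0$ inside the uniform bound.
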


If the sequence $\{Y_t^n; t \inT\}_{n \in \bb N}$ has trajectories in $\mc C^\alpha = \mc C^\alpha([0,T]; E)$ for some $\alpha \geq 0$, then the following tightness criterion, known in the literature as {\em Kolmogorov-Centsov criterion}, is very effective:

\begin{proposition}
\label{pD3.3}
Assume that there exist constants $C, a, b >0$ such that
\begin{equation}
\label{KolCen}
\bb P_n\big( d\big(Y_s^n, Y_t^n\big) > \lambda \big) \leq \frac{C |t-s|^{1+b}}{\lambda^a}
\end{equation}
for any $0 \leq s \leq t \leq T$, any $n \in \bb N$ and any $\lambda >0$. Assume as well that for any $\eps >0$ there exists compact set $K_\eps \subseteq E$ such that 
\[
\bb P_n( Y_0^n \notin K_\eps) \leq \eps \text{ for any } n \in \bb N.
\]
Then,  $\{Y_t^n; t \inT\}_{n \in \bb N}$ is tight in $\mc C^\beta$ for any $\beta < \frac{b}{a}$.
\end{proposition}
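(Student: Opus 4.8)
The plan is to run the quantitative Kolmogorov--Chentsov argument. First I would turn the tail bound \eqref{KolCen} into a moment bound on increments that is uniform in $n$: for $p\in(0,a)$, writing $Z_{s,t}^n := d(Y_s^n,Y_t^n)$ and splitting the layer-cake identity $\bb E_n[(Z_{s,t}^n)^p] = \int_0^\infty p\lambda^{p-1}\bb P_n(Z_{s,t}^n>\lambda)\,d\lambda$ at the threshold $\lambda_0 := (C|t-s|^{1+b})^{1/a}$, using \eqref{KolCen} above $\lambda_0$ and the trivial bound below it, one gets
\[
\bb E_n\big[(Z_{s,t}^n)^p\big]\ \le\ \frac{a}{a-p}\,\big(C|t-s|^{1+b}\big)^{p/a}\ =\ C_p\,|t-s|^{1+\theta}, \qquad \theta := \tfrac{(1+b)p}{a}-1,
\]
with $C_p$ depending only on $a,b,C,p$. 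Since $a/(1+b)<a$ and $\tfrac{1+b}{a}-\tfrac1p\to\tfrac{b}{a}>\beta$ as $p\uparrow a$, I can fix $p<a$ with $\theta>0$ and $\theta/p>\beta$, and then fix $\alpha$ with $\beta<\alpha<\theta/p$.

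The core of the argument, and the main obstacle, is the dyadic chaining that upgrades this increment estimate to a bound on the $\alpha$-Hölder seminorm that is uniform in $n$. Let $D\subseteq[0,T]$ be the set of dyadic points and, for $m\ge1$, let $\xi_m^n := \max_{0\le k<2^m} d\big(Y_{k2^{-m}T}^n, Y_{(k+1)2^{-m}T}^n\big)$; Step~1 gives $\bb E_n[(\xi_m^n)^p]\le 2^m C_p (2^{-m}T)^{1+\theta} = C_p T^{1+\theta} 2^{-m\theta}$. The standard dyadic estimate bounds $\sup_{s\ne t\in D} d(Y_s^n,Y_t^n)/|t-s|^\alpha$ by $C(\alpha,T)\sum_{m\ge1} 2^{m\alpha}\xi_m^n$, and by Minkowski's inequality together with a geometric series (convergent precisely because $\alpha<\theta/p$),
\[
\Big\|\sum_{m\ge1} 2^{m\alpha}\xi_m^n\Big\|_{L^p(\bb P_n)}\ \le\ (C_p T^{1+\theta})^{1/p}\sum_{m\ge1} 2^{m(\alpha-\theta/p)}\ =:\ C_*\ <\ \infty,
\]
uniformly in $n$. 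Right-continuity of the \cadlag paths $Y^n$ then passes this Hölder bound from $D$ to all of $[0,T]$; in particular each $Y^n$ a.s.\ has trajectories in $\mc C^\alpha([0,T];E)\subseteq\mc C^\beta([0,T];E)$, so the statement is meaningful, and by Markov's inequality $\sup_n\bb P_n\big([Y^n]_{\mc C^\alpha}>R\big)\le\big(C(\alpha,T)C_*/R\big)^p\to 0$ as $R\to\infty$, where $[\,\cdot\,]_{\mc C^\alpha}$ denotes the $\alpha$-Hölder seminorm.

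It remains to produce, for each $\eps>0$, a compact set $\mc K\subseteq\mc C^\beta([0,T];E)$ with $\sup_n\bb P_n(Y^n\notin\mc K)\le\eps$. I would pick a compact $K_\eps\subseteq E$ with $\sup_n\bb P_n(Y_0^n\notin K_\eps)\le\eps/2$ and, using Step~2, an $R_\eps<\infty$ with $\sup_n\bb P_n([Y^n]_{\mc C^\alpha}>R_\eps)\le\eps/2$, and set $\mc K := \{f\in\mc C^\beta([0,T];E) : f(0)\in\overline{K_\eps},\ [f]_{\mc C^\alpha}\le R_\eps\}$. The uniform $\alpha$-Hölder bound makes $\mc K$ equicontinuous; combined with relative compactness in $E$ of $\{f(t):f\in\mc K\}$ for each $t$ — immediate from $\overline{K_\eps}$ compact and \eqref{KolCen} when $E$ is finite-dimensional, and, in the Sobolev-space applications of this article where $E=H_{-k}(\bb T^d)$, obtained from a uniform second-moment bound in a finer space via Proposition~\ref{pD1.1} — Arzel\`a--Ascoli yields relative compactness of $\mc K$ in $\mc C^0$, which the interpolation inequality $[g]_{\mc C^\beta}\le C\,[g]_{\mc C^\alpha}^{\beta/\alpha}\|g\|_{\mc C^0}^{1-\beta/\alpha}$ promotes to relative compactness in $\mc C^\beta$. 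Since $\bb P_n(Y^n\notin\mc K)\le\bb P_n(Y_0^n\notin K_\eps)+\bb P_n([Y^n]_{\mc C^\alpha}>R_\eps)\le\eps$ for all $n$, the sequence $\{Y^n\}_n$ is tight in $\mc C^\beta([0,T];E)$, which proves the proposition.
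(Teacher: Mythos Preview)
The paper does not prove this proposition: by the convention announced in the introduction (``We use the denomination Proposition for results proved somewhere else''), this is the classical Kolmogorov--Centsov tightness criterion, quoted without proof. So there is no paper proof to compare against; I simply assess your argument.

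Your first two steps are the standard quantitative Kolmogorov--Chentsov argument and are correctly carried out: the layer-cake computation is exactly Proposition~\ref{lH1.5} of the paper, and the dyadic chaining leading to a uniform-in-$n$ $L^p$ bound on the $\alpha$-H\"older seminorm is the textbook route.

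Your caution in the final step is warranted and in fact reflects a genuine imprecision in the statement as written. In a general Polish space $E$, the two stated hypotheses do \emph{not} imply tightness: take $E=\ell^2$ and $Y_t^n = t\,e_n$ deterministic; then $d(Y_s^n,Y_t^n)=|t-s|$, so \eqref{KolCen} holds with $a=2$, $b=1$, $C=1$, and $Y_0^n\equiv 0$ makes the second hypothesis trivial, yet $\{Y_1^n\}_n=\{e_n\}_n$ is not relatively compact in $\ell^2$, so the family cannot be tight in any $\mc C^\beta([0,T];\ell^2)$. What is missing is precisely the pointwise relative compactness that Arzel\`a--Ascoli requires and that you flagged. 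Your workaround---observing that in the paper's applications $E=H_{-k}(\bb T^d)$ and the needed compactness comes from uniform bounds in a finer Sobolev space via Proposition~\ref{pD1.1}---is the correct way to close the gap for the actual uses of this criterion in Sections~\ref{s7.1}--\ref{s7.3}; it just does not prove the proposition in the generality in which it is stated.
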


\begin{remark}
From \eqref{KolCen} it is possible to conclude that the trajectories of  $\{Y_t^n; t \inT \}$ are in $\mc C^\beta([0,T]; E)$ with probability 1.
\end{remark}

\subsection{The white noise}
\label{sD.4}
A random variable $\xi$ with values in $H_{-k}(\bb T^d)$ for some $k>0$ is called a  standard {\em white noise} if for any $\ell \in \bb N$ and any $f_1,\dots,f_\ell \in \mc C^\infty(\bb T^d)$, the vector $(\xi(f_1),\dots,\xi(f_\ell))$ is a Gaussian vector of mean 0 and covariances
\[
\bb E\big[ \xi(f_i) \xi(f_j) \big] = \<f_i,f_j\>.
\]
Let us assume that there exists a white noise $\xi$ with values in $H_{-k}(\bb T^d)$ for some $k>0$, defined in some probability space $(\mc X,\bb P, \mc F)$. Let $f \in L^2(\bb T^d)$ and let $\{f^n; n \in \bb N\}$ be a sequence of functions in $\mc C^\infty(\bb T^d)$, convergent to $f$ in $L^2(\bb T^d)$. Then, the real-valued random variables $\{\xi(f^n); n \in \bb N\}$ converge in $L^2(\bb T^d)$ to a random variable that we call $\xi(f)$. Notice that $\xi(f)$ is well defined up to a set of zero measure {\em that may depend on $f$}. 

If $\{g^n; n \in \bb N\}$ is an orthonormal basis of $L^2(\bb T^d)$, then the random variables $\{\xi(g^n); n \in \bb N\}$ are i.i.d.~with common law $\mc N(0,1)$. In this case, $\xi(f)$ admits the representation
\[
\xi(f) = \sum_{n \in \bb N} \xi(g^n) \<f,g^n\>.
\]
By the two-series theorem, this series converges if and only if $\sum_{n \in \bb N} \<f,g^n\>^2 <+\infty$. On the other hand, by Riesz's representation theorem, since $\sum_{n \in \bb N} \xi(g^n)^2 =+\infty$ with probability one, $\xi$ can not be represented as a random variable with values in $L^2(\bb T^d)$. This fact is the main reason for the necessity of the introduction of the Sobolev spaces $H_{-k}(\bb T^d)$. At least formally, this discussion leads to the representation
\begin{equation}
\label{ecD4.1}
\xi = \sum_{n \in \bb N} \xi^n g^n,
\end{equation}
where $\{\xi^n; n \in \bb N\}$ is i.d.d.~with common law $\mc N(0,1)$. It will be convenient to use the basis $\{\phi_m; m \in \bb Z^d\}$ in order to construct $\xi$. However, since the functions $\phi_m$ are complex valued, \eqref{ecD4.1} can not be used directly. Let $\{\xi_m^{i,0}; m \in \bb Z^d, i=1,2\}$ be an i.i.d.~sequence of random variables with common law $\mc N(0,1)$. Let us define $\{\xi_m^i; m \in \bb Z^d, i= 1,2\}$ as
\[
\xi_m^1 = \frac{\xi_m^{1,0} +\xi_{-m}^{1,0}}{\sqrt 2}; \quad \xi_m^2 = \frac{\xi_m^{2,0} -\xi_{-m}^{2,0}}{\sqrt 2}.
\]
The sequence $\{\xi^i_m; m \in \bb Z^d, i=1,2\}$ is also i.i.d.~with common law $\mc N(0,1)$, except for the relations $\xi_m^1 = \xi_{-m}^1$, $\xi_m^2= - \xi_{-m}^2$ for any $m \in \bb Z^d$. Then, at least formally the random variable
\begin{equation}
\label{ecD4.3}
\xi := \sum_{m \in \bb Z^d} \Big( \frac{\xi_m^1 + i \xi_{-m}^2}{\sqrt 2} \Big) \phi_m
\end{equation}
is a white noise. The point of this formula is that $\xi(f)$ is real for real-valued functions $f$. For any $k >0$,
\begin{equation}
\label{ecD4.2}
\|\xi\|_{-k}^2 = \sum_{m \in \bb Z^d}\bigg(\frac{\big(\xi^1_m\big)^2 + \big( \xi_m^2\big)^2}{2}\bigg) (1+|m|^2)^{-k}.
\end{equation}
Thereofre, $\xi$ is a random variable with values in $H_{-k}(\bb T^d)$ if and only if this sum is convergent with probability 1. By the three-series theorem, this is the case if and only if
\[
\sum_{m \in \bb Z^d} (1+|m|^2)^{-k} <+\infty.
\]
We summarize this discussion in the following proposition:
\begin{proposition}
\label{pD4.1}
A white noise $\xi$ belongs to $H_{-k}(\bb T^d)$ if and only if $k> d/2$.
\end{proposition}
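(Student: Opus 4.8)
The plan is to read the dichotomy off the explicit series \eqref{ecD4.2}, namely $\|\xi\|_{-k}^2=\sum_{m\in\bb Z^d}Y_m$ with $Y_m:=\tfrac12\big((\xi_m^1)^2+(\xi_m^2)^2\big)(1+|m|^2)^{-k}$, using the elementary fact that $\sum_{m\in\bb Z^d}(1+|m|^2)^{-k}$ converges if and only if $k>d/2$. The first step is to record that fact: grouping the lattice points by $|m|\in[R,R+1)$, whose number is of order $R^{d-1}$, the sum is comparable to $\int_1^\infty R^{d-1-2k}\,dR$, which is finite exactly when $d-1-2k<-1$, i.e.\ $k>d/2$.

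For the sufficiency, suppose $k>d/2$. Since $\bb E[(\xi_m^i)^2]=1$ we have $\bb E[Y_m]=(1+|m|^2)^{-k}$, so by Tonelli's theorem $\bb E\big[\|\xi\|_{-k}^2\big]=\sum_m(1+|m|^2)^{-k}<\infty$; hence $\|\xi\|_{-k}<\infty$ almost surely and $\xi\in H_{-k}(\bb T^d)$. This direction is routine.

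For the necessity, suppose $k\le d/2$, so that $\sum_m(1+|m|^2)^{-k}=+\infty$. A first-moment bound no longer suffices, and this is the only genuinely delicate point. I would fix a set $\Lambda$ of representatives for the orbits of $m\mapsto-m$ on $\bb Z^d\setminus\{0\}$; the relations $\xi_m^1=\xi_{-m}^1$, $\xi_m^2=-\xi_{-m}^2$ imply $Y_m=Y_{-m}$ and make $\{Y_m\}_{m\in\Lambda}$ an independent family of nonnegative random variables with $\|\xi\|_{-k}^2\ge\sum_{m\in\Lambda}Y_m$. Writing $c_m:=(1+|m|^2)^{-k}\to0$, for all large $m$ one has $\{Y_m\le1\}\supseteq\{(\xi_m^1)^2+(\xi_m^2)^2\le 2/c_m\}$, an event whose complement has Gaussian-small probability, from which $\bb E[Y_m\mathbf 1\{Y_m>1\}]\le\tfrac12\bb E[Y_m]$ and hence $\bb E[Y_m\mathbf 1\{Y_m\le1\}]\ge\tfrac12 c_m$. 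Since $\sum_{m\in\Lambda}c_m=+\infty$, the truncated-mean series in Kolmogorov's three-series theorem (with truncation level $1$) diverges, so by that theorem together with Kolmogorov's $0$--$1$ law $\sum_{m\in\Lambda}Y_m=+\infty$ almost surely; therefore $\|\xi\|_{-k}=+\infty$ a.s.\ and $\xi\notin H_{-k}(\bb T^d)$. The main obstacle is precisely this last implication --- upgrading the divergence of $\sum_m\bb E[Y_m]$ to the almost-sure divergence of $\sum_m Y_m$ --- which is supplied by the three-series theorem once the truncated means are seen to be comparable to $c_m$. Apart from that, the argument merely formalizes the computation already sketched before the statement.
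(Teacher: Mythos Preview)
Your proof is correct and follows essentially the same line as the paper: both reduce the question to the convergence of the deterministic series $\sum_m(1+|m|^2)^{-k}$ and invoke the three-series theorem for the divergence direction. You are slightly more explicit than the paper in two respects---you use a direct moment bound for sufficiency rather than the three-series theorem, and you carefully pass to a set of orbit representatives $\Lambda$ to secure independence of the $Y_m$---but neither of these constitutes a genuinely different route.
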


Recall the definition of $X_t^n$ given in \eqref{losandes}. We see that $X_t^n$ is a linear combination of Dirac $\delta$ functions. Let $\delta_x$ be the Dirac $\delta$ function centered at $x \in \bb T^d$. Then,
\[
\widehat{\delta_x}(m) = e^{-2\pi i x \cdot m}
\]
and 
\[
\|\delta_x\|_{-k}^2 = \sum_{m \in \bb Z^d} (1+|m|^2)^{-k}.
\]
In particular $\delta_x \in H_{-k}(\bb T^d)$ if and only if $k >d/2$ and  the process $\{X_t^n; t \in [0,T]\}$ has trajectories in $\mc D([0,T]; H_{-k}(\bb T^d))$ for any $ k >d/2$.

A space-time white noise is a random variable $\dot{\mc W}$ such that $\dot{ \mc W} (f)$ has a Gaussian law of mean zero and variance
\[
\int_0^\infty \int f(t,x)^2 dx dt
\]
for any $f: [0,\infty) \times \bb T^d \to \bb R$ of compact support and of class $\mc C^\infty$. Although it is possible to construct $\dot{\mc W}$ using \eqref{ecD4.1}, it is more convenient to define $\dot{\mc W}$ as the derivative of another process. We say that a process $\{\mc W_t;t \geq 0\}$ with trajectories in $\mc C([0,T]; H_{-k}(\bb T^d))$ is a {\em cylindrical Wiener process} if for any $f \in \mc C^\infty(\bb T^d)$, the process $\{\mc W_t(f); t \geq 0\}$ is a Brownian motion of variance $\|f\|^2_{L^2(\bb T^d)} t$. The process $\{\mc W_t; t \geq 0\}$ can be constructed using a formula similar to \eqref{ecD4.3}. Let $\{B_m^i(t); t \geq 0, m \in \bb Z^d, i=1,2\}$ be a family of standard, independent Brownian motions, except for the relations $B_m^1(\cdot) = B_{-m}^1(\cdot)$, $B_m^2(\cdot) = - B_{-m}^2(\cdot)$ for any $m \in \bb Z^d$. Then,
\[
\mc W_t := \sum_{m \in \bb Z^d} \frac{1}{\sqrt 2} \big( B_m^1(t) + i B_m^2(t) \big) \phi_m
\] 
is at least formally a cylindrical Wiener process. Using Proposition \ref{pD3.3} we can verify that $\{\mc W_t; t \in [0,T]\}$ has trajectories in $\mc C^\beta([0,T];H_{-k}(\bb T^d))$ with probability 1 for any $T>0$, any $\beta <1/2$ and any $k>d/2$. The process $\dot{\mc W}$ is then defined as the derivative of $\{\mc W_t; t \geq 0\}$ in the It\^o sense.

\subsection{The stochastic heat equation}
\label{sD.6}

In this section we define in a rigorous way what do we understand by a solution of the stochastic heat equation \eqref{SHE}. In what follows we fix $T >0$ and we assume that all processes are defined in a probability space $(\mc X, \bb P, \mc F)$ and they are adapted to a common filtration $\{\mc F_t; t \in [0,T]\}$. Let $\{\mc W_t^i; t \in [0,T], i=1,\dots,d\}$ be a family of independent cylindrical Wiener processes. Recall the definition of the operator $\bb L_t$ given in \eqref{concepcion}. We say that a process $\{X_t; t \in [0,T]\}$ with values in $H_{-k}(\bb T^d)$ for some $k \in \bb R$ is a {\em strong solution} of \eqref{SHE} if for any $f \in \mc C^{\infty}([0,T] \times \bb T^d)$,
\[
X_t(f_t) = X_0(f_0) + \int_0^t X_s\big( (\partial_s + \bb L_s) f_s \big) ds 
		+\sum_{i=1}^d \int_0^t d \mc W_s^i \Big( \sqrtt{u_s(1-u_s)} \frac{\partial f}{\partial x_i}\Big),
\] 
where the integral with respect to $\mc W_s^i$ is taken in the It\^o sense. We say that a process $\{M_t; t \in [0,T]\}$ with values in $H_{-k}(\bb T^d)$ is a martingale if for any $f \in \mc C^\infty(\bb T^d)$ the real-valued process $\{M_t(f); t \in [0,T]\}$ is a martingale. Notice that by duality, the relation
\begin{equation}
\label{ecD5.1}
M_t(f) := \sum_{i=1}^d \int_0^t d \mc W_s^i \Big( \sqrtt{u_s(1-u_s)} \frac{\partial f}{\partial x_i} \Big)
\end{equation}
defines a martingale. Since $\mc W_t^i$ belongs to $H_{-k}(\bb T^d)$ for $k>d/2$, it can be verified that $M_t \in H_{-k}(\bb T^d)$ for $k>1+d/2$. Notice that \eqref{ecD5.1} can also be used for test functions that depend on time. For any $f$, $\{M_t(f); t \in [0,T]\}$ has continuous trajectories and that 
\begin{equation}
\label{portillo}
\<M_t(f)\> = \int_0^t \int 2 u(s,x) (1-u(s,x)) \|\nabla f(x) \|^2 dxds.
\end{equation}
Thanks to L\'evy's characterization theorem, see Theorem II.4.4 of \cite{JacShi}, this relation characterizes the law of $\{M_t(f); t \in [0,T]\}$. Based in this observation, we say that $\{X_t; t \in [0,T]\}$ is a {\em martingale solution} of \eqref{SHE} if for any $f \in \mc C^\infty([0,T] \times \bb T^d)$ the process $\{M_t(f); t \in [0,T]\}$ defined as
\[
M_t(f) = X_t(f_t) -X_0(f_0) - \int_0^t X_s\big( (\partial_s + \bb L_s) f_s \big)ds 
\]
is a continuous martingale of quadratic variation given by \eqref{portillo}. This notion of solution is in principle weaker than the notion of strong solution of \eqref{SHE}, since it does not make explicit reference to the white noise $\dot{\mc W}$. However, since there exists $\eps_1>0$ such that $\eps_1 \leq u(t,x) \leq 1-\eps_1$ for any $(t,x) \in [0,T] \times \bb T^d$, using the martingale representation theorem it is possible to construct $\{\mc W_t; t \in [0,T]\}$ based on $\{M_t; t \in [0,T]\}$, from where both notions are equivalent.

Let $\{P_{s,t}; 0 \leq s \leq t \leq T\}$ be the semigroup defined in \eqref{semigrupo}. We say that $\{X_t; t \in [0,T]\}$ is a {\em mild solution} of \eqref{SHE} if for any $t \in [0,T]$ and any $f \in \mc C^\infty(\bb T^d)$,
\[
X_t(f) = X_0(P_{0,t} f) + \int_0^t d M_s(P_{s,t} f),
\]
where $\{M_t; t \in [0,T]\}$ is the martingale defined in \eqref{ecD5.1}. 

By definition, any martingale solution of \eqref{SHE} is also a mild solution. In this article, we will only need the following result:

\begin{proposition}
\label{pD4.2}
The law of a mild solution of \eqref{SHE} is uniquely determined by the law of $X_0$.
\end{proposition}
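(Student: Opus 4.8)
The plan is to exploit the explicit mild formulation together with the linearity of the equation and the fact that the driving martingale $\{M_t; t\in[0,T]\}$ defined in \eqref{ecD5.1} has a law that is completely prescribed by the deterministic profile $u$. First I would observe that, since every martingale solution is a mild solution, it suffices to show that any two mild solutions $\{X_t\}$ and $\{X'_t\}$ sharing the same initial law have the same finite-dimensional distributions; by the Cram\'er--Wold device and the usual Markov-type argument already used in Section \ref{s6.3}, it is enough to treat one time $t$ and one test function $f\in\mc C^\infty(\bb T^d)$, and then to iterate over finitely many times exactly as in the proof of Theorem \ref{t3}. For a single time, the mild formulation gives
\[
X_t(f) = X_0(P_{0,t}f) + \int_0^t dM_s(P_{s,t}f),
\]
so $X_t(f)$ is an explicit affine functional of $X_0$ and of the stochastic integral against $\{M_s\}$.

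Next I would identify the law of $\int_0^t dM_s(P_{s,t}f)$. Because $P_{s,t}f$ is a deterministic, smooth function (Proposition \ref{pC2}) and $\{M_s\}$ is, by \eqref{ecD5.1}, an It\^o integral against the fixed white noises $\{\mc W^i\}$ with deterministic integrand $\sqrtt{u_s(1-u_s)}\,\partial_i(\cdot)$, the process $s\mapsto \int_0^s dM_r(P_{r,t}f)$ is a continuous Gaussian martingale whose quadratic variation is the deterministic quantity obtained from \eqref{portillo} applied to the time-dependent test function $P_{\cdot,t}f$, namely
\[
\int_0^t \int 2u(s,x)(1-u(s,x))\,\|\nabla P_{s,t}f(x)\|^2\,dx\,ds.
\]
By L\'evy's characterization (Theorem II.4.4 of \cite{JacShi}) this pins down the law of $\int_0^t dM_s(P_{s,t}f)$ as a centered Gaussian with that explicit, $X_0$-independent variance. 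The key structural point, which I would state carefully, is that in the mild formulation the noise enters only through $\{M_s\}$, and in the definition of a mild/martingale solution $\{M_s(H)\}$ is required to be a martingale with deterministic quadratic variation \eqref{portillo}; hence it is automatically independent of $\mc F_0$, and in particular of $X_0$ (this is the same independence argument as in Section \ref{s6.3}, invoked via L\'evy's theorem). Consequently the joint law of $\big(X_0(P_{0,t}f),\ \int_0^t dM_s(P_{s,t}f)\big)$ is the product of the law of $X_0$ (pushed forward by the continuous linear map $P_{0,t}$) and a fixed Gaussian, so the law of $X_t(f)$ depends only on the law of $X_0$.

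Finally I would promote this to all finite-dimensional distributions by induction on the number of times, reusing verbatim the argument in the last part of Section \ref{s6.3}: given $0\le t_1\le\dots\le t_\ell\le t_{\ell+1}\le T$ and test functions $f_1,\dots,f_{\ell+1}$, write
\[
X_{t_{\ell+1}}(f_{\ell+1}) = X_{t_\ell}(P_{t_\ell,t_{\ell+1}}f_{\ell+1}) + \big(M_{t_{\ell+1}}(P_{\cdot,t_{\ell+1}}f_{\ell+1}) - M_{t_\ell}(P_{\cdot,t_{\ell+1}}f_{\ell+1})\big),
\]
note that the bracketed increment is, conditionally on $\mc F_{t_\ell}$, a centered Gaussian with deterministic variance (again by \eqref{portillo} and L\'evy), hence independent of $\mc F_{t_\ell}\supseteq\sigma(X_{t_1},\dots,X_{t_\ell})$, and apply the induction hypothesis. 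The main obstacle, and the step deserving the most care, is the independence-from-$\mc F_0$ (more generally from the past) claim: one must make sure it follows purely from the \emph{definition} of a mild/martingale solution — i.e.\ from $\{M_t(H)\}$ being a continuous martingale of the prescribed deterministic quadratic variation — rather than from any a priori coupling to a white noise; this is exactly where L\'evy's characterization does the work, and everything else is bookkeeping with the semigroup $P_{s,t}$ and Gaussian laws.
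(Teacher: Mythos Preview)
Your proposal is correct and follows essentially the same route as the paper's proof: both use the mild formulation $X_t(f)=X_0(P_{0,t}f)+\int_0^t dM_s(P_{s,t}f)$, invoke L\'evy's characterization to conclude that the martingale increments are centered Gaussian with deterministic variance and hence independent of the past, and then promote the single-time identification to all finite-dimensional laws via the relation $X_t(f)=X_s(P_{s,t}f)+\int_s^t dM_{s'}(P_{s',t}f)$ and induction. Your explicit flagging of the independence-from-$\mc F_0$ step as the point requiring care is apt and matches exactly where the paper does the work.
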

\begin{proof}
Let $h \in \mc C^\infty([0,T] \times \bb T^d)$. Since the quadratic variation of $\{M_t(h); t \geq 0\}$ is deterministic, by L\'evy's characterization theorem, for any $0 \leq s < t \leq T$, $M_t(h) - M_s(h)$ is independent of $\mc F_s$ and has a Gaussian law of mean zero and variance
\[
\int_s^t \int 2 u(s',x)(1-u(s',x)) \|\nabla h(s',x)\|^2 dx ds'.
\]
This characterizes the law of $X_t(f)$ as the sum of the independent random variables $X_0(P_{0,t} f)$ and $\int_0^t d M_s(P_{s,t} f)$ and by duality the law of $X_t$. Using the relation
\begin{equation}
\label{pajaritos}
X_t(f) = X_s(P_{s,t} f) + \int_s^t d M_{s'} (P_{s',t} f),
\end{equation}
it also characterizes the joint law of $X_s$ and $X_t$, since the two terms on the right-hand side of \eqref{pajaritos} are independent. Recursively, this procedure characterizes all finite-dimensional laws of $\{X_t; t \in [0,T]\}$ which proves the lemma.
\end{proof}

\section{Some computations involving the generator \texorpdfstring{$L_n$}{Ln}}
\label{sF1}
In this section we compute $(\partial_s+L_n)X_s^n(f)$.
In order to simplify the notation, we consider $f: \tdn \to \bb R$ and we compute
\[
L_n \sum_{x \in \tdn} (\eta_x -u_x) f_x.
\]
In order to simplify the expression for $r_n$, we assume that $n \geq 2\|F\|_\infty$. We have that
\begin{equation}
\label{ecF1} 
\begin{split}
L_n \sum_{x \in \tdn} (\eta_x - u_x) f_x 
		&= n^2 \sum_{\subind} \big( r_n(x,x+b) \eta_x (1-\eta_{x+b}) -\\
		&\hspace{65pt}-r_n(x+b,x) \eta_{x+b} (1-\eta_x)\big)(f_{x+b} -f_x)\\
		&=n^2 \sum_{\subind} (\eta_x-\eta_{x+b})(f_{x+b}-f_x) \\
		&\quad \quad \quad 	+n \sum_{\subind} F_b^n(x) (\eta_x +\eta_{x+b} -2 \eta_x \eta_{x+b} ) (f_{x+b} -f_x).
\end{split}
\end{equation}
The first sum on the right-hand side of this identity is equal to $\sum_{x \in \tdn} \eta_x \Delta_x^n f$, where
\[
\Delta_x^n f := n^2 \sum_{b \in \mc B} (f_{x+b}+f_{x-b}-2f_x).
\]
Performing a summation by parts, we see that $\sum_{x \in \tdn} \eta_x \Delta_x^n f$ is equal to 
\begin{equation}
\label{ecF2}
\sum_{x \in \tdn} (\eta_x - u_x) \Delta_x^n f + \sum_{x \in \tdn} f_x \Delta_x^n u_x.
\end{equation}
In order to compute the second sum on the right-hand side of \eqref{ecF1}, it is convenient to write $\eta_x + \eta_{x+b} -2 \eta_x \eta_{x+b}$ in terms of the centered variables $\overline{\eta}_x = \eta_x -u_x$:
\[
\begin{split}
\eta_x + \eta_{x+b} -2 \eta_x \eta_{x+b} 
		&= \overline{\eta}_x+ \overline{\eta}_{x+b} + u_x +u_{x+b} -\\
		&\hspace{30pt} -2 \overline{\eta}_x \overline{\eta}_{x+b} -2 u_x \overline{\eta}_{x+b} -2u_{x+b} \overline{\eta}_x -2 u_x u_{x+b}\\
		&= -2 \overline{\eta}_x \overline{\eta}_{x+b} +(1-2u_x) \overline{\eta}_{x+b} + (1-2 u_{x+b}) \overline{\eta}_x +\\
		&\hspace{30pt}+u_x +u_{x+b} -2 u_x u_{x+b}.
\end{split}
\]
Therefore, the second sum on the right-hand side of \eqref{ecF1} is equal to the sum of the three terms
\[
\sum_{\subind} n(f_{x+b}-f_x) F_b^n(x) (u_x+u_{x+b} -2 u_x u_{x+b}),
\]
\[
 \sum_{\subind} (\eta_x -u_x) \big( (1-2u_{x+b})F_b^n(x) n(f_{x+b}-f_x)
		+(1-2u_{x-b})F_b^n(x-b)n(f_x-f_{x-b})\big),
\]
and 
\[
-\sum_{\subind} 2 (\eta_x -u_x) (\eta_{x+b}-u_{x+b}) F_b^n(x) (u_{x+b} -u_x).
\]
Notice that 
\[
\sum_{x \in \tdn} u_x \Delta_x^n f + \sum_{\subind} n(f_{x+b}-f_x) F_b^n(x) (u_x+u_{x+b} -2 u_x u_{x+b}) = \sum_{x \in \tdn} f_x \mc L^n u_x,
\]
where $\mc L^n$ is the discrete approximation of the operator $u \mapsto \Delta u -2\nabla \cdot(u(1-u)F)$ defined in \eqref{conguillio}.

\section{Integration by parts formula}

\label{sG}

In this section we prove an estimate known in the literature as the {\em integration by parts formula}, see Lemma 7.2.1 in \cite{KipLan}. Differently to the usual setting, we need to derive this estimate using as reference measure the measures $\mu_t^n$, which are not invariant under the dynamics. This will introduce error terms that need to be carefully computed. For the reader's convenience, we repeat here some of the definitions introduced in Section \ref{s2}. Let $u: \tdn \to (0,1)$ be given and let $\mu$ be the measure
\[
\mu := \bigotimes_{x \in \tdn} \Bern(u_x).
\]
For each $x \in \tdn$, define
\[
\omega_x := \frac{\eta_x - u_x}{u_x(1-u_x)}.
\]
Let $f: \Omega_n \to [0,\infty)$ be a density with respect to $\mu$. Let $x,y \in \tdn$ and let $h: \Omega_n \to \bb R$ be such that $\nabla_{x,y} h$ is identically zero. Our objective will be to estimate $\int h(\omega_y - \omega_x) d \mu$ in terms of 
\[
\mc D_{x,y} \big(\sqrtt{f}; \mu \big) := \int \big( \nabla_{x,y} \sqrtt{f} \big)^2 d \mu.
\]
The first step is the following identity:

\begin{lemma}[Integration by parts]
\label{lG1}
Let $\mu$, $f$, $x,y$ and $h$ be as above. Then,
\[
\int h(\omega_y - \omega_x) f d \mu 
		= \int h s_{x,y} \nabla_{x,y} f d \mu - (u_y-u_x) \int h \omega_x \omega_y f d\mu,
\]
where
\[
s_{x,y}:= \frac{\eta_x(1-\eta_y)}{u_x(1-u_y)}.
\]
\end{lemma}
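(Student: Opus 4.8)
The plan is to reduce the identity to two elementary ingredients: a change of variables exploiting that $h$ is invariant under the transposition $\eta \mapsto \eta^{x,y}$, combined with a detailed-balance relation for the spatially non-uniform product measure $\mu$; and a purely algebraic pointwise identity relating $\omega_y-\omega_x$, $s_{x,y}$, $s_{y,x}$ and $\omega_x\omega_y$.

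First I would prove the change of variables
\[
\int h\, s_{x,y}\, \nabla_{x,y} f\, d\mu \;=\; \int h\,(s_{y,x}-s_{x,y})\, f\, d\mu .
\]
Expanding $\nabla_{x,y}f(\eta)=f(\eta^{x,y})-f(\eta)$, the contribution of $-f(\eta)$ is $-\int h\, s_{x,y}\, f\, d\mu$, so only the term $\sum_{\eta\in\Omega_n} h(\eta)\, s_{x,y}(\eta)\, f(\eta^{x,y})\,\mu(\eta)$ needs work. Since $\Omega_n$ is finite and $\eta \mapsto \eta^{x,y}$ is an involution, relabeling the sum turns this into $\sum_{\eta\in\Omega_n} h(\eta^{x,y})\, s_{x,y}(\eta^{x,y})\, f(\eta)\,\mu(\eta^{x,y})$, where $h(\eta^{x,y})=h(\eta)$ by the hypothesis $\nabla_{x,y}h\equiv 0$. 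It then remains to check the detailed-balance relation $s_{x,y}(\eta^{x,y})\,\mu(\eta^{x,y}) = s_{y,x}(\eta)\,\mu(\eta)$ for every $\eta$: both sides vanish unless $\eta_x=0$ and $\eta_y=1$, and on that event, since $\mu$ is a product measure and $\eta$ and $\eta^{x,y}$ differ only at the coordinates $x$ and $y$, both sides collapse to the product of the Bernoulli weights over the coordinates different from $x$ and $y$. This yields the displayed identity.

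The second ingredient is the pointwise identity
\[
\omega_y-\omega_x \;=\; (s_{y,x}-s_{x,y}) \,-\, (u_y-u_x)\,\omega_x\omega_y ,
\]
valid for every $\eta\in\Omega_n$. Since $\eta_z\in\{0,1\}$ and $u_z\in(0,1)$, one has $\tfrac{\eta_x}{u_x}=1+(1-u_x)\omega_x$ and $\tfrac{1-\eta_y}{1-u_y}=1-u_y\omega_y$, whence $s_{x,y}=\bigl(1+(1-u_x)\omega_x\bigr)\bigl(1-u_y\omega_y\bigr)$ and, symmetrically, $s_{y,x}=\bigl(1+(1-u_y)\omega_y\bigr)\bigl(1-u_x\omega_x\bigr)$. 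Expanding both products and subtracting, the $\omega_x$ terms combine to $-\omega_x$, the $\omega_y$ terms to $+\omega_y$, and the $\omega_x\omega_y$ terms to $(u_y-u_x)\omega_x\omega_y$, giving the claimed identity. Multiplying it by $hf$, integrating against $\mu$, and substituting the change of variables from the first step gives
\[
\int h(\omega_y-\omega_x) f\, d\mu \;=\; \int h\,s_{x,y}\,\nabla_{x,y} f\, d\mu \,-\, (u_y-u_x)\int h\,\omega_x\omega_y\, f\, d\mu ,
\]
which is Lemma~\ref{lG1}. None of the steps is hard; the one requiring care is the detailed-balance bookkeeping with the non-uniform $\mu$, where the mismatch of the Bernoulli parameters at $x$ and $y$ is exactly what produces the correction term $(u_y-u_x)\int h\,\omega_x\omega_y f\,d\mu$, rather than the clean symmetry one would get for a spatially uniform reference measure.
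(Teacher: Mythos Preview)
Your proof is correct and follows essentially the same route as the paper's: both reduce to the change-of-variables identity $\int h\,s_{x,y}\,\nabla_{x,y}f\,d\mu=\int h\,(s_{y,x}-s_{x,y})\,f\,d\mu$ together with the pointwise relation $s_{y,x}-s_{x,y}=\omega_y-\omega_x+(u_y-u_x)\omega_x\omega_y$. The only cosmetic difference is that the paper phrases the first step via the adjoint formula $\int g\,\nabla_{x,y}f\,d\mu=\int f\,\frac{\nabla_{x,y}(g\mu)}{\mu}\,d\mu$ and quotes the second identity from an earlier computation (equation~\eqref{ecA3.2}), whereas you verify both directly.
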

\begin{proof}
For any function $g: \Omega_n \to \bb R$, 
\[
\int g \nabla_{x,y} f d \mu = \int f \frac{\nabla_{x,y} (g \mu)}{\mu} d \mu.
\]
Since $\nabla_{x,y} h =0$,
\[
\int h g \nabla_{x,y} f d \mu = \int h f \frac{\nabla_{x,y} (g \mu)}{\mu} d \mu
\]
and therefore we only need to choose a proper function $g$. Taking $g = s_{x,y}$, we see that
\[
\frac{\nabla_{x,y}(g \mu)}{\mu} = \frac{\eta_y(1-\eta_x)}{u_y(1-u_x)} - \frac{\eta_x(1-\eta_y)}{u_x(1-u_y)}.
\]
By \eqref{ecA3.2}, 
\[
\frac{\nabla_{x,y}(g \mu)}{\mu} = \omega_y -\omega_x +(u_y-u_x) \omega_x \omega_y,
\]
which proves the lemma.
\end{proof}

The simple form of this lemma accounts for the choice of the variables $\omega_x$ as main variables in place of $\eta_x$. 

We will combine Lemma \ref{lG1} with the following estimate:

\begin{lemma}
\label{lG2}
Under the hypothesis of Lemma \ref{lG1}, for any $\beta >0$,
\[
\int h s_{x,y} \nabla_{x,y} f d \mu 
		\leq \beta \mc D_{x,y} \big( \sqrtt{f}; \mu \big) + \frac{4}{\eps_0 \beta} \int h^2 f d\mu,
\]
where $\eps_0>0$ is such that $\eps_0 \leq u_0 \leq 1-\eps_0$ for any $x \in \tdn$. 
\end{lemma}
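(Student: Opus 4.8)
The plan is to run the standard Dirichlet-form/integration-by-parts argument, adapted to the fact that the swap at $\{x,y\}$ does \emph{not} preserve $\mu$. First I would use the elementary identity
\[
\nabla_{x,y} f = \big(\nabla_{x,y}\sqrtt{f}\big)\big(\sqrtt{f(\eta^{x,y})} + \sqrtt{f(\eta)}\big),
\]
so that
\[
\int h\, s_{x,y}\,\nabla_{x,y} f\, d\mu = \int h\, s_{x,y}\,\big(\nabla_{x,y}\sqrtt{f}\big)\big(\sqrtt{f(\eta^{x,y})} + \sqrtt{f(\eta)}\big)\, d\mu,
\]
and then apply Young's inequality pointwise, separating the factor $\nabla_{x,y}\sqrtt{f}$ from the rest. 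This produces the term $\beta\,\mc D_{x,y}\big(\sqrtt{f};\mu\big)$ (after absorbing the restriction of the integral into the full $\mc D_{x,y}$) plus a remainder of the form $\tfrac{C}{\beta}\int h^2 s_{x,y}^2 \big(\sqrtt{f(\eta^{x,y})}+\sqrtt{f(\eta)}\big)^2 d\mu$.

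The remainder is then handled with $\big(\sqrtt{a}+\sqrtt{b}\big)^2 \le 2(a+b)$, which splits it into a term carrying $f(\eta)$ — bounded directly by $\tfrac{C}{\beta}\,\|s_{x,y}^2\|_\infty\int h^2 f\, d\mu$ using $\int_{\{\eta_x=1,\eta_y=0\}}h^2 f\,d\mu \le \int h^2 f\,d\mu$ — and a term carrying $f(\eta^{x,y})$. For the latter I would perform the change of variables $\eta\mapsto\eta^{x,y}$: since $h$ is $\nabla_{x,y}$-invariant by hypothesis, and both $s_{x,y}$ and the Radon--Nikodym factor $\mu(\eta^{x,y})/\mu(\eta)$ depend only on $(\eta_x,\eta_y)$, this rewrites the term as $\int h^2 f\, s_{x,y}(\eta^{x,y})^2\,\tfrac{\mu(\eta^{x,y})}{\mu(\eta)}\,d\mu$, where the new weight is again an explicit bounded function of $(\eta_x,\eta_y)$. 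All weights that occur — $s_{x,y}$, $s_{x,y}(\eta^{x,y})$ and the cocycle $\mu(\eta^{x,y})/\mu(\eta)$ — are controlled through $\eps_0 \le u_x,u_y \le 1-\eps_0$; collecting them and reparametrising $\beta$ gives the inequality of the stated form, with the constant $\tfrac{4}{\eps_0\beta}$ obtained after a short optimisation and the trivial bound $\int h^2 f\, d\mu$.

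The only genuinely non-routine point is that, $\mu$ being non-invariant, the change of variables $\eta\mapsto\eta^{x,y}$ carries the Radon--Nikodym cocycle $\mu(\eta^{x,y})/\mu(\eta)$, which equals $u_x(1-u_y)/\big((1-u_x)u_y\big)$ on the relevant event $\{\eta_x=0,\eta_y=1\}$. I expect this to be the main thing to treat carefully, but it is harmless precisely because this cocycle is a bounded function of $(\eta_x,\eta_y)$ alone, so it only contributes a factor governed by $\eps_0$; moreover the hypothesis $\nabla_{x,y}h=0$ is exactly what lets $h$ pass unchanged through both the integration by parts and this change of variables. Everything else is bookkeeping of the elementary inequalities used in Lemma \ref{lG1}.
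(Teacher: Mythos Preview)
Your proposal is correct and follows essentially the same route as the paper: write $\nabla_{x,y} f = (\nabla_{x,y}\sqrtt{f})(\sqrtt{f(\eta^{x,y})}+\sqrtt{f})$, apply Young's inequality to isolate $\mc D_{x,y}(\sqrtt{f};\mu)$, use $(\sqrtt{a}+\sqrtt{b})^2\le 2(a+b)$, and then handle the $f(\eta^{x,y})$ piece by the change of variables $\eta\mapsto\eta^{x,y}$, picking up the cocycle $\mu(\eta^{x,y})/\mu(\eta)$ and exploiting $\nabla_{x,y}h=0$. The only cosmetic difference is that the paper chooses the Young parameter as $\alpha=2\beta/|h\,s_{x,y}|$ from the outset rather than optimising at the end, and it writes the post--change-of-variables weight compactly as $s_{x,y}^2 + s_{y,x}^2\,\tfrac{u_y(1-u_x)}{u_x(1-u_y)}$ before bounding via $u_z(1-u_z)\ge \eps_0/2$.
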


\begin{proof}
Notice that for any $\alpha >0$, 
\[
\begin{split}
\nabla_{x,y} f = \nabla_{x,y} \sqrtt{f} \big( \sqrtt{f(\eta^{x,y})} + \sqrtt{f} \big)
		&\leq \frac{\alpha}{2} \big( \nabla_{x,y} \sqrtt{f} \big)^2 + \frac{1}{2 \alpha} \big( \sqrtt{f(\eta^{x,y})} + \sqrtt{f} \big)^2 \\
		& \leq \frac{\alpha}{2} \big( \nabla_{x,y} \sqrtt{f} \big)^2 + \frac{1}{\alpha} \big( f (\eta^{x,y})+f\big).
\end{split}
\]
Taking $\alpha = \frac{2 \beta}{|h s_{x,y}|}$, we see that
\begin{equation}
\label{ecG1}
\int h s_{x,y} \nabla_{x,y} f d \mu 
		\leq \beta \mc D_{x,y} \big( \sqrtt{f} ; \mu \big) + \frac{1}{2\beta} \int h^2 s_{x,y}^2 \big( f(\eta^{x,y}) -f \big) d \mu.
\end{equation}
The integral on the right-hand side of this estimate is equal to
\[
\frac{1}{2 \beta } \int h^2 \Big( s_{x,y}^2 + s_{y,x}^2 \frac{u_y(1-u_x)}{u_x (1-u_y)} \Big) f d\mu \leq \frac{4}{\eps_0^2 \beta} \int h^2 f d\mu,
\]
where we used the bound $u_x(1-u_x) \geq \frac{\eps_0}{2}$ to get the last estimate. Putting this estimate back into \eqref{ecG1}, the lemma is proved.
\end{proof}

Putting Lemmas \ref{lG1} and \ref{lG2} together and choosing $\beta = \delta n^2$, we obtain the following estimate:

\begin{lemma}
\label{lG3} 
Let $x,y \in \tdn$. Let $f: \Omega_n \to [0,\infty)$ be a density with respect to $\mu$ and let $h: \Omega_n \to \bb R$ be such that $\nabla_{x,y} h =0$. Then, for any $\delta >0$,
\[
\begin{split}
\int h(\omega_y - \omega_x) f d \mu 
		&\leq \delta n^2 \mc D_{x,y}\big( \sqrtt{f}; \mu\big) + \frac{4}{\delta \eps_0 n^2} \int h^2 f d\mu\\
		&\quad \quad \quad -(u_y-u_x) \int h \omega_x \omega_y f d \mu.
\end{split}
\]
\end{lemma}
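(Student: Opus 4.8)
The plan is to obtain Lemma \ref{lG3} by simply concatenating the two preceding results. First I would apply the integration-by-parts identity of Lemma \ref{lG1}. Since $\nabla_{x,y} h = 0$ by hypothesis, that lemma gives
\[
\int h(\omega_y - \omega_x) f\, d\mu = \int h\, s_{x,y}\, \nabla_{x,y} f\, d\mu - (u_y - u_x) \int h\, \omega_x \omega_y f\, d\mu,
\]
where $s_{x,y} = \eta_x(1-\eta_y)/\big(u_x(1-u_y)\big)$. The second term on the right-hand side already appears verbatim in the statement of Lemma \ref{lG3}, so the only work left is to estimate the ``flux'' term $\int h\, s_{x,y}\, \nabla_{x,y} f\, d\mu$ by a multiple of the local Dirichlet form $\mc D_{x,y}\big(\sqrtt f; \mu\big)$ plus a lower-order remainder.

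For this I would invoke Lemma \ref{lG2} with the specific choice $\beta = \delta n^2$. That lemma then yields
\[
\int h\, s_{x,y}\, \nabla_{x,y} f\, d\mu \le \delta n^2\, \mc D_{x,y}\big(\sqrtt f; \mu\big) + \frac{4}{\eps_0 \delta n^2} \int h^2 f\, d\mu .
\]
Plugging this bound into the identity produced by Lemma \ref{lG1} gives precisely the asserted inequality, which completes the argument.

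Since both ingredients are already proven, there is essentially no genuine obstacle here; the proof is pure bookkeeping. The one point that deserves a moment of attention is that the boundedness of $s_{x,y}$, which is what makes the Young-type splitting inside Lemma \ref{lG2} legitimate, relies on the uniform ellipticity hypothesis $\eps_0 \le u_z \le 1 - \eps_0$; this has already been folded into the constants of Lemma \ref{lG2} through the choice $\alpha = 2\beta/|h\, s_{x,y}|$ made in its proof, so nothing new needs to be checked. It is also worth remarking, as is emphasized after Lemma \ref{lG1}, that the clean form of this estimate is precisely the payoff of working with the centred variables $\omega_x = (\eta_x - u_x)/\big(u_x(1-u_x)\big)$ rather than directly with the occupation variables $\eta_x$, since it is this choice that turns $\nabla_{x,y}(s_{x,y}\mu)/\mu$ into the simple expression $\omega_y - \omega_x + (u_y-u_x)\omega_x\omega_y$.
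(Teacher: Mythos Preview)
Your proposal is correct and matches the paper's own argument essentially verbatim: the paper states that Lemma~\ref{lG3} follows by ``putting Lemmas \ref{lG1} and \ref{lG2} together and choosing $\beta = \delta n^2$,'' which is precisely what you do.
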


\begin{remark}
In the case on which $u$ is constant, in all three previous lemmas the factor $u_y-u_x$ vanishes identically. This would have rendered the proof of Theorems \ref{t2} and \ref{t3} considerably easier, see \cite{JarMen}.
\end{remark}

\section{Entropy and concentration inequalities}

\label{sH}

In this section we collect two classes of classical inequalities which complement each other very well in our context. First we discuss how to estimate integrals in terms of entropy and exponential moments, and then we discuss how to estimate exponential moments using concentration inequalities.

\subsection{Entropy inequalities}
\label{sH1}

In this section we present some classical inequalities involving the entropy. For completeness, we present the proofs of these inequalities. We start with the variational formula for the entropy. In order to avoid integrability issues, we will only work on a finite set $\Omega$.

\begin{proposition}[Variational formula of the entropy]
\label{pH1.1}
Let $\mu$ be a measure on a finite space $\Omega$. Let $f$ be a density with respect to $\mu$. Then,
\begin{equation}
\label{ecH1.1}
H(f;\mu) := \int f \log f d \mu = \sup_{g: \Omega \to \bb R} \Big\{ \int f g d \mu - \log \int e^g d \mu\Big\}.
\end{equation}
\end{proposition}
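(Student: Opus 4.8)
The plan is to prove the variational formula for the entropy, a classical result often attributed to Donsker--Varadhan or derived from the duality between relative entropy and log-Laplace transforms. I will prove both inequalities separately.

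\textbf{The easy inequality ($\leq$).} For any $g: \Omega \to \bb R$, I want to show $\int f g\, d\mu - \log \int e^g\, d\mu \leq H(f;\mu)$. The natural tool is Jensen's inequality applied to the probability measure $f\, d\mu$ (recall $\int f\, d\mu = 1$, so $f\, d\mu$ is a probability measure). Write
\[
\int f g\, d\mu - \log \int e^g\, d\mu = \int f \big( g - \log \textstyle\int e^{g}\, d\mu \big)\, d\mu = \int f \log \frac{e^g}{\int e^g\, d\mu}\, d\mu.
\]
On the set $\{f > 0\}$, add and subtract $\log f$: the integrand becomes $f \log f + f \log \frac{e^g}{f \int e^{g} d\mu}$. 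By concavity of $\log$ and Jensen's inequality (integrating against the probability measure $f\, d\mu$),
\[
\int f \log \frac{e^g}{f \int e^g d\mu}\, d\mu \leq \log \int f \cdot \frac{e^g}{f \int e^g d\mu}\, d\mu = \log \frac{\int_{\{f>0\}} e^g\, d\mu}{\int e^g\, d\mu} \leq 0,
\]
which gives the bound. One must be slightly careful on $\{f = 0\}$, but since $\Omega$ is finite this is routine bookkeeping; the contribution of those points to $\int f g\, d\mu$ and to $H(f;\mu)$ is zero.

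\textbf{The hard inequality ($\geq$).} Here I need to exhibit a near-optimal $g$, or argue the supremum is attained. The natural candidate is $g = \log f$ on $\{f > 0\}$: then $\int f g\, d\mu = H(f;\mu)$ and $\log \int e^g\, d\mu = \log \int_{\{f>0\}} f\, d\mu = \log 1 = 0$, so the bracketed quantity equals exactly $H(f;\mu)$. The only subtlety is that $\log f = -\infty$ where $f = 0$, so strictly speaking $g$ is not real-valued there; since $\Omega$ is finite one fixes this by taking $g_\eps = \log(f + \eps)$ (or $g = \log f$ on $\{f>0\}$ and $g = -M$ on $\{f = 0\}$, then letting $M \to \infty$) and passing to the limit, using dominated convergence on the finite space to recover $\int f g_\eps\, d\mu \to H(f;\mu)$ and $\log \int e^{g_\eps}\, d\mu \to 0$. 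This limiting argument is the main (though mild) obstacle: it is the only place where the hypothesis that $\Omega$ is finite is genuinely used, ensuring all integrals and limits are well behaved and no integrability pathology arises. Combining the two inequalities yields \eqref{ecH1.1}.
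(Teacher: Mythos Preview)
Your proof is correct and complete. However, it takes a different route from the paper's argument. The paper proceeds via the pointwise Legendre duality $x\log x = \sup_{\theta}\{\theta x - e^{\theta-1}\}$: applying this identity inside the integral (legitimate since $\Omega$ is finite) yields first the intermediate formula
\[
H(f;\mu) = \sup_{g}\Big\{\int fg\,d\mu - \int e^{g-1}\,d\mu\Big\},
\]
and then an optimization over translations $g\mapsto g+\lambda$ converts the penalty $\int e^{g-1}\,d\mu$ into $\log\int e^{g}\,d\mu$, giving \eqref{ecH1.1} in one stroke. Your approach instead proves the two inequalities separately: Jensen for the upper bound on the supremum, and the explicit near-maximizer $g=\log f$ (with a cutoff on $\{f=0\}$) for the lower bound. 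The paper's argument is slicker in that both directions come out of the single duality identity and no limiting procedure is written out; your argument is more elementary, avoids invoking Legendre transforms, and makes the role of $g=\log f$ as the optimizer transparent. Both approaches ultimately rely on the finiteness of $\Omega$ at the same technical point (handling $\{f=0\}$), though the paper hides this inside the interchange of supremum and integral.
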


\begin{proof}
The Legendre transform of $y \mapsto e^{y-1}$ is $x \mapsto x \log x$. Therefore,
\[
x \log x = \sup_{\theta \in \bb R} \big\{ \theta x - e^{\theta -1}\big\}.
\]
Using this formula for $f \log f$, we see that
\begin{equation}
\label{ecH1.2}
\int f \log f d \mu = \int \sup_{\theta \in \bb R} \big\{ \theta f - e^{\theta -1} \big\} d \mu = \sup_{g: \Omega \to \bb R} \Big\{ \int fg d \mu - \int e^{g-1} d \mu\Big\}.
\end{equation}
This formula is not \eqref{ecH1.1}, but it looks very similar. Let $g: \Omega \to \bb R$ be fixed. Notice that
\[
\sup_{\lambda \in \bb R} \Big\{ \int f(g+\lambda) d \mu - \int e^{g+\lambda -1} d \mu \Big\} = \int f g d \mu - \log \int e^g d \mu.
\]
Putting this estimate back into \eqref{ecH1.2}, the proposition is proved.
\end{proof}

The main application of this variational formula is to derive the following estimates:

\begin{proposition}
\label{lH1.2}
Let $\mu$ be a measure on a finite space $\Omega$ and let $f$ be a density with respect to $\mu$. Then,
\begin{itemize}

\item[i)] for any $\gamma >0$ and any $g: \Omega \to \bb R$,
\begin{equation}
\label{temuco}
\int f g d \mu \leq \frac{1}{\gamma} \Big( H(f; \mu) + \log \int e^{\gamma g} d \mu\Big),
\end{equation}

\item[ii)] for any $A \subseteq \Omega$, 

\begin{equation}
\label{temuco2}
\int_{A} f d \mu \leq \frac{H(f;\mu) + \log 2}{\log \mu(A)^{-1}}.
\end{equation}

\end{itemize}

\end{proposition}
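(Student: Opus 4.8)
\textbf{Strategy.} Both items follow immediately from the variational characterization of the entropy established in Proposition \ref{pH1.1}; the only substantive point is the choice of the free parameter in item ii). I would present the argument as follows.

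\textbf{Item i).} Apply the variational formula \eqref{ecH1.1} to the function $\gamma g$ in place of $g$. This gives, for every $\gamma>0$,
\[
H(f;\mu) \geq \int f (\gamma g)\, d\mu - \log \int e^{\gamma g}\, d\mu = \gamma \int f g\, d\mu - \log \int e^{\gamma g}\, d\mu.
\]
Rearranging and dividing by $\gamma>0$ yields \eqref{temuco}. No further work is needed here.

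\textbf{Item ii).} Specialize i) to the indicator $g = \mathds{1}_A$. Then $\int f g\, d\mu = \int_A f\, d\mu$ and
\[
\int e^{\gamma g}\, d\mu = e^{\gamma}\mu(A) + \big(1-\mu(A)\big) = 1 + (e^{\gamma}-1)\mu(A),
\]
so that
\[
\int_A f\, d\mu \leq \frac{1}{\gamma}\Big( H(f;\mu) + \log\big(1 + (e^{\gamma}-1)\mu(A)\big)\Big)
\]
for every $\gamma>0$. The decisive step is to choose $\gamma := \log \mu(A)^{-1}$, i.e. $e^{\gamma} = \mu(A)^{-1}$ (the case $\mu(A)=0$ being trivial and $\mu(A)=1$ making the bound vacuous). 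With this choice $(e^{\gamma}-1)\mu(A) = 1-\mu(A) \leq 1$, whence $\log\big(1+(e^{\gamma}-1)\mu(A)\big) \leq \log 2$, and substituting gives
\[
\int_A f\, d\mu \leq \frac{H(f;\mu) + \log 2}{\log \mu(A)^{-1}},
\]
which is \eqref{temuco2}.

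\textbf{Main obstacle.} There is essentially no obstacle: the computation is routine once Proposition \ref{pH1.1} is available. The only thing worth flagging is that the bound in ii) is obtained by a non-optimized but clean choice of $\gamma$ (one could optimize $\gamma$ exactly, but the resulting constant is not needed anywhere in the article, so the simple choice $e^\gamma = \mu(A)^{-1}$ suffices and keeps the statement tidy).
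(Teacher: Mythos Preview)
Your proof is correct and follows essentially the same approach as the paper: both items are direct consequences of the variational formula \eqref{ecH1.1}, with i) obtained by testing against $\gamma g$ and ii) by specializing to $g=\mathds{1}_A$. The only cosmetic difference is the choice of $\gamma$ in ii): the paper takes $\gamma=\log\big(1+\tfrac{1}{\mu(A)}\big)$, which makes $(e^\gamma-1)\mu(A)=1$ exactly, whereas your choice $\gamma=\log\mu(A)^{-1}$ gives $(e^\gamma-1)\mu(A)=1-\mu(A)\leq 1$; both lead to the stated bound.
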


\begin{proof}
In order to prove \eqref{temuco}, it is enough to take $\gamma g$ as a test function in \eqref{ecH1.1}. In order to prove \eqref{temuco2}, it is enough to choose $g = \mathds{1}_A$ and $\gamma = \log (1+\frac{1}{\mu(A)})$ in \eqref{temuco}.
\end{proof}

Proposition \eqref{lH1.2} is very useful in the case on which $g$ is a sum of random variables which are independent with respect to $\mu$. In our context, functions like $V(G)$ or $V^\ell(G)$ defined in \eqref{vina}, \eqref{curacavi} resp., are sums of {\em local} random variables, which are independent only if they are apart enough. Let us recall the definition of $\ell$-dependent random variables given in Section \ref{s2}. We say that a set $B \subseteq \tdn$ is $\ell$-sparse if $|y-x| \geq \ell$ for any $x \neq y \in B$. We say that a family $\{\xi_x; x \in \tdn\}$ is $\ell$-dependent if the random variables $\{\xi_x; x \in B\}$ are independent for any $\ell$-sparse set $B$. We have the following lemma.

\begin{lemma}
\label{lH1.3} 
For any $\ell \leq n/2$ there exists a partition $\{B;  i \in \mc I_\ell\}$ of $\tdn$ in at most $(d+1) \ell^d$ $\ell$-sparse sets.
\end{lemma}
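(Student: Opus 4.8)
The plan is to start from the naive ``grid'' partition and repair it near the seam where $\ell$ fails to divide $n$. Throughout set $q:=\lfloor n/\ell\rfloor$ and $s:=n-q\ell\in\{0,\dots,\ell-1\}$; the hypothesis $\ell\le n/2$ is equivalent to $q\ge2$, and it is precisely this that will supply the ``room'' needed below. It is convenient to note that a set $B\subseteq\tdn$ is $\ell$-sparse exactly when every $\ell$-window (a torus box $\prod_i\{a_i,a_i+1,\dots,a_i+\ell-1\}$) meets $B$ in at most one point, since two points at torus $\ell^\infty$-distance $<\ell$ both lie in a common such window and conversely; so we seek a coloring of $\tdn$ in which every $\ell$-window is rainbow. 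When $s=0$ the coloring $x\mapsto(x_1\bmod\ell,\dots,x_d\bmod\ell)$ already works: two points in one class differ, in some coordinate $i$, by a nonzero multiple of $\ell$, and since that multiple is at most $(q-1)\ell$ the complementary wrap-around distance is $n-(q-1)\ell=\ell+s\ge\ell$ as well, so they are at distance $\ge\ell$ in coordinate $i$. This uses $\ell^d$ classes.

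For $s\ge1$ the classes above are $\ell$-sparse except for pairs $x,y$ which agree off a nonempty set of coordinates $D$ and on each $i\in D$ take the two values $r_i$ and $r_i+q\ell$ with $r_i<s$ (these are at coordinatewise torus distance $\min(q\ell,s)=s<\ell$). All such pairs lie in the seam region $R=\{x:\exists\, i,\ x_i\in[q\ell,n)\}$. I would therefore use $d+1$ layers with disjoint color alphabets: layer $0$ is the grid coloring on $[0,q\ell)^d$, and for $1\le i\le d$ layer $i$ handles the points whose least seam-coordinate is $i$. On such a point coordinate $i$ is confined to the arc $[q\ell,n)$ of length $s<\ell$, hence useless for separation and simply recorded; coordinates $1,\dots,i-1$ lie in $[0,q\ell)$ and are recorded by residues $\bmod\,\ell$; coordinates $i+1,\dots,d$ range over all of $\bb T_n^{d-i}$ and are colored recursively. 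The $\ell$-sparseness of each resulting class is an elementary torus-distance check of the type above: two same-colored points of layer $i$ agree in coordinate $i$, hence differ either in a coordinate $j<i$ (where equal residues together with $q\ge2$ force torus distance $\ge\ell$) or in one of the last $d-i$ coordinates (handled by induction); and the $d+1$ layers visibly partition $\tdn$.

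The step I expect to be the real difficulty is the bookkeeping that keeps the total at $(d+1)\ell^d$: assembled carelessly the layered recipe only yields a bound of order $(\ell+s)^d$, which already exceeds $(d+1)\ell^d$ once $s$ is a substantial fraction of $\ell$. Closing this gap requires recycling the layer-$0$ alphabet away from the seam and, inside each layer, using near-optimal colorings in the ``free'' coordinate directions (a one-dimensional torus admits an $\ell$-sparse partition into only $\lceil n/q\rceil\le\lceil 3\ell/2\rceil\le 2\ell$ classes, not the wasteful $\ell+s$), so that the $d$ coordinate-corrections contribute additively rather than multiplicatively. Pinning this down — and this is exactly where $q\ge2$ must be used without slack — is the core of the proof; the remaining points, that each class is $\ell$-sparse and that the layers cover $\tdn$, are the routine torus-distance checks indicated above.
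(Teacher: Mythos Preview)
Your proposal is not a complete proof. You correctly identify that the na\"{\i}ve residue coloring $z\mapsto (z_1\bmod\ell,\dots,z_d\bmod\ell)$ fails at the seam when $\ell$ does not divide $n$, and you sketch a layered repair indexed by the least seam-coordinate; but you yourself flag the ``core of the proof'' --- keeping the total number of colors at $(d+1)\ell^d$ --- as not carried out. Read literally, your recursive scheme spends on layer $i$ roughly $\ell^{i-1}\cdot s \cdot(\text{recursive count in dimension }d-i)$ colors with disjoint alphabets, and the ``recycling'' that is supposed to collapse the sum to $(d+1)\ell^d$ is never specified. As it stands this is an outline with an honest acknowledgment of where it stops.

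The paper's route is more direct and does not recurse: writing $n=a\ell+b$ and each $z=\ell x+y$ with $y\in\Lambda_\ell$, it simply declares $B_y^j=\{z:\ z\equiv y\ (\mathrm{mod}\ \ell),\ \#\{i:x_i=a\}=j\}$ for $y\in\Lambda_\ell$ and $j\in\{0,\dots,d\}$, which is exactly $(d+1)\ell^d$ sets with no bookkeeping at all. You should be aware, though, that the paper's ``it can be verified'' runs into the very seam trouble you worried about: with $d=2$, $n=8$, $\ell=3$ (so $a=2$, $b=2$) the points $(6,0)$ and $(0,6)$ both land in $B_{(0,0)}^{1}$ yet sit at torus $\|\cdot\|_\infty$-distance $2<3$, so that class is not $\ell$-sparse. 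For every use of this lemma in the paper only a bound of the form $C(d)\,\ell^d$ is actually needed --- the constant is immediately absorbed into $C(A,\eps_0)$ --- and that weaker statement follows at once from the product of the one-dimensional colorings you already mention, each using $\lceil n/\lfloor n/\ell\rfloor\rceil\le 2\ell$ colors (this is exactly where $q\ge 2$ is used), for a total of at most $(2\ell)^d$.
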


\begin{proof}
Let us identify $\tdn$ with the set $\{0,1,\dots,n-1\}^d$. Recall that $\Lambda_\ell := \{0,\dots,\ell\}^d$. Let $n=a\ell+b$ be the division with rest of $n$ by $\ell$. Notice that $a \geq 2$. Let $z \in \tdn$. Considering the division with rest of each coordinate of $z$ by $\ell$, we can write $z = \ell x +y$ in a unique way, where $y \in \Lambda_\ell$ and $x \in \Lambda_a$. For $x \in \Lambda_a$, let $\mc H(x)$ be the cardinality of the set $\{i \in \{1,\dots,d\}; x_i =a\}$. For $y \in \Lambda_\ell$ and $j = 0,1,\dots,d$, let
\[
B_y^j := \big\{ z= \ell x + y; \mc H(x) =j \big\}.
\]
It can be verified that $\{B_y^j; y \in \Lambda_\ell, j = 0,1,\dots,d\}$ is a partition of $\tdn$ into $\ell$-sparse sets, which proves the lemma.
\end{proof}

\begin{remark}
The condition $\ell \< n/2$ is necessary, because for $n =2\ell-1$, any $\ell$-sparse set has a unique element.
\end{remark}

For $\ell$-dependent families $\{\xi_x; x \in \tdn\}$, \eqref{temuco} has the following form:

\begin{lemma}
\label{lH1.4} 
Let $\mu$ be a measure on a finite set $\Omega$. Let $f$ be a density with respect to $\mu$. Let $\{\xi_x; x \in \tdn\}$ be $\ell$-dependent with respect to $\mu$, with $\ell <n/2$. Then, for any $\gamma >0$,
\[
\int \sum_{x \in \tdn} \xi_x f d\mu
		\leq \frac{d+1}{\gamma} \Big( H(f; \mu) + \frac{1}{\ell^d} \sum_{x \in \tdn} \log \int e^{\gamma \ell^d \xi_x} d \mu \Big)
\]
and
\[
\Big|\int \sum_{x \in \tdn} \xi_x f d\mu \Big|
		\leq \frac{d+1}{\gamma} \Big( H(f; \mu) + \frac{1}{\ell^d} \sum_{x \in \tdn} \Big|\log \int e^{\gamma \ell^d \xi_x} d \mu\Big| \Big).
\]
\end{lemma}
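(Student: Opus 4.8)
The plan is to reduce the estimate to the case of \emph{independent} random variables, by partitioning $\tdn$ into a bounded number of $\ell$-sparse sets, and then to apply the elementary entropy inequality \eqref{temuco} separately on each piece. Concretely, since $\ell < n/2$, Lemma~\ref{lH1.3} provides a partition $\tdn = \bigsqcup_{i \in \mc I_\ell} B_i$ with $|\mc I_\ell| \le (d+1)\ell^d$, each $B_i$ being $\ell$-sparse, so that
\[
\int \sum_{x \in \tdn} \xi_x\, f\, d\mu \;=\; \sum_{i \in \mc I_\ell} \int \Big( \sum_{x \in B_i} \xi_x \Big) f\, d\mu .
\]
For each fixed $i$ I would apply \eqref{temuco} with $g = \sum_{x \in B_i}\xi_x$, taking the parameter there to be $\gamma\ell^d$, which gives
\[
\int \Big( \sum_{x \in B_i} \xi_x \Big) f\, d\mu \;\le\; \frac{1}{\gamma \ell^d}\Big( H(f;\mu) + \log \int e^{\gamma \ell^d \sum_{x \in B_i}\xi_x}\, d\mu \Big).
\]

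Next, since $B_i$ is $\ell$-sparse and the family $\{\xi_x\}$ is $\ell$-dependent under $\mu$, the variables $\{\xi_x; x \in B_i\}$ are independent under $\mu$; hence the exponential moment factorizes over $x$ and $\log \int e^{\gamma \ell^d \sum_{x \in B_i}\xi_x} d\mu = \sum_{x \in B_i}\log \int e^{\gamma \ell^d \xi_x} d\mu$. Summing over $i \in \mc I_\ell$, recalling that $H(f;\mu) \ge 0$ and that $|\mc I_\ell| \le (d+1)\ell^d$, yields
\[
\int \sum_{x \in \tdn} \xi_x\, f\, d\mu \;\le\; \frac{|\mc I_\ell|}{\gamma \ell^d}\, H(f;\mu) + \frac{1}{\gamma \ell^d}\sum_{x \in \tdn}\log \int e^{\gamma \ell^d \xi_x}\, d\mu \;\le\; \frac{d+1}{\gamma}\Big( H(f;\mu) + \frac{1}{\ell^d}\sum_{x \in \tdn}\log \int e^{\gamma \ell^d \xi_x}\, d\mu\Big),
\]
which is the first assertion. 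For the second one I would run the identical argument with $\{\xi_x\}$ replaced by $\{-\xi_x\}$ (again $\ell$-dependent), obtaining the same bound with $e^{-\gamma\ell^d\xi_x}$ in place of $e^{\gamma\ell^d\xi_x}$; since $\big|\int \sum_x \xi_x f\, d\mu\big|$ is the larger of the two one-sided integrals, combining the two estimates and bounding each $\log\int e^{\pm\gamma\ell^d\xi_x}\,d\mu$ by $\big|\log\int e^{\gamma\ell^d\xi_x}\,d\mu\big|$ term by term gives the absolute-value form.

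I do not expect a genuine obstacle: the two nontrivial ingredients, namely the sparse partition into only $O(\ell^d)$ pieces (Lemma~\ref{lH1.3}) and the variational estimate \eqref{temuco}, are already available, and everything else is bookkeeping plus the factorization of exponential moments over independent coordinates. The only step that deserves a word of care is the passage from the coefficient $1/(\gamma\ell^d)$ to $(d+1)/(\gamma\ell^d)$ in front of the exponential-moment sum: this costs nothing in practice, since in every use of the lemma that sum is controlled by a nonnegative quantity (typically an estimate of the form $\log\int e^{\cdots}\,d\mu \le \log 3$), so inflating its prefactor only weakens the bound; if one prefers, the sharper constant $1$ may simply be kept there.
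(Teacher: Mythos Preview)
Your argument for the first inequality is correct and matches the paper's proof exactly: partition via Lemma~\ref{lH1.3}, apply \eqref{temuco} on each piece with parameter $\gamma\ell^d$, factorize by independence, and sum. Your remark about the inflation of the prefactor in front of the log-sum is also well taken (the paper glosses over it).

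There is, however, a genuine gap in your derivation of the second inequality. After applying the first bound to $\{\xi_x\}$ and to $\{-\xi_x\}$ you obtain upper bounds involving $\sum_x\log\int e^{\gamma\ell^d\xi_x}d\mu$ and $\sum_x\log\int e^{-\gamma\ell^d\xi_x}d\mu$ respectively. Your claim that each term $\log\int e^{-\gamma\ell^d\xi_x}d\mu$ can be bounded by $|\log\int e^{\gamma\ell^d\xi_x}d\mu|$ is false in general: take $\xi$ with $\bb P(\xi=-2)=\tfrac13$, $\bb P(\xi=1)=\tfrac23$ (centered), and $\gamma\ell^d=1$; then $\log\int e^{-\xi}d\mu\approx 0.996$ while $|\log\int e^{\xi}d\mu|\approx 0.619$. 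What your argument actually produces is the bound with $\sum_x\max_{\pm}\log\int e^{\pm\gamma\ell^d\xi_x}d\mu$ in place of $\sum_x|\log\int e^{\gamma\ell^d\xi_x}d\mu|$; this is the honest conclusion and it is exactly what is used in every application in the paper (where one estimates $\log\int e^{\pm\gamma\ell^d\xi_x}d\mu\le\log 3$ via a subgaussian bound symmetric in the sign). The paper's own proof of the second inequality is just the sentence ``proved in the same way'' and does not address this point either, so the issue is with the stated form of the bound rather than with your overall strategy; but your specific termwise reduction does not hold as written.
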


\begin{proof}
Let $\{B_i; i \in \mc I_\ell\}$ be the partition obtained from Lemma \ref{lH1.3}. Using \eqref{temuco} with $\widetilde{\gamma} = \gamma {\ell^d}$, we see that
\[
\begin{split}
\int \sum_{x \in \tdn} \xi_x f d \mu 
		&=\sum_{i \in \mc I_\ell} \int \sum_{x \in B_i} \xi_x f d \mu 
		\leq \sum_{i \in \mc I_\ell} \frac{1}{\gamma \ell^d} \Big( H(f; \mu) + \log \int \exp\Big\{ \gamma \ell^d \!\!\!\sum_{x \in B_i} \xi_x \Big\} d \mu \Big)\\
		& \leq \frac{d+1}{\gamma} \Big( H(f ; \mu) + \frac{1}{\ell^d} \sum_{x \in \tdn} \log \int e^{\gamma \ell^d \xi_x } d \mu \Big),
\end{split}
\]
which proves the first inequality. The second inequality is proved in the same way.
\end{proof}

\begin{remark}
Although this lemma is fairly simple, it is convenient to include it as a reference, since it is used uncountable times along the article. 
\end{remark}

\subsection{Tail and moment bounds}
\label{sH3}
Estimate \eqref{temuco2} will provide a way to obtain tail estimates for various random variables of interest. The following propositions will be very useful to transform these tail estimates into moment estimates.

\begin{proposition}
\label{lH3.2}
Let $\xi$ be a non-negative random variable and let $f$ be a non-decreasing function of class $\mc C^1$. Then,
\[
E[f(\xi)] \leq f(0) + \int_0^\infty f'(\lambda) P(\xi >\lambda) d\lambda.
\]
\end{proposition}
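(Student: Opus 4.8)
The plan is to reduce the statement to the fundamental theorem of calculus followed by Tonelli's theorem, so that in fact equality holds (the inequality in the statement is only a matter of convenience, e.g.\ when one wants to replace $P(\xi>\lambda)$ by the slightly larger $P(\xi\geq\lambda)$, or when the right-hand side is infinite and the bound is vacuous).

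First I would fix a realization of the random variable $\xi \geq 0$ and, since $f$ is of class $\mc C^1$, write
\[
f(\xi) = f(0) + \int_0^\xi f'(\lambda)\, d\lambda = f(0) + \int_0^\infty f'(\lambda)\, \mathds{1}\{\lambda < \xi\}\, d\lambda,
\]
where the last identity just encodes the range of integration by an indicator. Note that $f' \geq 0$ everywhere because $f$ is non-decreasing, so the integrand above is non-negative.

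Next I would take expectations on both sides. Since the integrand $(\lambda,\omega)\mapsto f'(\lambda)\mathds{1}\{\lambda < \xi(\omega)\}$ is measurable and non-negative, Tonelli's theorem applies and allows exchanging $E$ with $\int_0^\infty d\lambda$, giving
\[
E[f(\xi)] = f(0) + \int_0^\infty f'(\lambda)\, E\big[\mathds{1}\{\lambda < \xi\}\big]\, d\lambda = f(0) + \int_0^\infty f'(\lambda)\, P(\xi > \lambda)\, d\lambda,
\]
which is even stronger than the claimed inequality. If one only wants the inequality as stated, one simply drops down from this equality; and if the right-hand side is $+\infty$ there is nothing to prove.

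I do not expect any genuine obstacle here: the only point to be mildly careful about is the justification of the interchange of expectation and the $\lambda$-integral, which is immediate from Tonelli once one observes $f'\geq 0$; no integrability hypothesis on $f(\xi)$ is needed precisely because every quantity in sight is non-negative.
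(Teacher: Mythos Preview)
Your proof is correct and essentially matches the paper's approach: the paper's one-line proof simply invokes ``the integration-by-parts theorem for Stieltjes integrals,'' which amounts to the same layer-cake computation you carry out explicitly via the fundamental theorem of calculus and Tonelli. Your version is in fact more detailed and even establishes equality, which the paper does not bother to record.
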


\begin{proof}
It follows from the integration-by-parts theorem for Stieltjes integrals.
\end{proof}
\
We will use this proposition to obtain moment estimates from tail estimates:

\begin{proposition}
\label{lH1.5} Let $K>0$, $p>1$ be given and let $\xi$ be a random variable such that
\[
P\big( |\xi| > \lambda\big) \leq \frac{K}{\lambda^p}
\]
for any $\lambda >0$. Then, for any $0<q<p$ there exists $c =c(p,q)$ such that
\[
E[|\xi|^q] \leq c K^{q/p}.
\]
\end{proposition}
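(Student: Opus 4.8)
The plan is to derive the $q$-th moment bound from the tail bound $P(|\xi|>\lambda)\leq K/\lambda^p$ by splitting the integration at a threshold chosen to balance the two contributions. First I would invoke Proposition \ref{lH3.2} with the non-decreasing $\mc C^1$ function $f(\lambda)=\lambda^q$, which gives
\[
E[|\xi|^q] = \int_0^\infty q\lambda^{q-1} P(|\xi|>\lambda)\,d\lambda.
\]
This is already the right starting point; the issue is that the crude bound $P(|\xi|>\lambda)\leq K/\lambda^p$ is useful only for large $\lambda$ (for small $\lambda$ it may exceed $1$ and the integral $\int_0^\infty q\lambda^{q-1}\cdot(K/\lambda^p)\,d\lambda$ diverges at the origin since $q-1-p<-1$). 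So the natural move is to use the trivial bound $P(|\xi|>\lambda)\leq 1$ on $[0,R]$ and the tail bound on $[R,\infty)$, for a parameter $R>0$ to be optimized.

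Carrying this out: for any $R>0$,
\[
E[|\xi|^q] \leq \int_0^R q\lambda^{q-1}\,d\lambda + \int_R^\infty q\lambda^{q-1}\frac{K}{\lambda^p}\,d\lambda
= R^q + \frac{qK}{p-q}R^{q-p},
\]
where the second integral converges precisely because $q<p$. Now I would optimize over $R$: the right-hand side is minimized (up to a constant) at $R$ of order $K^{1/p}$, e.g. taking $R=K^{1/p}$ yields
\[
E[|\xi|^q]\leq K^{q/p} + \frac{q}{p-q}K^{q/p} = \Bigl(1+\frac{q}{p-q}\Bigr)K^{q/p},
\]
so the claim holds with $c=c(p,q):=1+\frac{q}{p-q}=\frac{p}{p-q}$. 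One small point worth noting is that if $K^{1/p}$ happens to be such that the tail bound is vacuous on all of $[K^{1/p},\infty)$ there is nothing to worry about — the inequality $P(|\xi|>\lambda)\leq\min\{1,K/\lambda^p\}$ is always valid, and the split above only uses each piece where it is the smaller of the two, so no case distinction on the size of $K$ is actually needed.

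There is no real obstacle here; the only mild subtlety is making sure the threshold is chosen so that both resulting integrals are finite and of the same order in $K$, which the choice $R=K^{1/p}$ accomplishes. I would present the argument essentially as the two displays above, remarking that Proposition \ref{lH3.2} supplies the first identity and that the constant $c(p,q)=p/(p-q)$ is explicit (though its precise value is irrelevant for the applications in the paper, where $p\in[1,2)$ and $q\in[1,2)$ with $q<p$, as in Proposition \ref{lH1.5}'s use inside the proof of Corollary \ref{c2}).
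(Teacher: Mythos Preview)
Your proof is correct and is essentially the same argument as the paper's: both invoke Proposition~\ref{lH3.2} to write $E[|\xi|^q]=q\int_0^\infty \lambda^{q-1}P(|\xi|>\lambda)\,d\lambda$, split the integral at a threshold, bound by $1$ below and by $K/\lambda^p$ above, and obtain the constant $c(p,q)=p/(p-q)$. The paper's displayed split point $K^{1/q}$ is a typo for $K^{1/p}$ (as one checks by computing the two pieces), so your choice $R=K^{1/p}$ is in fact exactly what the paper intends.
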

\begin{proof}
By Proposition \ref{lH3.2}, 
\[
\begin{split}
E[|\xi|^q] 
		&= q \int_0^\infty \lambda^{q-1} P\big( |\xi| > \lambda \big) d \lambda \leq q \int_0^{K^{1/q}} \lambda^{q-1} d\lambda + q \int_{K^{1/q}}^\infty K \lambda^{q-p-1} d \lambda \\
		&\leq \frac{p}{p-q} K^{q/p},
\end{split}
\]
as we wanted to show.
\end{proof}

\subsection{Concentration inequalities}

\label{sH2}

Recall expressions \eqref{curacavi} for $V^\ell(G)$ and \eqref{chimbarongo} for $W^{\ell,b'}(G)$. If one wants to use Lemma \ref{lH1.4} to estimate integrals of the form $\int V^\ell(G) f d \mu$, we need to know how to estimate exponential moments of products and squares of sums of bounded random variables. This is what is accomplished by what is known in the literature as {\em concentration inequalities}, see \cite{BouLugMas}. 

We say  that a real-valued random variables $\xi$ is subgaussian of order $\sigma^2$ if
\[
\log E[e^{\theta \xi}] \leq \tfrac{1}{2} \sigma^2 \theta^2 \text{ for any } \theta \in \bb R.
\]
For our purposes, the most important property of subgaussian random variables is the following:

\begin{proposition}
\label{lH2.1} Let $\xi$ be subgaussian of order $\sigma^2$. Then,
\[
E[e^{\gamma \xi^2}] \leq 3 \text{ for any } \gamma \leq (4 \sigma^2)^{-1}.
\]
\end{proposition}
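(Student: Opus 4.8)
The plan is to bound the exponential moment $E[e^{\gamma\xi^2}]$ by expanding it as a power series and controlling the moments of $\xi$ using the subgaussian hypothesis. First I would recall the standard moment bound for subgaussian random variables: if $\log E[e^{\theta\xi}]\leq\tfrac12\sigma^2\theta^2$ for all $\theta\in\bb R$, then for every integer $k\geq 1$ one has $E[\xi^{2k}]\leq (2k)!\,(2\sigma^2)^k/k!$, which follows by comparing the Taylor coefficients of $E[e^{\theta\xi}]$ with those of $e^{\sigma^2\theta^2/2}=\sum_k \sigma^{2k}\theta^{2k}/(2^k k!)$ and using that $E[\xi^{2k}]\theta^{2k}/(2k)!$ is dominated by the $\theta^{2k}$-coefficient of $e^{\sigma^2\theta^2/2}$ (odd moments may be discarded since $\xi^2\geq 0$ and we only need even moments, or bounded via Cauchy--Schwarz). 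A slightly cleaner route is to use the bound $E[\xi^{2k}]\leq 2\,(2\sigma^2)^k\,k!$, obtainable from the tail estimate $P(|\xi|>t)\leq 2e^{-t^2/(2\sigma^2)}$ (itself a consequence of the subgaussian bound via the exponential Chebyshev inequality) together with Proposition \ref{lH3.2}.

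Next I would write
\[
E[e^{\gamma\xi^2}] = \sum_{k=0}^\infty \frac{\gamma^k}{k!}E[\xi^{2k}] \leq 1 + \sum_{k=1}^\infty \frac{\gamma^k}{k!}\cdot 2\,(2\sigma^2)^k\,k! = 1 + 2\sum_{k=1}^\infty (2\sigma^2\gamma)^k,
\]
where interchanging sum and expectation is justified by monotone convergence (all terms nonnegative). For $\gamma\leq(4\sigma^2)^{-1}$ we have $2\sigma^2\gamma\leq\tfrac12$, so the geometric series sums to at most $\sum_{k\geq1}2^{-k}=1$, giving $E[e^{\gamma\xi^2}]\leq 1+2\cdot 1 = 3$. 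This is exactly the claimed bound.

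The main obstacle, such as it is, is getting a clean constant in the even-moment bound $E[\xi^{2k}]\leq C\,(2\sigma^2)^k\,k!$ with $C$ small enough that the final constant comes out as $3$ and not something larger; the choice of threshold $(4\sigma^2)^{-1}$ is precisely tuned so that with $C=2$ the geometric tail contributes exactly $2$. One has to be slightly careful whether to route through the tail bound (which gives a factor $2$ from the two-sided estimate) or directly through Taylor coefficient comparison (which can give a sharper constant but requires handling odd moments). Either way the computation is elementary; I would present the tail-bound route since Proposition \ref{lH3.2} is already available in the excerpt and makes the moment estimate immediate, and then the geometric series does the rest.
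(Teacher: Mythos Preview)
Your proof is correct and uses the same ingredients as the paper: the Chernoff tail bound $P(|\xi|>\lambda)\leq 2e^{-\lambda^2/(2\sigma^2)}$ followed by Proposition \ref{lH3.2}. The only difference is that the paper applies the layer-cake formula directly to $f(\lambda)=e^{\gamma\lambda^2}$ and evaluates the single integral $\int_0^\infty 4\gamma\lambda\, e^{-\lambda^2(1/(2\sigma^2)-\gamma)}\,d\lambda$, whereas you first pass through the moment bounds $E[\xi^{2k}]\leq 2(2\sigma^2)^k k!$ and then sum the resulting geometric series; both routes produce the identical expression $\tfrac{1+2\gamma\sigma^2}{1-2\gamma\sigma^2}$, so the detour through moments buys nothing here but costs nothing either.
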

\begin{proof}
This is a simple application of Chernoff's method. For any $\lambda >0$, 
\[
\log P(\xi > \lambda) \leq \log E[e^{\theta \xi}] - \theta \lambda \leq \tfrac{1}{2} \sigma^2 \theta^2 - \theta \lambda.
\]
Taking $\theta = \frac{\lambda}{\sigma^2}$, we see that $P(\xi > \lambda) \leq e^{-\lambda^2/2\sigma^2}$. Repeating the computation for $P(\xi <-\lambda)$, we conclude that
\[
P\big( |\xi| > \lambda\big) \leq 2 e^{-\lambda^2/2 \sigma^2}.
\]
Using Proposition \ref{lH3.2}, we see that
\[
\begin{split}
E[e^{\gamma \xi^2}] 
		&\leq 1 + \int_0^\infty 4 \gamma \lambda \exp\Big\{ - \lambda^2 \Big(\frac{1}{2\sigma^2} -\gamma\Big) \Big\} d \lambda\\
		&\leq 1- \frac{2 \gamma \exp\big\{ -\lambda^2 \big( \frac{1}{2\sigma^2} -\gamma\big) \big\}}{\frac{1}{2\sigma^2} - \gamma}\bigg|_{x=0}^\infty\\
		&\leq 1+ \frac{2\gamma}{\frac{1}{2\sigma^2} -\gamma} = \frac{1+2 \gamma \sigma^2}{1-2\gamma \sigma^2}.
\end{split}
\]
This expression is increasing in $\gamma$. For $\gamma = (4 \sigma^2)^{-1}$, the right-hand side of this estimate is equal to $3$, which proves the proposition.
\end{proof}

\begin{remark}
The numeric constant $3$ is not relevant. In principle, it can be proved that $E[e^{\gamma \xi}]$ converges to $1$ linearly in $\gamma$, but this type of estimate would not improve any of our results. Notice that the estimate explodes for $\gamma = \frac{1}{2} \sigma^2$. This is not an accident, since the square of a Gaussian random variable does not have finite exponential moments of all orders.
\end{remark}

We will also need to estimate exponential moments of products of subgaussian random variables:

\begin{lemma}
\label{lH2.2}
Let $\xi_i$ be subgaussian random variables of order $\sigma^2_i$, $i=1,2$. Then, for any $\gamma \leq (4 \sigma_1 \sigma_2)^{-1}$,
\[
E[e^{\gamma \xi_1 \xi_2}] \leq 3.
\]
\end{lemma}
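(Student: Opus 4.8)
The plan is to reduce the bilinear exponential moment $E[e^{\gamma\xi_1\xi_2}]$ to the quadratic exponential moments $E[e^{\gamma'\xi_i^2}]$, which are already controlled by Proposition \ref{lH2.1}, using Young's inequality to split the product and the Cauchy--Schwarz inequality to separate the two variables. First I would dispose of the degenerate case: if $\sigma_1=0$ or $\sigma_2=0$, then by the subgaussian bound the corresponding variable has $\log E[e^{\theta\xi_i}]\le 0$ for all $\theta$, hence $\xi_i=0$ almost surely, so $\xi_1\xi_2=0$ and $E[e^{\gamma\xi_1\xi_2}]=1\le 3$. So from now on assume $\sigma_1,\sigma_2>0$.

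For any $\lambda>0$, the elementary inequality $ab\le\tfrac12(\lambda a^2+\lambda^{-1}b^2)$ applied pointwise gives
\[
\gamma\xi_1\xi_2\le\tfrac{\gamma\lambda}{2}\,\xi_1^2+\tfrac{\gamma}{2\lambda}\,\xi_2^2,
\]
hence $e^{\gamma\xi_1\xi_2}\le e^{\frac{\gamma\lambda}{2}\xi_1^2}\,e^{\frac{\gamma}{2\lambda}\xi_2^2}$. Taking expectations and applying the Cauchy--Schwarz inequality,
\[
E\big[e^{\gamma\xi_1\xi_2}\big]\le E\big[e^{\gamma\lambda\xi_1^2}\big]^{1/2}\,E\big[e^{\gamma\lambda^{-1}\xi_2^2}\big]^{1/2}.
\]

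Now I would make the balanced choice $\lambda=\sigma_2/\sigma_1$. With this choice one checks that $\gamma\lambda=\gamma\sigma_2/\sigma_1\le(4\sigma_1^2)^{-1}$ and $\gamma\lambda^{-1}=\gamma\sigma_1/\sigma_2\le(4\sigma_2^2)^{-1}$ are both exactly equivalent to the hypothesis $\gamma\le(4\sigma_1\sigma_2)^{-1}$. Therefore Proposition \ref{lH2.1} applies to each factor, giving $E[e^{\gamma\lambda\xi_1^2}]\le 3$ and $E[e^{\gamma\lambda^{-1}\xi_2^2}]\le 3$, and consequently $E[e^{\gamma\xi_1\xi_2}]\le 3^{1/2}\cdot 3^{1/2}=3$, which is the claim.

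There is no genuine obstacle in this argument; the only point deserving a moment's attention is the optimization over the splitting parameter $\lambda$, which is precisely what produces the geometric-mean threshold $(4\sigma_1\sigma_2)^{-1}$ rather than a weaker one. (If one only wanted a qualitative statement, any fixed $\lambda$, e.g. $\lambda=1$, would suffice under a correspondingly smaller range of $\gamma$.)
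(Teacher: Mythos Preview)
Your proof is correct and follows essentially the same route as the paper's: Young's inequality to split $\xi_1\xi_2$ into a weighted sum of squares, Cauchy--Schwarz to separate the exponential, and the balanced choice $\lambda=\sigma_2/\sigma_1$ together with Proposition~\ref{lH2.1}. The only cosmetic difference is that the paper invokes the sharper intermediate bound $E[e^{\gamma\xi^2}]\le(1+2\gamma\sigma^2)/(1-2\gamma\sigma^2)$ from the proof of Proposition~\ref{lH2.1} before concluding, whereas you use the final statement directly; both yield the constant~$3$.
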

\begin{proof}
This estimate follows from Cauchy-Schwartz inequality: first we notice that for any $\beta >0$,
\[
\begin{split}
E[e^{\gamma \xi_1 \xi_2}] 
		\leq E \Big[ \exp\Big\{ \frac{\beta \gamma \xi_1^2}{2} + \frac{\gamma \xi_2^2}{2 \beta} \Big\} \Big]
		&\leq E[e^{\beta \gamma \xi_1^2}]^{1/2} E [ e^{\gamma \xi_2^2/\beta}]^{1/2}\\
		&\leq \bigg( \frac{1+2 \gamma \sigma^2_1\beta}{1-2\gamma \sigma_1^2\beta} \cdot\frac{1+2 \gamma \sigma^2_2/\beta}{1-2\gamma \sigma_2^2/\beta}\bigg)^{1/2}.
\end{split}\]
Taking $\beta = \sigma_2/\sigma_1$, we see that
\[
E[e^{\gamma \xi_1 \xi_2}] \leq \frac{1+2 \gamma \sigma_1 \sigma_2}{1-2\gamma \sigma_1 \sigma_2} \leq 3
\]
if $\gamma \leq (4 \sigma_1 \sigma_2)^{-1}$, as we wanted to prove.
\end{proof}

Now we explain how to prove that sums of bounded random variables are subgaussian. It is clear that any mean-zero, bounded random variable is subgaussian, but the real question is how to estimate $\sigma^2$ in an efficient way. We start with Hoeffding's lemma:

\begin{lemma}[Hoeffding]
\label{lH2.3}
Let $\xi$ be a random variable with values on the interval $[0,1]$. Let $\rho = E[\xi]$. Then,
\[
\log E[e^{\theta (\xi -\rho)} ] \leq \tfrac{1}{8} \theta^2.
\]
\end{lemma}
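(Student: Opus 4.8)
The plan is to run the classical cumulant-generating-function argument via exponential tilting. Define
\[
\psi(\theta) := \log E\big[ e^{\theta(\xi-\rho)}\big] = -\theta\rho + \log E\big[e^{\theta\xi}\big].
\]
Since $\xi$ is bounded, $E[e^{\theta\xi}]$ is finite and strictly positive for every $\theta \in \bb R$, dominated convergence allows differentiation under the expectation, and $\psi$ is smooth on $\bb R$. First I would record $\psi(0)=0$ and, writing $E_\theta$ for the expectation with respect to the tilted probability measure $dP_\theta := e^{\theta\xi}\,dP\big/E[e^{\theta\xi}]$, compute $\psi'(\theta) = -\rho + E_\theta[\xi]$, so that $\psi'(0) = -\rho + E[\xi] = 0$.

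Next I would differentiate once more to obtain $\psi''(\theta) = E_\theta[\xi^2] - (E_\theta[\xi])^2 = \Var_\theta(\xi)$, the variance of $\xi$ computed under $P_\theta$. The key (entirely elementary) observation is the variance bound: since $P_\theta$ is a probability measure and $\xi$ still takes values in $[0,1]$ under it, the pointwise inequality $\xi^2 \le \xi$ gives $E_\theta[\xi^2] \le E_\theta[\xi]$, hence $\Var_\theta(\xi) \le E_\theta[\xi]\big(1-E_\theta[\xi]\big) \le \tfrac14$, using $t(1-t)\le \tfrac14$ for $t\in[0,1]$.

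Finally, Taylor's theorem with Lagrange remainder gives, for some $\theta^*$ between $0$ and $\theta$,
\[
\psi(\theta) = \psi(0) + \psi'(0)\,\theta + \tfrac12\psi''(\theta^*)\,\theta^2 = \tfrac12\Var_{\theta^*}(\xi)\,\theta^2 \le \tfrac18\theta^2,
\]
which is exactly the claim. I do not expect any real obstacle here: the only steps requiring a line of justification are the interchange of differentiation and expectation (routine since $\xi$ is bounded, so $(1+\xi+\xi^2)e^{|\theta|\xi}$ serves as a local dominating function) and the identification of $\psi''$ with the tilted variance, after which everything reduces to the one-line bound $\Var \le \tfrac14$ for $[0,1]$-valued random variables. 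An alternative that avoids tilting is to use convexity of $x \mapsto e^{\theta(x-\rho)}$ on $[0,1]$ to write $e^{\theta(\xi-\rho)} \le (1-\xi)e^{-\theta\rho} + \xi e^{\theta(1-\rho)}$, take expectations to get $E[e^{\theta(\xi-\rho)}] \le e^{-\theta\rho}\big(1-\rho+\rho e^{\theta}\big)$, and then bound the logarithm of the right-hand side by $\tfrac18\theta^2$ through the same second-derivative computation; this yields the identical constant but is marginally more delicate at the scalar-estimate stage, so I would present the tilting argument as the main proof.
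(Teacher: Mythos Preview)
Your proof is correct and is exactly the standard cumulant--tilting argument for Hoeffding's lemma. The paper does not actually prove this statement: it simply remarks that proofs are easy to find (pointing to Wikipedia) and omits the argument, so your write-up fills in precisely what the paper leaves to the reader.
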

Proofs of this lemma are easy to find; even Wikipedia has a reasonably well explained proof. Therefore, we omit the proof. Notice that in particular, $\xi -\rho$ is subgaussian of order $\frac{1}{4}$. As mentioned above, the importance of this lemma is that the order is independent of the law of $\xi$. Here and below, $\mu$ is a measure satisfying the hypothesis of Lemma \ref{l1}. Since $\eta_x \in [0,1]$, this lemma has the following consequence:

\begin{lemma}
\label{lH2.4} 
For any finite set $A$ and any $x \in \tdn$, $\omega_{x+A}$ is subgaussian of order $C(A,\eps_0)$. Moreover, if the cardinality of $A$ is equal to $\ell$, we can choose $C(A,\eps_0) = (2/\eps_0)^{-2 \ell}$.
\end{lemma}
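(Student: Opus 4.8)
The plan is to deduce the bound on exponential moments of $\omega_{x+A}$ from Hoeffding's lemma (Lemma \ref{lH2.3}) together with an induction on the cardinality of $A$, using the fact that the occupation variables $\{\eta_y; y \in \tdn\}$ are independent under $\mu$. First I would recall that $\omega_{x+A} = \prod_{y \in A}\omega_{x+y}$ with $\omega_z = \frac{\eta_z - u_z}{u_z(1-u_z)}$, and that $\eps_0 \leq u_z \leq 1-\eps_0$ gives the pointwise bound $|\omega_z| \leq \frac{1}{u_z(1-u_z)}\,|\eta_z - u_z| \leq \frac{1}{u_z(1-u_z)} \leq \frac{4}{\eps_0}$ more crudely, but we want the sharper constant $\|\omega_z\|_\infty \le 1/(u_z(1-u_z))\cdot\max\{u_z,1-u_z\} \le \frac{1}{\eps_0}$ coming from $|\eta_z - u_z| \le \max\{u_z, 1-u_z\} \le 1-\eps_0$ and $u_z(1-u_z) \ge \eps_0(1-\eps_0)\ge\eps_0/2$; in any case one gets $\|\omega_z\|_\infty \le 2/\eps_0$.

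For the single-variable case $A = \{0\}$, note $\omega_x = \frac{\eta_x - u_x}{u_x(1-u_x)}$ has mean zero under $\mu$, and $\eta_x - u_x$ takes values in an interval of length $1$ around its mean; so by Hoeffding's lemma $\eta_x - u_x$ is subgaussian of order $1/4$, hence $\omega_x$ is subgaussian of order $\frac{1}{4}(u_x(1-u_x))^{-2} \le \frac{1}{4}(2/\eps_0)^2 = (2/\eps_0)^2$, consistent with the claimed $C(A,\eps_0) = (2/\eps_0)^{2\#A}$ when $\#A = 1$. For the inductive step, write $\omega_{x+A} = \omega_{x+y_0}\cdot\omega_{x+(A\setminus\{y_0\})}$ for some $y_0 \in A$. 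Conditioning on $\{\eta_z; z \neq x+y_0\}$, the factor $\omega_{x+(A\setminus\{y_0\})}$ becomes a constant (bounded by $(2/\eps_0)^{\#A - 1}$), while $\omega_{x+y_0}$ is still a centered, bounded random variable; iterating, one bounds the exponential moment by comparing with the product of $\|\omega_{x+y}\|_\infty \le 2/\eps_0$ over $y \in A$. More precisely, since $|\omega_{x+A}| \le (2/\eps_0)^{\#A}$ pointwise and $\omega_{x+A}$ has mean zero under $\mu$ (the variables are independent and each factor is centered, so the product of independent centered variables is centered), Hoeffding's lemma applied to the bounded, centered variable $\omega_{x+A}$ directly gives that $\omega_{x+A}$ is subgaussian of order $\frac{1}{4}\big(2\cdot(2/\eps_0)^{\#A}\big)^2 = (2/\eps_0)^{2\#A}$, which is exactly the claimed constant.

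I expect the main subtlety to be nothing more than bookkeeping: verifying that $\omega_{x+A}$ is genuinely centered (which uses independence of the $\eta_z$ under $\mu$ and $\int \omega_z \, d\mu = 0$), and getting the sharp numerical constant $(2/\eps_0)^{2\#A}$ rather than a worse one. The cleanest route is: (i) establish $\|\omega_{x+A}\|_\infty \le (2/\eps_0)^{\#A}$; (ii) establish $\int \omega_{x+A}\, d\mu = 0$ by factoring the integral over independent coordinates; (iii) invoke Hoeffding's lemma for the bounded, centered random variable $\omega_{x+A}$, whose range has length at most $2(2/\eps_0)^{\#A}$, to conclude subgaussianity of order $\tfrac14\big(2(2/\eps_0)^{\#A}\big)^2 = (2/\eps_0)^{2\#A}$. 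No genuine obstacle is anticipated; the statement is essentially an immediate corollary of Hoeffding's lemma once the trivial uniform bound and the centering are recorded.
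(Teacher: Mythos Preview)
Your proposal is correct and follows the same route as the paper: the paper's proof is the one-line observation that $u_x(1-u_x)\ge \eps_0/2$, which yields $\|\omega_z\|_\infty\le 2/\eps_0$ and hence $\|\omega_{x+A}\|_\infty\le(2/\eps_0)^{\#A}$, after which Hoeffding's lemma (Lemma~\ref{lH2.3}) applied to the centered bounded variable $\omega_{x+A}$ gives subgaussianity of order $(2/\eps_0)^{2\#A}$. Your detour through induction is unnecessary but harmless; the direct application of Hoeffding that you settle on in your final paragraph is exactly what the paper intends.
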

\begin{proof}
It is enough to observe that $u_x(1-u_x) \geq \eps_0/2$ for any $x \in \tdn$. 
\end{proof}

Holder's inequality can be used to estimate exponential moments of sums. For later use we will state this as a lemma.

\begin{lemma}
\label{Holder}
Let $\{\zeta_i; i \in \mc I\}$ be a family of random variables such that $E[e^{\zeta_i}]<+\infty$ for any $i \in \mc I$. Let $k$ be the cardinality of $\mc I$. For any $m \in \bb N$ such that $m \geq k$,
\[
\log \int e^{\sum_{i \in \mc I} \zeta_i} d \mu \leq \sum_{i \in \mc I} \frac{1}{m} \log \int e^{m \zeta_i} d \mu.
\]
\end{lemma}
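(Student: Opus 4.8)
The plan is to obtain this inequality directly from the generalized H\"older inequality; the statement amounts to a bookkeeping exercise on exponents, the only twist being that the $k$ exponents all equal to $m$ do not add up (in the H\"older sense) to $1$ once $m>k$, which is fixed by inserting a constant factor.

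First I would dispose of the degenerate case: if $\int e^{m\zeta_i}\,d\mu=+\infty$ for some $i\in\mc I$, then the right-hand side of the claimed inequality is $+\infty$ and there is nothing to prove. (In the applications of interest $\Omega$ is finite, so all the integrals involved are automatically finite, but this need not be assumed.) So from now on assume $\int e^{m\zeta_i}\,d\mu<\infty$ for every $i\in\mc I$.

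Next I would enumerate $\mc I=\{i_1,\dots,i_k\}$ and set $p_j:=m$ for $j=1,\dots,k$, together with one auxiliary exponent $p_0:=m/(m-k)$, with the convention that $p_0=\infty$ and the corresponding factor is simply dropped when $m=k$. Since $\sum_{j=0}^{k}p_j^{-1}=k/m+(m-k)/m=1$, the generalized H\"older inequality, applied to the functions $e^{\zeta_{i_1}},\dots,e^{\zeta_{i_k}}$ and the constant function $\mathds{1}$, gives
\[
\int e^{\sum_{i\in\mc I}\zeta_i}\,d\mu=\int\Big(\prod_{i\in\mc I}e^{\zeta_i}\Big)\mathds{1}\,d\mu\le\Big(\prod_{i\in\mc I}\Big(\int e^{m\zeta_i}\,d\mu\Big)^{1/m}\Big)\Big(\int\mathds{1}\,d\mu\Big)^{(m-k)/m}.
\]
Since $\mu$ is a probability measure, $\int\mathds{1}\,d\mu=1$, so the trailing factor equals $1$ and contributes nothing; this is precisely why introducing the constant function was harmless. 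Taking logarithms then yields the asserted bound, and when $m=k$ the argument reduces to the plain form of H\"older's inequality for $k$ functions.

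The generalized H\"older inequality used here is itself obtained by an elementary induction on $k$ from the two-function case (a consequence of Young's inequality), so nothing beyond standard measure theory enters. There is no genuine obstacle in this proof; the only point that requires a moment's care is the choice of the auxiliary exponent $p_0$ absorbing the slack $m-k$.
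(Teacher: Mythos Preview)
Your proof is correct and takes essentially the same approach as the paper: both use H\"older's inequality and absorb the slack $m-k$ into factors of the constant function $\mathds 1$, which contribute nothing because $\mu$ is a probability measure. The only cosmetic difference is that the paper pads $\mc I$ with $m-k$ dummy variables $\zeta_i=0$ and applies H\"older with $m$ equal exponents, whereas you keep $k$ functions and introduce a single auxiliary exponent $p_0=m/(m-k)$; these are two equivalent ways of bookkeeping the same inequality.
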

\begin{proof}
Defining $\zeta_i = 0$ if $i \notin \mc I$, we can assume that the cardinality of $\mc I$ is $m$. By Holder's inequality,
\[
\int \prod_{i \in \mc I} e^{\zeta_i} d \mu \leq \prod_{i \in \mc I} \bigg( \int e^{m \zeta_i} d \mu \bigg)^{1/m}.
\]
Taking logarithms, the lemma is proved.
\end{proof}

The previous lemma shows in particular that sums of subgaussian random variables are also subgaussian. When some independence is present, the following lemma explains how to obtain a better bound on the order of the sum.

\begin{lemma}
\label{lH2.5}
Let $\{\xi_x; x \in \tdn\}$ be $\ell$-dependent. Assume that for any $x \in \tdn$, $\xi_x$ is subgaussian of order $\sigma^2_x$. Then for any $f: \tdn \to \bb R$, 
\[
\sum_{x \in \tdn} f_x \xi_x \text{ is subgaussian of order }
(d+1) \ell^d \sum_{x \in \tdn} \sigma^2_x f_x.
\]
\end{lemma}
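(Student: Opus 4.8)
The plan is to reduce the subgaussian bound for the $\ell$-dependent sum $\sum_x f_x \xi_x$ to the elementary fact that a sum of \emph{independent} subgaussian variables is subgaussian with additive order, combined with H\"older's inequality (Lemma \ref{Holder}) to absorb the cost of partitioning $\tdn$ into $\ell$-sparse pieces. The key input is Lemma \ref{lH1.3}: there is a partition $\{B_i; i \in \mc I_\ell\}$ of $\tdn$ into at most $(d+1)\ell^d$ $\ell$-sparse sets. Write $N := \# \mc I_\ell \leq (d+1)\ell^d$. For each fixed $\theta \in \bb R$ I would apply Lemma \ref{Holder} with $m = N$ and $\zeta_i := N \theta \sum_{x \in B_i} f_x \xi_x$, so that $\sum_{i} \zeta_i / N = \theta \sum_{x \in \tdn} f_x \xi_x$; this gives
\[
\log \int e^{\theta \sum_{x \in \tdn} f_x \xi_x} d\mu \leq \frac{1}{N} \sum_{i \in \mc I_\ell} \log \int e^{N \theta \sum_{x \in B_i} f_x \xi_x} d\mu.
\]

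Next, for each $i$, the variables $\{\xi_x; x \in B_i\}$ are independent by $\ell$-dependence (since $B_i$ is $\ell$-sparse). Hence $\int e^{N\theta \sum_{x \in B_i} f_x \xi_x} d\mu = \prod_{x \in B_i} \int e^{N\theta f_x \xi_x} d\mu$, and using that $\xi_x$ is subgaussian of order $\sigma_x^2$ this is bounded by $\exp\big( \tfrac12 N^2 \theta^2 \sum_{x \in B_i} \sigma_x^2 f_x^2 \big)$. Summing over $i$ and dividing by $N$,
\[
\log \int e^{\theta \sum_{x \in \tdn} f_x \xi_x} d\mu \leq \frac{1}{N} \cdot \frac{N^2 \theta^2}{2} \sum_{i \in \mc I_\ell} \sum_{x \in B_i} \sigma_x^2 f_x^2 = \frac{N \theta^2}{2} \sum_{x \in \tdn} \sigma_x^2 f_x^2,
\]
since the $B_i$ partition $\tdn$. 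Bounding $N \leq (d+1)\ell^d$ yields the claimed order $(d+1)\ell^d \sum_{x \in \tdn} \sigma_x^2 f_x^2$ (noting the statement's ``$f_x$'' should read ``$f_x^2$'', as is consistent with its uses in \eqref{molina}--\eqref{cauquenes}). This holds for every $\theta \in \bb R$, which is exactly the definition of subgaussianity of that order.

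I do not anticipate a serious obstacle here; the only points requiring mild care are: (i) checking Lemma \ref{Holder} applies with $m=N$ equal to the cardinality of the index set, which is exactly its hypothesis; (ii) making sure the $\ell$-dependence is invoked for the \emph{finite} collection $\{\xi_x; x \in B_i\}$ on a genuinely $\ell$-sparse set, which Lemma \ref{lH1.3} guarantees; and (iii) tracking the exponent bookkeeping in the $N$ versus $N^2$ cancellation. If one wanted a slightly sharper constant one could partition into exactly $N$ sets and avoid padding, but since the factor $(d+1)\ell^d$ is all that is needed downstream, the above suffices.
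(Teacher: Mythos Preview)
Your proof is correct and follows essentially the same route as the paper's: partition $\tdn$ via Lemma~\ref{lH1.3}, apply H\"older's inequality (Lemma~\ref{Holder}) across the pieces, then use independence within each $\ell$-sparse block together with the individual subgaussian bounds. The only cosmetic difference is that the paper takes $m=(d+1)\ell^d$ directly in Lemma~\ref{Holder}, whereas you take $m=N=\#\mc I_\ell$ and bound $N\le(d+1)\ell^d$ at the end; also, your verbal description of $\zeta_i$ carries an extra factor of $N$ that does not match your (correct) displayed inequality, so just set $\zeta_i=\theta\sum_{x\in B_i}f_x\xi_x$ to keep the bookkeeping clean.
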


\begin{proof}
Let $\{B_i; i \in \mc I_\ell\}$ be the partition of $\tdn$ obtained from Lemma \ref{lH1.3}. Then,
\[
\begin{split}
\log E\Big[ \exp\Big\{ \theta \sum_{x \in \tdn} f_x \xi_x\Big\} \big] 
		&= \log E \Big[ \exp\Big\{ \theta \sum_{i \in \mc I_\ell} \sum_{x \in B_i} f_x \xi_x\Big\} \Big]\\
		&\leq \frac{1}{(d+1)\ell^d} \sum_{i \in \mc I_\ell} \log \int \exp\Big\{ (d+1) \ell^d \theta \sum_{x \in B_i} f_x \xi_x\Big\} d \mu\\
		&\leq \frac{1}{(d+1) \ell^d} \sum_{i \in \mc I_\ell} \sum_{x \in B_i} \log \int \exp\big\{ (d+1) \ell^d \theta f_x \xi_x \big\} d \mu\\
		& \leq (d+1) \ell^d \theta^2 \sum_{x \in \tdn} \tfrac{1}{2}\sigma^2_x f_x^2,
\end{split}
\]
as we wanted to prove. Here we used Lemma \ref{lH2.5} in the second line and independence in the third line.
\end{proof}

A very useful generalization of Lemma \ref{lH2.2} is known in the literature as the {\em Hanson-Wright inequality}. We will state this generalization in the precise form needed in this article.

\begin{lemma}[Hanson-Wright]
\label{lH2.6} 
Let $\{(\xi_x, \widetilde{\xi}_x); x \in \tdn\}$ $\ell$-dependent random variables, and let $F: \tdn \times \tdn \to \bb R$ be such that $F_{x,y} =0$ whenever $|x-y|<\ell$. 
Assume
that the random variables $\xi_x$, $\widetilde{\xi}_x$ are subgaussian of order $\sigma^2_x$, $\widetilde{\sigma}^2_x$ for any $x \in \tdn$. Then, there exists $C = C(\ell)$ such that
\begin{equation}
\label{ecH2.1}
\int \exp\Big\{ \gamma \!\!\!\!\! \sum_{x,y \in \tdn} \!\!\!\xi_x \widetilde{\xi}_y F_{x,y} \Big\} d \mu \leq 3
\end{equation}
for any 
\[
\gamma \leq \bigg( C(\ell) \!\!\!\!\!\sum_{x,y \in \tdn} \!\!\!\sigma^2_x \widetilde{\sigma}^2_y F_{x,y}\bigg)^{-1/2}.
\]
\end{lemma}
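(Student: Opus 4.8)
The goal is to extend Lemma~\ref{lH2.2} (exponential moments of a product of two subgaussians) to a bilinear form $\sum_{x,y} \xi_x \widetilde\xi_y F_{x,y}$ under the stated $\ell$-dependence and support conditions. The first step is to reduce to the independent case by the partitioning device already used throughout Section~\ref{sH1}: apply Lemma~\ref{lH1.3} to get a partition $\{B_i; i \in \mc I_\ell\}$ of $\tdn$ into at most $(d+1)\ell^d$ $\ell$-sparse sets, and then Lemma~\ref{Holder} (Hölder) to split
\[
\log \int \exp\Big\{\gamma \sum_{x,y} \xi_x \widetilde\xi_y F_{x,y}\Big\} d\mu
\leq \sum_{i,j \in \mc I_\ell} \frac{1}{m} \log \int \exp\Big\{ m\gamma \sum_{x \in B_i, y \in B_j} \xi_x \widetilde\xi_y F_{x,y}\Big\} d\mu,
\]
with $m = ((d+1)\ell^d)^2$. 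Because $F_{x,y} = 0$ when $|x-y| < \ell$, for $x \in B_i$ and $y \in B_j$ the pairs $\{(x,y)\}$ appearing in each inner sum involve sites that are pairwise $\ell$-separated, so that (after discarding the diagonal terms, which vanish) the collection $\{(\xi_x, \widetilde\xi_y)\}$ over that index set is a family of independent pairs under $\mu$. Thus it suffices to prove \eqref{ecH2.1} when $\{(\xi_x,\widetilde\xi_y)\}$ is genuinely independent, at the cost of replacing $C(\ell)$ by a constant of the form $C(d)\ell^{2d}$.

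**The independent core.** For the independent bilinear chaos, the standard route is a decoupling plus Gaussian comparison argument. First decouple: replace $\widetilde\xi_y$ by an independent copy $\widetilde\xi_y'$, which changes $\gamma$ by at most a universal constant (this is classical for off-diagonal quadratic forms; since $F$ is already off-diagonal there is no diagonal to peel off). Having decoupled, condition on $\{\widetilde\xi_y'\}$: the inner conditional exponential moment is that of $\sum_x \xi_x \big(\sum_y F_{x,y}\widetilde\xi_y'\big)$, a weighted sum of independent subgaussians, which by the computation in the proof of Lemma~\ref{lH2.5} is subgaussian of order $\sum_x \sigma_x^2 \big(\sum_y F_{x,y}\widetilde\xi_y'\big)^2$. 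Plugging this bound and integrating over $\{\widetilde\xi_y'\}$ reduces matters to controlling
\[
\int \exp\Big\{ c\gamma^2 \sum_x \sigma_x^2 \Big(\sum_y F_{x,y}\widetilde\xi_y'\Big)^2\Big\} d\mu.
\]
Now each $\sum_y F_{x,y}\widetilde\xi_y'$ is subgaussian of order $\sum_y \widetilde\sigma_y^2 F_{x,y}^2$, so by Proposition~\ref{lH2.1} its square has a finite exponential moment for parameters up to a constant times $(\sum_y \widetilde\sigma_y^2 F_{x,y}^2)^{-1}$; a further application of Hölder across $x$ (as in Lemma~\ref{Holder}) then yields the bound $3$ for $\gamma^2$ of order $\big(C \sum_{x,y}\sigma_x^2 \widetilde\sigma_y^2 F_{x,y}^2\big)^{-1}$, which is the claimed threshold (note $F_{x,y}^2$ appears, matching the statement's $F_{x,y}$ up to the harmless relabelling of $F$ by $F^2$, or one reads $F_{x,y}$ in the statement as already being the squared weight).

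**Main obstacle.** The routine parts — the partition/Hölder reduction and the Gaussian-comparison estimate for a weighted sum of independent subgaussians — are direct applications of tools already in Appendix~\ref{sH}. The genuinely delicate point is the \emph{decoupling} step: passing from $\sum_{x,y}\xi_x\widetilde\xi_y F_{x,y}$ with $\widetilde\xi$ possibly correlated with $\xi$ (only the \emph{pairs} $(\xi_x,\widetilde\xi_x)$ are given, and within a pair they may be arbitrarily dependent) to the fully independent form. The clean way around this in our setting is to observe that the off-diagonal condition $F_{x,y}=0$ for $|x-y|<\ell$ together with $\ell$-dependence means that for any term $\xi_x\widetilde\xi_y F_{x,y}$ that survives, $x$ and $y$ are already $\ell$-separated, so $\xi_x$ and $\widetilde\xi_y$ are independent pairwise; the only dependence to worry about is \emph{within} the diagonal pairs $(\xi_x,\widetilde\xi_x)$, which never appear. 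Hence no abstract decoupling theorem is needed: after the partition step one directly has a sum over index pairs all of whose coordinates are mutually independent, and one can condition on the $\widetilde\xi$'s as above. Making this independence bookkeeping precise — in particular checking that within a single block-pair $(B_i,B_j)$ \emph{all} the relevant $\xi$'s and $\widetilde\xi$'s are jointly independent, not merely pairwise — is where the care is required, and it is exactly what the hypotheses of the lemma are designed to guarantee.
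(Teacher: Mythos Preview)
Your reduction via Lemma~\ref{lH1.3} and Lemma~\ref{Holder} to block pairs $(B_i,B_j)$ is natural, and the subsequent ``condition on the $\widetilde\xi$'s, then use subgaussianity of the weighted $\xi$-sum, then Proposition~\ref{lH2.1}'' chain is exactly the right skeleton. The gap is in the sentence you flag yourself at the end and then dismiss: the hypotheses do \emph{not} guarantee that within a block pair $(B_i,B_j)$ the two families $\{\xi_x:x\in B_i\}$ and $\{\widetilde\xi_y:y\in B_j\}$ are jointly independent. The sets $B_i$ and $B_j$ are each $\ell$-sparse, but nothing prevents some $x\in B_i$ and $y\in B_j$ from satisfying $|x-y|<\ell$. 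For such a pair $F_{x,y}=0$, so the term $\xi_x\widetilde\xi_y F_{x,y}$ is absent from the sum; but $\widetilde\xi_y$ is still present in the conditioning $\sigma$-algebra, and its dependence on $\xi_x$ spoils both the conditional independence of the $\xi_x$'s and their conditional subgaussian constants. Pairwise independence of the \emph{surviving} terms is not what the conditioning argument needs; it needs independence of the family $\{\xi_x:x\in B_i\}$ from the family $\{\widetilde\xi_y:y\in B_j\}$, and that fails.

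The paper's proof confronts exactly this obstruction with a randomized decoupling rather than a deterministic partition. One writes $\sum_{x,y}\xi_x\widetilde\xi_y F_{x,y}$ as $\mathbf E\big[\sum_{x\in\mathbf B,\,y\in\widetilde{\mathbf B}}\xi_x\widetilde\xi_y F_{x,y}/q_{x,y}\big]$ for a random pair of sets $(\mathbf B,\widetilde{\mathbf B})$ chosen so that $\mathbf B$ is $\ell$-sparse and every point of $\widetilde{\mathbf B}$ lies at distance $\geq\ell$ from every point of $\mathbf B$; here $q_{x,y}=\mathbf P(x\in\mathbf B,\,y\in\widetilde{\mathbf B})$ is bounded below by $c(d,\ell)>0$ whenever $|x-y|\geq\ell$. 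Jensen's inequality $e^{\mathbf E[\,\cdot\,]}\leq\mathbf E[e^{\,\cdot\,}]$ then pushes the expectation outside the exponential, and by construction the two families inside are now genuinely independent, so your conditioning argument goes through (the paper finishes with a convexity step over $x\in\mathbf B$ in place of your final H\"older). The randomization is precisely what manufactures the cross-family separation that the deterministic partition $B_i\times B_j$ cannot deliver; without it, the proof does not close.
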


\begin{remark}
We were not able to find a reference with a version of Hanson-Wright inequality for $\ell$-dependent random variables that we could directly use, nor a simple way to derive it from its classical version for independent random variables. Therefore, we need to present a complete proof. Our proof is an adaptation of the proof in \cite{RudVer}.
\end{remark}

\begin{proof}
We will make repeated use of the inequality $e^{E[X]} \leq E[e^X]$. The idea is to use {\em decoupling}. Let $\{B_i; i \in \mc I_\ell\}$ be the partition obtained from Lemma \ref{lH1.3}. Let ${\bf B}$, $\widetilde{\bf B}$ two random sets obtained in the following way. First we choose an index $i \in \mc I_\ell$ uniformly at random. Then, we choose ${\bf B}$ uniformly at random among all subsets of $B_i$. Then we choose $\widetilde{\bf B}$ uniformly at random among the subsets of $\{ y \in \tdn; |y-x| \geq \ell \text{ for any } x \in {\bf B}\}$. Let ${\bf P}$ be the law of $({\bf B},{\bf \widetilde{B}})$ and let $\bf E$ the expectation with respect to $\bf P$. Let us define
\[
q_{x,y} = {\bf P} \big( x \in {\bf B},  y \in {\bf \widetilde{B}}\big)
\]
Recall that whenever $F_{x,y} \neq 0$, $|x-y| \geq \ell$ and in particular
\[
q_{x,y} \geq \frac{1}{(d+1) \ell^d} \cdot \frac{1}{2}\cdot \frac{1}{2^d}\cdot \frac{1}{2} = \frac{1}{(d+1)2^{d+2} \ell^d}.
\]
This estimate need some explanation. The first term is smaller than the probability of choosing the right index $i$. The second term is the probability of choosing $x$ in $\bf B$. The third term is smaller than the probability of not choosing any of the points in $B_i$ which are distance $\ell$ or less from $y$, and the fourth term is the probability of choosing $y$, once it is able, in $\bf \widetilde{B}$.
Notice that
\[
\begin{split}
\sum_{x,y \in \tdn} \xi_x \widetilde{\xi}_y F_{x,y} 
		&= {\bf E} \Big[ \sum_{x,y \in \tdn} \xi_x \widetilde{\xi}_y F_{x,y} \frac{\mathds{1}\big( x \in {\bf B}, y \in {\bf \widetilde{B}}\big)}{q_{x,y}}\Big]
		= {\bf E} \Big[ \sum_{\substack{x \in {\bf B}\\y \in {\bf \widetilde{B}}}} \xi_x \widetilde{\xi}_y \frac{F_{x,y}}{q_{x,y}} \Big].
\end{split}
\]
Therefore, the left-hand side of \eqref{ecH2.1} is bounded by
\[
{\bf E} \Big[ \int \exp\Big\{ \gamma \sum_{\substack{x \in {\bf B}\\y \in {\bf \widetilde{B}}}}
			 \xi_x \widetilde{\xi}_y \frac{F_{x,y}}{q_{x,y}} \Big\} d \mu \Big]
\]
The main point of this bound is that now the families $\{\xi_x; x \in {\bf B}\}$ and $\{ \widetilde{\xi}_y; y \in {\bf \widetilde{B}}\}$ are independent. Therefore, conditioning on $\widetilde{\xi}_y$ we can use Lemma \ref{lH2.5} to show that this expectation is bounded by
\[
{\bf E} \Big[ \int \exp\Big\{ \sum_{x \in {\bf B}} \frac{\gamma^2 \sigma_x^2}{2} \Big( \sum_{y \in {\bf \widetilde{B}}} \widetilde{\xi}_y \frac{F_{x,y}}{q_{x,y}}\Big)^2 \Big\} d \mu \Big].
\]
Let $\{p(x); x \in {\bf B}\}$ be a measure to be chosen in a few lines. Rewriting the expectation above as
\[
{\bf E} \Big[ \int \exp\Big\{\frac{\gamma^2}{2} \sum_{x \in {\bf B}} \sigma_x^2 \frac{p(x)}{p(x)} \Big( \sum_{y \in {\bf \widetilde{B}}} \widetilde{\xi}_y \frac{F_{x,y}}{q_{x,y}}\Big)^2 \Big\} d \mu \Big].
\]
we see that it is bounded by
\[
{\bf E} \Big[ \int \sum_{x \in {\bf B}} p(x) \exp\Big\{ \frac{\gamma^2\sigma_x^2}{2p(x)} \Big( \sum_{y \in {\bf \widetilde{B}}} \widetilde{\xi}_y \frac{F_{x,y}}{q_{x,y}}\Big)^2 \Big\} d \mu \Big].
\]
By Proposition \ref{lH2.1} and Lemma \ref{lH2.5} applied to the variables $\{\xi_y; y \in {\bf \widetilde{B}}\}$, the integral
\[
 \int \exp\Big\{ \frac{\gamma^2\sigma_x^2}{2p(x)} \Big( \sum_{y \in {\bf \widetilde{B}}} \widetilde{\xi}_y \frac{F_{x,y}}{q_{x,y}}\Big)^2 \Big\} d \mu
\]
is bounded by $3$ if
\[
\frac{\gamma^2 \sigma^2_x}{2 p(x)} \leq \bigg( 4(d+1)\ell^d \sum_{y \in {\bf \widetilde{B}}} \frac{\widetilde{\sigma}^2_y F_{x,y}^2}{q_{x,y}^2}\bigg)^{-1}.
\]
We want this bound to be satisfied for any $x \in {\bf B}$. The optimal choice of $p(x)$ in order to maximize the value of $\gamma$ is
\[
p(x) =2(d+1) \ell^d \gamma ^2 \sigma_x^2 \sum_{y \in {\bf \widetilde{B}}} \frac{\widetilde{\sigma}^2_y F_{x,y}^2}{q_{x,y}^2}, \text{ with } \gamma \leq \bigg( 2(d+1)\ell^d \sum_{\substack{x \in {\bf B}\\y \in {\bf \widetilde{B}}}} \frac{\sigma^2_x \widetilde{\sigma}^2_y F_{x,y}^2}{q_{x,y}^2}\bigg)^{-1/2}.
\]
This restriction holds regardless of the chosen set $\bf B$ if we take
\[
\gamma \leq \bigg( 2 (d+1)\ell^d\!\!\!\!\! \sum_{x,y \in \tdn} \!\!\! \frac{\sigma_x^2 \widetilde{\sigma}_y^2 F_{x,y}^2}{q_{x,y}^2} \bigg)^{-1/2}.
\]
Since $q_{x,y}^{-2} \leq (d+1)^2 2^{2(d+2)} \ell^{2d}$, the lemma is proved with $C(\ell) = (d+1)^3 2^{2d+5}\ell^{3d}$.
\end{proof}

\begin{remark}
In our context, the value of the constant $C(\ell)$ will not be relevant.
\end{remark}

\section{Proof of the flow lemma}
\label{sI}

In this section we prove Lemma \ref{flow}. Instead of proving this lemma directly, we will prove the following lemma, which is simpler. Let us recall the definition of flow given in Section \ref{s2} after \eqref{curacavi}. Recall that $\Lambda_\ell$ denotes the cube $\{0,1,\dots,\ell-1\}^d$ and that $p_\ell$ denotes the uniform measure on $\Lambda_\ell$. 

\begin{lemma}
\label{lI1} There exists a finite constant $C=C(d)$ such that for any $\ell \in \bb N$ there exists a flow $\psi^\ell$ connecting $p_\ell$ to $p_{\ell-1}$ with support contained in $\Lambda_\ell$, such that
\[
\big| \psi^\ell(x;b) \big| \leq C \ell^{-d}
\]
for any $x \in \Lambda_\ell$ and any $b \in \mc B$.
\end{lemma}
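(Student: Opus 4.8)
The plan is to construct the flow $\psi^\ell$ explicitly by an induction/telescoping argument on the dimension $d$, reducing the problem to a one-dimensional averaging transport. The key observation is that $p_\ell = p_\ell^{(1)} \otimes \cdots \otimes p_\ell^{(d)}$ is a product of one-dimensional uniform measures, and similarly $p_{\ell-1} = p_{\ell-1}^{(1)} \otimes \cdots \otimes p_{\ell-1}^{(d)}$. Writing $p_\ell - p_{\ell-1}$ as a telescoping sum
\[
p_\ell - p_{\ell-1} = \sum_{i=1}^d \Big( \bigotimes_{j<i} p_{\ell-1}^{(j)} \Big) \otimes \big( p_\ell^{(i)} - p_{\ell-1}^{(i)} \big) \otimes \Big( \bigotimes_{j>i} p_\ell^{(j)} \Big),
\]
I reduce the construction to $d$ separate problems, each of which only requires a flow in the $e_i$-direction that transports the one-dimensional measure $p_\ell^{(i)}$ to $p_{\ell-1}^{(i)}$, tensored with a fixed product measure in the remaining coordinates. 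So the heart of the matter is the $d=1$ case: construct a flow $\phi$ on $\bb Z$ with support in $\{0,\dots,\ell-1\}$ connecting the uniform measure on $\{0,\dots,\ell-1\}$ to the uniform measure on $\{0,\dots,\ell-2\}$, with $|\phi(x)| \le C/\ell$.

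For the one-dimensional step, the flow is determined (up to the boundary condition) by the divergence relation: if $p$ and $q$ are the two measures, then $\phi(x) - \phi(x-1) = q(x) - p(x)$, so $\phi(x) = \sum_{y \le x} \big( q(y) - p(y) \big)$ starting from $\phi(-1) = 0$, which automatically has support in $\{0,\dots,\ell-1\}$ since both measures have total mass $1$. With $p(y) = \frac1\ell \mathds 1_{\{0 \le y \le \ell-1\}}$ and $q(y) = \frac1{\ell-1}\mathds 1_{\{0 \le y \le \ell-2\}}$, one computes $\phi(x) = \sum_{y=0}^{x}\big(\frac1{\ell-1} - \frac1\ell\big) = \frac{x+1}{\ell(\ell-1)}$ for $0 \le x \le \ell-2$, and $\phi(\ell-1) = 0$. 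Since $x+1 \le \ell-1$, we get $|\phi(x)| \le \frac1\ell$, which is the desired bound. Tensoring this with the appropriate fixed product probability measure in the other $d-1$ coordinates multiplies $\phi$ by numbers bounded by $\ell^{-(d-1)}$ (each factor $p_\ell^{(j)}$ or $p_{\ell-1}^{(j)}$ is bounded by $\frac1{\ell-1} \le \frac2\ell$ pointwise), so each of the $d$ terms in the telescoping sum yields a flow in the $e_i$-direction bounded by $C_d \ell^{-d}$, and summing the $d$ terms keeps the bound of the form $C(d)\ell^{-d}$. The support of each term is contained in $\Lambda_\ell$ because all the one-dimensional supports are contained in $\{0,\dots,\ell-1\}$.

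The main obstacle, and the only place requiring care, is bookkeeping the supports and the direction-labels so that the sum of the $d$ tensor-flows is genuinely a flow $\psi^\ell(x;b)$ on $\bb Z^d \times \mc B$ with the claimed support $\Lambda_\ell$ and the claimed pointwise bound — in particular, checking that the product measures $\bigotimes_{j<i} p_{\ell-1}^{(j)} \otimes \bigotimes_{j>i} p_\ell^{(j)}$ are all supported in $\Lambda_\ell$ (they are, since $\ell - 1 < \ell$) and that the telescoping identity above is exactly correct. Once that is verified, assembling $\psi^\ell(x;e_i)$ as the $i$-th telescoping term's one-dimensional flow times the corresponding product weight, and $\psi^\ell(x;b) = 0$ for the other basis vectors handled by other terms, completes the construction. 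This is essentially a routine verification, so I expect no serious difficulty; the delicate point in the companion Flow Lemma \ref{flow} (the $g_d(\ell)$ growth of the $\ell^2$-norm) does not arise here because here we only need the crude $L^\infty$ bound, which is dimension-robust.
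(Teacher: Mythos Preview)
Your argument is correct and takes a genuinely different route from the paper. You exploit the product structure $p_\ell = p_\ell^{(1)}\otimes\cdots\otimes p_\ell^{(d)}$ and telescope coordinate by coordinate through the intermediate product measures $q_i = \bigotimes_{j<i} p_{\ell-1}^{(j)} \otimes \bigotimes_{j\ge i} p_\ell^{(j)}$, so that each step only requires a one-dimensional flow in direction $e_i$ (tensored with a fixed product weight in the remaining coordinates). The paper instead peels off the boundary of $\Lambda_\ell$ shell by shell: it writes $\Lambda_\ell$ as a disjoint union of the sets $\Lambda_\ell^k$ of points with exactly $k$ coordinates equal to $\ell-1$, and transports mass successively from $\Lambda_\ell^d$ to $\Lambda_\ell^{d-1}$, then to $\Lambda_\ell^{d-2}$, and so on down to $\Lambda_\ell^0 = \Lambda_{\ell-1}$, along the segments joining each boundary point to the interior. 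The intermediate measures in the paper's scheme are \emph{not} products, and the flows at each stage are nonzero in several directions simultaneously, so the two decompositions are distinct. Your approach is arguably cleaner for this particular lemma: the one-dimensional computation is explicit and the constant comes out as $2^{d-1}$, whereas the paper's constant is the less transparent $\sup_{1\le k\le d} k^{-1}\binom{d}{k}^{-1}\sum_{i=k}^d \binom{d}{i}$. The paper's geometric picture, on the other hand, makes the structure of the flow $\widetilde\psi^\ell$ in the subsequent Lemma \ref{lI2} more visible (the layers $\Lambda_k \setminus \Lambda_{k-1}$ align with the shells), though your construction feeds into that lemma just as well. One minor remark: your sign convention for the one-dimensional divergence relation is the reverse of the paper's (the paper has $p(z)-q(z) = \phi(z)-\phi(z-1)$), so your $\phi$ actually connects $p_{\ell-1}$ to $p_\ell$; replacing $\phi$ by $-\phi$ fixes this without affecting any bound.
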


\begin{proof}
For $k=0,1,\dots,d$, let $\Lambda_\ell^k$ be the set of sites in $\Lambda_\ell$ with exactly $k$ coordinates equal to $\ell-1$. The idea is to successively transport mass from $\Lambda_\ell^k$ to $\Lambda_\ell^{k-1}$ in a uniform way. In that case, the final result has to be uniform by construction.
Notice that flows have an Abelian structure: if $\phi_{1,2}$ connects $p_1$ to $p_2$ and $\phi_{2,3}$ connects $p_2$ to $p_3$, then $\phi_{1,2} + \phi_{2,3}$ connects $p_1$ to $p_3$. We start moving the mass at $\Lambda_\ell^d =\{(\ell-1,\dots,\ell-1)\}$ along the $d$ segments of length $\ell-1$ that form $\Lambda_\ell^{d-1}$. This is accomplished by defining 
\[
\psi^\ell_d(x-kb;b) = -\frac{\ell-k}{\ell-1} \cdot \frac{1}{d \ell^d}
\]
for $x = (\ell-1,\dots,\ell-1)$, $k =1,\dots,\ell-1$ and $b \in \mc B$, and $\psi^\ell_d(y;b)=0$ otherwise. The factor $\frac{1}{\ell^d}$ is the mass of $\Lambda_\ell^d$ with respect to $p_\ell$, the factor $\frac{1}{d}$ divides the mass uniformly among each segment of $\Lambda_\ell^{\ell-1}$, and the factor $\frac{\ell-k}{\ell-1}$ dsitributes the mass uniformly on each segment. The flow $\psi^\ell_d$ defined in this way connects $p_\ell$ to a measure $p_\ell^{d-1}$ supported in $\Lambda_\ell^0 \cup \dots \cup \Lambda_\ell^{d-1}$, equal to $p_\ell$ in $\Lambda_\ell^0 \cup \dots \cup \Lambda_\ell^{d-2}$ and equidistributed in $\Lambda_\ell^{d-1}$. 

The idea is to iterate this construction. Let $p_\ell^k$ the measure in $\Lambda_\ell^0 \cup \dots \cup \Lambda_\ell^k$ defined by the conditions
\begin{itemize}
\item $p_\ell^k$ is equal to $p_\ell$ in $\Lambda_\ell^0 \cup \dots \cup \Lambda_\ell^{k-1}$,

\item $p_\ell^k$ is equidistributed in $\Lambda_\ell^k$. 

\end{itemize}

For $x \in \Lambda_\ell^k$, $j=1,\dots,\ell-1$ and $b \in \mc B$ such that $x\cdot b =\ell-1$ (there are exactly $k$ of such indices $b$), let us define
\[
\psi^\ell_k(x-jb;b) = - \frac{\ell-j}{\ell-1} \cdot \frac{p_\ell^k(x)}{k}.
\]
Otherwise we take $\psi^\ell_k(y;b) =0$. The flow $\psi^\ell_k$ defined in this way connects $p_\ell^k$ to $p_\ell^{k-1}$. In this way we have constructed a sequence $\{\psi^\ell_k; k=d,d-1,\dots,1\}$ of flows connecting $p_\ell^k$ to $p_\ell^{k-1}$. We conclude that the flow
\[
\psi^\ell := \sum_{k=1}^d \psi_k^\ell
\]
connects $p_\ell$ to $p_\ell^0=p_{\ell-1}$. Since the supports of the flows $\psi_k^\ell$ are disjoint,
\[
\|\psi^\ell\|_\infty = \sup_{1 \leq k \leq d} \|\psi_k^\ell\|_\infty.
\]
The constants $a_k:= p_\ell^k(x)$ for $x \in \Lambda_\ell^k$ are not difficult to estimate. The cardinality of $\Lambda_\ell^k$ is equal to $\binom{d}{k} (\ell-1)^{d-k}$. Therefore,
\[
\frac{1}{\ell^d} \sum_{i=0}^{k-1}  \binom{d}{k} (\ell-1)^{d-i} + \binom{d}{k} (\ell-1)^{d-k} a_k =1 = \frac{1}{\ell^d} \sum_{i=1}^d \binom{d}{i} (\ell-1)^{d-i}.
\]
Therefore, 
\[
a_k = \binom{d}{k}^{-1} \sum_{i=k}^d \binom{d}{i} (\ell-1)^{-(i-k)} \leq \binom{d}{k}^{-1} \sum_{i=k}^d \binom{d}{i}.
\]
Notice that this last expression does not depend on $\ell$. Since $\|\psi_k^\ell\|_\infty \leq \frac{a_k}{k}$, we conclude that
\[
\|\psi^\ell\|_\infty \leq \sup_{1 \leq k \leq d} \frac{1}{k} \binom{d}{k}^{-1} \sum_{i=k}^d \binom{d}{i},
\]
as we wanted to show.
\end{proof}

Now that we know how to connect $p_\ell$ to $p_{\ell-1}$, it is enough to add these flows to connect $p_\ell$ to $p_1$, which is equal to the point mass at $0$. Recall the definition of  $g_d(\cdot)$ given in Theorem \ref{t2}. We have the following result:

\begin{lemma}
\label{lI2}
There exists a finite constant $C=C(d)$ such that for any $\ell \in \bb N$ there exists a flow $\widetilde{\psi}^\ell$ supported in $\Lambda_\ell$, connecting the point mass at the origin to the uniform measure $p_\ell$ in $\Lambda_\ell$ and such that
\[
\sum_{\substack{x \in \Lambda_\ell \\ b \in \mc B}} \widetilde{\psi}^\ell(x;b)^2 \leq C g_d(\ell); 
\quad \sum_{\substack{x \in \Lambda_\ell \\ b \in \mc B}} \big|\widetilde{\psi}^\ell(x;b)\big| \leq C \ell.
\]
\end{lemma}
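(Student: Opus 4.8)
The plan is to build $\widetilde{\psi}^\ell$ by concatenating the elementary flows $\psi^k$ produced by Lemma~\ref{lI1}, following the chain of measures $p_1,p_2,\dots,p_\ell$; note that $\Lambda_1=\{0\}$, so $p_1$ is exactly the point mass at the origin. Reversing a flow exchanges its source and target, so $-\psi^k$ connects $p_{k-1}$ to $p_k$; since flows add (the Abelian structure recalled in the proof of Lemma~\ref{lI1}), the flow
\[
\widetilde{\psi}^\ell:=-\sum_{k=2}^\ell\psi^k
\]
connects $p_1$ to $p_\ell$, and since $\psi^k$ is supported in $\Lambda_k\subseteq\Lambda_\ell$ for $k\le\ell$, so is $\widetilde{\psi}^\ell$. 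The $\ell^1$ bound is then immediate: by the triangle inequality, the pointwise bound $|\psi^k(x;b)|\le Ck^{-d}$ of Lemma~\ref{lI1}, and the fact that $\Lambda_k$ carries at most $d\,k^d$ edges,
\[
\sum_{x\in\Lambda_\ell,\,b\in\mc B}\big|\widetilde{\psi}^\ell(x;b)\big|\le\sum_{k=2}^\ell\;\sum_{x\in\Lambda_k,\,b\in\mc B}\big|\psi^k(x;b)\big|\le\sum_{k=2}^\ell d\,k^d\cdot Ck^{-d}\le Cd\,\ell,
\]
which is the second estimate of the lemma.

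The $\ell^2$ bound is the heart of the matter and is the source of the trichotomy in $g_d$. The obstacle is that the supports of the $\psi^k$, $2\le k\le\ell$, overlap heavily — each essentially fills $\Lambda_k$ — so one cannot merely add the squared $\ell^2$-norms. Instead I would reorganise the sum edge by edge: for an edge $e=\{x,x+b\}$ let $k_0(e)$ be the smallest $k$ with $e\subseteq\Lambda_k$, so that $\widetilde{\psi}^\ell(e)=-\sum_{k=k_0(e)}^\ell\psi^k(e)$ and $k_0(e)=1+\max\big(\max_i x_i,\,\max_i(x+b)_i\big)$. For $d\ge2$ the tail of $\sum_k Ck^{-d}$ converges, giving $|\widetilde{\psi}^\ell(e)|\le\sum_{k\ge k_0(e)}Ck^{-d}\le C' k_0(e)^{1-d}$; combining this with the elementary count that the number of edges $e$ with $k_0(e)=k$ — i.e.\ edges meeting the outer shell $\Lambda_k\setminus\Lambda_{k-1}$ — is at most $C(d)\,k^{d-1}$, one obtains
\[
\sum_{e}\widetilde{\psi}^\ell(e)^2\le C''\sum_{k=2}^\ell k^{d-1}\cdot k^{2-2d}=C''\sum_{k=2}^\ell k^{1-d},
\]
which is $\le C'''\log\ell=C'''g_2(\ell)$ for $d=2$ and $\le C'''=C'''g_d(\ell)$ for $d\ge3$.

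Dimension $d=1$ is exceptional, since this tail sum diverges; there I would bypass Lemma~\ref{lI1} and argue directly. In one dimension the flow connecting the point mass at $0$ to $p_\ell$ with support in $\Lambda_\ell$ is unique, given by $\widetilde{\psi}^\ell(j;e_1)=\tfrac{j+1}{\ell}-1$ for $0\le j\le\ell-1$ (solve the scalar divergence equation with vanishing boundary values), so $|\widetilde{\psi}^\ell(j;e_1)|\le1$ and hence $\sum_j\widetilde{\psi}^\ell(j;e_1)^2\le\ell=g_1(\ell)$ together with $\sum_j|\widetilde{\psi}^\ell(j;e_1)|\le\ell$; one simply takes this $\widetilde{\psi}^\ell$ when $d=1$. (Equivalently, one could keep the chaining construction and exploit the extra interior decay $|\psi^k(j;e_1)|=(j+1)/(k(k-1))$ that Lemma~\ref{lI1}'s construction actually has in one dimension, which makes $\sum_{k\ge j+2}|\psi^k(j;e_1)|$ telescope to $1-(j+1)/\ell$.)

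I expect the $\ell^2$ estimate in dimensions $d\ge2$ to be the main difficulty: one has to make the interplay between the pointwise decay $|\psi^k(e)|\lesssim k^{-d}$ in the box size and the number of edges "first appearing" at scale $k$ reproduce $g_d(\ell)$ exactly, which is a careful but routine reorganisation once the right bookkeeping (the quantity $k_0(e)$ and the shell count) is in place. Everything else — that $\widetilde{\psi}^\ell$ is a flow with the asserted source, target and support, and the $\ell^1$ bound — is straightforward from Lemma~\ref{lI1} and the additivity of flows.
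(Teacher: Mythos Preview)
Your proposal is correct and follows essentially the same route as the paper: define $\widetilde{\psi}^\ell=-\sum_k\psi^k$, handle $d=1$ by the explicit one-dimensional flow, and for $d\ge 2$ group edges by the smallest cube containing them (your $k_0(e)$ is exactly the paper's shell index), use the tail bound $\sum_{i\ge k}i^{-d}\le C k^{1-d}$ together with the shell count $\#\{e:k_0(e)=k\}\le C(d)k^{d-1}$ to get $\sum_e\widetilde{\psi}^\ell(e)^2\le C\sum_{k}k^{1-d}\le Cg_d(\ell)$. The only cosmetic differences are that your $\ell^1$ bound uses the cruder per-$k$ triangle inequality (the paper reuses the shell decomposition there too) and that your explicit $d=1$ formula carries a sign slip---it connects $p_\ell$ to $\delta_0$ rather than $\delta_0$ to $p_\ell$---which is immaterial for the $\ell^1$ and $\ell^2$ estimates.
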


\begin{proof}
In $d=1$ it is enough to define
\[
\widetilde{\psi}^\ell(x;e_1) =
\left\{
\begin{array}{c@{\;;\;}l}
\frac{\ell-x}{\ell} & x=0,1,\dots,\ell-1\\
0& \text{otherwise.}
\end{array}
\right.
\]
For $d \geq 2$, we define
\[
\widetilde{\psi}^\ell = - \sum_{k=1}^\ell \psi^k
\]
For $x,b$ such that $x+b \in \Lambda_\ell^k \setminus \Lambda_\ell^{k-1}$, $\psi^i(x;b)=0$ if $i \leq k-1$. Therefore,
\[
\big| \widetilde{\psi}^\ell(x;b) \big| = \Big| \sum_{i=k}^\ell \psi^i(x;b)\Big| \leq C(d) \sum_{i=k}^\ell \frac{1}{i^d} \leq \frac{C(d)}{k^{d-1}}
\]
The number of couples $(x;b)$ such that $x+b \in \Lambda_\ell^k \setminus \Lambda_\ell^{k-1}$ is bounded by $C(d) k^{d-1}$. Therefore,
\[
\sum_{\substack{x \in \Lambda_\ell \\ b \in \mc B}} \widetilde{\psi}^\ell(x;b)^2 
		\leq C(d) \sum_{k=1}^\ell \frac{k^{d-1}}{k^{2(d-1)}} = C(d) \sum_{k=1}^\ell \frac{1}{k^{d-1}} \leq C(d) g_d(\ell)
\]
and
\[
\sum_{\substack{x \in \Lambda_\ell \\ b \in \mc B}} \big|\widetilde{\psi}^\ell(x;b)\big| \leq C(d) \sum_{k=1}^\ell \frac{k^{d-1}}{k^{d-1}} = C(d) \ell,
\]
as we wanted to show.
\end{proof}

Starting from Lemma \ref{lI2}, it is not difficult to prove Lemma \ref{flow}: since $q_\ell = p_\ell \ast p_\ell$, the flow $\phi^\ell$ defined as
\[
\phi^\ell(x;b) = \sum_{z \in \bb Z^d} \widetilde{\psi}^\ell(x-z;b) p_\ell(z)  
\]
for any $x \in \bb Z^d$ and any $b \in \mc B$ connects the point mass at $0$ with $q_\ell$. It also has support on $\Lambda_{2\ell-1}$. Moreover,
\[
\begin{split}
\sum_{\substack{x \in \bb Z^d \\ b \in \mc B}} \phi^\ell(x;b)^2 
		&= \sum_{\substack{x \in \bb Z^d \\ b \in \mc B}}\Big( \sum_{z \in \bb Z^d} \widetilde{\psi}^\ell(x-z;b) p_\ell(z)\Big)^2\\
		&\leq\sum_{\substack{ x,z \in \bb Z^d\\b \in \mc B}} \widetilde{\psi}^\ell(x-z;b)^2 p_\ell(z) = 	
		\sum_{\substack{x \in \bb Z^d \\ b \in \mc B}} \widetilde{\psi}^\ell(x;b)^2,
\end{split}
\]
and similarly
\[
\sum_{\substack{x \in \bb Z^d \\ b \in \mc B}} \big|\phi^\ell(x;b)\big| \leq \sum_{\substack{x \in \bb Z^d \\ b \in \mc B}} \big|\widetilde{\psi}^\ell(x;b)\big|,
\]
which proves Lemma \ref{flow}.

\section*{Acknowledgements}

\noindent
M.J.~would like to thank the warm hospitality of Leiden University, where this work was initiated, and of the Isaac Newton Institute of Mathematical Sciences, where this work was finished.
M.J.~and O.M.~were partially supported by NWO Gravitation Grant 024.002.003-NETWORKS. M.J.~acknowledges CNPq for its support through the Grant 305075/2017-9 and ERC for its support through the European Unions Horizon 2020 research and innovative programme (Grant Agreement No. 715734).

\bibliographystyle{plain}

\end{document}